\documentclass[aap]{imsart}
\makeatletter\def\journal@name{}\makeatother

\RequirePackage{amsthm,amsmath,amsfonts,amssymb,mathtools}
\RequirePackage[numbers]{natbib}
\RequirePackage[colorlinks=true,citecolor=blue,urlcolor=blue,linkcolor=blue,hypertexnames=false,pagebackref=true]{hyperref}
\renewcommand*{\backref}[1]{}
\renewcommand*{\backrefalt}[4]{%
  \ifcase #1\or (Cited on page~#2.)\else (Cited on pages~#2.)\fi}
\RequirePackage{microtype}
\RequirePackage{booktabs}
\RequirePackage{float}
\RequirePackage{needspace}
\RequirePackage{tikz}
\RequirePackage{pgfplots}
\usepgfplotslibrary{groupplots}
\usepgfplotslibrary{fillbetween}
\pgfplotsset{compat=1.18}

\startlocaldefs
\numberwithin{equation}{section}
\theoremstyle{plain}
\newtheorem{theorem}{Theorem}[section]
\newtheorem{proposition}[theorem]{Proposition}
\newtheorem{lemma}[theorem]{Lemma}
\newtheorem{corollary}[theorem]{Corollary}
\newtheorem{question}{Question}[section]
\newtheorem{assumption}{Assumption}
\theoremstyle{definition}

\newtheorem{remark}[theorem]{Remark}

\newcommand{\R}{\mathbb R}
\newcommand{\E}{\mathbb E}
\newcommand{\Pp}{\mathbb P}
\newcommand{\cM}{\mathcal M}
\newcommand{\cN}{\mathcal N}
\newcommand{\cU}{\mathcal U}
\newcommand{\cV}{\mathcal V}
\newcommand{\TV}{\mathrm{TV}}
\newcommand{\proj}{\operatorname{proj}}
\newcommand{\Id}{\mathrm{Id}}
\newcommand{\Law}{\operatorname{Law}}
\newcommand{\norm}[1]{\lVert #1\rVert}
\newcommand{\abs}[1]{\lvert #1\rvert}
\newcommand{\ip}[2]{\langle #1,#2\rangle}
\newcommand{\eps}{\varepsilon}
\newcommand{\rhoq}{\rho_{\mathrm q}}
\newcommand{\supp}{\operatorname{supp}}
\endlocaldefs

\begin{document}

\begin{frontmatter}
\title{Provable Non-Acceleration of Standard Strang\\ Splittings of Kinetic Langevin Dynamics}
\runtitle{Non-acceleration of Strang Langevin splittings}

\begin{aug}
\author[A]{\fnms{Nawaf}~\snm{Bou-Rabee}\ead[label=e1]{nawaf.bourabee@rutgers.edu}}
\address[A]{Department of Mathematical Sciences,
Rutgers University, Camden, NJ 08102, USA;
and Visiting Scholar, Center for Computational Mathematics,
Flatiron Institute, 162 5th Ave, New York, NY 10010, USA.\printead[presep={; }]{e1}}
\end{aug}

\begin{abstract}
The OBABO and BAOAB schemes and the other standard Strang splittings
of kinetic (underdamped) Langevin dynamics are widely used Markov
chain Monte Carlo algorithms. Under a suitable friction scaling, the
underlying diffusion relaxes on a ballistic time scale, suggesting
that these discretizations, suitably tuned, sample targets with
condition number $\kappa$ in $O(\sqrt{\kappa})$ iterations.
We prove that no fixed choice of step size and friction, based only
on the curvature bounds and the dimension, achieves this
acceleration: total variation mixing time lower bounds for OBABO
show that ballistic cold-start mixing fails uniformly over the
smooth strongly convex class, and the lower bounds extend, with the
same orders, to BAOAB and the other four Strang splittings.
The proof transfers non-acceleration from optimization to sampling.
Eliminating velocity gives an exact noisy heavy-ball recursion, and
by an extension of the non-acceleration theorem of Goujaud, Taylor
and Dieuleveut, for every tuning either some Gaussian target has a
mode with relaxation time at least of order $\kappa$, or an
attracting cycle exists on a smooth potential; dilating such a
potential as $U_R(x)=R^2U(x/R)$ preserves its curvature bounds and
produces metastability for a number of steps exponential in $R^2$,
from an initial state at Wasserstein distance $O(R)$ from
equilibrium. Using contraction estimates of Leimkuhler, Paulin and
Whalley and a Wasserstein-to-total-variation regularization
estimate, we prove a complementary upper bound of $O(\kappa)$ steps,
up to logarithmic factors, for a fixed-parameter OBABO tuning.
Hence, among fixed-parameter OBABO tunings, the optimal
condition-number dependence of cold-start total variation mixing
over this class is linear, up to logarithmic factors. A direct
Gaussian calculation also rules out fixed-parameter acceleration for
the left-endpoint exponential integrator.
\end{abstract}

\begin{keyword}[class=MSC]
\kwdgroup[type=primary]{\kwd{65C30}}
\kwdgroup[type=secondary]{\kwd{60J05}\kwd{65P10}\kwd{65C05}}
\end{keyword}

\begin{keyword}
\kwd{kinetic Langevin}
\kwd{underdamped Langevin}
\kwd{OBABO}
\kwd{BAOAB}
\kwd{Strang splitting}
\kwd{heavy-ball method}
\kwd{metastability}
\kwd{mixing time lower bounds}
\end{keyword}
\end{frontmatter}

\section{Introduction}

\subsection{Motivation}

Let $U:\R^d\to\R$ be differentiable, let $(W_t)_{t\ge0}$ be a
standard Brownian motion on $\R^d$, and let $\gamma>0$ be a friction
parameter. Consider the kinetic, or underdamped, Langevin dynamics
\begin{equation}\label{eq:KLD}
 dX_t=V_t\,dt,\qquad
 dV_t=-\nabla U(X_t)\,dt-\gamma V_t\,dt+\sqrt{2\gamma}\,dW_t.
\end{equation}
For the strongly convex potentials with Lipschitz gradient considered in this paper, \eqref{eq:KLD} admits a unique non-explosive strong solution and has invariant probability measure
\begin{equation}\label{eq:gibbs}
 \mu(dx,dv)=Z^{-1}e^{-U(x)-\abs{v}^2/2}\,dx\,dv,
\end{equation}
where
$Z=\int_{\R^d\times\R^d}e^{-U(x)-\abs v^2/2}\,dx\,dv$.

The convergence rate of \eqref{eq:KLD} to $\mu$ provides the
continuous-time benchmark for the splitting schemes studied below.
To quantify this benchmark, suppose that $U$ satisfies the
convexity, regularity, and growth conditions in
\citep[Assumptions~2.1 and~2.2]{Lu2026}, and that the logarithmic
Sobolev constant appearing in Assumption~2.1 is $K$. At the friction
scale $\gamma\asymp\sqrt K$, the law of $(X_t,V_t)$ converges to
$\mu$ in relative entropy at a rate of order $\sqrt K$, for every
initial law of finite relative entropy with respect to $\mu$
\citep[Theorem~2.3]{Lu2026}; equivalently, the relaxation time is of
order $K^{-1/2}$. By comparison, under the same logarithmic Sobolev inequality,
overdamped Langevin dynamics converges in relative entropy at a rate
of order $K$, corresponding to a relaxation time of order $K^{-1}$
\citep[Section~5.2]{BakryGentilLedoux2014}.

The improvement
from $K^{-1}$ to $K^{-1/2}$ is the square-root, or ballistic,
acceleration. Such square-root gains are fundamental in convex optimization and
numerical linear algebra. Nesterov's accelerated gradient method
improves the worst-case dependence of the iteration complexity of
gradient descent on the condition number from linear to square root
\citep[Theorem~2.2.3]{Nesterov2004}, and this square-root dependence
is optimal for first-order methods whose iterates lie in the initial point plus
the linear span of the previously computed gradients
\citep[Assumption~2.1.4 and Theorem~2.1.13]{Nesterov2004}. The
conjugate gradient method of Hestenes and Stiefel
\citep{HestenesStiefel1952} achieves the
analogous improvement over steepest descent for symmetric positive
definite linear systems \citep[Theorem~38.5]{TrefethenBau1997}.

The OBABO splitting is a standard splitting integrator of weak
order two for \eqref{eq:KLD}, composed of exact
Ornstein-Uhlenbeck ($\mathrm O$), velocity-kick ($\mathrm B$), and
position-drift ($\mathrm A$) substeps in the order indicated by its
name. It goes back to the Langevin thermostat of Bussi
and Parrinello \citep{BussiParrinello2007} and belongs to the family
of Strang splittings analyzed systematically by Leimkuhler and
Matthews \citep{LeimkuhlerMatthews2013}. Throughout the paper, we
assume that $U$ is $K$-strongly convex with $L$-Lipschitz gradient.
We write $\kappa=L/K$ for the condition number. The formal class used in
our results appears in \eqref{eq:classUkappa} below, normalized to
$K=1$ and $L=\kappa$.

By the Bakry-\'Emery criterion
\citep[Section~5.7]{BakryGentilLedoux2014}, the lower curvature bound
$K$ gives a logarithmic Sobolev inequality with constant $K$. For
potentials that also satisfy the remaining hypotheses of
\citep{Lu2026}, this yields the benchmark described above. The
upper curvature bound $L$ limits the step size: stability of OBABO
on every quadratic potential with Hessian eigenvalues in $[K,L]$
requires $h<2/\sqrt L$; see \eqref{eq:stabletuningintro} below.

At the largest stable step-size scale $L^{-1/2}$, simulating the
kinetic Langevin diffusion \eqref{eq:KLD} with OBABO over the
ballistic time interval $K^{-1/2}$ requires order
$K^{-1/2}/L^{-1/2}=\sqrt\kappa$ steps. This suggests a ballistic
mixing bound of order $\sqrt\kappa$. Available estimates for OBABO
over the full smooth strongly convex class are slower. The synchronous-coupling estimates of Leimkuhler, Paulin and
Whalley \citep{LeimkuhlerPaulinWhalley2024} give a Wasserstein
contraction rate of order $\kappa^{-1}$ per step. Combined with the
Wasserstein-to-total-variation regularization of
\citep{BouRabeeCoxSchieven2026}, they give a total variation mixing
time of order $\kappa$, up to logarithmic factors
(Theorem~\ref{thm:upper} below). By contrast, on Gaussian targets the
optimally tuned OBABO chain has
cold-start asymptotic total variation contraction factor
$(\sqrt\kappa-1)/(\sqrt\kappa+1)$, corresponding to a relaxation scale
of order $\sqrt\kappa$ (Corollary~\ref{cor:optimalGaussian}).

The gap between the order-$\kappa$ mixing time guarantee over the
full smooth strongly convex class and the order-$\sqrt\kappa$
relaxation scale on Gaussian targets leads to the following
question.

\begin{question}\label{q:acceleration}
Can the OBABO step size and friction be tuned using only $(K,L,d)$
such that, for every potential in the smooth strongly
convex class and every fixed deterministic initial state, the resulting chain has
a total variation mixing time of order $\sqrt\kappa$, up to
logarithmic factors?
\end{question}

This is one precise form of a question raised by Ma, Chatterji,
Cheng, Flammarion, Bartlett and Jordan: whether
gradient-based sampling admits a provable analogue of Nesterov
acceleration \citep{MaChatterjiChengFlammarionBartlettJordan2021}.
They interpret \eqref{eq:KLD} as accelerated gradient
descent in relative entropy, and they note that mixing time lower
bounds by which acceleration could be characterized have been
lacking. A negative answer requires lower bounds: an upper bound of any
order leaves open the possibility that the chain mixes faster than
its analysis shows. This paper establishes such lower bounds for
OBABO and the other standard Strang splittings considered below.

\subsection{Contributions}

Corollary~\ref{cor:sharp} below rules out, for every numerically stable fixed-parameter OBABO tuning, every cold-start total variation bound over the full smooth strongly convex class whose convergence rate is of larger order than $\kappa^{-1}$, with polynomial dependence on $\kappa$ and the initial Wasserstein distance; in particular, no such bound holds with the ballistic rate of order $\kappa^{-1/2}$. Here ``fixed-parameter'' means that the step size and friction may depend on the curvature bounds $(K,L)$ and the dimension, but not on the particular target, the state, the iteration number, or the realized noise. Together with the order-$\kappa$ upper bound in Theorem~\ref{thm:upper}, this shows that order $\kappa$, up to a logarithmic factor, is optimal among cold-start exponential bounds of this form.

The proofs rest on a dichotomy between a quadratic case and a cycling case (Theorem~\ref{thm:dichotomy}). These cases mirror the deterministic heavy-ball dichotomy of Goujaud, Taylor and Dieuleveut \citep{GoujaudTaylorDieuleveut2025}. The dichotomy states that, for every heavy-ball parameter pair, either the worst-case asymptotic convergence factor of heavy ball over quadratic objectives with curvature in $[K,L]$ is nonaccelerated, or heavy ball fails to converge on some strongly convex $C^1$ objective with Lipschitz gradient. Above an explicit condition-number threshold, Appendix~B of that paper realizes the nonconvergence case through a roots-of-unity cycle. Eliminating velocity turns the OBABO position process into an exact noisy heavy-ball method and allows Theorem~\ref{thm:dichotomy} to transfer this dichotomy to the OBABO chain.

In the quadratic case, the OBABO chain on a Gaussian target is a Gaussian autoregressive process. Lemma~\ref{lem:gaussianTVrate} gives a sharp asymptotic total variation decay rate for such processes: the spectral radius is an upper bound from every initial state and is attained for suitable initial states. Choose a curvature at which the heavy-ball contraction factor is maximal. On the corresponding Gaussian target, the lemma produces two initial states whose asymptotic total variation contraction factor equals this maximum and is therefore nonaccelerated. Lemma~\ref{lem:gaussianTVrate} also yields the Gaussian non-acceleration result for the left-endpoint exponential integrator (Appendix~\ref{app:exponential-integrator}).

In the cycling case, Lemma~\ref{lem:metastability} gives exponentially long confinement and separation between two chains initialized one step apart on the cycle after dilation by a scale factor $R\ge1$. Theorem~\ref{thm:transfer} converts this separation into metastability relative to the invariant law and a mixing time lower bound. The dilation preserves the curvature bounds but reduces the normalized noise amplitude as $R^{-1}$, producing mixing times exponential in $R^2$ at fixed condition number.

Two features of the analysis deserve emphasis. First, the cycling case of the dichotomy of Goujaud, Taylor and Dieuleveut asserts that a cycle exists, while the metastability argument here requires an attracting cycle. We therefore prove a strict form of their cycling conclusion in the regime of the main results: if the worst-case quadratic contraction factor lies below the explicit threshold in Theorem~\ref{thm:dichotomy}, then the parameters lie strictly inside a single roots-of-unity cycling region, and this strict inclusion provides a positive attraction radius (Lemma~\ref{lem:interiorcycle}).

Second, eliminating the velocity from each of the six standard Strang splittings (OBABO, BAOAB, ABOBA, OABAO, AOBOA, and BOAOB) produces a noisy heavy-ball recursion of the same form. The stationary Gaussian noise terms are independent for some splittings and correlated only across adjacent steps for the others. Since Lemma~\ref{lem:metastability} uses the noise terms only through a Gaussian tail bound, the lower bounds transfer to all six splittings (Section~\ref{sec:BAOAB}).

Finally, Corollary~\ref{cor:LRPexplicit} shows that the cycling mechanism already occurs in one dimension at condition number $25$. Building on an example of Lessard, Recht and Packard \citep[Section~4.6 and Appendix~B]{LessardRechtPackard2016}, Section~\ref{sec:LRP} constructs an explicit $C^\infty$ potential on which the noise-free OBABO position recursion, at parameters optimal over quadratic potentials with curvature in $[1,25]$, has an attracting three-cycle. After dilation, the resulting chains have cold starts whose total variation distance from stationarity remains at least $3/8$ for a number of steps exponential in $R^2$.

\subsection{Setting and notation}\label{sec:notation}
Fix a step size $h>0$ and a friction parameter $\gamma>0$, and write
\begin{equation}\label{eq:r_eta_sigma}
 r=e^{-\gamma h/2},\qquad \beta=r^2=e^{-\gamma h},\qquad
 \sigma=(1-\beta)^{1/2}.
\end{equation}
\needspace{10\baselineskip}%
One OBABO step maps the state $(x,v)$ to the updated state $(x^+,v^+)$ through the intermediate velocities $v^{(a)},v^{(b)},v^{(c)}$:
\begin{align}
 v^{(a)}&=rv+\sigma\xi^{(1)},\label{eq:obabo1}\\
 v^{(b)}&=v^{(a)}-\frac h2\nabla U(x),\nonumber\\
 x^+&=x+hv^{(b)},\label{eq:obabo3}\\
 v^{(c)}&=v^{(b)}-\frac h2\nabla U(x^+),\nonumber\\
 v^+&=rv^{(c)}+\sigma\xi^{(2)},\label{eq:obabo5}
\end{align}
where $\xi^{(1)},\xi^{(2)}$ are independent $\cN(0,\Id_d)$ random
variables. The first and last lines are the two $\mathrm O$ half
steps, the second and fourth lines the two $\mathrm B$ half steps,
and the middle line the full $\mathrm A$ step. Iterating this
update with i.i.d.\ copies of the pair $(\xi^{(1)},\xi^{(2)})$ defines a Markov chain on the phase space $\R^d\times\R^d$, and we denote its transition kernel by $Q_{h,\gamma}^U$.

Throughout, $\mathbb N=\{0,1,2,\ldots\}$, $\abs{\cdot}$ denotes the Euclidean norm, $\Id_d$ denotes the $d\times d$ identity matrix, $\norm{M}$ denotes the operator norm of a matrix $M$ induced by the Euclidean norm, $B(x,r)$ denotes the closed Euclidean ball of radius $r\ge0$ centered at $x$, and $\preceq$ denotes the Loewner order on symmetric matrices. We write $\delta_z$ for the Dirac measure at $z$ and $\rho(A)$ for the spectral radius of a square matrix $A$. For probability measures $\nu,\widetilde\nu$ on the same Euclidean space, write
\begin{equation*}
 \norm{\nu-\widetilde\nu}_{\TV}
 =\sup_A\abs{\nu(A)-\widetilde\nu(A)},
 \qquad
 W_p(\nu,\widetilde\nu)^p
 =\inf_{\Gamma\in\Pi(\nu,\widetilde\nu)}
   \int\abs{z-\widetilde z}^p\,\Gamma(dz,d\widetilde z),
\end{equation*}
for $p\in\{1,2\}$, where the supremum runs over Borel sets $A$ and $\Pi(\nu,\widetilde\nu)$ denotes the set of couplings of $\nu$ and $\widetilde\nu$, that is, probability measures on the product space with first marginal $\nu$ and second marginal $\widetilde\nu$. For a Markov kernel $Q$ with invariant law $\pi$ and an initial state $z$, define the mixing time of $Q$ from the state $z$ up to accuracy $\eps>0$ by
\begin{equation*}
 t_{\mathrm{mix}}(z,\eps;Q,\pi)
 =\inf\left\{n\ge0:\norm{\delta_zQ^n-\pi}_{\TV}\le\eps\right\},
\end{equation*}
with the convention $\inf\varnothing=\infty$. Invariant laws of chains are denoted by $\pi$, possibly with a subscript identifying the target; for the splitting schemes studied here they differ in general from the measure $\mu$ in \eqref{eq:gibbs}. We refer to bounds starting from a deterministic state $z$ as cold-start bounds. For every fixed $\eps\in(0,1)$, the supremum of $t_{\mathrm{mix}}(z,\eps;Q,\pi)$ over $z$ is infinite even for an Ornstein-Uhlenbeck chain, so a cold-start bound must retain quantitative dependence on the initial state.

Because the bounds ruled out below are asserted for every pair of
curvature bounds $(K,L)$, it suffices to disprove them on the
normalized subfamily $K=1$, $L=\kappa$. For $\kappa>1$ and $d\ge1$,
define the class of $C^\infty$ potentials on $\R^d$ whose Hessian
eigenvalues lie in $[1,\kappa]$ at every point,
\begin{equation}\label{eq:classUkappa}
 \cU_\kappa^d
 =\left\{U\in C^\infty(\R^d):\ \Id_d\preceq\nabla^2U(x)\preceq\kappa\Id_d
 \text{ for every }x\right\}.
\end{equation}
The $C^\infty$ requirement rules out limited regularity as a
source of the lower bounds; Proposition~\ref{prop:smoothcycle}
shows that the piecewise-quadratic cycling construction of
Goujaud, Taylor and Dieuleveut can be realized within this class.

A \emph{fixed-parameter tuning rule} assigns to each triple $(K,L,d)$ with $0<K<L$ and $d\ge1$ a step size and friction
\begin{equation*}
 \mathfrak t(K,L,d)=(h_{K,L,d},\gamma_{K,L,d})\in(0,\infty)^2,
\end{equation*}
independently of the particular potential, the current state, the iteration number, and the realized noise. It is \emph{numerically stable} if
\begin{equation}\label{eq:stabletuningintro}
 0<h_{K,L,d}<\frac{2}{\sqrt L}
 \qquad\text{for every }0<K<L\text{ and }d\ge1.
\end{equation}
For a given tuning rule, on the normalized dimension-two subfamily
we write
\begin{equation*}
 h_\kappa:=h_{1,\kappa,2},\qquad \gamma_\kappa:=\gamma_{1,\kappa,2}.
\end{equation*}

This normalization is consistent with the natural scaling of
the OBABO chain. Indeed, if
\begin{equation*}
K\,\Id_d\preceq\nabla^2U(x)\preceq L\,\Id_d
\qquad\text{for every }x,
\end{equation*}
then $V:=U(\cdot/\sqrt K)$ belongs to $\cU_\kappa^d$, where $\kappa=L/K$. Under the change of variables
\begin{equation*}
(x,v)\longmapsto(\sqrt K\,x,v),
\end{equation*}
the OBABO chain for $U$ with parameters $(h,\gamma)$ becomes the OBABO chain for $V$ with parameters $(\sqrt K\,h,\gamma/\sqrt K)$. Total variation distances are unchanged under this bijection. The Euclidean Wasserstein distance is generally altered by this anisotropic change of variables, but no Wasserstein invariance is needed for the reduction above.

Section~\ref{sec:HB} shows that \eqref{eq:stabletuningintro} is exactly the condition under which the OBABO iteration matrix is Schur stable for every quadratic potential with Hessian eigenvalues in $[K,L]$. If \eqref{eq:stabletuningintro} fails, then for the quadratic potential $U(x)=\tfrac L2\abs x^2$ the OBABO chain admits no invariant probability law (Lemma~\ref{lem:quadratic-stability}). We therefore restrict attention to numerically stable tuning rules.

\subsection{Main results}\label{sec:intro-main}
Our lower bounds follow from a dichotomy that holds for each numerically stable OBABO tuning. To state it, let
\begin{equation*}
 C_{\mathrm{GTD}}=(3+\sqrt5)^2
\end{equation*}
denote the numerical threshold required by the cycling results of
Goujaud, Taylor and Dieuleveut
\citep[Appendix~B]{GoujaudTaylorDieuleveut2025}. Choose
$C_\star>C_{\mathrm{GTD}}$; it determines the threshold $q_\kappa$
used to distinguish the quadratic and cycling cases. For
$\kappa\ge2C_\star$ and an OBABO parameter pair $(h,\gamma)$, define
\begin{equation}\label{eq:hbparamsintro}
 \beta=e^{-\gamma h},
 \qquad
 s=\frac{h^2}{2}(1+\beta),
 \qquad
 q_\kappa=\frac{1-C_\star/\kappa}{1+C_\star/\kappa}.
\end{equation}
Here $\beta$ and $s$ are the corresponding heavy-ball momentum and
step size.
For $\lambda\in[1,\kappa]$, define
\begin{equation}\label{eq:A_lambda_intro}
 A_\lambda(s,\beta)
 =\begin{pmatrix}1+\beta-s\lambda&\;-\beta\\1&\;0\end{pmatrix},
 \qquad
 \rhoq(s,\beta;\kappa)=\max_{\lambda\in[1,\kappa]}\rho\bigl(A_\lambda(s,\beta)\bigr).
\end{equation}
Here $A_\lambda(s,\beta)$ is the companion matrix for the heavy-ball
position recursion on the quadratic potential $\lambda\abs{x}^2/2$, and
$\rhoq(s,\beta;\kappa)$ is its worst-case spectral radius over
$\lambda\in[1,\kappa]$.

\needspace{23\baselineskip}%
\begin{theorem}[Non-acceleration dichotomy]\label{thm:dichotomy}
Fix $C_\star>C_{\mathrm{GTD}}$ and $\kappa\ge2C_\star$. Let $h,\gamma>0$ satisfy $h<2/\sqrt\kappa$, and let $s$, $\beta$, and $q_\kappa$ be defined as above. Then at least one of the following two cases holds.

\begin{enumerate}
\item[\rm(Q)] There is $\lambda\in[1,\kappa]$ such that, for the
two-dimensional quadratic potential
$U_\lambda(x)=\lambda\abs{x}^2/2$, two initial states
$z,\widetilde z\in\R^4$ satisfy
\begin{equation}\label{eq:quadrootrateintro}
 \lim_{n\to\infty}
 \norm{\delta_z(Q_{h,\gamma}^{U_\lambda})^n-
       \delta_{\widetilde z}(Q_{h,\gamma}^{U_\lambda})^n}_{\TV}^{1/n}
 =\rhoq(s,\beta;\kappa)\ge q_\kappa.
\end{equation}

\item[\rm(C)] There exist $U\in\cU_\kappa^2$ and constants $c_0,C_0>0$ such that, for every $R\ge1$, the following holds. The dilated potential $U_R(x)=R^2U(x/R)$ also belongs to $\cU_\kappa^2$, the kernel $Q_{h,\gamma}^{U_R}$ has a unique invariant law $\pi_R$, and there is an initial state $z_R\in\R^4$ satisfying
\begin{equation}\label{eq:cyclemomentintro}
 W_2(\delta_{z_R},\pi_R)\le C_0(1+R),
\end{equation}
while
\begin{equation}\label{eq:cyclemetaintro}
 \norm{\delta_{z_R}(Q_{h,\gamma}^{U_R})^n-\pi_R}_{\TV}\ge\frac38,
 \qquad
 0\le n\le\left\lfloor\frac1{16}e^{c_0R^2}\right\rfloor.
\end{equation}
\end{enumerate}
\end{theorem}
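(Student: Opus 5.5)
The proof splits according to whether $\rhoq(s,\beta;\kappa)\ge q_\kappa$. The first step is to eliminate the velocity from \eqref{eq:obabo1}--\eqref{eq:obabo5}. Combining two consecutive steps gives an exact noisy heavy-ball recursion
\begin{equation*}
 x_{k+1}=x_k-s\nabla U(x_k)+\beta(x_k-x_{k-1})+h\,\zeta_{k},
\end{equation*}
with $s=\tfrac{h^2}{2}(1+\beta)$. The term $\zeta_k$ is a centered Gaussian built from $\xi^{(2)}_{k-1}$ (damped by $r$) and $\xi^{(1)}_k$, and its covariance is a fixed multiple of $\Id_d$. The derivation uses $x_k-x_{k-1}=hv^{(b)}_{k-1}$ and the identity $v_k=r(v^{(b)}_{k-1}-\tfrac h2\nabla U(x_k))+\sigma\xi^{(2)}$. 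So the position process is heavy ball plus Gaussian noise, and the noise terms are independent across steps.

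\emph{Case (Q): $\rhoq\ge q_\kappa$.} Pick $\lambda^\ast\in[1,\kappa]$ attaining the maximum in \eqref{eq:A_lambda_intro}, which exists by continuity. For $U_{\lambda^\ast}$ the OBABO chain is a Gaussian AR(1) process $z_{n+1}=Mz_n+\text{noise}$ on $\R^4$. Its iteration matrix $M$ decomposes over the coordinates of $\R^2$. In each coordinate, the $2\times2$ block in $(x,v)$ is similar to $A_{\lambda^\ast}(s,\beta)$, via the change of variables $(x_k,v_k)\leftrightarrow(x_k,x_{k-1})$, which is invertible because $h>0$. Hence $\rho(M)=\rho(A_{\lambda^\ast})=\rhoq$. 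Stability $h<2/\sqrt\kappa$ gives $\rho(M)<1$, so the noise covariance is nondegenerate after two steps. I then apply Lemma~\ref{lem:gaussianTVrate}. The laws $\delta_zQ^n$ and $\delta_{\widetilde z}Q^n$ are Gaussians with the same covariance $\Sigma_n\to\Sigma_\infty\succ0$ and means differing by $M^n(z-\widetilde z)$. Their TV distance is therefore comparable to $|\Sigma_\infty^{-1/2}M^n(z-\widetilde z)|$ when this is small. Choosing $z-\widetilde z$ in the real span of a dominant (generalized) eigenvector gives the $n$-th root limit $\rho(M)$, which yields \eqref{eq:quadrootrateintro}.

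\emph{Case (C): $\rhoq<q_\kappa$.} Here I invoke the strict cycling result, Lemma~\ref{lem:interiorcycle}. It places $(s,\beta)$ strictly inside a roots-of-unity cycling region. Together with Proposition~\ref{prop:smoothcycle}, this gives $U\in\cU_\kappa^2$ on which the deterministic heavy ball has an attracting $p$-cycle with positive attraction radius $\delta$. Dilation preserves the Hessian, since $\nabla^2U_R(x)=\nabla^2U(x/R)$, so $U_R\in\cU_\kappa^2$. Rescaling $y=x/R$ turns the noisy recursion for $U_R$ into the one for $U$ with noise $h\zeta_k/R$. Uniqueness of $\pi_R$ follows from strong convexity and the contraction estimates of Leimkuhler, Paulin and Whalley, or from minorization and a Lyapunov function.

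Next I apply Lemma~\ref{lem:metastability}. A Gaussian tail bound with a union bound over steps shows that, with probability at least $1-ne^{-c R^2}$, the rescaled chain stays in the $\delta$-neighborhood of the cycle and tracks its phase. Two chains started at consecutive cycle points $z_R$ and $\widetilde z_R=$ the next cycle point therefore occupy disjoint neighborhoods at every time $n\le\tfrac1{16}e^{c_0R^2}$. Theorem~\ref{thm:transfer} then converts this separation into \eqref{eq:cyclemetaintro}. Heuristically, $\pi_R$ is invariant under the cyclic shift in the metastable sense, so it gives each phase neighborhood mass at most about $1/p\le1/2$. Formally, if both $\delta_{z_R}Q^n$ and $\delta_{\widetilde z_R}Q^n$ were within $3/8$ of $\pi_R$, the triangle inequality would contradict their separation of nearly $1$, up to a $\tfrac1{16}$-type failure probability. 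This also lets me choose whichever of the two starts works. For \eqref{eq:cyclemomentintro}, $|z_R|=O(R)$, and $\pi_R$ has second moment $O(1+R^2)$: it concentrates at scale $O(\sqrt d)$ around a point of size at most $O(R)$, namely the minimizer of $U_R$, which is $R$ times that of $U$.

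The main obstacle is Case (C). Specifically, one must pass from the existence of a cycle to an attracting cycle with a uniform attraction radius, via Lemma~\ref{lem:interiorcycle}, and control the adjacent-step correlation of $\zeta_k$ in the tail argument. I would handle the correlation by working in the lifted state $(x_k,x_{k-1})$, on which the noise is a linear image of i.i.d.\ Gaussians.
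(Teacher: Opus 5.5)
Your architecture is the paper's. You split on whether $\rhoq(s,\beta;\kappa)\ge q_\kappa$. In case (Q) you apply Lemma~\ref{lem:gaussianTVrate} to the OBABO chain on $U_\lambda$ at a maximizing $\lambda$; your similarity $(x_k,v_k)\leftrightarrow(x_k,x_{k-1})$ is a valid substitute for the paper's trace/determinant comparison. In case (C) you combine Lemma~\ref{lem:interiorcycle}, Proposition~\ref{prop:smoothcycle}, Lemma~\ref{lem:metastability}, and the triangle inequality through $\pi_R$ as in Theorem~\ref{thm:transfer}. Case (Q) is fine with one correction: nondegeneracy of the noise does not follow from $\rho(M)<1$. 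It holds already after one step, because $(\xi^{(1)},\xi^{(2)})\mapsto(x^+,v^+)$ is triangular with diagonal blocks $h\sigma\Id$ and $\sigma\Id$.

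The gap is in the invariant-law part of case (C), where both justifications you offer fail.

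\emph{Uniqueness.} It cannot come from the Leimkuhler--Paulin--Whalley contraction. Their step-size condition $h<(1-e^{-\gamma h})/(2\sqrt\kappa)$ is much stronger than $h<2/\sqrt\kappa$. Moreover, in case (C) no synchronous contraction is possible at all. Such an estimate is pathwise, so it would make the noise-free OBABO map on $U_R$ contractive. But that map carries the lifted $m$-cycle, whose consecutive states stay at position distance at least $d_\circ R$ forever.

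\emph{Second moment.} Your heuristic is unfounded. $U_R$ has at least two attractors of the heavy-ball map: the minimizer, which is linearly stable at every stable tuning, and the cycle at radius $R$. So $\pi_R$ need not concentrate near the minimizer.

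\emph{What is missing.} You need the global decomposition \eqref{eq:boundedremainder-general}, $\nabla U(x)=x+w(x)$ with $\sup\abs{w}\le b_\ast$. This is exactly hypothesis \eqref{eq:transfer-bounded} of Theorem~\ref{thm:transfer} with $\lambda_\ast=1$, and you never check it. It makes the position-pair chain a Schur-stable linear recursion with matrix $\mathsf A_1$ plus a remainder bounded by $sb_\ast R$. The quadratic form solving the discrete Lyapunov equation then gives $\E[\cV(S_{k+1})\mid S_k]\le(1-\alpha)\cV(S_k)+KR^2$. This yields existence via Krylov--Bogoliubov and the $O(R^2)$ second moment, and after lifting to phase space it gives \eqref{eq:cyclemomentintro}. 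Uniqueness then comes from the everywhere positive, jointly continuous one-step density (strong Feller plus irreducibility), not from any contraction.

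\emph{Choice of start.} The statement requires a single $z_R$ valid for all $0\le n\le N_R$. Picking ``whichever start works'' separately at each $n$ does not provide one. Pick the start at $n=N_R$, where the pair separation is at least $3/4$. Then extend to earlier times by monotonicity of $n\mapsto\norm{\delta_{z_R}(Q_{h,\gamma}^{U_R})^n-\pi_R}_{\TV}$.

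Two smaller points. First, the OBABO noises are independent, since $\zeta_{k+1}$ uses the disjoint pair $(\xi_k^{(2)},\xi_{k+1}^{(1)})$. So there is no adjacent-step correlation to handle, and the union bound would not need independence anyway. Second, the initial velocities must be chosen as in \eqref{eq:cyclevelocity-general}, so that the auxiliary $X_{-1}$ is the preceding cycle point and $E_0=0$.
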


The proof of Theorem~\ref{thm:dichotomy} is given in Section~\ref{sec:proofmain}.

By \eqref{eq:quadrootrateintro}, case \emph{(Q)} produces two initial states whose asymptotic total variation contraction factor satisfies $\rhoq(s,\beta;\kappa)\ge q_\kappa$. Consequently,
\begin{equation*}
1-\rhoq(s,\beta;\kappa)\;\le\;1-q_\kappa\;=\;\frac{2C_\star}{\kappa+C_\star}\;\le\;\frac{2C_\star}{\kappa}.
\end{equation*}
For comparison, under the optimal quadratic tuning the worst-case cold-start asymptotic total variation contraction factor over Gaussian targets with curvature in $[1,\kappa]$ is
\begin{equation*}
\rho_\ast(\kappa)
=\min_{(s,\beta)}\max_{\lambda\in[1,\kappa]}
\rho\bigl(A_\lambda(s,\beta)\bigr)
=\frac{\sqrt\kappa-1}{\sqrt\kappa+1},
\qquad
1-\rho_\ast(\kappa)=\frac{2}{\sqrt\kappa+1}.
\end{equation*}
The minimization over $(s,\beta)$ is the classical
quadratic-optimality result of Polyak \citep{Polyak1964};
Corollary~\ref{cor:optimalGaussian} gives its total variation
interpretation. Since $t\mapsto(1-t)/(1+t)$ is decreasing, $\rho_\ast(\kappa)<q_\kappa$ precisely when $\kappa>C_\star^2$. Thus, in the regime $\kappa>C_\star^2$ used in the main lower-bound arguments, the optimal quadratic tuning cannot satisfy case \emph{(Q)}, and the dichotomy yields case \emph{(C)}. When $2C_\star\le\kappa\le C_\star^2$, the optimal tuning itself satisfies case \emph{(Q)}, so case \emph{(Q)} does not by itself imply suboptimality on Gaussian targets in that regime. This finite regime does not enter the proof of the lower-bound part of Corollary~\ref{cor:sharp}.

Each case of Theorem~\ref{thm:dichotomy} yields a mixing time lower bound: one for a Gaussian target in case \emph{(Q)}, and one for the family of dilated potentials in case \emph{(C)}.

\needspace{16\baselineskip}%
\begin{corollary}[Mixing time lower bounds]\label{cor:mixing}
Assume the setting of Theorem~\ref{thm:dichotomy}.
\begin{enumerate}
\item[\rm(i)] In case \emph{(Q)}, choose $\lambda\in[1,\kappa]$ for which \eqref{eq:quadrootrateintro} holds, and let $\pi_\lambda$ be the unique invariant law of $Q_{h,\gamma}^{U_\lambda}$. Then there are initial states $z,\widetilde z\in\R^4$ and a constant $c>0$ such that, for every $\eps\in(0,c/4]$, there exists $z_\eps\in\{z,\widetilde z\}$ satisfying
\begin{equation*}
 t_{\mathrm{mix}}(z_\eps,\eps;Q_{h,\gamma}^{U_\lambda},\pi_\lambda)
 \ \ge\ \frac{\kappa}{3C_\star}\,\log\frac{c}{4\eps}-1.
\end{equation*}
\item[\rm(ii)] In case \emph{(C)}, for every $R\ge1$ and every $\eps\in(0,3/8)$,
\begin{equation*}
 t_{\mathrm{mix}}\bigl(z_R,\eps;Q_{h,\gamma}^{U_R},\pi_R\bigr)
 \ >\ \left\lfloor\tfrac1{16}e^{c_0R^2}\right\rfloor,
 \qquad
 W_2(\delta_{z_R},\pi_R)\le C_0(1+R).
\end{equation*}
\end{enumerate}
\end{corollary}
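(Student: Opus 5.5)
Both parts follow from Theorem~\ref{thm:dichotomy}: part (ii) directly, and part (i) via a triangle inequality that turns the two-point asymptotic rate into a lower bound on the distance to $\pi_\lambda$.

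For (ii), fix $\eps<3/8$. The bound \eqref{eq:cyclemetaintro} gives $\norm{\delta_{z_R}(Q_{h,\gamma}^{U_R})^n-\pi_R}_{\TV}\ge 3/8>\eps$ for every $n\le\lfloor e^{c_0R^2}/16\rfloor$. Hence no such $n$ belongs to the set defining $t_{\mathrm{mix}}$, which gives the strict inequality. The Wasserstein bound is exactly \eqref{eq:cyclemomentintro}.

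For (i), write $Q=Q_{h,\gamma}^{U_\lambda}$ and $\rho=\rhoq(s,\beta;\kappa)\ge q_\kappa$. Since $h<2/\sqrt\kappa$, Section~\ref{sec:HB} shows the chain is Schur stable, so the invariant Gaussian law $\pi_\lambda$ exists and is unique. By the triangle inequality,
\[
\norm{\delta_zQ^n-\delta_{\widetilde z}Q^n}_{\TV}\le \norm{\delta_zQ^n-\pi_\lambda}_{\TV}+\norm{\delta_{\widetilde z}Q^n-\pi_\lambda}_{\TV},
\]
so for each $n$ at least one of the two starting points is at distance at least half the left-hand side. Fix a rate $\rho'$ with $\rho^{3/2}\le\rho'<\rho$; the choice $\rho'=\rho^{3/2}$ is convenient. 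By \eqref{eq:quadrootrateintro}, the left-hand side exceeds $(\rho')^n$ for all $n\ge n_0$. Set
\[
c=\min\Bigl(1,\ \min_{0\le n<n_0}\norm{\delta_zQ^n-\delta_{\widetilde z}Q^n}_{\TV}/\rho'^n\Bigr).
\]
This minimum is positive provided the distances are nonzero for $n<n_0$. That holds because the two laws are Gaussians with the same covariance and means $A^n(z-\widetilde z)$. The mean difference stays nonzero because the phase-space iteration matrix is invertible, its determinant being a power of $\beta\ne0$ (here I assume $z\ne\widetilde z$, which is forced since the limit in \eqref{eq:quadrootrateintro} is positive). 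With this $c$, $\norm{\delta_zQ^n-\delta_{\widetilde z}Q^n}_{\TV}\ge c\rho'^n$ for every $n$.

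Now suppose $\eps\le c/4$ and let $N$ be the largest integer with $c\rho'^N/2>\eps$. The main obstacle is that the point $z_\eps$ must be a single starting point serving all $n\le N$, while the triangle inequality picks one per $n$. Monotonicity resolves this: $n\mapsto\norm{\delta_wQ^n-\pi_\lambda}_{\TV}$ is nonincreasing, since $\pi_\lambda Q=\pi_\lambda$ and TV contracts under a Markov kernel. So it suffices to apply the triangle inequality at $n=N$ alone. Choose $z_\eps\in\{z,\widetilde z\}$ with distance at least $c\rho'^N/2>\eps$ at time $N$; the distance then also exceeds $\eps$ for every $n\le N$, and hence $t_{\mathrm{mix}}(z_\eps,\eps)\ge N+1$.

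It remains to bound $N$ from below. By maximality, $N+1\ge\log(c/(2\eps))/\log(1/\rho')$, so $N\ge\log(c/(2\eps))/\log(1/\rho')-1$. Using $\log(1/\rho')=\tfrac32\log(1/\rho)$ and $\log(1/q_\kappa)=\log\frac{1+C_\star/\kappa}{1-C_\star/\kappa}$, I would bound this last quantity by $2(C_\star/\kappa)/(1-C_\star/\kappa)\le 4C_\star/\kappa$, using $\kappa\ge2C_\star$. This gives
\[
\log(1/\rho')\le \tfrac32\cdot\tfrac{4C_\star}{\kappa}=\tfrac{6C_\star}{\kappa}.
\]
Combined with $\log(c/(2\eps))\ge 2\log(c/(4\eps))$, which holds because $c/(4\eps)\ge1$, this gives $N\ge\frac{\kappa}{3C_\star}\log\frac{c}{4\eps}-1$, matching the stated constant.
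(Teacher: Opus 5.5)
\paragraph{Gap in the final constant.} Part (ii) is correct, and so is the structure of part (i): positive total variation at finitely many times via invertibility of the phase matrix, and the triangle inequality at the single time $N$ combined with monotonicity, which is also how the paper handles the choice of $z_\eps$. The error is in the last step. The inequality $\log\frac{c}{2\eps}\ge2\log\frac{c}{4\eps}$ is equivalent to $\frac{c}{4\eps}\le2$. It therefore holds only for $\eps\in[c/8,c/4]$ and fails for every smaller $\eps$, which is exactly the regime a mixing-time bound is about. What your argument actually proves is $t_{\mathrm{mix}}\ge N+1\ge\frac{\kappa}{6C_\star}\log\frac{c}{2\eps}$, and the coefficient of $\log(1/\eps)$ there is half the stated one. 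This is not just loose bookkeeping. When $\rho=q_\kappa$, which case (Q) allows, you have $\log(1/\rho')=\frac32\log(1/q_\kappa)>\frac{3C_\star}{\kappa}$, because $\log\frac{1+x}{1-x}>2x$ for $x>0$. So with $\rho'=\rho^{3/2}$, no choice of $c$ recovers the coefficient $\frac{\kappa}{3C_\star}$ for all small $\eps$.

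\paragraph{How to repair it.} Within your soft approach through the limit in \eqref{eq:quadrootrateintro}, lose less in the exponent. Take $\rho'\in[\rho^{9/8},\rho)$, which is still strictly below $\rho$ since $0<\rho<1$. Then use the sharper estimate $\log\frac{1+x}{1-x}\le\frac{2x}{1-x^2}\le\frac{8x}{3}$ for $x=C_\star/\kappa\le\frac12$. This gives $\log(1/\rho')\le\frac98\log(1/q_\kappa)\le3C_\star/\kappa$, hence $t_{\mathrm{mix}}\ge\frac{\kappa}{3C_\star}\log\frac{c}{2\eps}$, which implies the claim.

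The paper avoids the exponent loss entirely. It uses the pair from case (Q) only to deduce $\rho(M)\ge q_\kappa$ for the phase matrix $M$. It then picks a new pair with $z-\widetilde z$ along a real eigenvector, or in the real invariant plane, of an eigenvalue of modulus $\rho(M)$, so that $\abs{M^n(z-\widetilde z)}\ge c_1\rho(M)^n$ for every $n$. Combined with $\Sigma_n\preceq\Sigma_\infty$ in \eqref{eq:equalcovTV}, this gives $\norm{\delta_zQ^n-\delta_{\widetilde z}Q^n}_{\TV}\ge c\,\rho(M)^n$ for every $n$ at the exact rate. The bound $\log(1/q_\kappa)\le3C_\star/\kappa$ then finishes the proof.
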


The proof of Corollary~\ref{cor:mixing} is given in Section~\ref{sec:proofmain}.

In particular, these results rule out a fixed-parameter cold-start bound with convergence rate of order $\kappa^{-1/2}$. Corollary~\ref{cor:sharp}(ii) below gives the stronger conclusion that every rate of larger order than $\kappa^{-1}$ is impossible within this class of bounds. These lower bounds are complemented by a cold-start upper bound at the diffusive scale (Theorem~\ref{thm:upper}).

\needspace{18\baselineskip}%
\begin{corollary}[Sharpness of the diffusive scale]\label{cor:sharp}
\begin{enumerate}
\item[\rm(i)] For the tuning
\begin{equation*}
 h=\frac1{4\sqrt\kappa},
 \qquad
 \gamma=4\sqrt\kappa,
 \qquad
 Q=Q_{1/(4\sqrt\kappa),\,4\sqrt\kappa}^{U},
\end{equation*}
there is a universal
constant $C_1<\infty$ such that, for every $\kappa>1$, $d\ge1$,
$U\in\cU_\kappa^d$, every initial state $z$, and every
$\eps\in(0,1)$,
\begin{equation*}
 t_{\mathrm{mix}}(z,\eps;Q,\pi)
 \le C_1\kappa\log\!\left(\frac{C_1\kappa(1+W_2(\delta_z,\pi))}{\eps}\right),
\end{equation*}
where $\pi$ is the unique invariant law of $Q$.
\item[\rm(ii)] Let $\mathfrak t$
be any numerically
stable fixed-parameter OBABO tuning rule, let $C,c>0$ and $a,b\ge0$
be constants, let $\tau:(1,\infty)\to(0,\infty)$ satisfy
$\tau(\kappa)=o(\kappa)$ as $\kappa\to\infty$, and let
$\kappa_0>1$. Then there exist $\kappa\ge\kappa_0$ and
$U\in\cU_\kappa^2$ such that $Q_{h_\kappa,\gamma_\kappa}^U$ has a
unique invariant law $\pi$, and there are $z\in\R^4$ and
$n\in\mathbb N$ for which
\begin{equation}\label{eq:falseuniform}
 \norm{\delta_z(Q_{h_\kappa,\gamma_\kappa}^U)^n-\pi}_{\TV}
 >C\kappa^a\bigl(1+W_2(\delta_z,\pi)\bigr)^b
   \exp\!\left(-\frac{cn}{\tau(\kappa)}\right).
\end{equation}
In particular, no such bound holds with $\tau(\kappa)=\kappa^\alpha$
for any $\alpha<1$.
\end{enumerate}
\end{corollary}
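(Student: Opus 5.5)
Part (i) is a direct consequence of Theorem~\ref{thm:upper}, which combines the contraction estimates of Leimkuhler, Paulin and Whalley with the Wasserstein-to-total-variation regularization of \citep{BouRabeeCoxSchieven2026}. I would specialize that theorem to $h=1/(4\sqrt\kappa)$ and $\gamma=4\sqrt\kappa$, so that $\gamma h=1$. The first step is to check that this pair lies in the admissible parameter region of the synchronous-coupling estimate. That estimate yields a $W_2$ contraction factor $1-c/\kappa$ per step in a twisted norm equivalent to the Euclidean one, with equivalence constants uniform in $\kappa$. After $n$ steps this gives
\[
W_2(\delta_zQ^n,\pi)\le C(1-c/\kappa)^nW_2(\delta_z,\pi).
\]
Next, one step, or a bounded number of steps, of the regularization estimate converts $W_2$ into total variation at the cost of a factor polynomial in $\kappa$, coming from $1/(h\sqrt{\text{noise}})$-type constants. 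Solving $C\kappa^{p}(1-c/\kappa)^nW_2(\delta_z,\pi)\le\eps$ for $n$ gives the stated bound. The logarithmic form absorbs the polynomial prefactor, provided the constant $1$ in $1+W_2$ covers the regularization offsets. Uniqueness of $\pi$ follows from the contraction.

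For part (ii), I argue by contradiction. Suppose the bound \eqref{eq:falseuniform} failed for every $\kappa\ge\kappa_0$ and $U\in\cU_\kappa^2$. Because $\tau=o(\kappa)$, I can choose $\kappa\ge\max(\kappa_0,2C_\star,C_\star^2+1)$ so large that $\tau(\kappa)\le c\kappa/(4C_\star)$, say. I apply Theorem~\ref{thm:dichotomy} with $(h_\kappa,\gamma_\kappa)$; numerical stability gives $h_\kappa<2/\sqrt\kappa$.

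In case (Q), take the Gaussian target $U_\lambda\in\cU_\kappa^2$ and the states $z,\widetilde z$. By the triangle inequality, the assumed bound applied to both states gives
\[
\norm{\delta_zQ^n-\delta_{\widetilde z}Q^n}_{\TV}\le 2C\kappa^a M e^{-cn/\tau(\kappa)},
\]
where $M$ is the larger of the two $W_2$ factors. Taking $n$-th roots and letting $n\to\infty$ yields $\rhoq\le e^{-c/\tau(\kappa)}$. Combined with $\rhoq\ge q_\kappa$, this gives $-\log q_\kappa\ge c/\tau(\kappa)$. Since $-\log q_\kappa\sim 2C_\star/\kappa$, this forces $\tau(\kappa)\gtrsim c\kappa/(2C_\star)$, contradicting the choice of $\kappa$.

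In case (C), fix $\kappa$ and let $R\to\infty$. Take $n=\lfloor e^{c_0R^2}/16\rfloor$. The assumed bound gives
\[
\tfrac38\le C\kappa^a\bigl(1+C_0(1+R)\bigr)^b\exp\bigl(-cn/\tau(\kappa)\bigr).
\]
The right side tends to $0$ as $R\to\infty$, because the exponential decay in $n\approx e^{c_0R^2}$ beats the polynomial growth in $R$. This is a contradiction. Hence in either case some $U$, $z$, $n$ violate the bound, and each choice comes with a unique invariant law as stated. The final claim follows since $\kappa^\alpha=o(\kappa)$ for $\alpha<1$.

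The main obstacle is part (i). The parameter conditions of \citep{LeimkuhlerPaulinWhalley2024} must actually admit $\gamma h=1$ with $h\sqrt\kappa=1/4$, and the regularization constant must be only polynomial in $\kappa$ and $d$-free, or at worst enter logarithmically. I would first verify these conditions via Theorem~\ref{thm:upper} as stated, and adjust constants there if needed. Part (ii) is bookkeeping. The one subtlety is that the limit in case (Q) concerns the difference of two chains rather than distance to $\pi$, which is handled by the triangle inequality above.
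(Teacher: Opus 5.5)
Your proposal is correct and matches the paper's proof. Part (i) is obtained by solving the bound of Theorem~\ref{thm:upper} for $n$; there the twisted-norm equivalence constant is of order $\sqrt\kappa$ rather than uniform in $\kappa$, but as you anticipate this only enters the logarithm. Part (ii) is the same contradiction argument through Theorem~\ref{thm:dichotomy}, with case (Q) handled by the triangle inequality and $n$th roots, and case (C) by letting $R\to\infty$ at time $\lfloor e^{c_0R^2}/16\rfloor$. Your margin $\tau(\kappa)\le c\kappa/(4C_\star)$, combined with $-\kappa\log q_\kappa\to2C_\star$, plays the role of the paper's condition $c\kappa/\tau(\kappa)\ge5C_\star$ in guaranteeing $q_\kappa>e^{-c/\tau(\kappa)}$.
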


The proof of Corollary~\ref{cor:sharp} is given in Section~\ref{sec:upper}.

For every $\kappa\ge2C_\star$, the same dichotomy applies to each of the six standard Strang splittings: any parameter choice satisfying the corresponding stability condition yields either a Gaussian target on which the chain mixes no faster than at a rate of order $\kappa^{-1}$, or a family of dilated targets with exponentially long metastability (Theorems~\ref{thm:dichotomy-BAOAB} and~\ref{thm:palindromic}).

The lower bounds above rule out cold-start bounds of the form \eqref{eq:falseuniform} for fixed-parameter tunings. An accelerated bound would therefore have to fall outside this class. Possible routes include warm starts, target-dependent or adaptive tuning, randomized time integrators, and modified dynamics. Section~\ref{sec:discussion} discusses these possibilities.

\needspace{6\baselineskip}%
\begin{remark}
As in Goujaud, Taylor and Dieuleveut
\citep{GoujaudTaylorDieuleveut2025}, the counterexamples are
two-dimensional, and the explicit instance of
Corollary~\ref{cor:LRPexplicit} below is one-dimensional, so
non-acceleration does not require high dimension.
\end{remark}

The general dichotomy produces two-dimensional examples for
sufficiently large condition numbers. The following explicit result
shows that the cycling mechanism already occurs in one dimension at
condition number $25$: at the tuning optimal over quadratic
potentials with curvature in $[1,25]$, the OBABO chain is metastable
on an explicit smooth strongly convex potential.

\needspace{19\baselineskip}%
\begin{corollary}[Explicit one-dimensional instance]
\label{cor:LRPexplicit}
Set
\begin{equation}\label{eq:criticalparams}
 h_\star=\frac{2}{\sqrt{26}},\qquad
 \beta_\star=\frac49,\qquad
 \gamma_\star=-\frac1{h_\star}\log\frac49.
\end{equation}
There exist a potential $U\in C^\infty(\R)$ satisfying
$1\le U''\le25$ and a constant $C>0$ such that the following holds
for every $R\ge1$. Let
\[
 U_R(x)=R^2U(x/R),
 \qquad
 Q_R=Q_{h_\star,\gamma_\star}^{U_R}.
\]
Then $U_R\in\cU_{25}^1$, $Q_R$ has a unique invariant law $\pi_R$, and at least one of
the two explicit initial states defined in Section~\ref{sec:LRP},
denoted by $z_R$, satisfies
\begin{equation}\label{eq:LRPexplicitbound}
 W_2(\delta_{z_R},\pi_R)\le CR,
 \qquad
 \norm{\delta_{z_R}Q_R^n-\pi_R}_{\TV}\ge\frac38,
 \qquad
 0\le n\le
 \left\lfloor\frac1{16}e^{R^2/8000}\right\rfloor.
\end{equation}
\end{corollary}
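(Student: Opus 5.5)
We will prove Corollary~\ref{cor:LRPexplicit} by running the cycling branch of the argument by hand for the explicit parameters, without invoking the general threshold $C_\star$.

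\textbf{Step 1: parameters and the heavy-ball recursion.} First check that $(h_\star,\beta_\star)$ corresponds to Polyak's optimal heavy-ball tuning for curvature in $[1,25]$. With $\kappa=25$, Polyak's parameters are
\[
\beta=\bigl((\sqrt\kappa-1)/(\sqrt\kappa+1)\bigr)^2=(4/6)^2=4/9,
\qquad
s=4/(\sqrt\kappa+1)^2=1/9 .
\]
From \eqref{eq:hbparamsintro}, $s=\tfrac{h^2}{2}(1+\beta)=\tfrac{2}{26}\cdot\tfrac{13}{9}=1/9$, so these match. Also $h_\star<2/5$, so the tuning is numerically stable. Eliminating velocity from \eqref{eq:obabo1}--\eqref{eq:obabo5} gives the exact noisy heavy-ball recursion
\[
x_{n+1}=x_n+\beta(x_n-x_{n-1})-s\nabla U(x_n)+\zeta_n ,
\]
where the $\zeta_n$ are stationary Gaussian terms, correlated at most across adjacent steps, with variance of order $h^2$. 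Under the dilation $U_R=R^2U(\cdot/R)$ and the rescaling $y=x/R$, this becomes the same recursion for $U$ with noise of size $O(1/R)$. Curvature bounds are preserved, since $U_R''(x)=U''(x/R)$.

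\textbf{Step 2: the explicit potential and an attracting three-cycle.} Following Lessard, Recht and Packard \citep[Section~4.6 and Appendix~B]{LessardRechtPackard2016}, take their piecewise-quadratic $U$ with $U''\in\{1,25\}$, on which heavy ball at $(s,\beta)=(1/9,4/9)$ has a period-three orbit $y_1,y_2,y_3$. Each $y_i$ lies in the interior of a quadratic piece. Smooth $U$ by mollifying $U''$ near the breakpoints, away from the cycle points, keeping $1\le U''\le25$; this is the route of Proposition~\ref{prop:smoothcycle}. If the mollification is localized and the cycle points lie in open pieces, the cycle survives exactly.

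Next compute the linearization of the three-step map at the cycle. It is the product $A_{\lambda_3}A_{\lambda_2}A_{\lambda_1}$ of companion matrices \eqref{eq:A_lambda_intro}, with $\lambda_i\in\{1,25\}$ the local curvatures. Verify by explicit arithmetic that its spectral radius is strictly below $1$. \textbf{This is the main obstacle.} At Polyak's parameters the endpoint matrices $A_1$ and $A_{25}$ each have a repeated eigenvalue of modulus $2/3$, so they are not normal and strict attraction is not automatic. It must come from the specific product. If the LRP cycle is only neutral, I would perturb the breakpoints, which remains legitimate since only $U$ and not the parameters is fixed. From the resulting spectral gap, choose an adapted norm in which the three-step map is a contraction on a neighbourhood of radius $\delta$ around each cycle point. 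The constant $1/8000$ in \eqref{eq:LRPexplicitbound} should come out as roughly $\delta^2$ divided by the noise variance constant.

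\textbf{Step 3: metastability and transfer.} Apply Lemma~\ref{lem:metastability} to the rescaled chain with noise of size $1/R$, through a Gaussian tail bound plus a union bound over steps. Two chains started at $R$ times consecutive cycle points remain within $R\delta$ of their respective shifted cycles, and hence separated, for $e^{R^2/8000}$ steps with probability at least $3/4$. Theorem~\ref{thm:transfer} then turns this separation into $\norm{\delta_{z}Q_R^n-\pi_R}_{\TV}\ge3/8$ for one of the two starts, over the stated range of $n$.

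\textbf{Step 4: remaining assertions.} Uniqueness of $\pi_R$ follows from strong convexity via contraction and minorization as used for Theorem~\ref{thm:upper}. The bound $W_2(\delta_{z_R},\pi_R)\le CR$ holds because $|z_R|=O(R)$ while $\pi_R$ has second moments $O(1)$ about the minimizer of $U_R$, which lies at $R$ times the minimizer of $U$.
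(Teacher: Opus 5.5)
Your outline is the paper's: the Lessard--Recht--Packard cycle, centered smoothing, dilation, a tube estimate with Gaussian tails, and the triangle inequality through $\pi_R$. The step you flag as the main obstacle is not one. All three cycle points $a=2592/1225$, $b=792/1225$, $c=-2208/1225$ lie where $U''=25$, since $a>2+\delta_0$ and $b,c<1-\delta_0$. The three-step linearization is therefore $A_{25}(1/9,4/9)^3$, with spectral radius $(2/3)^3$. Non-normality affects only constants, and no perturbation of the breakpoints is needed.

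The first genuine gap is the explicit exponent. The statement asserts the window $\lfloor\frac1{16}e^{R^2/8000}\rfloor$, and ``should come out as roughly'' does not prove it. Feeding this cycle into Lemma~\ref{lem:metastability} and Theorem~\ref{thm:transfer} with their generic constants (tube radius $aR/2$ with $a=\frac14\min\{r_0,d_\circ\}$, noise tolerance $a/(2G_0)$) gives a substantially smaller exponent. The paper gets $1/8000$ from a sharp scalar estimate, Lemma~\ref{lem:LRPcycle}:
\begin{itemize}
\item Because every local curvature equals $25$, the error obeys $e_{k+1}=-\frac43e_k-\frac49e_{k-1}+\eps_{k+1}$. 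Its impulse response $(k+1)(-2/3)^k$ has $\ell^1$ norm $\sum_{i\ge0}(i+1)(2/3)^i=9$.
\item With $\delta_0=1/100$, the $1/20$-neighbourhoods of $a,b,c$ stay inside the affine pieces of $U'$, so the per-step tolerance after dilation is $R/180$.
\item The position noise has variance at most $h_\star^2(1-\beta_\star^2)=10/81$, whence $\Pp(\abs{\zeta_j}>R/180)\le2e^{-R^2/8000}$.
\end{itemize}
The paper's second start is the origin $z_R^0=(0,0)$, a fixed point of the same recursion with the same error dynamics, rather than a neighbouring cycle point. Either choice gives disjoint tubes, but the statement refers to $z_R^{\mathrm{cyc}}$ and $z_R^0$.

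The second gap is that Step~4 is wrong as argued. The contraction behind Theorem~\ref{thm:upper} requires $h<(1-e^{-\gamma h})/(2\sqrt\kappa)$, which here reads $h<1/18$, while $h_\star\approx0.39$. In fact no synchronous-coupling contraction can hold at these parameters. The noise-free recursion for $U_R$ has two distinct attractors, the fixed point $0$ and the dilated three-cycle. Similarly, strong convexity of $U_R$ controls the Gibbs measure, not the invariant law $\pi_R$ of the discretization at this step size. $\pi_R$ may put non-negligible mass near the dilated cycle, at distance of order $R$, so ``second moments $O(1)$ about the minimizer'' is unjustified.

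What is true and suffices comes from Theorem~\ref{thm:transfer} applied with $H_j=\lambda_\ast=25$:
\begin{itemize}
\item uniqueness, from the everywhere positive, jointly continuous one-step density of $Q_R$ (strong Feller plus irreducibility);
\item $\int\abs z^2\,d\pi_R\le CR^2$, from the Lyapunov drift argument using $U_R'(x)=25x+Rw(x/R)$ with $\abs w\le24$.
\end{itemize}
Together with $\abs{z_R}=O(R)$, these give $W_2(\delta_{z_R},\pi_R)\le CR$.
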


The proof of Corollary~\ref{cor:LRPexplicit} is given in Section~\ref{sec:LRP}.

\subsection{Related work}

The gap between the order-$\kappa$ mixing time guarantee over the full smooth strongly convex class and the order-$\sqrt\kappa$ relaxation scale on Gaussian targets is not specific to OBABO. For the continuous dynamics, hypocoercivity and coupling arguments give quantitative upper bounds. Within the hypocoercive approach \citep{Villani2009,DolbeaultMouhotSchmeiser2015}, Cao, Lu and Wang \citep{CaoLuWang2023} show that an optimized friction choice at the critical scale gives an $L^2$ convergence rate of order $\sqrt K$ under convexity and a Poincar\'e inequality. The entropy bound of Lu \citep{Lu2026} described above gives the same order in relative entropy under their stated logarithmic Sobolev, regularity, and growth assumptions. Coupling arguments give Wasserstein contraction, by synchronous coupling in the strongly convex case \citep{ChengChatterjiBartlettJordan2018} and by couplings combining reflection and synchronous components beyond strong convexity \citep{EberleGuillinZimmer2019}; the resulting rates over the smooth strongly convex class are of nonaccelerated order. Non-reversible lift theory supplies a complementary limitation: lifting can improve non-asymptotic relaxation times by at most a square root \citep{EberleLorler2026}. It also provides a general framework for identifying optimal lifts.

For discretizations, synchronous couplings yield Wasserstein guarantees under strong convexity for both left-endpoint exponential integrators \citep{ChengChatterjiBartlettJordan2018,DalalyanRiouDurand2020,KimGruffazParkDurmus2026} and OBABO \citep{LeimkuhlerPaulinWhalley2024}. Schuh and Whalley \citep{SchuhWhalley2025} establish contraction for the Euler, BU and UBU schemes for non-convex potentials satisfying structural assumptions. Gouraud, Le Bris, Majka and Monmarch\'e \citep{GouraudLeBrisMajkaMonmarche2025} place unadjusted Hamiltonian Monte Carlo \citep{BouRabeeEberle2023,BouRabeeSchuh2023} and kinetic Langevin splittings in the common framework of unadjusted generalized Hamiltonian Monte Carlo. Their Gaussian analysis attains the ballistic rate through partial velocity refreshment. In the same framework, Chak and Monmarch\'e \citep{ChakMonmarche2026} prove contraction by reflection coupling for non-convex potentials with stochastic gradients.

Multi-step couplings give sharp Wasserstein-to-total-variation regularization for OBABO \citep{BouRabeeCoxSchieven2026}. Under a $\chi^2$-warm-start assumption, the shifted-composition framework of Altschuler, Chewi and Zhang \citep{AltschulerChewiZhang2026} gives a mixing guarantee with $\kappa^{5/6}$ dependence, up to logarithmic factors, for the randomized midpoint discretization of \eqref{eq:KLD}. This dependence is sublinear in $\kappa$ but falls short of the square-root scale. None of the cited results gives the $\sqrt\kappa$-guarantee for a discretization of kinetic Langevin dynamics. Appendix~\ref{app:exponential-integrator} rules out such a guarantee for the left-endpoint exponential integrator as well, by a purely Gaussian argument.

In practice, explicit curvature bounds for a given target are rarely
available. Generalized HMC may be run with one Verlet step per
iteration and without a Metropolis correction. If its partial
momentum refresh is split into two half steps, the resulting scheme
is OBABO. The tuning procedure of Hoffman and Sountsov uses an
estimate of the largest eigenvalue of an average Hessian to choose
the step size and an estimate of the largest eigenvalue of an
empirical covariance to choose the damping; on Gaussian targets,
these choices correspond to $h\asymp L^{-1/2}$ and
$\gamma\asymp K^{1/2}$ \citep{HoffmanSountsov2022}.
Corollary~\ref{cor:sharp}(ii) concerns the idealized fixed-parameter setting
in which the corresponding global curvature bounds are known
exactly. It shows that this curvature information alone does not
yield acceleration: no fixed choice of the OBABO step size and
friction gives a cold-start total variation mixing bound of order
$\sqrt\kappa$, up to logarithmic factors, over every target
satisfying the bounds.

This paper concerns unadjusted chains, specifically OBABO and the
other standard Strang splittings: the lower bounds
measure convergence of each chain to its own invariant law, so no
estimate of the asymptotic bias, that is, the discrepancy between
this invariant law and \eqref{eq:gibbs}, is required. Quantitative
bias estimates for OBABO are available in Monmarch\'e
\citep[Propositions~11 and~12]{Monmarche2021}, and Metropolis
adjustment removes the bias entirely, with ergodicity and mixing
guarantees in
\citep{BouRabeeVandenEijnden2010,BouRabeeOberdorster2024}.

\subsection{Organization of the paper}
Section~\ref{sec:HB} reinterprets the OBABO positions as a noisy heavy-ball recursion. Section~\ref{sec:GTD} restates the non-acceleration theorem of Goujaud, Taylor and Dieuleveut in normalized form and derives the strict cycling property needed here. Section~\ref{sec:proofs} proves the mixing time lower bounds. Section~\ref{sec:upper} proves a cold-start upper bound of order $\kappa$, up to a logarithmic factor, for an explicit fixed-parameter OBABO tuning, and shows that this diffusive scale is optimal: no numerically stable fixed-parameter tuning attains any scale $o(\kappa)$ (Corollary~\ref{cor:sharp}). Section~\ref{sec:BAOAB} extends the results to the remaining standard Strang splittings. Section~\ref{sec:discussion} discusses consequences, scope, and open problems. Appendix~\ref{app:exponential-integrator} proves the Gaussian non-acceleration result for the left-endpoint exponential integrator. Appendix~\ref{app:diffusion} determines the sharp worst-case asymptotic total variation rate over deterministic initial states for the kinetic Langevin diffusion on Gaussian targets.

\section{OBABO position as an exact noisy heavy-ball method}\label{sec:HB}

This section derives an exact noisy heavy-ball representation of the
OBABO position process. Proposition~\ref{prop:HB} gives the
representation. For quadratic potentials, we then compute the
associated position and phase-space matrices and characterize the
numerically stable parameter range in
Lemma~\ref{lem:quadratic-stability}.

Eliminating velocity produces a two-step recursion in the positions,
so its initialization requires a pair $(X_0,X_{-1})$. We introduce
the auxiliary position $X_{-1}$ to encode the initial velocity $V_0$.
The variable $X_{-1}$ is not an earlier position visited by the OBABO
chain; once the auxiliary variable $\xi_0^{(2)}$ of
Proposition~\ref{prop:HB} is specified, the relation below can be
read either as defining $X_{-1}$ from $(X_0,V_0)$ or as defining
$V_0$ from $(X_{-1},X_0)$.

Two initialization conventions will be useful. Taking
$\xi_0^{(2)}$ to be an independent standard Gaussian makes the
noise terms identically distributed from the first position update
onward. This convention is used later to lift an invariant law of the
$(X_{k-1},X_k)$-chain to phase space in the proof of
Theorem~\ref{thm:transfer}. For deterministic cold-start
initializations, we instead set $\xi_0^{(2)}=0$; the first noise
term in the resulting recursion then has a smaller covariance.

Recall from \eqref{eq:r_eta_sigma} that $r=e^{-\gamma h/2}$, $\beta=r^2=e^{-\gamma h}$, and $\sigma=(1-\beta)^{1/2}$.

\needspace{8\baselineskip}%
\begin{proposition}[Noisy heavy-ball representation]\label{prop:HB}
Let $(X_k,V_k)_{k\ge0}$ be the OBABO chain \eqref{eq:obabo1}-\eqref{eq:obabo5}. Let $\xi_0^{(2)}$ be a $\cN(0,\Id_d)$ variable independent of the pairs $(\xi_k^{(1)},\xi_k^{(2)})_{k\ge1}$ driving the chain. Alternatively, let $\xi_0^{(2)}$ be the zero vector. Define the auxiliary variable $X_{-1}$ by
\begin{equation}\label{eq:vpairrelation}
 V_0=r\left(\frac{X_0-X_{-1}}h-\frac h2\nabla U(X_0)\right)
      +\sigma\xi_0^{(2)}.
\end{equation}
Then
\begin{equation}\label{eq:noisyHB}
 X_{k+1}=X_k+\beta(X_k-X_{k-1})
 -s\nabla U(X_k)+\zeta_{k+1},
 \qquad
 s=\frac{h^2}{2}(1+\beta),
\end{equation}
where
\begin{equation}\label{eq:zetadef}
 \zeta_{k+1}=h\sigma\bigl(r\xi_k^{(2)}+\xi_{k+1}^{(1)}\bigr).
\end{equation}
Distinct $\zeta_k$ depend on disjoint noises and are therefore
independent centered Gaussians. If
$\xi_0^{(2)}\sim\cN(0,\Id_d)$, they are identically distributed
with covariance
\begin{equation}\label{eq:zetacov}
 \E[\zeta_k\zeta_k^\top]
 =h^2(1-\beta^2)\Id_d.
\end{equation}
If $\xi_0^{(2)}=0$, then
\[
 \zeta_1=h\sigma\xi_1^{(1)},\qquad
 \E[\zeta_1\zeta_1^\top]=h^2(1-\beta)\Id_d,
\]
while \eqref{eq:zetacov} holds for every $k\ge2$.
\end{proposition}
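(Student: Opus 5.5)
The plan is a direct algebraic elimination of the velocity. First I record a one-step identity that expresses the pre-drift velocity $v^{(b)}$ at step $k$ through the position increment. By \eqref{eq:obabo3}, $X_{k+1}=X_k+hv_k^{(b)}$, so
\[
v_k^{(b)}=\frac{X_{k+1}-X_k}{h},
\]
where the subscript $k$ refers to the step from $(X_k,V_k)$ to $(X_{k+1},V_{k+1})$. Composing the first two substeps gives
\[
v_k^{(b)}=rV_k+\sigma\xi_{k+1}^{(1)}-\frac h2\nabla U(X_k),
\]
with the noise of the step leading to time $k+1$ labelled $\xi^{(1)}_{k+1}$ and $\xi^{(2)}_{k+1}$. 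Composing the last two substeps of the previous step gives
\[
V_k=r\Bigl(\frac{X_k-X_{k-1}}h-\frac h2\nabla U(X_k)\Bigr)+\sigma\xi_k^{(2)}.
\]
For $k=0$ this last relation is exactly the definition \eqref{eq:vpairrelation} of $X_{-1}$, which is why that definition was chosen; for $k\ge1$ it follows from the scheme.

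Next I substitute the expression for $V_k$ into the formula for $v_k^{(b)}$ and multiply by $h$. Using $r^2=\beta$, this gives
\[
X_{k+1}-X_k=\beta(X_k-X_{k-1})-\frac{h^2}{2}(\beta+1)\nabla U(X_k)+h\sigma\bigl(r\xi_k^{(2)}+\xi_{k+1}^{(1)}\bigr),
\]
which is \eqref{eq:noisyHB} with $s=h^2(1+\beta)/2$ and with $\zeta_{k+1}$ as in \eqref{eq:zetadef}. I will check the indexing carefully so that $\xi^{(2)}_k$ is the final O-noise of the step producing $(X_k,V_k)$. Under this convention $\zeta_{k+1}$ uses $\xi_k^{(2)}$ and $\xi_{k+1}^{(1)}$, so distinct $k$ involve disjoint noise variables.

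For the distributional claims, independence of the $\zeta_k$ follows from this disjointness together with the mutual independence of all the $\xi$'s, including $\xi_0^{(2)}$. Each $\zeta_k$ is a linear image of Gaussians and hence a centered Gaussian. If both $\xi$'s in $\zeta_{k+1}$ are standard, its covariance is
\[
h^2\sigma^2(r^2+1)\Id_d=h^2(1-\beta)(1+\beta)\Id_d=h^2(1-\beta^2)\Id_d.
\]
If instead $\xi_0^{(2)}=0$, then $\zeta_1=h\sigma\xi_1^{(1)}$ has covariance $h^2(1-\beta)\Id_d$, while $\zeta_k$ for $k\ge2$ is unaffected.

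The argument is routine; the only point needing care is the index bookkeeping between the O-half-steps of consecutive iterations and the role of $X_{-1}$ at $k=0$.
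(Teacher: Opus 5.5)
Your proposal is correct and follows the paper's proof essentially step for step. Both use the same indexing, in which $(\xi_{k+1}^{(1)},\xi_{k+1}^{(2)})$ drive the step to time $k+1$, and the same two identities for $v^{(b)}$ and $V_k$, with the $k=0$ case supplied by \eqref{eq:vpairrelation}. Both then substitute using $r^2=\beta$ and finish with the same disjointness argument and the covariance computation $h^2\sigma^2(1+r^2)=h^2(1-\beta^2)$.
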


\begin{proof}
Index the noises so that the update from $(X_k,V_k)$ to
$(X_{k+1},V_{k+1})$ uses
$(\xi_{k+1}^{(1)},\xi_{k+1}^{(2)})$. The first
Ornstein-Uhlenbeck half-step, the first kick, and the position update
in \eqref{eq:obabo1}-\eqref{eq:obabo3} give
\begin{equation}\label{eq:xstep}
 X_{k+1}-X_k
 =h\left(rV_k+\sigma\xi_{k+1}^{(1)}
          -\frac h2\nabla U(X_k)\right).
\end{equation}
For $k\ge1$, the position update \eqref{eq:obabo3} of the step
producing $X_k$ shows that the intermediate velocity $v^{(b)}$ of
that step equals $(X_k-X_{k-1})/h$. The final kick and
Ornstein-Uhlenbeck half-step of the same step therefore give
\begin{equation}\label{eq:vfrompair}
 V_k=r\left(\frac{X_k-X_{k-1}}h
             -\frac h2\nabla U(X_k)\right)
      +\sigma\xi_k^{(2)}.
\end{equation}
For $k=0$, equation~\eqref{eq:vfrompair} is precisely the imposed
relation~\eqref{eq:vpairrelation}.
Substituting \eqref{eq:vfrompair} into \eqref{eq:xstep} and using $r^2=\beta$ gives
\begin{equation*}
 X_{k+1}-X_k
 =\beta(X_k-X_{k-1})
 -\frac{h^2}{2}(1+\beta)\nabla U(X_k)
 +h\sigma\bigl(r\xi_k^{(2)}+\xi_{k+1}^{(1)}\bigr),
\end{equation*}
which is \eqref{eq:noisyHB}-\eqref{eq:zetadef}. By \eqref{eq:zetadef}, the variable $\zeta_{k+1}$ depends only on the pair $(\xi_k^{(2)},\xi_{k+1}^{(1)})$, and these pairs share no variable for distinct $k$, so the $(\zeta_k)_{k\ge1}$ are independent centered Gaussians. When $\xi_0^{(2)}\sim\cN(0,\Id_d)$, their common covariance is $h^2\sigma^2(1+r^2)\,\Id_d$, and
\begin{equation*}
 h^2\sigma^2(1+r^2)=h^2(1-\beta)(1+\beta)=h^2(1-\beta^2),
\end{equation*}
which is \eqref{eq:zetacov}. If $\xi_0^{(2)}=0$, then
$\zeta_1=h\sigma\xi_1^{(1)}$ has covariance
$h^2(1-\beta)\Id_d$. From $\zeta_2$ onward, both Gaussian terms in
\eqref{eq:zetadef} are present, and the covariance is again given by
\eqref{eq:zetacov}.
\end{proof}

Suppressing the noise in \eqref{eq:noisyHB} gives Polyak's heavy-ball method with step size $s$ and momentum $\beta$ \citep{Polyak1964}. For the quadratic potential $U_\lambda(x)=\lambda\abs{x}^2/2$, the two-step position matrix, acting on the pair $(X_k,X_{k-1})$, is $A_\lambda(s,\beta)$ from \eqref{eq:A_lambda_intro}. The deterministic phase-space matrix, acting on $(X_k,V_k)$, is
\begin{equation}\label{eq:Mlambda}
 M_\lambda=
 \begin{pmatrix}
 1-h^2\lambda/2&hr\\
 -hr\lambda(1-h^2\lambda/4)&\beta(1-h^2\lambda/2)
 \end{pmatrix}.
\end{equation}
Direct computation gives
\begin{equation*}
 \operatorname{tr}M_\lambda=1+\beta-s\lambda,
 \qquad
 \det M_\lambda=\beta.
\end{equation*}
Both matrices therefore have characteristic polynomial
\begin{equation}\label{eq:charpoly}
 z^2-(1+\beta-s\lambda)z+\beta.
\end{equation}
In particular,
\begin{equation}\label{eq:rhosame}
 \rho(M_\lambda)=\rho(A_\lambda(s,\beta)).
\end{equation}

\begin{lemma}[Numerical stability]\label{lem:quadratic-stability}
For $\beta\in(0,1)$ and $s>0$, all matrices $A_\lambda(s,\beta)$, $\lambda\in[1,\kappa]$, are Schur stable, meaning that their spectral radii are strictly less than one, if and only if
\begin{equation}\label{eq:HBstability}
 0<s<\frac{2(1+\beta)}{\kappa}.
\end{equation}
For OBABO, using $s=h^2(1+\beta)/2$, this is equivalent to $0<h<2/\sqrt\kappa$. Moreover, if $h\ge2/\sqrt\kappa$, the OBABO chain for the potential $U_\kappa(x)=\kappa\abs x^2/2$ has no invariant probability law.
\end{lemma}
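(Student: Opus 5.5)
The plan is to use the Jury (Schur–Cohn) criterion for the quadratic characteristic polynomial \eqref{eq:charpoly}, $p_\lambda(z)=z^2-a_\lambda z+\beta$ with $a_\lambda=1+\beta-s\lambda$. A real monic quadratic $z^2-az+b$ has both roots strictly inside the unit disk if and only if $\abs b<1$ and $\abs a<1+b$. Since $\beta\in(0,1)$, the first condition always holds, so Schur stability of $A_\lambda(s,\beta)$ is equivalent to $-(1+\beta)<1+\beta-s\lambda<1+\beta$, that is, $0<s\lambda<2(1+\beta)$. The left inequality is automatic for $s>0$ and $\lambda\ge1$. The right inequality is monotone in $\lambda$, so requiring it for all $\lambda\in[1,\kappa]$ is equivalent to requiring it at $\lambda=\kappa$. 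This gives \eqref{eq:HBstability}.

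For OBABO, substitute $s=h^2(1+\beta)/2$. The factor $1+\beta>0$ cancels, and the condition becomes $h^2<4/\kappa$, i.e. $h<2/\sqrt\kappa$. Since $\beta=e^{-\gamma h}\in(0,1)$ for all $h,\gamma>0$, no further restriction arises.

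For the last claim, let $h\ge2/\sqrt\kappa$ and consider $U_\kappa$. Then $a_\kappa\le-(1+\beta)$, so $p_\kappa(-1)=1+a_\kappa+\beta\le0$. Since $p_\kappa(z)\to+\infty$ as $z\to-\infty$, there is a real root $\mu\le-1$, and $\rho(M_\kappa)\ge1$ by \eqref{eq:rhosame}. The chain is linear, $Z_{k+1}=M Z_k+\eta_{k+1}$ with $M=M_\kappa\otimes\Id_d$ acting on $(x,v)$ and i.i.d.\ Gaussian $\eta$, so it suffices to work in $d=1$ coordinatewise. Let $w$ be a real left eigenvector with $w^\top M_\kappa=\mu w^\top$, and set $Y_k=w^\top Z_k$. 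Then $Y_{k+1}=\mu Y_k+w^\top\eta_{k+1}$ with $w^\top\eta_{k+1}$ independent of $Y_k$.

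Suppose $\pi$ were invariant and $Z_0\sim\pi$. Then $Y_k$ has a stationary law $\nu$, and this law must satisfy $\nu=\nu_\mu*\mathcal N(0,\tau^2)$, where $\nu_\mu$ is the law of $\mu Y$ for $Y\sim\nu$ and $\tau^2=w^\top\operatorname{Cov}(\eta)w$.

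The main point is to show $\tau^2>0$, i.e. that the noise is nondegenerate along $w$. The OBABO noise in $(x,v)$ for one step is $(h\sigma\xi^{(1)},\,r(\ldots)\sigma\xi^{(1)}+\sigma\xi^{(2)})$. Its covariance is positive definite because $\xi^{(1)}$ and $\xi^{(2)}$ are independent and $\sigma>0$ (the map $(\xi^{(1)},\xi^{(2)})\mapsto\eta$ is triangular with nonzero diagonal). Hence $\tau^2>0$.

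Now compare characteristic functions. With $\phi$ the characteristic function of $\nu$, invariance gives $\abs{\phi(t)}=\abs{\phi(\mu t)}e^{-\tau^2t^2/2}$. Iterating backward (using $\abs\mu\ge1$, so $\mu^{-n}t$ stays defined) yields
\[
\abs{\phi(t)}\le\exp\Bigl(-\tfrac{\tau^2t^2}{2}\textstyle\sum_{j\ge1}\mu^{-2j}\Bigr)\cdot\abs{\phi(\mu^{-n}t)}.
\]
For $\abs\mu=1$ the sum diverges, which forces $\phi(t)=0$ for $t\ne0$. For $\abs\mu>1$, one iterates forward instead, $\abs{\phi(\mu^n t)}=\abs{\phi(t)}\exp\bigl(\tau^2t^2\sum_{j<n}\mu^{2j}/2\bigr)\le1$, which forces $\phi(t)=0$ for $t\ne0$. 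In both cases $\phi(t)=0$ for $t\ne0$, contradicting the continuity of $\phi$ at $0$ with $\phi(0)=1$. Hence no invariant probability law exists.
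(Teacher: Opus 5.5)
Your proof follows the paper's argument step for step: the Jury criterion reduces stability to $0<s\lambda<2(1+\beta)$; $p_\kappa(-1)\le0$ gives a real root $\mu\le-1$; you project onto a real left eigenvector; the triangular map $(\xi^{(1)},\xi^{(2)})\mapsto\eta$ makes the projected noise nondegenerate; and a characteristic-function iteration finishes. One step is wrong as written and needs a small repair.

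\textbf{The backward-iteration display.} Substituting $\mu^{-1}t$ for $t$ in $\abs{\phi(t)}=\abs{\phi(\mu t)}e^{-\tau^2t^2/2}$ and iterating gives
\[
\abs{\phi(\mu^{-n}t)}=\abs{\phi(t)}\exp\Bigl(-\tfrac{\tau^2t^2}{2}\textstyle\sum_{j=1}^n\mu^{-2j}\Bigr).
\]
Your display has the roles of $\phi(t)$ and $\phi(\mu^{-n}t)$ swapped. The inequality you wrote therefore does not follow from invariance; combined with the correct identity, it is equivalent to $\phi(t)=0$, which is what you are trying to prove.

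\textbf{Why this case matters.} The case $\abs\mu=1$ actually occurs. At $h=2/\sqrt\kappa$ one has $s\kappa=2(1+\beta)$, so $p_\kappa(z)=(z+1)(z+\beta)$ and $\mu=-1$.

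\textbf{The repair.} Drop the case split. Your forward iteration
\[
\abs{\phi(\mu^nt)}=\abs{\phi(t)}\exp\Bigl(\tfrac{\tau^2t^2}{2}\textstyle\sum_{j=0}^{n-1}\mu^{2j}\Bigr)\le1
\]
works for every $\abs\mu\ge1$: since $\mu^{2j}\ge1$, the sum is at least $n$, and so $\phi(t)=0$ for $t\ne0$. This is exactly how the paper concludes. A minor point of rigor: state explicitly that an invariant law of the $d$-dimensional chain has invariant coordinate-pair marginals for the two-dimensional recursion. This is what justifies reducing to one coordinate.
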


\begin{proof}
A $2\times2$ real matrix $M$ is Schur stable if and only if $\abs{\operatorname{tr}M}<1+\det M<2$. Since $A_\lambda(s,\beta)$ has trace $1+\beta-s\lambda$ and determinant $\beta\in(0,1)$, this condition reduces to $0<s\lambda<2(1+\beta)$. Uniformity over $\lambda\in[1,\kappa]$ gives \eqref{eq:HBstability}.

For the second assertion, let $h\ge2/\sqrt\kappa$, equivalently $s\kappa\ge2(1+\beta)$, and let $p$ denote the characteristic polynomial in \eqref{eq:charpoly} with $\lambda=\kappa$. Then
\begin{equation*}
 p(-1)=2(1+\beta)-s\kappa\le0,
\end{equation*}
while $p(z)\to\infty$ as $z\to-\infty$. Hence $p$ has a real root $z_-\le-1$.
For the potential $U_\kappa$, the $d$ coordinates of the OBABO chain evolve independently, so it suffices to consider one of them: the pair $Z_k\in\R^2$ formed by the $i$th components of $(X_k,V_k)$ satisfies the linear recursion $Z_{k+1}=M_\kappa Z_k+N\xi_{k+1}$, where $\xi_{k+1}\in\R^2$ collects the $i$th components of $(\xi_{k+1}^{(1)},\xi_{k+1}^{(2)})$ and
\begin{equation*}
 N=\begin{pmatrix}h\sigma&0\\ r\sigma(1-h^2\kappa/2)&\sigma\end{pmatrix}.
\end{equation*}
Let $w$ be a left eigenvector of $M_\kappa$ for $z_-$. The scalar sequence $Y_k=\langle w,Z_k\rangle$ satisfies $Y_{k+1}=z_-Y_k+\eta_{k+1}$ with independent centered Gaussians $\eta_{k+1}=\langle w,N\xi_{k+1}\rangle$ of variance $\omega^2=\abs{N^\top w}^2$; since $\det N=h\sigma^2>0$, the matrix $N^\top$ is invertible, so $N^\top w\ne0$ and $\omega^2>0$.

Suppose the full $d$-dimensional chain had an invariant probability law $\pi$, and start it at stationarity. The $i$th-coordinate marginal of $\pi$ is then invariant for the two-dimensional recursion above, so $Z_0$ and $Z_1$ have the same law, and hence so do $Y_0$ and $Y_1$. Since $Y_1=z_-Y_0+\eta_1$ with $\eta_1\sim\cN(0,\omega^2)$ independent of $Y_0$, their common characteristic function $\varphi$ satisfies $\varphi(t)=\E\bigl[e^{itY_1}\bigr]=\varphi(z_-t)\,e^{-\omega^2t^2/2}$ for all $t\in\R$. Iterating this identity $n$ times gives
\begin{equation*}
 \varphi(t)
 =\varphi(z_-^nt)\,
  \exp\Bigl(-\frac{\omega^2t^2}{2}\sum_{j=0}^{n-1}z_-^{2j}\Bigr).
\end{equation*}
Since $\abs\varphi\le1$ and $z_-^{2j}\ge1$ for every $j$, it follows that $\abs{\varphi(t)}\le e^{-n\omega^2t^2/2}$ for every $n\ge1$, so $\varphi(t)=0$ for $t\ne0$. Since $\varphi(0)=1$, this contradicts the continuity of $\varphi$ at $0$.
\end{proof}

\section{The deterministic heavy-ball dichotomy}\label{sec:GTD}

This section concerns only the deterministic heavy-ball method, the noise-free recursion obtained from \eqref{eq:noisyHB} by removing the noise variables. In Theorem~\ref{thm:dichotomy}, case \emph{(C)} holds when
\begin{equation}\label{eq:acceleratedquad}
 \rhoq(s,\beta;\kappa)<q_\kappa.
\end{equation}
Under \eqref{eq:acceleratedquad} and the remaining hypotheses of Lemma~\ref{lem:interiorcycle}, the results below produce a potential $U\in\cU_\kappa^2$ together with $m$ distinct points that satisfy the heavy-ball recursion for $U$ and whose linearization is exponentially stable. Section~\ref{sec:proofs} combines these points with small-noise estimates for the OBABO chain to prove case \emph{(C)}.

The starting point is the cycling result of Goujaud, Taylor and Dieuleveut \citep{GoujaudTaylorDieuleveut2025}, normalized to the curvature bounds $K=1$ and $L=\kappa$. Theorem~\ref{thm:GTD} states that the heavy-ball recursion advances through the vertices of a regular $m$-sided polygon on some $1$-strongly convex $C^1$ potential function with $\kappa$-Lipschitz gradient if and only if an explicit quadratic in $s$ is nonpositive. When this condition holds, \eqref{eq:GTDpsi} gives an explicit piecewise-quadratic potential function. Strict inequality is needed below because it gives positive-radius neighborhoods of the cycle vertices on which the gradient is affine. Lemma~\ref{lem:interiorcycle} proves this strict inequality for one period $m$. Proposition~\ref{prop:smoothcycle} then mollifies the resulting potential function without changing the dynamics near the cycle; the mollified potential belongs to $\cU_\kappa^2$, has the same cycle, and, by numerical stability, has an exponentially stable linearization along it.

To avoid conflict with the Langevin friction, we use $s$ for the heavy-ball step size denoted $\gamma$ in Goujaud, Taylor and Dieuleveut \citep{GoujaudTaylorDieuleveut2025}. Their results are stated for the class of $\mu$-strongly convex $C^1$ functions with $L$-Lipschitz gradient and in terms of the inverse condition number. Throughout this section, the heavy-ball parameters $(s,\beta)$ are fixed and satisfy the numerical stability condition \eqref{eq:HBstability}.

The construction below reverses the usual order of analysis: rather than fixing a potential function $\psi$ and studying the resulting heavy-ball recursion, it prescribes an $m$-periodic sequence of points $(x_t)_{t\in\mathbb Z}$ in $\R^2$ and then constructs a differentiable $\psi:\R^2\to\R$ for which these points satisfy the heavy-ball recursion
\begin{equation}\label{eq:GTD-cycle-eq}
 x_{t+1}=(1+\beta)x_t-\beta x_{t-1}-s\nabla\psi(x_t)
 \qquad\text{for every }t.
\end{equation}
The points are taken to be the $m$th roots of unity, that is, the vertices of a regular $m$-gon inscribed in the unit circle of $\R^2$, visited in cyclic order: for an integer $m\ge3$, write $\theta_m=2\pi/m$, let $R_m$ denote rotation of $\R^2$ by $\theta_m$, and set
\begin{equation*}
 x_t^\circ=R_m^tx_0^\circ,\qquad x_0^\circ=(1,0),
\end{equation*}
with indices interpreted modulo $m$. Figure~\ref{fig:gtd-cycle} illustrates the construction.

\begin{figure}[t]
\centering
\begin{tikzpicture}[>=stealth]
 \draw[densely dotted,gray,semithick] (0,0) circle (2.2);
 \draw[thin,gray] (0,0) -- (0:2.2);
 \draw[thin,gray] (0,0) -- (60:2.2);
 \fill[gray!15] (25:0.6) -- (85:0.6) -- (145:0.6) -- (205:0.6) -- (265:0.6) -- (325:0.6) -- cycle;
 \draw[gray!70] (25:0.6) -- (85:0.6) -- (145:0.6) -- (205:0.6) -- (265:0.6) -- (325:0.6) -- cycle;
 \foreach \a in {25,85,...,325} \fill[gray!70] (\a:0.6) circle (0.9pt);
 \node[font=\scriptsize] at (0,-0.12) {$\mathsf C$};
 \draw[->,gray] (0:0.95) arc (0:60:0.95);
 \node[font=\scriptsize,gray] at (30:1.25) {$\theta_m$};
 \foreach \a in {0,60,...,300} \fill (\a:2.2) circle (1.4pt);
 \node[right=3pt] at (0:2.2) {$x_0^\circ$};
 \node at (40:2.62) {$x_1^\circ$};
 \node[above left=2pt] at (120:2.2) {$x_2^\circ$};
 \foreach \a [evaluate=\a as \sa using \a+8, evaluate=\a as \ea using \a+52] in {0,60,...,300} {
   \draw[->,thick] (\sa:2.2) arc (\sa:\ea:2.2);
 }
\end{tikzpicture}
\caption{Schematic of the $m$th-roots-of-unity cycle of
Theorem~\ref{thm:GTD}, shown for $m=6$. One heavy-ball step \eqref{eq:GTD-cycle-eq} advances the cycle point $x_t^\circ$ to $x_{t+1}^\circ$, a rotation by $\theta_m=2\pi/m$. The gradient of the piecewise-quadratic potential function $\psi$ in \eqref{eq:GTDpsi} is determined by the metric projection onto the convex hull $\mathsf C$ of the points $Mx_t^\circ$.}
\label{fig:gtd-cycle}
\end{figure}

The first step of the construction is to determine the gradient values that the potential function $\psi$ must have at these points. Solving \eqref{eq:GTD-cycle-eq} for the gradient at $x_t^\circ$ gives
\begin{equation}\label{eq:GTDgrad}
 \nabla\psi(x_t^\circ)
 =\frac{(1+\beta)x_t^\circ-\beta x_{t-1}^\circ-x_{t+1}^\circ}{s}
 \qquad\text{for every }t.
\end{equation}
Thus $(x_t^\circ)$ satisfies \eqref{eq:GTD-cycle-eq} if and only if \eqref{eq:GTDgrad} holds. To put \eqref{eq:GTDgrad} in a form suitable for constructing $\psi$, define
\begin{equation}\label{eq:Mdef}
 M=\frac{(1+\beta-s)\Id_2-R_m-\beta R_m^{-1}}{(\kappa-1)s}.
\end{equation}
Since $x_{t+1}^\circ=R_mx_t^\circ$ and $x_{t-1}^\circ=R_m^{-1}x_t^\circ$, \eqref{eq:GTDgrad} can equivalently be written as
\begin{equation*}
 \nabla\psi(x_t^\circ)=x_t^\circ+(\kappa-1)Mx_t^\circ
 \qquad\text{for every }t.
\end{equation*}

Set
\begin{equation*}
 \mathsf C=\operatorname{conv}\{Mx_t^\circ:\,0\le t\le m-1\},
\end{equation*}
where $\operatorname{conv}$ denotes the convex hull, and consider the piecewise-quadratic potential function
\begin{equation}\label{eq:GTDpsi}
 \psi(x)=\frac{\kappa}{2}\abs{x}^2-\frac{\kappa-1}{2}d(x,\mathsf C)^2,
 \qquad
 \nabla\psi(x)=x+(\kappa-1)\proj_{\mathsf C}(x).
\end{equation}
Here $d(\cdot,\mathsf C)$ denotes the Euclidean distance to $\mathsf C$, and $\proj_{\mathsf C}(x)$ denotes the metric projection of $x$ onto $\mathsf C$, that is, the point of $\mathsf C$ nearest to $x$; it is well defined because $\mathsf C$ is nonempty, closed and convex. Comparing the gradient in \eqref{eq:GTDpsi} with \eqref{eq:GTDgrad}, and using $\kappa>1$, the potential function \eqref{eq:GTDpsi} satisfies \eqref{eq:GTDgrad} if and only if
\begin{equation}\label{eq:GTDproj}
 \proj_{\mathsf C}(x_t^\circ)=Mx_t^\circ
 \qquad\text{for every }t.
\end{equation}
The identities \eqref{eq:GTDproj} are not automatic. Each $Mx_t^\circ$ lies in $\mathsf C$ by construction, but whether it is the point of $\mathsf C$ nearest to $x_t^\circ$ depends on $(s,\beta,\kappa,m)$. When \eqref{eq:GTDproj} holds, $(x_t^\circ)$ satisfies \eqref{eq:GTD-cycle-eq} with $\psi$ given by \eqref{eq:GTDpsi}.

To restate \eqref{eq:GTDproj} in a form that can be checked, recall the variational characterization of the metric projection: for a nonempty closed convex set $\mathsf C\subseteq\R^2$ and $x\in\R^2$, a point $c\in\mathsf C$ satisfies $c=\proj_{\mathsf C}(x)$ if and only if
\begin{equation*}
 \langle x-c,\,y-c\rangle\le0
 \qquad\text{for every }y\in\mathsf C.
\end{equation*}
The left-hand side is affine in $y$, so it suffices to impose this condition for $y$ ranging over a set whose convex hull is $\mathsf C$. Taking $x=x_t^\circ$ and $c=Mx_t^\circ$, and letting $y$ range over the vertices $Mx_j^\circ$, the identities \eqref{eq:GTDproj} therefore hold if and only if
\begin{equation}\label{eq:GTDvarineq}
 \bigl\langle x_t^\circ-Mx_t^\circ,\;
 Mx_j^\circ-Mx_t^\circ\bigr\rangle\le0
 \qquad\text{for all }j,t.
\end{equation}
We call \eqref{eq:GTDvarineq} the \emph{projection inequalities}. Since $M$ is a linear combination of $\Id_2$, $R_m$ and $R_m^{-1}$, it commutes with $R_m$, so $Mx_t^\circ=R_m^tMx_0^\circ$. Moreover, since rotations preserve inner products, the inner product in \eqref{eq:GTDvarineq} depends on $j$ and $t$ only through $j-t$. The calculation in \citep[Section~B.1]{GoujaudTaylorDieuleveut2025} shows that, among the resulting $m-1$ inequalities, the one with $j-t=1$ implies all the others, and that it holds if and only if the following quadratic in $s$ is nonpositive:
\begin{equation*}
\begin{split}
 P_m(s,\beta;\kappa)
 =s^2&-2\left[\beta-\cos\theta_m+\kappa^{-1}(1-\beta\cos\theta_m)\right]s\\
 &+2\kappa^{-1}(1-\cos\theta_m)(1+\beta^2-2\beta\cos\theta_m).
\end{split}
\end{equation*}
This is the \emph{$m$th-roots-of-unity cycling quadratic}, the normalized form of the quadratic in \citep[Theorem~3.5]{GoujaudTaylorDieuleveut2025}, obtained from the convex interpolation calculation in \citep[Section~B.1]{GoujaudTaylorDieuleveut2025}.

Their calculation shows that \eqref{eq:GTDvarineq} holds if and only if $P_m(s,\beta;\kappa)\le0$.

When $P_m(s,\beta;\kappa)<0$, the projection inequalities \eqref{eq:GTDvarineq} are strict. For fixed $t$, each inequality defines a half-plane containing $x_t^\circ$, and the distance from $x_t^\circ$ to the nearest boundary gives a positive ball on which $\proj_{\mathsf C}$ is constant, so that, by \eqref{eq:GTDpsi}, the potential function $\psi$ is quadratic with Hessian $\Id_2$ on this ball. Figure~\ref{fig:rmax-geometry} illustrates this geometry. The precise statement follows.

\begin{theorem}[Goujaud-Taylor-Dieuleveut]\label{thm:GTD}
Let $\kappa>1$, let $(s,\beta)\in(0,\infty)\times(0,1)$ satisfy the numerical stability condition \eqref{eq:HBstability}, and fix an integer $m\ge3$. With the notation introduced above:
\begin{enumerate}
\item[\rm(i)] There exists a $C^1$ function $f:\R^2\to\R$ that is $1$-strongly convex and has $\kappa$-Lipschitz gradient such that
\begin{equation*}
 x_{t+1}^\circ=(1+\beta)x_t^\circ-\beta x_{t-1}^\circ-s\nabla f(x_t^\circ)
 \qquad\text{for every }t
\end{equation*}
if and only if $P_m(s,\beta;\kappa)\le0$. When this condition holds, the potential function $\psi$ defined in \eqref{eq:GTDpsi} is one such function and is piecewise quadratic.
\item[\rm(ii)] Suppose $P_m(s,\beta;\kappa)<0$, and let $\psi$ be the potential function in \eqref{eq:GTDpsi}. Then there is a radius $r_{\max}=r_{\max}(m,s,\beta,\kappa)>0$, given explicitly in the proof, such that $\psi$ is quadratic with Hessian $\Id_2$ on each ball $B(x_t^\circ,r_{\max})$, that is,
\begin{equation}\label{eq:psi-local}
 \nabla\psi(x_t^\circ+u)=\nabla\psi(x_t^\circ)+u,
 \qquad\abs{u}\le r_{\max}.
\end{equation}
\end{enumerate}
\end{theorem}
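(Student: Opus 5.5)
The plan is to treat part (i) as essentially a citation to Goujaud, Taylor and Dieuleveut, supplemented by the elementary facts already assembled in the text, and to prove part (ii) directly from the strict projection inequalities.

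For (i), sufficiency goes as follows. If $P_m(s,\beta;\kappa)\le0$, the calculation of \citep[Section~B.1]{GoujaudTaylorDieuleveut2025} gives the projection inequalities \eqref{eq:GTDvarineq}, hence \eqref{eq:GTDproj}. By the discussion preceding the theorem, the cycle then satisfies \eqref{eq:GTD-cycle-eq} with $f=\psi$. It remains to check that $\psi$ is $1$-strongly convex with $\kappa$-Lipschitz gradient. Write $\psi(x)=\tfrac12\abs{x}^2+\tfrac{\kappa-1}2\bigl(\abs x^2-d(x,\mathsf C)^2\bigr)$. The function $g(x)=\tfrac12(\abs x^2-d(x,\mathsf C)^2)$ is the convex conjugate of $\tfrac12\abs y^2+\iota_{\mathsf C}(y)$ (equivalently, $\sup_{y\in\mathsf C}(\ip xy-\tfrac12\abs y^2)$). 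Hence $g$ is convex with gradient $\proj_{\mathsf C}$. This map is firmly nonexpansive, so $g$ is convex and $1$-smooth. Therefore $\psi=\tfrac12\abs{\cdot}^2+(\kappa-1)g$ has Hessian, in the sense of monotone gradients, between $\Id$ and $\kappa\Id$, which gives the claimed class. Piecewise quadraticity follows because $\mathsf C$ is a polygon, so $\proj_{\mathsf C}$ is piecewise affine.

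For necessity in (i), I would invoke the interpolation argument of \citep[Theorem~3.5 and Section~B.1]{GoujaudTaylorDieuleveut2025}. Any admissible $f$ must take the gradient values \eqref{eq:GTDgrad} at the cycle points, and the smooth strongly convex interpolation conditions for these data reduce, by rotational symmetry, to the single inequality with $j-t=1$, i.e.\ $P_m\le0$. I would only restate this in the normalized variables $K=1$, $L=\kappa$, $s$ in place of their $\gamma$, checking that the change of notation maps their quadratic onto $P_m$.

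For (ii), the strict inequalities are the key input. With $c_t=Mx_t^\circ$, the condition $P_m<0$ makes every inequality in \eqref{eq:GTDvarineq} strict, including those with $j-t\neq1$; I would verify this from the GTD reduction, in which the other inequalities are implied with margin by the $j-t=1$ one. Set $n_t=x_t^\circ-c_t\ne0$. Consider the edges of $\mathsf C$ adjacent to the vertex $c_t$. Strictness says $n_t$ lies in the interior of the normal cone $N_{\mathsf C}(c_t)$, which is spanned by the outward normals of those two edges. Then $\proj_{\mathsf C}(y)=c_t$ for all $y\in c_t+N_{\mathsf C}(c_t)$. The ball $B(x_t^\circ,r)$ lies inside this cone whenever $r$ is at most the distance from $x_t^\circ$ to the two boundary rays. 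Explicitly, I would take
\[r_{\max}=\min_{j\ne t}\frac{-\ip{n_t}{c_j-c_t}}{\abs{c_j-c_t}},\]
which is independent of $t$ by rotational symmetry and positive by strictness. Indeed, for $\abs u\le r_{\max}$ we get $\ip{x_t^\circ+u-c_t}{c_j-c_t}\le0$ for all $j$, so $\proj_{\mathsf C}(x_t^\circ+u)=c_t$. Then \eqref{eq:GTDpsi} gives \eqref{eq:psi-local}.

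The main obstacle is confirming that strictness transfers from the single quadratic $P_m<0$ to all $m-1$ inequalities. I expect this to follow from the GTD monotonicity-in-$j-t$ calculation, but it must be checked with care.
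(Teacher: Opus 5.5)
Your proposal follows the paper's route. Part (i) is a citation to Goujaud, Taylor and Dieuleveut. In part (ii), your $r_{\max}$ is exactly \eqref{eq:rmaxdef}, with $n_t=x_t^\circ-Mx_t^\circ$ and $c_j-c_t=M(x_j^\circ-x_t^\circ)$, and the Cauchy--Schwarz and convex-combination argument giving $\proj_{\mathsf C}(x_t^\circ+u)=Mx_t^\circ$ is the same. Your conjugate-function check is correct and more self-contained than the paper, which cites this fact: $\psi=\tfrac12\abs{\cdot}^2+(\kappa-1)g$ with $\nabla g=\proj_{\mathsf C}$ firmly nonexpansive, so $\psi$ is $1$-strongly convex with $\kappa$-Lipschitz gradient.

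\textbf{The gap.} It is the step you flag yourself: that $P_m(s,\beta;\kappa)<0$ forces $I_{t,j}<0$ for every $j\ne t$. This is the substantive content of (ii), and the non-strict GTD statements do not give it. Knowing that $P_m\le0$ is equivalent to all projection inequalities does not transfer strictness: the sets $\{P_m\le0\}$ and $\{I_{0,1}\le0\}$ could coincide while $\{P_m<0\}$ and $\{I_{0,1}<0\}$ differ. Likewise, a non-strict implication $I_{0,1}\le0\Rightarrow I_{0,j}\le0$ allows $I_{0,j}=0$.

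\textbf{How the paper closes it.} You need explicit structure, which the paper supplies in two steps.

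First, write $R_m=\cos\theta_m\Id_2+\sin\theta_m J$. Then $M=a\Id_2+bJ$ with $b=-(1-\beta)\sin\theta_m/((\kappa-1)s)<0$. This also gives $\det M=a^2+b^2>0$, so the vertices $Mx_j^\circ$ are distinct and your denominators are nonzero. Multiplying out $(\Id_2-M)^\top M(R_m^j-\Id_2)$ in the commutative algebra spanned by $\Id_2$ and $J$, and using a half-angle identity, gives
\[
 I_{0,j}=\bigl(1-\cos(j\theta_m)\bigr)\Bigl[a^2+b^2-a-b\cot\bigl(j\theta_m/2\bigr)\Bigr].
\]
The prefactor is positive, $b<0$, and $\cot$ is decreasing on $(0,\pi)$. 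So the bracket is decreasing in $j$, and $I_{0,1}<0$ implies $I_{0,j}<0$ for $1\le j\le m-1$.

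Second, you need the exact proportionality
\[
 P_m(s,\beta;\kappa)=\frac{(\kappa-1)^2s^2}{\kappa(1-\cos\theta_m)}\,I_{0,1},
\]
whose factor is strictly positive; a direct expansion in $a,b$ confirms it. This converts $P_m<0$ into $I_{0,1}<0$.

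With these two facts and $I_{t,j}=I_{0,j-t}$, the rest of your argument goes through unchanged.
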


\begin{proof}
Part (i) is \citep[Theorem~3.5]{GoujaudTaylorDieuleveut2025} together with the interpolation argument of their Section~B.1, normalized as above; the formula for $\nabla\psi$ follows from $\nabla d(\cdot,\mathsf C)^2=2(\operatorname{id}-\proj_{\mathsf C})$.

For part (ii), we first show that $P_m(s,\beta;\kappa)<0$ makes every projection inequality strict. We then define $r_{\max}$ as the smallest distance from $x_0^\circ$ to the boundary lines of the corresponding half-planes. Finally, we show that every ball $B(x_t^\circ,r_{\max})$ remains inside those half-planes, so the metric projection is constant there.

\emph{Strict projection inequalities.} Write
\begin{equation*}
 I_{t,j}=\bigl\langle x_t^\circ-Mx_t^\circ,\;
 Mx_j^\circ-Mx_t^\circ\bigr\rangle,
\end{equation*}
so that the projection inequalities \eqref{eq:GTDvarineq}, which characterize when each $Mx_t^\circ$ is the point of $\mathsf C$ nearest to $x_t^\circ$, read $I_{t,j}\le0$ for all $j,t$; as noted above, $I_{t,j}=I_{0,j-t}$.

We first observe that $M$ is invertible. Writing
\begin{equation*}
 R_m=\cos\theta_m\,\Id_2+\sin\theta_m\,J,
 \qquad
 J=\begin{pmatrix}0&-1\\1&0\end{pmatrix},
\end{equation*}
and $R_m^{-1}=\cos\theta_m\,\Id_2-\sin\theta_m\,J$, the definition \eqref{eq:Mdef} of $M$ gives $M=a\Id_2+bJ$ with
\begin{equation*}
 a=\frac{(1+\beta-s)-(1+\beta)\cos\theta_m}{(\kappa-1)s},
 \qquad
 b=-\frac{(1-\beta)\sin\theta_m}{(\kappa-1)s}.
\end{equation*}
Since $\beta\in(0,1)$ and $\sin\theta_m>0$ for $m\ge3$, we have $b<0$; hence $\det M=a^2+b^2>0$. In particular $M(x_j^\circ-x_t^\circ)\ne0$ whenever $j\ne t$.

Since $x_j^\circ=R_m^jx_0^\circ$, the definition of $I_{0,j}$ reads
\begin{equation*}
 I_{0,j}
 =\bigl\langle(\Id_2-M)x_0^\circ,\;M(R_m^j-\Id_2)x_0^\circ\bigr\rangle
 =\bigl\langle x_0^\circ,\;(\Id_2-M)^\top M(R_m^j-\Id_2)\,x_0^\circ\bigr\rangle.
\end{equation*}
Every matrix appearing here has the form $p\Id_2+qJ$; such matrices satisfy $(p\Id_2+qJ)^\top=p\Id_2-qJ$ and multiply by the rule
\begin{equation*}
 (p\Id_2+qJ)(r\Id_2+uJ)=(pr-qu)\Id_2+(pu+qr)J.
\end{equation*}
Applying this rule first to $(\Id_2-M)^\top M=\bigl((1-a)\Id_2+bJ\bigr)\bigl(a\Id_2+bJ\bigr)=\bigl(a-a^2-b^2\bigr)\Id_2+bJ$, and then to the product with $R_m^j-\Id_2=\bigl(\cos(j\theta_m)-1\bigr)\Id_2+\sin(j\theta_m)J$, gives
\begin{equation*}
 (\Id_2-M)^\top M(R_m^j-\Id_2)=c_j\Id_2+c_j'J,
 \quad
 c_j=\bigl(1-\cos(j\theta_m)\bigr)\bigl(a^2+b^2-a\bigr)-b\sin(j\theta_m),
\end{equation*}
where the value of the scalar $c_j'$ is not needed: since $\abs{x_0^\circ}=1$ and $\langle x_0^\circ,Jx_0^\circ\rangle=0$, we get $I_{0,j}=c_j$. The half-angle identity $\sin(j\theta_m)=\bigl(1-\cos(j\theta_m)\bigr)\cot(j\theta_m/2)$, valid for $1\le j\le m-1$ since then $j\theta_m/2\in(0,\pi)$, gives
\begin{equation}\label{eq:I0j}
 I_{0,j}
 =\bigl(1-\cos(j\theta_m)\bigr)
 \Bigl[a^2+b^2-a-b\cot\bigl(j\theta_m/2\bigr)\Bigr].
\end{equation}
The prefactor is positive, and $j\theta_m/2$ increases through $(0,\pi)$ as $j$ runs from $1$ to $m-1$, on which interval $\cot$ is strictly decreasing; since $b<0$, the expression in brackets in \eqref{eq:I0j} is therefore strictly decreasing in $j$. Consequently
\begin{equation}\label{eq:I01dominates}
 I_{0,1}\le0\ \Longrightarrow\ I_{0,j}\le0
 \quad\text{for }1\le j\le m-1,
\end{equation}
and the same implication holds with both inequalities strict.

Normalized as above, the identity of \citep[Section~B.1]{GoujaudTaylorDieuleveut2025} reads
\begin{equation}\label{eq:PmIm}
 P_m(s,\beta;\kappa)
 =\frac{(\kappa-1)^2s^2}{\kappa(1-\cos\theta_m)}\,I_{0,1},
\end{equation}
and the prefactor is strictly positive. Together with \eqref{eq:I01dominates} and $I_{t,j}=I_{0,j-t}$, this gives the equivalence of \eqref{eq:GTDvarineq} with $P_m(s,\beta;\kappa)\le0$ used above, and shows that the hypothesis $P_m(s,\beta;\kappa)<0$ implies $I_{t,j}<0$ for all $j\ne t$.

\emph{The radius $r_{\max}$.} For $x\in\R^2$, the variational characterization of the metric projection, applied with $Mx_t^\circ$ as the candidate projection, requires
\begin{equation}\label{eq:projcone}
 \bigl\langle x-Mx_t^\circ,\;Mx_j^\circ-Mx_t^\circ\bigr\rangle\le0
 \qquad\text{for every }j.
\end{equation}
For $j\ne t$, the inequality indexed by $j$ defines a half-plane whose boundary is the line through $Mx_t^\circ$ perpendicular to $M(x_j^\circ-x_t^\circ)$, and the perpendicular distance from $x_t^\circ$ to this line equals $-I_{t,j}/\abs{M(x_j^\circ-x_t^\circ)}$. Since $I_{t,j}=I_{0,j-t}$ and $\abs{M(x_j^\circ-x_t^\circ)}=\abs{M(x_{j-t}^\circ-x_0^\circ)}$, the smallest of these distances does not depend on $t$. Set
\begin{equation}\label{eq:rmaxdef}
 r_{\max}
 =\min_{1\le j\le m-1}
 \frac{-I_{0,j}}{\abs{M(x_j^\circ-x_0^\circ)}},
\end{equation}
which is well defined because $M$ is invertible, and positive because every $I_{0,j}$ is negative. These quotients are the distances shown in Figure~\ref{fig:rmax-geometry}.

\emph{Local quadratic structure.} Let $\abs u\le r_{\max}$ and $j\ne t$. Using $I_{t,j}=I_{0,j-t}$, $\abs{M(x_j^\circ-x_t^\circ)}=\abs{M(x_{j-t}^\circ-x_0^\circ)}$, and the Cauchy-Schwarz inequality,
\begin{equation*}
 \bigl\langle x_t^\circ+u-Mx_t^\circ,\;Mx_j^\circ-Mx_t^\circ\bigr\rangle
 \le I_{t,j}+\abs u\,\abs{M(x_j^\circ-x_t^\circ)}
 \le0
\end{equation*}
by \eqref{eq:rmaxdef}; for $j=t$ the left-hand side vanishes. Thus \eqref{eq:projcone} holds at $x=x_t^\circ+u$. Because $\mathsf C$ is the convex hull of the points $Mx_j^\circ$, every $y\in\mathsf C$ can be written as $y=\sum_j\alpha_jMx_j^\circ$, where $\alpha_j\ge0$ and $\sum_j\alpha_j=1$; hence $y-Mx_t^\circ=\sum_j\alpha_j(Mx_j^\circ-Mx_t^\circ)$. The inequalities just proved therefore give
\begin{equation*}
 \bigl\langle x_t^\circ+u-Mx_t^\circ,\;y-Mx_t^\circ\bigr\rangle
 =\sum_j\alpha_j
 \bigl\langle x_t^\circ+u-Mx_t^\circ,\;Mx_j^\circ-Mx_t^\circ\bigr\rangle
 \le0
\end{equation*}
for every $y\in\mathsf C$. Since $Mx_t^\circ\in\mathsf C$, the variational characterization of the metric projection gives $\proj_{\mathsf C}(x_t^\circ+u)=Mx_t^\circ$, so by \eqref{eq:GTDpsi},
\begin{equation*}
 \nabla\psi(x_t^\circ+u)
 =x_t^\circ+u+(\kappa-1)Mx_t^\circ
 =\nabla\psi(x_t^\circ)+u,
\end{equation*}
which is \eqref{eq:psi-local}.
\end{proof}

\begin{figure}[t]
\centering
\begin{tikzpicture}[scale=0.95,>=stealth]
 \fill[blue!7] (0,0) -- (3.9,2.2517) -- (3.9,-2.2517) -- cycle;
 \fill[gray!15] (0,0) -- (-0.7,1.2124) -- (-2.1,1.2124) -- (-2.8,0) -- (-2.1,-1.2124) -- (-0.7,-1.2124) -- cycle;
 \draw[gray!70] (0,0) -- (-0.7,1.2124) -- (-2.1,1.2124) -- (-2.8,0) -- (-2.1,-1.2124) -- (-0.7,-1.2124) -- cycle;
 \foreach \p in {(-2.1,1.2124),(-2.8,0),(-2.1,-1.2124)} \fill[gray!70] \p circle (0.9pt);
 \draw[dashed,blue!65] (0,0) -- (3.7,2.136);
 \draw[dashed,blue!65] (0,0) -- (3.7,-2.136);
 \node[font=\scriptsize,rotate=30,anchor=south] at (2.51,1.45) {boundary for $j=1$};
 \node[font=\scriptsize,rotate=-30,anchor=north] at (2.42,-1.4) {boundary for $j=m-1$};
 \coordinate (x0) at (2.6,0.5);
 \coordinate (p0) at (0,0);
 \coordinate (q) at (2.167,1.251);
 \draw[->,thick,gray] (x0) -- (p0);
 \draw[dashed,thick] (x0) circle (0.867);
 \draw[->,thick] (x0) -- (q) node[midway,right,font=\scriptsize] {$r_{\max}$};
 \fill (x0) circle (1.5pt);
 \node[anchor=west,inner sep=1.5pt] at (2.68,0.5) {$x_0^\circ$};
 \fill (p0) circle (1.5pt);
 \node[anchor=east,inner sep=1.5pt] at (-0.07,0) {$Mx_0^\circ$};
 \fill (-0.7,1.2124) circle (1.2pt);
 \node[anchor=south west,inner sep=1.5pt] at (-0.66,1.28) {$Mx_1^\circ$};
 \fill (-0.7,-1.2124) circle (1.2pt);
 \node[anchor=north west,inner sep=1.5pt] at (-0.66,-1.28) {$Mx_{m-1}^\circ$};
 \fill (q) circle (1pt);
 \node[font=\scriptsize] at (-1.9,0) {$\mathsf C$};
\end{tikzpicture}
\caption{Geometric interpretation of $r_{\max}$ in the proof of Theorem~\ref{thm:GTD}(ii), shown for $m=6$. The light blue region consists of the points whose metric projection onto $\mathsf C$ equals $Mx_0^\circ$: it is the intersection of the half-planes defined by \eqref{eq:projcone} at $t=0$. Each quotient in \eqref{eq:rmaxdef} is the distance from $x_0^\circ$ to the boundary line of one of these half-planes, and their minimum is $r_{\max}$, so the dashed ball $B(x_0^\circ,r_{\max})$ lies in the region.}
\label{fig:rmax-geometry}
\end{figure}

It remains to show that the hypothesis $\rhoq(s,\beta;\kappa)<q_\kappa$ places $(s,\beta)$ strictly inside the cycling region for at least one period. We first describe the boundaries of these regions. Whenever $s\mapsto P_m(s,\beta;\kappa)$ has real roots, denote them by
\begin{equation*}
 s_-(\beta,m;\kappa)\le s_+(\beta,m;\kappa).
\end{equation*}
These are the step-size thresholds denoted $\gamma_\pm$ in \citep[Notation~B.1]{GoujaudTaylorDieuleveut2025}, renamed here because $\gamma$ denotes the Langevin friction. By \citep[Notation~B.1 and Fact~B.3]{GoujaudTaylorDieuleveut2025}, there is a threshold $\beta_-(m;\kappa)$ such that the roots are real whenever $\beta\ge\beta_-(m;\kappa)$ and, for $s>0$,
\begin{equation*}
 P_m(s,\beta;\kappa)\le0
 \quad\Longleftrightarrow\quad
 \beta\ge\beta_-(m;\kappa)
 \ \text{ and }\
 s_-(\beta,m;\kappa)\le s\le s_+(\beta,m;\kappa).
\end{equation*}

To apply Theorem~\ref{thm:GTD}(ii) we need a single period $m$ with $P_m(s,\beta;\kappa)<0$, that is, $s$ must lie strictly between $s_-(\beta,m;\kappa)$ and $s_+(\beta,m;\kappa)$. The next lemma ensures that consecutive intervals $[s_-(\beta,m;\kappa),s_+(\beta,m;\kappa)]$ overlap on a set with nonempty interior, and Lemma~\ref{lem:interiorcycle} combines the overlap with \eqref{eq:acceleratedquad} to produce a period satisfying the strict inequality.

\begin{lemma}[Overlap of consecutive intervals]\label{lem:strictB6}
Assume $\kappa^{-1}<\bigl(\tfrac{3-\sqrt5}{4}\bigr)^2$. For every integer $m\ge3$ and every $\beta\in(0,1)$ with $\beta\ge\beta_-(m+1;\kappa)$,
\begin{equation}\label{eq:strictB6}
 s_-(\beta,m;\kappa)<s_+(\beta,m+1;\kappa).
\end{equation}
\end{lemma}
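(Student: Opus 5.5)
This is essentially a strict version of \citep[Fact~B.6]{GoujaudTaylorDieuleveut2025}, so the plan is to redo their overlap computation with strict inequalities. Write $c_m=\cos\theta_m$, $k=\kappa^{-1}$, and define
\[
B_m=\beta-c_m+k(1-\beta c_m),
\qquad
D_m=2k(1-c_m)(1+\beta^2-2\beta c_m),
\]
so that $P_m(s)=s^2-2B_ms+D_m$. The roots are then
\[
s_\pm(\beta,m)=B_m\pm\sqrt{B_m^2-D_m}.
\]

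The first step is to check that, whenever $\beta\ge\beta_-(m+1;\kappa)$, we also have $\beta\ge\beta_-(m;\kappa)$. This holds because the cycling regions shrink in $\beta$ as $m$ decreases, which is part of \citep[Fact~B.3]{GoujaudTaylorDieuleveut2025}. If that monotonicity is not directly available, I will verify it from the discriminant. With both root pairs real, the target inequality \eqref{eq:strictB6} reads
\[
B_m-\sqrt{B_m^2-D_m}<B_{m+1}+\sqrt{B_{m+1}^2-D_{m+1}}.
\]

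The main tool is the product identity $s_-s_+=D$, which gives $s_-(\beta,m)=D_m/s_+(\beta,m)\le D_m/B_m$, and the trivial bound $s_+(\beta,m+1)\ge B_{m+1}$. So it suffices to prove $D_m<B_mB_{m+1}$.

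Two observations make this tractable. First, $B_{m+1}-B_m=(c_m-c_{m+1})(1+k\beta)<0$, since $c_{m+1}>c_m$. Second, $D_m$ carries the factor $k(1-c_m)$, which is small. The reduced inequality should therefore follow once $\beta$ is far enough above $c_{m+1}$. Reality of the $(m+1)$ roots means $B_{m+1}^2\ge D_{m+1}$, and together with $B_{m+1}>0$ this forces $B_{m+1}\ge\sqrt{D_{m+1}}\gtrsim\sqrt{k}\,(1-c_{m+1})$.

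With these in hand, I will bound
\[
D_m\le 2k(1-c_m)(1-\beta+\beta(1-c_m))^2\cdot(\text{const}).
\]
I will also express $B_m=B_{m+1}+(c_{m+1}-c_m)(1+k\beta)$ and use the trigonometric comparison $1-c_m\le C(1-c_{m+1})$ for $m\ge3$. The numerical condition $\sqrt k<(3-\sqrt5)/4$ is exactly what makes the resulting quadratic inequality in $\sqrt{k}$ strict. Indeed, the constant $(3-\sqrt5)/4$ is a root of $4x^2-6x+1$, which suggests the golden-ratio relation arising from the ratio bound $(1-c_m)/(1-c_{m+1})$ combined with $\sqrt{D_{m+1}}\le B_{m+1}$.

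If the crude bound $s_-\le D_m/B_m$ is too lossy, I will fall back on the exact argument of \citep[Fact~B.6]{GoujaudTaylorDieuleveut2025}. That argument shows the weak inequality $s_-(\beta,m)\le s_+(\beta,m+1)$ under $\kappa^{-1}\le((3-\sqrt5)/4)^2$. I will then trace where equality could occur and show that equality forces $\kappa^{-1}$ to equal the threshold, or forces $\beta$ to a boundary case excluded by $\beta<1$. Hence the strict hypothesis gives strict inequality.

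The main obstacle is this final algebraic inequality: identifying the worst case over $\beta$ and $m$, which I expect to be $\beta=\beta_-(m+1;\kappa)$, where $s_+(\beta,m+1)=B_{m+1}$ is a double root, and checking strictness there.
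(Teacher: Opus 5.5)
\textbf{There is a genuine gap: the key inequality is never proved, and the estimates you sketch cannot prove it.}

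Your reduction is a legitimate sufficient condition. In your notation, $s_+(\beta,m+1;\kappa)\ge B_{m+1}$ and $s_-(\beta,m;\kappa)=D_m/s_+(\beta,m;\kappa)\le D_m/B_m$, so it suffices to prove $D_m<B_mB_{m+1}$. Since $0\le B_{m+1}<B_m$, that inequality already gives $D_m<B_m^2$, i.e.\ real period-$m$ roots. Your first step, a monotonicity of $\beta_-(\cdot\,;\kappa)$ in $m$ that you attribute to Fact~B.3 without checking, is therefore not needed.

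The problem is that $D_m<B_mB_{m+1}$ is asymptotically tight. At $\beta=\beta_-(m+1;\kappa)$ one has $B_{m+1}^2=D_{m+1}$. As $m\to\infty$ with $k$ fixed, both $D_m$ and $B_mB_{m+1}$ equal $4k(1-c_{m+1})^2(1+o(1))$, while their difference is smaller than either by a factor of order $1/(m\sqrt k)$. An argument that bounds $D_m$ from above and $B_{m+1}$ from below separately, up to multiplicative constants (your ``$\mathrm{const}$'', ``$\gtrsim$'', and $1-c_m\le C(1-c_{m+1})$), loses more than this margin.

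The specific bound you propose is also false uniformly in $m$. For large $m$ and $\beta\ge\beta_-(m+1;\kappa)$, $1+\beta^2-2\beta c_m=(1-\beta)^2+2\beta(1-c_m)$ is about $2(1-c_m)$. By contrast, $(1-\beta+\beta(1-c_m))^2=O((1-c_m)^2)$, so your constant would have to grow like $m^2$.

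Nor is $(3-\sqrt5)/4=1/(3+\sqrt5)$ the sharp root of a quadratic arising here. In the paper it comes from the crude uniform bound $1/\cos\theta_5+2=3+\sqrt5$ on the right side of \eqref{eq:B6-final}.

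\textbf{The missing idea is to cancel the leading terms exactly before estimating.} With $\delta_m=c_{m+1}-c_m>0$,
\[
 B_mB_{m+1}-D_m=(B_{m+1}^2-D_{m+1})
 +\delta_m\Bigl[(1+k\beta)B_{m+1}-2k\bigl((1-\beta)^2+2\beta(2-c_m-c_{m+1})\bigr)\Bigr].
\]
After dropping the nonnegative first term, the factor $\delta_m$ divides out, and only then can constant-lossy inputs close the argument. These inputs are:
\begin{itemize}
\item $B_{m+1}\ge\sqrt{D_{m+1}}$;
\item $\beta\ge c_{m+1}$, which is trivial for $m=3$ and, for $m\ge4$, follows from the bound $\beta_-(m+1;\kappa)\ge c_{m+1}(1+\sqrt k\,\xi_{m+1})$, $\xi_{m+1}=1/c_{m+1}-1$, quoted in the paper's proof;
\item $1-c_m\le\frac32(1-c_{m+1})$.
\end{itemize}
Together they reduce the claim to positivity on $[0,1]$ of a concave function of $t=(1-\beta)^2/(1-c_{m+1})$, which you check at the two endpoints under the stated hypothesis.

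The paper exploits the same cancellation in a different form. In its notation, $A_m$ is your $B_m$ and $B_m=\sqrt{A_m^2-D_m}$. It proves $(A_m-A_{m+1})^2<B_m^2-B_{m+1}^2$, where both sides carry the factor $\cos\theta_{m+1}-\cos\theta_m$. This yields $s_-(\beta,m;\kappa)<A_{m+1}\le s_+(\beta,m+1;\kappa)$ and reduces to the strict form \eqref{eq:B6-27} of inequality~(27) of Goujaud, Taylor and Dieuleveut. That inequality is verified exactly for $m=3$, and for $m\ge4$ through monotonicity in $\beta$ and the lower bound on $\beta_-(m+1;\kappa)$.

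Your fallback points in that direction but remains a plan. Strictness does not come from analyzing equality cases, since equality somewhere in a chain of lossy sufficient bounds says nothing about equality in \eqref{eq:strictB6}. It comes from the terminal inequality \eqref{eq:B6-final} being strict under the strict hypothesis, with each implication in the chain preserving strictness.
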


\begin{proof}
This sharpens \citep[Lemma~B.6]{GoujaudTaylorDieuleveut2025}; we follow their argument, keeping each inequality strict. Write $u=\kappa^{-1}$ and adopt the notation of their Notation~B.1, in normalized variables:
\begin{equation}\label{eq:AB-def}
 A_m=\beta-\cos\theta_m+u(1-\beta\cos\theta_m),
 \quad
 B_m=\sqrt{A_m^2-2u(1-\cos\theta_m)(1+\beta^2-2\beta\cos\theta_m)},
\end{equation}
so that $s_\pm(\beta,m;\kappa)=A_m\pm B_m$. We first claim the strict form of their inequality~(23),
\begin{equation}\label{eq:B6-23}
 (A_m-A_{m+1})^2<B_m^2-B_{m+1}^2 .
\end{equation}
Assuming \eqref{eq:B6-23} for the moment, we conclude as follows: since $B_{m+1}^2\ge0$, we get $(A_m-A_{m+1})^2<B_m^2$, so $B_m>0$ and $A_m-A_{m+1}\le\abs{A_m-A_{m+1}}<B_m$, whence
\begin{equation*}
 s_-(\beta,m;\kappa)=A_m-B_m<A_{m+1}\le A_{m+1}+B_{m+1}=s_+(\beta,m+1;\kappa),
\end{equation*}
which is \eqref{eq:strictB6}.

To prove \eqref{eq:B6-23}, expand as in \citep[equations~(25)-(26)]{GoujaudTaylorDieuleveut2025}: from \eqref{eq:AB-def},
\begin{align*}
 A_m-A_{m+1}&=(1+\beta u)(\cos\theta_{m+1}-\cos\theta_m),\\
 B_m^2-B_{m+1}^2&=\bigl[2\beta(1-u)^2-(1-\beta u)^2(\cos\theta_{m+1}+\cos\theta_m)\bigr](\cos\theta_{m+1}-\cos\theta_m).
\end{align*}
Since $0<\theta_{m+1}<\theta_m<\pi$ for $m\ge3$ and $\cos$ is strictly decreasing on $[0,\pi]$, we have $\cos\theta_{m+1}-\cos\theta_m>0$; dividing by it shows that \eqref{eq:B6-23} is equivalent to
\begin{equation*}
 (1+\beta u)^2(\cos\theta_{m+1}-\cos\theta_m)+(1-\beta u)^2(\cos\theta_{m+1}+\cos\theta_m)<2\beta(1-u)^2,
\end{equation*}
whose left-hand side equals $2(1+\beta^2u^2)\cos\theta_{m+1}-4\beta u\cos\theta_m$; dividing by $4\beta u>0$ gives the equivalent strict form of their inequality~(27),
\begin{equation}\label{eq:B6-27}
 \frac12\Bigl(\frac1{\beta u}+\beta u\Bigr)\cos\theta_{m+1}
 \;<\;\frac{(1-u)^2}{2u}+\cos\theta_m .
\end{equation}

For $m=3$ one has $\cos\theta_4=0$ and $\cos\theta_3=-\tfrac12$, so \eqref{eq:B6-27} is independent of $\beta$ and reads $0<\frac{(1-u)^2}{2u}-\frac12$, that is, $u^2-3u+1>0$. The smaller root of this quadratic in $u$ is $\tfrac{3-\sqrt5}{2}$, so the inequality holds for $u<\tfrac{3-\sqrt5}{2}$, and the hypothesis gives $u<\bigl(\tfrac{3-\sqrt5}{4}\bigr)^2<\tfrac{3-\sqrt5}{4}<\tfrac{3-\sqrt5}{2}$.

For $m\ge4$, set
\begin{equation*}
 c=\cos\theta_{m+1},\qquad
 d=\cos\theta_m,\qquad
 \xi_{m+1}=\frac1c-1.
\end{equation*}
Then $c>0$, since $\theta_{m+1}\le2\pi/5<\pi/2$. Since $\beta u\in(0,1)$ and $t\mapsto t+t^{-1}$ is strictly decreasing on $(0,1)$, the left-hand side of \eqref{eq:B6-27} is strictly decreasing in $\beta$. The right-hand side is independent of $\beta$. The bound used in the proof of \citep[Lemma~B.6]{GoujaudTaylorDieuleveut2025}, based on their Lemma~B.2, gives
\begin{equation*}
 \beta_-(m+1;\kappa)
 \;\ge\;
 \beta_0:=c\bigl(1+\sqrt u\,\xi_{m+1}\bigr).
\end{equation*}
Consequently, it is enough to verify \eqref{eq:B6-27} at $\beta=\beta_0$.

Substituting $\beta_0=c(1+\sqrt u\,\xi_{m+1})$ into \eqref{eq:B6-27} and multiplying by $2u$ gives
\begin{equation*}
 \frac1{1+\sqrt u\,\xi_{m+1}}
 +c^2\bigl(1+\sqrt u\,\xi_{m+1}\bigr)u^2
 <
 (1-u)^2+2ud.
\end{equation*}
We use
\begin{equation*}
 \frac1{1+\sqrt u\,\xi_{m+1}}
 \le
 1-\sqrt u\,\xi_{m+1}+u\xi_{m+1}^2,
\end{equation*}
which holds because $(1+t)(1-t+t^2)=1+t^3\ge1$ for $t=\sqrt u\,\xi_{m+1}\ge0$. Moreover, since $\sqrt u<1$,
\begin{equation*}
 c^2\bigl(1+\sqrt u\,\xi_{m+1}\bigr)
 \le
 c^2(1+\xi_{m+1})
 =c
 <1.
\end{equation*}
Thus the left-hand side is strictly smaller than
\begin{equation*}
 1-\sqrt u\,\xi_{m+1}+u\xi_{m+1}^2+u^2.
\end{equation*}
Since the right-hand side equals
\begin{equation*}
 1-2u(1-d)+u^2,
\end{equation*}
inequality \eqref{eq:B6-27} follows provided
\begin{equation*}
 \sqrt u\,\xi_{m+1}-u\xi_{m+1}^2>2u(1-d).
\end{equation*}
Dividing by $u\xi_{m+1}>0$, this condition is equivalent to
\begin{equation}\label{eq:B6-final}
 \frac1{\sqrt u}
 \;>\;
 \xi_{m+1}
 +\frac{2(1-\cos\theta_m)}{\xi_{m+1}} .
\end{equation}
By \citep[Lemma~B.7]{GoujaudTaylorDieuleveut2025}, $1\le(1-\cos\theta_m)/(1-\cos\theta_{m+1})\le\tfrac32$, and the resulting elementary bound in their proof shows that the right-hand side of \eqref{eq:B6-final} is at most $1/\cos\theta_5+2=3+\sqrt5$ for every $m\ge4$. Since $u<\bigl(\tfrac{3-\sqrt5}{4}\bigr)^2=(3+\sqrt5)^{-2}$, we have $1/\sqrt u>3+\sqrt5$, so \eqref{eq:B6-final} holds strictly, and the chain of sufficient bounds above yields the strict inequality \eqref{eq:B6-27} for all $\beta\ge\beta_-(m+1;\kappa)$, completing the proof.
\end{proof}

\begin{lemma}[Existence of a period with $P_m(s,\beta;\kappa)<0$]\label{lem:interiorcycle}
Let $C_\star>C_{\mathrm{GTD}}=(3+\sqrt5)^2$, let $\kappa\ge2C_\star$, and let $(s,\beta)\in(0,\infty)\times(0,1)$ satisfy the numerical stability condition \eqref{eq:HBstability} with
\begin{equation*}
 \rhoq(s,\beta;\kappa)<q_\kappa=\frac{1-C_\star/\kappa}{1+C_\star/\kappa}.
\end{equation*}
Then there exists an integer $m\ge3$ with
\begin{equation}\label{eq:strictPK}
 P_m(s,\beta;\kappa)<0.
\end{equation}
\end{lemma}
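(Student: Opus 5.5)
The plan is to follow the covering argument of Goujaud, Taylor and Dieuleveut (their Appendix~B), which shows that a parameter pair outside the nonaccelerated quadratic region lies in some cycling interval. At each point where their argument gives a non-strict inclusion, I will use the strict overlap of Lemma~\ref{lem:strictB6} instead. Lemma~\ref{lem:strictB6} applies because $\kappa\ge2C_\star>C_{\mathrm{GTD}}=(3+\sqrt5)^2$ gives $\kappa^{-1}<\bigl(\tfrac{3-\sqrt5}{4}\bigr)^2$.

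\emph{Step 1: translate the hypothesis into constraints on $(s,\beta)$.} The eigenvalues of $A_\lambda$ are the roots of \eqref{eq:charpoly} and have product $\beta$, so $\rhoq\ge\sqrt\beta$. Hence $\rhoq<q_\kappa$ forces $\beta<q_\kappa^2$. On the other hand, $\rhoq$ is at least the modulus of the real roots at the endpoints $\lambda=1$ and $\lambda=\kappa$ whenever these are real. This shows that $s$ cannot be too small, since otherwise the root near $1$ at $\lambda=1$ exceeds $q_\kappa$. It also shows that $s\kappa$ cannot be too close to $2(1+\beta)$, since otherwise the root near $-1$ at $\lambda=\kappa$ exceeds $q_\kappa$ in modulus. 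Moreover, $\beta$ must be close to $1$: taking $\lambda=1$ in the real-root case gives $1-\rho\lesssim (1-\beta)+s$, and combining this with the bound at $\lambda=\kappa$ yields $1-\beta\gtrsim$ something of order $\kappa^{-1/2}$ from below and of order $C_\star/\kappa$ from above. This places $(s,\beta)$ in the region treated in their Appendix~B, namely $\beta$ near $1$ with $s$ of order $(1-\beta)^2$ up to order $1/\kappa$. The precise bounds I aim for are $\beta\ge\beta_-(m_0;\kappa)$ for a suitable starting period $m_0$ (namely $m_0=3$), and $s_-(\beta,3;\kappa)<s$ or the analogous lower constraint, using the explicit formulas $s_\pm=A_m\pm B_m$ from \eqref{eq:AB-def}.

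\emph{Step 2: cover by overlapping open intervals.} As $m\to\infty$, $\cos\theta_m\to1$ and $\beta_-(m;\kappa)$ increases toward $1$. Let $m^\ast$ be the largest period with $\beta\ge\beta_-(m^\ast;\kappa)$; it is finite because $\beta<1$. For $3\le m<m^\ast$ the roots are real, and Lemma~\ref{lem:strictB6} gives $s_-(\beta,m)<s_+(\beta,m+1)$. Hence the union of the open intervals $(s_-(\beta,m),s_+(\beta,m))$ over $3\le m\le m^\ast$ is connected, with no gaps at the endpoints. Its extremes are $\min_m s_-$ and $\max_m s_+$, and I would identify these with the lower and upper boundary from the quadratic condition, as in their Appendix~B. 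I then need to show that $\rhoq<q_\kappa$ places $s$ strictly inside this union. It is enough to show $s>s_-(\beta,m^\ast)$, $s<s_+(\beta,3)$, and that the intervals are ordered decreasingly in $m$. Once these hold, some $m$ satisfies $s_-(\beta,m)<s<s_+(\beta,m)$, which means $P_m<0$ because $P_m$ is an upward quadratic in $s$.

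\emph{Main obstacle.} The two boundary inequalities are the hard part. Their paper proves the corresponding statement under $\rho\le 1-C/\kappa$ with threshold $C_{\mathrm{GTD}}$, and I would reuse that proof. The strict margin $C_\star>C_{\mathrm{GTD}}$ must upgrade their non-strict endpoint comparisons to strict ones. The largest-period endpoint requires care because at $m^\ast$ the discriminant $B_{m^\ast}$ may be close to zero. If $s$ equals $s_-(\beta,m^\ast)$ exactly, I would use the strict inequality from Lemma~\ref{lem:strictB6}, in the form $s_-(\beta,m^\ast-1)<s_+(\beta,m^\ast)$, to move into the interior of an adjacent interval instead.
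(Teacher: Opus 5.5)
Your framework matches the paper's on two points: the overlap Lemma~\ref{lem:strictB6}, and using stability for the top end ($s<2(1+\beta)/\kappa\le s_+(\beta,3)$). The gap is the bottom end, which is the real content of the lemma, and both targets you name for it are false in general. Write $u=\kappa^{-1}$ and take $C_\star=28$ in the examples below.

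\emph{Your $m_0=3$ (Step~1).} We have $s_-(\beta,3)\approx3u$ for $\beta$ near $1$. The hypothesis only forces, through the $\lambda=1$ root $\approx1-s/(1-\beta)$, roughly $s>2C_\star u(1-\beta)$. For instance, $\kappa=10^6$, $\beta=0.99$, $s=u$ satisfies stability and $\rhoq<q_\kappa$, yet $P_3(s,\beta;\kappa)>0$.

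\emph{Your largest real-root period $m^\ast$ (Step~2).} If $\beta=\beta_-(m^\ast)$, the interval collapses to the point $A_{m^\ast}$. Then $B_{m^\ast}=0$ gives $A_{m^\ast}^2=2u(1-\cos\theta_{m^\ast})\bigl[(1-\beta)^2+2\beta(1-\cos\theta_{m^\ast})\bigr]$, so $s_-(\beta,m^\ast)\gtrsim2\sqrt{u\beta}\,(1-\beta)$. This exceeds every stable $s<2(1+\beta)u$ once $1-\beta$ is a large multiple of $\sqrt u$. For example, at $\kappa=10^6$ and $\beta=\beta_-(20)\approx0.951$ (so $m^\ast=20$), admissible $s$ exist (roughly $2.7u<s<3.9u$), but $s_-(\beta,20)\approx97u$.

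Consequently $s_-(\beta,m)$ is not decreasing in $m$ near $m^\ast$, and the ordering of intervals you assume is false. Your fallback covers only the equality $s=s_-(\beta,m^\ast)$, so it does not repair this. Your Step~1 size heuristics are also off: the hypothesis allows $1-\beta$ anywhere from about $4C_\star/\kappa$ up to order one.

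What is missing is a comparison at a period adapted to $\beta$, driven by a strict quantitative lower bound on $s$. The paper proceeds in four steps.
\begin{enumerate}
\item It gets $\beta>13/42$ from the level-set bound $\ell(\rho)\le\beta$ of \citep[Lemma~2.4]{GoujaudTaylorDieuleveut2025}, using the monotonicity of $\ell$.
\item It proves $s>\tfrac{50}{3}u(1-\beta)$ strictly, from $\beta\le\rho^2$, $s\ge(1-\rho)(1-\beta/\rho)$, $\rho<q_\kappa$ and $C_\star>50/3$.
\item It picks $m_0$ with $(\beta-\cos\theta_{m_0})/(1-\beta)\in[\tfrac23,\tfrac32]$ \citep[Lemma~B.8]{GoujaudTaylorDieuleveut2025}; here $m_0\ge3$ because $\beta>1/5$. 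Lemma~B.9 there gives $\beta\ge\beta_-(m)$ for every $3\le m\le m_0$, and their estimate~(29) gives $s_-(\beta,m_0)\le\tfrac{50}{3}u(1-\beta)<s$.
\item Let $\bar m$ be the least $m\in\{3,\dots,m_0\}$ with $s>s_-(\beta,m)$. Either $\bar m=3$ and stability gives $s<s_+(\beta,3)$, or minimality and Lemma~\ref{lem:strictB6} give $s\le s_-(\beta,\bar m-1)<s_+(\beta,\bar m)$.
\end{enumerate}
This argument needs no ordering of the intervals and no largest period $m^\ast$, and strictness comes for free. Using $m^\ast$ would also require monotonicity of $\beta_-(m)$ in $m$, which you do not establish.
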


\begin{proof}
Write $u=\kappa^{-1}$, $\rho=\rhoq(s,\beta;\kappa)$, and $g=(1-u)/(1+u)$, and abbreviate $\beta_-(m)=\beta_-(m;\kappa)$, $s_\pm(\beta,m)=s_\pm(\beta,m;\kappa)$. Note $u\le(2C_\star)^{-1}<\tfrac12(3+\sqrt5)^{-2}$, so, since $(3-\sqrt5)(3+\sqrt5)=4$, in particular $u<\bigl(\tfrac{3-\sqrt5}{4}\bigr)^2$ and $u<1/16$; all results of \citep[Appendix~B]{GoujaudTaylorDieuleveut2025} invoked below apply.

\emph{Step 1: $\beta>13/42$.} By \citep[Corollary~2.3]{GoujaudTaylorDieuleveut2025}, $\rho\ge\rho_\ast(\kappa)=(1-\sqrt u)/(1+\sqrt u)$, the worst-case contraction factor of the optimal quadratic tuning defined in Section~\ref{sec:intro-main}; abbreviate $\rho_\ast=\rho_\ast(\kappa)$. For $t\in(0,g)$, define
\begin{equation*}
 \ell(t):=\frac{t(g-t)}{1-gt}.
\end{equation*}
The level-set parametrization \citep[Lemma~2.4]{GoujaudTaylorDieuleveut2025}, applied to the fixed pair $(s,\beta)$, gives
\begin{equation}\label{eq:LS-beta}
 \ell(\rho)\le\beta\le\rho^2.
\end{equation}
Moreover,
\begin{equation*}
 \ell'(t)=\frac{g-2t+gt^2}{(1-gt)^2}.
\end{equation*}
The roots of the numerator are $\rho_\ast$ and $\rho_\ast^{-1}>1$, so $\ell$ is strictly decreasing on $[\rho_\ast,g]$. By Corollary~2.3 and the hypothesis of the lemma,
\begin{equation*}
 \rho_\ast\le\rho<q_\kappa.
\end{equation*}
Since $C_\star>1$,
\begin{equation*}
 q_\kappa
 =\frac{1-C_\star u}{1+C_\star u}
 <
 \frac{1-u}{1+u}
 =g.
\end{equation*}
Thus $0<\rho_\ast\le\rho<q_\kappa<g$, and consequently
\begin{equation*}
 \beta\ \ge\ \ell(\rho)\ >\ \ell(q_\kappa).
\end{equation*}
A direct computation gives the exact identity
\begin{equation*}
 \frac{g-q_\kappa}{1-gq_\kappa}=\frac{C_\star-1}{C_\star+1},
\end{equation*}
so that $\ell(q_\kappa)=q_\kappa\,\frac{C_\star-1}{C_\star+1}$. Since $C_\star u\le\frac12$, we get $q_\kappa\ge\frac{1-1/2}{1+1/2}=\frac13$, and since $C_\star>C_{\mathrm{GTD}}>27$,
\begin{equation*}
 \frac{C_\star-1}{C_\star+1}>\frac{26}{28}=\frac{13}{14},
\end{equation*}
so $\beta>\ell(q_\kappa)>\frac13\cdot\frac{13}{14}=\frac{13}{42}>\frac15$.

\emph{Step 2: a strict lower bound on the step size.} Since $C_\star>50/3$, we have $\rho<q_\kappa<\frac{1-(50/3)u}{1+(50/3)u}$. If $s\le\frac{50}{3}u(1-\beta)$ held, then combining $\beta\le\rho^2$ from \eqref{eq:LS-beta} and the further level-set consequence $s\ge(1-\rho)(1-\beta/\rho)$ of \citep[Lemma~2.4]{GoujaudTaylorDieuleveut2025} with the algebra in \citep[Section~B.3.2]{GoujaudTaylorDieuleveut2025} would force $\rho\ge\frac{1-(50/3)u}{1+(50/3)u}$, a contradiction. Hence
\begin{equation}\label{eq:s-lower}
 s>\frac{50}{3}\,u\,(1-\beta).
\end{equation}
By \citep[Lemma~B.8]{GoujaudTaylorDieuleveut2025} there is an integer $m_0\ge2$ with
\begin{equation}\label{eq:Cbeta}
 \frac23\ \le\ C_\beta:=\frac{\beta-\cos\theta_{m_0}}{1-\beta}\ \le\ \frac32 .
\end{equation}
For $m_0=2$ one has $C_\beta=(1+\beta)/(1-\beta)>3/2$ because $\beta>1/5$; hence $m_0\ge3$. By \citep[Lemma~B.9]{GoujaudTaylorDieuleveut2025}, whose proof uses only $u\le1/16$ and the lower bound in \eqref{eq:Cbeta}, we get $\beta\ge\beta_-(m_0)$, so $s_\pm(\beta,m_0)$ are real, and the computation leading to \citep[equation~(29)]{GoujaudTaylorDieuleveut2025} gives
\begin{equation}\label{eq:gammaminus-bound}
\begin{split}
 s_-(\beta,m_0)
 &\le\frac{2u(1-\cos\theta_{m_0})(1+\beta^2-2\beta\cos\theta_{m_0})}{\beta-\cos\theta_{m_0}}\\
 &\le4\Bigl(C_\beta+\frac1{C_\beta}+2\Bigr)u(1-\beta)
 \le\frac{50}{3}\,u\,(1-\beta).
\end{split}
\end{equation}
Combining \eqref{eq:s-lower} and \eqref{eq:gammaminus-bound}, $s>s_-(\beta,m_0)$.

\emph{Step 3: selection of a period with strict cycling.} For every integer $3\le m\le m_0$ we have $\cos\theta_m\le\cos\theta_{m_0}$, hence $(\beta-\cos\theta_m)/(1-\beta)\ge C_\beta\ge2/3$, and \citep[Lemma~B.9]{GoujaudTaylorDieuleveut2025} again yields
\begin{equation}\label{eq:beta-admissible}
 \beta\ \ge\ \beta_-(m),\qquad 3\le m\le m_0 .
\end{equation}
Let
\begin{equation*}
 \bar m:=\min\{m\in\{3,\ldots,m_0\}:\ s>s_-(\beta,m)\},
\end{equation*}
which is well defined by Step~2. We claim $s<s_+(\beta,\bar m)$. If $\bar m=3$, then by \citep[Remark~B.5 and the initialization of the proof of Theorem~B.4]{GoujaudTaylorDieuleveut2025}, $s_+(\beta,3)\ge2(1+\beta)/\kappa$, while numerical stability \eqref{eq:HBstability} gives $s<2(1+\beta)/\kappa$; hence $s<s_+(\beta,3)$. If $\bar m\ge4$ and $s\ge s_+(\beta,\bar m)$ held, then Lemma~\ref{lem:strictB6} applied with $m=\bar m-1\ge3$, which is legitimate by \eqref{eq:beta-admissible}, would give
\begin{equation*}
 s_-(\beta,\bar m-1)<s_+(\beta,\bar m)\le s,
\end{equation*}
contradicting the minimality of $\bar m$. Therefore
\begin{equation*}
 s_-(\beta,\bar m)<s<s_+(\beta,\bar m).
\end{equation*}
In particular $s$ lies strictly between the two roots, so $P_{\bar m}(s,\beta;\kappa)<0$, proving \eqref{eq:strictPK} with $m=\bar m$.
\end{proof}

The next proposition puts the deterministic construction into a form suited to OBABO. Because the local Hessian at every cycle point of the Goujaud-Taylor-Dieuleveut potential function equals the identity, the linearization along the cycle is a constant-coefficient system, and its stability follows from numerical stability alone.

\begin{proposition}[Smooth attracting cycle with stable linearization]\label{prop:smoothcycle}
Assume the hypotheses of Lemma~\ref{lem:interiorcycle}, and let $m\ge3$ satisfy the strict cycling condition \eqref{eq:strictPK}. Then there exist distinct points $x_0^\circ,\ldots,x_{m-1}^\circ\in\R^2$ on the unit circle, a potential $U\in\cU_\kappa^2$, and constants $r_0>0$, $b_\ast<\infty$ such that, with indices modulo $m$,
\begin{equation}\label{eq:cycleequation-general}
 x_{j+1}^\circ=(1+\beta)x_j^\circ-\beta x_{j-1}^\circ-s\nabla U(x_j^\circ),
\end{equation}
\begin{equation}\label{eq:localaffine-general}
 \nabla U(x_j^\circ+u)=\nabla U(x_j^\circ)+u,
 \qquad \abs{u}\le r_0,
\end{equation}
and
\begin{equation}\label{eq:boundedremainder-general}
 \nabla U(x)=x+w(x),\qquad \sup_{x\in\R^2}\abs{w(x)}\le b_\ast.
\end{equation}
Consequently, $\nabla^2U=\Id_2$ on each ball $B(x_j^\circ,r_0)$, and the Jacobian of the deterministic heavy-ball update
\begin{equation*}
 (x,y)\longmapsto
 \bigl((1+\beta)x-\beta y-s\nabla U(x),\;x\bigr)
\end{equation*}
at every cycle state $(x_j^\circ,x_{j-1}^\circ)$ is
\begin{equation}\label{eq:periodic-linearization}
 \mathsf A_1=
 \begin{pmatrix}(1+\beta-s)\Id_2&\;-\beta\Id_2\\
                 \Id_2&\;0\end{pmatrix}.
\end{equation}
Therefore, the Jacobian of the $m$-step update at each cycle state is $\mathsf M=\mathsf A_1^m$, and
\begin{equation*}
 \rho(\mathsf M)
 =\rho(\mathsf A_1)^m
 \le\rhoq(s,\beta;\kappa)^m
 <1.
\end{equation*}
\end{proposition}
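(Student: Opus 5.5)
Take the piecewise-quadratic potential $\psi$ of Theorem~\ref{thm:GTD} for the period $m$ satisfying \eqref{eq:strictPK}. Keep the roots-of-unity points $x_j^\circ$ as the cycle, and mollify $\psi$ away from the cycle.

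The first step is to write $\nabla\psi(x)=x+(\kappa-1)\proj_{\mathsf C}(x)$, so that $\psi(x)=\tfrac12\abs x^2+(\kappa-1)\phi(x)$. Here
\[
 \phi(x)=\tfrac12\abs x^2-\tfrac12 d(x,\mathsf C)^2
\]
is convex and $C^{1,1}$, with $\nabla\phi=\proj_{\mathsf C}$ and $0\preceq\nabla^2\phi\preceq\Id_2$ almost everywhere; this holds because the metric projection is firmly nonexpansive and is the gradient of a convex function. Let $\eta_\delta$ be a standard radial mollifier supported in $B(0,\delta)$, with $\delta=r_{\max}/2$, and define
\[
 U=\tfrac12\abs x^2+(\kappa-1)\,\phi*\eta_\delta .
\]
Convolution preserves convexity and the bound $\nabla^2\le\Id_2$, so $\Id_2\preceq\nabla^2U\preceq\kappa\Id_2$ everywhere and $U\in C^\infty$. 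Thus $U\in\cU_\kappa^2$.

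The second step is to check \eqref{eq:localaffine-general} with $r_0=r_{\max}/2$. For $\abs u\le r_0$ and $\abs y\le\delta$, the point $x_j^\circ+u-y$ lies in $B(x_j^\circ,r_{\max})$. By Theorem~\ref{thm:GTD}(ii) the projection is constant there and equals $Mx_j^\circ$. Hence $\nabla(\phi*\eta_\delta)(x_j^\circ+u)=(\proj_{\mathsf C}*\eta_\delta)(x_j^\circ+u)=Mx_j^\circ$. This gives $\nabla U(x_j^\circ+u)=x_j^\circ+u+(\kappa-1)Mx_j^\circ=\nabla\psi(x_j^\circ)+u$. In particular $\nabla U(x_j^\circ)=\nabla\psi(x_j^\circ)$, and the cycle identity \eqref{eq:cycleequation-general} follows from \eqref{eq:GTDgrad}, i.e.\ from Theorem~\ref{thm:GTD}(i). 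For \eqref{eq:boundedremainder-general}, set $w=(\kappa-1)\proj_{\mathsf C}*\eta_\delta$. Since $\mathsf C$ is compact, $\abs{w}\le(\kappa-1)\max_{c\in\mathsf C}\abs c=:b_\ast$. The points $x_j^\circ$ are distinct and lie on the unit circle by construction.

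The last step is the linearization. On the balls $B(x_j^\circ,r_0)$ we have $\nabla^2U=\Id_2$, so the Jacobian of the heavy-ball map at $(x_j^\circ,x_{j-1}^\circ)$ is $\mathsf A_1$, which is the same matrix at every cycle state. By the chain rule along the cycle, the $m$-step Jacobian is $\mathsf A_1^m$. Since $\mathsf A_1=A_1(s,\beta)\otimes\Id_2$ up to a permutation of coordinates, $\rho(\mathsf A_1)=\rho(A_1(s,\beta))\le\rhoq(s,\beta;\kappa)$. Lemma~\ref{lem:quadratic-stability} with \eqref{eq:HBstability} gives $\rhoq<1$; alternatively one can use $\rhoq<q_\kappa<1$ directly. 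The identity $\rho(\mathsf A_1^m)=\rho(\mathsf A_1)^m$ is standard.

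The only delicate point is confirming that mollification preserves both Hessian bounds and does not disturb the gradient on the balls. This is handled by the decomposition through $\phi$ (convex with $\nabla\phi$ $1$-Lipschitz), together with the choice $\delta<r_{\max}$ and the exact local constancy of $\proj_{\mathsf C}$ from Theorem~\ref{thm:GTD}(ii).
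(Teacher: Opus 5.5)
Your proof is correct and follows essentially the same route as the paper. Both mollify the Goujaud--Taylor--Dieuleveut potential $\psi$ at a scale below $r_{\max}$, use the local constancy of $\proj_{\mathsf C}$ from Theorem~\ref{thm:GTD}(ii) to preserve the cycle and the local affine identity, and bound the remainder by compactness of $\mathsf C$. The only difference is cosmetic: you split off $\tfrac12\abs x^2$ and mollify only $\phi$, so the Hessian bounds follow from convexity of $\phi$ and nonexpansiveness of $\proj_{\mathsf C}$, and the mollifier need not be centered, whereas the paper mollifies $\psi$ itself and integrates its strong-monotonicity and Lipschitz inequalities over translates.
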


\begin{proof}
Apply Theorem~\ref{thm:GTD} with this $m$: since $P_m(s,\beta;\kappa)<0$, the $m$th roots of unity $x_t^\circ$, which are distinct points on the unit circle, satisfy \eqref{eq:GTD-cycle-eq} for the piecewise-quadratic potential function $\psi\in C^1$ of \eqref{eq:GTDpsi}, which is $1$-strongly convex and has $\kappa$-Lipschitz gradient, and \eqref{eq:psi-local} holds with $r_{\max}>0$.

Let $\varrho_\epsilon$ be a centered $C^\infty$ probability density supported in $B(0,\epsilon)$, with $0<\epsilon<r_{\max}/4$, and set $U=\varrho_\epsilon*\psi$, which is $C^\infty$. Since $\psi$ is $1$-strongly convex with $\kappa$-Lipschitz gradient,
\begin{equation*}
 \abs{x-y}^2\le\ip{\nabla\psi(x)-\nabla\psi(y)}{x-y},
 \qquad
 \abs{\nabla\psi(x)-\nabla\psi(y)}\le\kappa\abs{x-y}
\end{equation*}
for all $x,y\in\R^2$. Since $\psi\in C^1$ and $\varrho_\epsilon\in C_c^\infty$, we have $\nabla U=\varrho_\epsilon*\nabla\psi$. Fix $x,y\in\R^2$. For each $w\in\R^2$ the first inequality applied to the two points $x-w$ and $y-w$, whose difference is again $x-y$, gives
\begin{equation*}
 \ip{\nabla\psi(x-w)-\nabla\psi(y-w)}{x-y}\ge\abs{x-y}^2.
\end{equation*}
Integrating this in $w$ against the probability density $\varrho_\epsilon$ yields
\begin{equation*}
 \ip{\nabla U(x)-\nabla U(y)}{x-y}
 =\int\ip{\nabla\psi(x-w)-\nabla\psi(y-w)}{x-y}\varrho_\epsilon(w)\,dw
 \ge\abs{x-y}^2.
\end{equation*}
The same substitution in the second inequality gives $\abs{\nabla\psi(x-w)-\nabla\psi(y-w)}\le\kappa\abs{x-y}$ for every $w$, and integrating against $\varrho_\epsilon$ gives $\abs{\nabla U(x)-\nabla U(y)}\le\kappa\abs{x-y}$. Because $U$ is $C^\infty$, these two bounds pass to the Hessian: taking $y=x+tv$ with $\abs v=1$, dividing the first by $t^2$ and the second by $t$, and letting $t\downarrow0$ gives $\ip{\nabla^2U(x)v}v\ge1$ and $\abs{\nabla^2U(x)v}\le\kappa$ for every unit vector $v$. Since $\nabla^2U(x)$ is symmetric, its eigenvalues therefore lie in $[1,\kappa]$, so $U\in\cU_\kappa^2$.

For $\abs{u}\le r_0:=r_{\max}/2$ and $w\in\supp\varrho_\epsilon$, equation \eqref{eq:psi-local} applies at $x_j^\circ+u-w$, and centering gives
\begin{align*}
 \nabla U(x_j^\circ+u)
 &=\int\nabla\psi(x_j^\circ+u-w)\varrho_\epsilon(w)\,dw\\
 &=\nabla\psi(x_j^\circ)+u-\int w\varrho_\epsilon(w)\,dw
 =\nabla\psi(x_j^\circ)+u.
\end{align*}
Thus the gradients at the cycle points are unchanged by smoothing, so \eqref{eq:GTD-cycle-eq} becomes \eqref{eq:cycleequation-general}, and \eqref{eq:localaffine-general} holds. Convolving the gradient formula in \eqref{eq:GTDpsi} yields \eqref{eq:boundedremainder-general} with $b_\ast=(\kappa-1)\max_{y\in\mathsf C}\abs y$, because $\proj_{\mathsf C}$ takes values in the compact set $\mathsf C$.

Finally, differentiating \eqref{eq:localaffine-general} gives $\nabla^2U=\Id_2$ on each ball $B(x_j^\circ,r_0)$. The Jacobian of this update at $(x,y)$ is
\begin{equation*}
 \begin{pmatrix}(1+\beta)\Id_2-s\nabla^2U(x)&\;-\beta\Id_2\\
                 \Id_2&\;0\end{pmatrix},
\end{equation*}
which at $x=x_j^\circ$ reduces to $\mathsf A_1$, proving \eqref{eq:periodic-linearization}; since the update maps each cycle state to the next by \eqref{eq:cycleequation-general}, the chain rule gives $\mathsf M=\mathsf A_1^m$ as the Jacobian of the $m$-step update. In the block form \eqref{eq:periodic-linearization}, the two coordinates of $\R^2$ do not mix: writing $x=(x^{(1)},x^{(2)})$ and $y=(y^{(1)},y^{(2)})$, the linear map $\mathsf A_1$ sends each pair $(x^{(i)},y^{(i)})$ to $\bigl((1+\beta-s)x^{(i)}-\beta y^{(i)},\,x^{(i)}\bigr)$, that is, it acts on each pair by the companion matrix $A_1(s,\beta)$ of \eqref{eq:A_lambda_intro} with $\lambda=1$. Hence $\rho(\mathsf A_1)=\rho(A_1(s,\beta))$. Since $\rhoq(s,\beta;\kappa)$ is the maximum in \eqref{eq:A_lambda_intro} and $\lambda=1$ lies in $[1,\kappa]$, the spectral mapping theorem gives
\begin{equation*}
 \rho(\mathsf M)=\rho(\mathsf A_1)^m=\rho(A_1(s,\beta))^m\le\rhoq(s,\beta;\kappa)^m<q_\kappa^m<1.
\end{equation*}
\end{proof}

Only Lemma~\ref{lem:interiorcycle} and Proposition~\ref{prop:smoothcycle} are used beyond this section; Theorem~\ref{thm:GTD} and Lemma~\ref{lem:strictB6} enter only through their proofs. Both are used in the proof of Theorem~\ref{thm:dichotomy} in Section~\ref{sec:proofmain}. When $\rhoq(s,\beta;\kappa)<q_\kappa$, Lemma~\ref{lem:interiorcycle} produces a period $m$ with $P_m(s,\beta;\kappa)<0$, and the potential of Proposition~\ref{prop:smoothcycle} satisfies the hypotheses of Theorem~\ref{thm:transfer} in Section~\ref{sec:transfer}: the cycle equation \eqref{eq:cycleequation-general} and the local identity \eqref{eq:localaffine-general} become Assumption~\ref{ass:cycle}, the spectral radius bound on $\mathsf M$ becomes Assumption~\ref{ass:monodromy}, and \eqref{eq:boundedremainder-general} gives the global gradient bound \eqref{eq:transfer-bounded}. Theorem~\ref{thm:transfer} then yields the metastability asserted in case \emph{(C)} of Theorem~\ref{thm:dichotomy}. Since Lemma~\ref{lem:interiorcycle} and Proposition~\ref{prop:smoothcycle} depend only on the pair $(s,\beta)$, Section~\ref{sec:BAOAB} applies them unchanged to the remaining Strang splittings.

\section{Proofs of the mixing time lower bounds}\label{sec:proofs}

This section proves the mixing time lower bounds. Section~\ref{sec:quadraticbranch} treats the quadratic case. Section~\ref{sec:metastability-lemma} proves a general metastability lemma for noisy heavy-ball recursions, and Section~\ref{sec:transfer} applies it to derive OBABO metastability from an attracting cycle of the heavy-ball recursion. Section~\ref{sec:proofmain} proves Theorem~\ref{thm:dichotomy} and Corollary~\ref{cor:mixing}. Section~\ref{sec:LRP} presents the explicit instance of Corollary~\ref{cor:LRPexplicit}.

\subsection{The quadratic case}\label{sec:quadraticbranch}

We begin with an elementary fact about stable Gaussian autoregressive processes.

\pagebreak
\begin{lemma}[Total variation decay rates of Gaussian autoregressive processes]\label{lem:gaussianTVrate}
Let
\begin{equation*}
 Y_{n+1}=MY_n+G\xi_{n+1}
\end{equation*}
be a Gaussian autoregressive process on $\R^N$, with transition kernel $Q$, where $M\in\R^{N\times N}$ satisfies $\rho(M)<1$, $G\in\R^{N\times N'}$, the $\xi_n$ are i.i.d.\ standard Gaussian vectors in $\R^{N'}$, and the covariance $\Sigma_{n_0}$ of the $n_0$-step transition law is nonsingular for some $n_0\ge1$; this holds in particular when $GG^\top\succ0$, with $n_0=1$. Then the chain has a unique invariant law $\pi$, which is a centered Gaussian. Moreover,
\begin{equation}\label{eq:TVonepoint}
 \limsup_{n\to\infty}\norm{\delta_zQ^n-\pi}_{\TV}^{1/n}\le\rho(M)
 \qquad\text{for every initial state }z,
\end{equation}
and the limit exists and equals $\rho(M)$ for a suitable $z$. Finally, there are initial states $z,\widetilde z$ such that
\begin{equation}\label{eq:TVrootM}
 \lim_{n\to\infty}
 \norm{\delta_zQ^n-\delta_{\widetilde z}Q^n}_{\TV}^{1/n}=\rho(M).
\end{equation}
\end{lemma}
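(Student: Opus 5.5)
Everything in the statement reduces to explicit Gaussian laws. Iterating the recursion gives
\[
 Y_n=M^nY_0+\sum_{k=0}^{n-1}M^kG\xi_{n-k},
\]
so $\delta_zQ^n=\cN(M^nz,\Sigma_n)$ with $\Sigma_n=\sum_{k=0}^{n-1}M^kGG^\top(M^k)^\top$.

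\emph{The invariant law.} Since $\rho(M)<1$, we have $\norm{M^k}\le C_\epsilon(\rho(M)+\epsilon)^k$. Hence $\Sigma_n$ increases to $\Sigma_\infty=\sum_{k\ge0}M^kGG^\top(M^k)^\top$, which is finite and nonsingular because $\Sigma_\infty\succeq\Sigma_{n_0}\succ0$. Thus $\delta_zQ^n\to\pi:=\cN(0,\Sigma_\infty)$ weakly for every $z$. This $\pi$ is invariant, because $M\Sigma_\infty M^\top+GG^\top=\Sigma_\infty$. It is unique: any invariant $\nu$ satisfies $\nu=\nu Q^n$, and $\nu Q^n\to\pi$ by dominated convergence applied to characteristic functions.

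\emph{The upper bound \eqref{eq:TVonepoint}.} Use the Pinsker bound together with the Gaussian KL formula for $\cN(M^nz,\Sigma_n)$ versus $\cN(0,\Sigma_\infty)$. The mean term is $(M^nz)^\top\Sigma_\infty^{-1}M^nz=O((\rho+\epsilon)^{2n})$. The covariance term is $\tfrac12[\operatorname{tr}(\Sigma_\infty^{-1}\Sigma_n)-N-\log\det(\Sigma_\infty^{-1}\Sigma_n)]$. Since $\Sigma_\infty-\Sigma_n=M^n\Sigma_\infty(M^n)^\top=O((\rho+\epsilon)^{2n})$, the function $f(D)=\operatorname{tr}D-N-\log\det D$ at $D=I-E$ behaves like $\tfrac12\operatorname{tr}E^2=O(\norm{E}^2)$, which is even smaller. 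So $\mathrm{KL}=O((\rho+\epsilon)^{2n})$ and $\TV\le\sqrt{\mathrm{KL}/2}=O((\rho+\epsilon)^n)$. Letting $\epsilon\to0$ gives the limsup bound.

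\emph{Lower bounds.} It is enough to prove \eqref{eq:TVrootM}, since the triangle inequality gives $\norm{\delta_zQ^n-\delta_{\widetilde z}Q^n}\le\norm{\delta_zQ^n-\pi}+\norm{\delta_{\widetilde z}Q^n-\pi}$. Combined with \eqref{eq:TVonepoint}, at least one of $z,\widetilde z$ then has $\limsup=\rho(M)$.

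For \eqref{eq:TVrootM}, take $\widetilde z=0$. The two laws $\cN(M^nz,\Sigma_n)$ and $\cN(0,\Sigma_n)$ share a covariance, so their TV distance is exactly $2\Phi(\delta_n/2)-1$, where $\delta_n=\abs{\Sigma_n^{-1/2}M^nz}$. For $n\ge n_0$ we have $\Sigma_{n_0}\preceq\Sigma_n\preceq\Sigma_\infty$, so $\delta_n\asymp\abs{M^nz}$ uniformly, and TV is comparable to $\min(1,\abs{M^nz})$. The task is then to choose $z$ with $\lim\abs{M^nz}^{1/n}=\rho(M)$. Take $z=\mathrm{Re}\,w$ for an eigenvector $w$ of an eigenvalue $\lambda$ with $\abs\lambda=\rho$, chosen with maximal Jordan block and with $w$ at the top of its chain. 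Then $M^nz=\mathrm{Re}(\lambda^nw)$ in the diagonalizable case, whose norm is $\rho^n\abs{\mathrm{Re}(e^{in\theta}w)}$. The factor $\abs{\mathrm{Re}(e^{in\theta}w)}$ is bounded below by a positive constant: $\mathrm{Re}(e^{i\phi}w)=0$ would force $w$ to be a complex multiple of a real vector, which is impossible for nonreal $\lambda$, and for real $\lambda$ it is trivial.

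In the Jordan case the norm is $\abs{M^nz}\asymp n^{j}\rho^n$ up to a bounded oscillating factor of the same type. This choice of $z$ is the main obstacle: the oscillation must not make the norm small along a subsequence. I would handle it by working in the real invariant subspace $\{\mathrm{Re}(cw)\}$, on which $M$ acts as $\rho$ times a rotation (or a Jordan-rotation block), so the norm is comparable to $\rho^n n^j$ from both sides. Taking $n$th roots then yields the limit $\rho(M)$, and since TV $\le1$ while $\delta_n\to0$, the root limit transfers to TV.

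For the single-point claim with an actual limit, use the same $z$ against $\pi$. Write $\delta_zQ^n=\cN(M^nz,\Sigma_n)$ versus $\cN(0,\Sigma_\infty)$. The covariance mismatch is $O(n^{2j}\rho^{2n})$ in KL terms, so by Pinsker it contributes only $O(n^{j}\rho^{n})$ to TV. The mean-shift contribution is of exact order $n^j\rho^n$, with a constant. The two might cancel only to leading order, and I would fix that by replacing $z$ with $tz$ for large $t$: the mean term scales linearly in $t$, while the covariance discrepancy does not depend on $t$.
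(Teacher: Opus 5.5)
Your proposal is correct and follows essentially the same route as the paper: explicit Gaussian transition laws, the Lyapunov identity and characteristic functions for the invariant law, Pinsker plus the Gaussian KL formula for the covariance mismatch, the equal-covariance formula for the mean shift, and a real eigenvector (or the real part of a complex one) giving $c\rho(M)^n\le\abs{M^nz}\le C\rho(M)^n$. The Jordan-chain discussion and the rescaling $z\mapsto tz$ are unnecessary, though. Your own upper-bound paragraph shows that the covariance KL term is $O(\norm{E_n}^2)$, so the covariance mismatch contributes only $O(\norm{M^n}^2)$ to total variation, not $O(n^j\rho(M)^n)$; its $n$th root has limit superior at most $\rho(M)^2<\rho(M)$ when $\rho(M)>0$, so the reverse triangle inequality with a genuine eigenvector already gives the single-point limit, exactly as in the paper (the case $\rho(M)=0$ follows immediately from the upper bound).
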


\begin{proof}
Set
\begin{equation*}
 \Sigma_n=\sum_{j=0}^{n-1}M^jGG^\top(M^\top)^j,
 \qquad
 \Sigma_\infty=\sum_{j=0}^{\infty}M^jGG^\top(M^\top)^j.
\end{equation*}
Since $\rho(M)<1$, the second series converges. Each summand is
positive semidefinite, so
$\Sigma_{n_0}\preceq\Sigma_n\preceq\Sigma_\infty$ for $n\ge n_0$,
and in particular $\Sigma_\infty\succ0$. Moreover,
\begin{equation*}
 \Sigma_\infty=GG^\top+M\Sigma_\infty M^\top,
\end{equation*}
so $\cN(0,\Sigma_\infty)$ is invariant.

The law $\cN(0,\Sigma_\infty)$ is also the unique invariant law. Indeed, iterating the
autoregression gives $Y_n=M^nY_0+\zeta_n$ with
$\zeta_n\sim\cN(0,\Sigma_n)$ independent of $Y_0$. If $\pi$ is
invariant and $\widehat\pi$ is its characteristic function, taking
$Y_0\sim\pi$ therefore yields
\begin{equation*}
 \widehat\pi(t)
 =\widehat\pi\bigl((M^\top)^nt\bigr)
 \exp\left(-\frac12t^\top\Sigma_nt\right).
\end{equation*}
Since $\rho(M)<1$, we have $(M^\top)^nt\to0$. Every
characteristic function is continuous at zero and satisfies
$\widehat\pi(0)=1$, so $\widehat\pi\bigl((M^\top)^nt\bigr)\to1$.
Since also $\Sigma_n\to\Sigma_\infty$, passing to the limit in the
preceding identity gives
$\widehat\pi(t)=\exp\left(-\frac12t^\top\Sigma_\infty t\right)$,
and thus $\pi=\cN(0,\Sigma_\infty)$; we write $\pi$ for this law below.

The remaining assertions compare Gaussian laws that differ in their
means and in their covariances, and we prepare one estimate for each
source of error. For the means: for $\Sigma\succ0$, equal-covariance
Gaussians satisfy
\begin{equation}\label{eq:equalcovTV}
 \norm{\cN(m,\Sigma)-\cN(\widetilde m,\Sigma)}_{\TV}
 =2\Phi\left(\frac12\abs{\Sigma^{-1/2}(m-\widetilde m)}\right)-1,
\end{equation}
where $\Phi$ is the standard normal distribution function; since
$\Phi$ is $1/\sqrt{2\pi}$-Lipschitz, in particular
$2\Phi(t/2)-1\le t/\sqrt{2\pi}$ for $t\ge0$. For the covariances:
reindexing the tail of the series defining $\Sigma_\infty$ gives
\begin{equation*}
 \Sigma_\infty-\Sigma_n=M^n\Sigma_\infty(M^\top)^n,
\end{equation*}
so the matrix
$E_n=\Sigma_\infty^{-1/2}(\Sigma_\infty-\Sigma_n)\Sigma_\infty^{-1/2}$
satisfies
\begin{equation*}
 0\preceq E_n,
 \qquad
 \norm{E_n}
 \le\frac{\lambda_{\max}(\Sigma_\infty)}{\lambda_{\min}(\Sigma_\infty)}
 \norm{M^n}^2.
\end{equation*}
By Gelfand's formula $\rho(M)=\lim_{n\to\infty}\norm{M^n}^{1/n}$
and $\rho(M)<1$, we have $\norm{M^n}\to0$; in particular,
$E_n\preceq\tfrac12\Id_N$ for all sufficiently large $n$. If $e_{n,1},\dots,e_{n,N}$ denote the eigenvalues of $E_n$, the
closed form of the Kullback-Leibler divergence between centered
Gaussian laws gives
\begin{equation*}
 D_{\mathrm{KL}}\bigl(\cN(0,\Sigma_n)\,\big\|\,\cN(0,\Sigma_\infty)\bigr)
 =\frac12\sum_{i=1}^N\bigl\{-e_{n,i}-\log(1-e_{n,i})\bigr\}
 \le\sum_{i=1}^Ne_{n,i}^2
 \le N\norm{E_n}^2,
\end{equation*}
using $-t-\log(1-t)\le2t^2$ for $t\in[0,\tfrac12]$. Pinsker's
inequality
$2\norm{\nu-\widetilde\nu}_{\TV}^2
\le D_{\mathrm{KL}}(\nu\,\|\,\widetilde\nu)$
then gives, for all sufficiently large $n$,
\begin{equation*}
 \norm{\cN(0,\Sigma_n)-\cN(0,\Sigma_\infty)}_{\TV}
 \le\sqrt{N/2}\,\norm{E_n}
 \le C_0\norm{M^n}^2,
 \qquad
 C_0=\sqrt{N/2}\,
 \frac{\lambda_{\max}(\Sigma_\infty)}{\lambda_{\min}(\Sigma_\infty)}.
\end{equation*}

If $\rho(M)=0$, then every eigenvalue of $M$ is zero, so the
Cayley-Hamilton theorem gives $M^N=0$. For $n\ge N$, the mean
$M^nz$ vanishes and $\Sigma_n=\Sigma_\infty$, so $\delta_zQ^n=\pi$
for every initial state $z$, and any two transition laws coincide.
Hence \eqref{eq:TVonepoint} holds with equality as a limit for every
$z$, and \eqref{eq:TVrootM} holds, with all quantities equal to
zero.

Suppose now that $\rho(M)>0$. Choose an eigenvalue $\lambda$ of
$M$ with $\abs\lambda=\rho(M)$. If $\lambda$ is real, take a
corresponding real eigenvector. If $\lambda$ is complex, the real
and imaginary parts of a corresponding complex eigenvector span a
two-dimensional real invariant subspace on which $M$ is similar to
$\rho(M)$ times a rotation. In either case, there are
$u\in\R^N\setminus\{0\}$ and constants $c,C>0$ such that
\begin{equation*}
 c\rho(M)^n\le\abs{M^nu}\le C\rho(M)^n,
 \qquad n\ge0.
\end{equation*}
Consequently, $\lim_{n\to\infty}\abs{M^nu}^{1/n}=\rho(M)$.

Choose any two initial states $z,\widetilde z$ such that
$z-\widetilde z=u$. Their $n$-step laws are Gaussian with common
covariance $\Sigma_n$ and means differing by $M^nu$. Therefore,
\eqref{eq:equalcovTV} gives, for $n\ge n_0$,
\begin{equation*}
 \norm{\delta_zQ^n-\delta_{\widetilde z}Q^n}_{\TV}
 =2\Phi(t_n/2)-1,
 \qquad
 t_n=\abs{\Sigma_n^{-1/2}M^nu}.
\end{equation*}
Since $\Sigma_{n_0}\preceq\Sigma_n\preceq\Sigma_\infty$, we have
\begin{equation*}
 \lambda_{\max}(\Sigma_\infty)^{-1/2}\abs{M^nu}
 \le t_n\le
 \lambda_{\min}(\Sigma_{n_0})^{-1/2}\abs{M^nu}.
\end{equation*}
Taking $n$th roots and using the choice of $u$ yields $\lim_{n\to\infty}t_n^{1/n}=\rho(M)$. Because $\rho(M)<1$, this also implies $t_n\to0$. Since $2\Phi(t/2)-1\sim t/\sqrt{2\pi}$ as $t\downarrow0$, we have $\bigl((2\Phi(t_n/2)-1)/t_n\bigr)^{1/n}\to1$. Consequently,
\begin{equation*}
 \lim_{n\to\infty}
 \norm{\delta_zQ^n-\delta_{\widetilde z}Q^n}_{\TV}^{1/n}
 =\lim_{n\to\infty}
 t_n^{1/n}\left(\frac{2\Phi(t_n/2)-1}{t_n}\right)^{1/n}
 =\rho(M),
\end{equation*}
which proves \eqref{eq:TVrootM}.

Next, fix an arbitrary initial state $z$. By the triangle
inequality and the two estimates above, for all sufficiently large
$n$,
\begin{align*}
 \norm{\delta_zQ^n-\pi}_{\TV}
 &\le\norm{\cN(M^nz,\Sigma_n)-\cN(0,\Sigma_n)}_{\TV}
 +\norm{\cN(0,\Sigma_n)-\cN(0,\Sigma_\infty)}_{\TV}\\
 &\le\frac{\lambda_{\min}(\Sigma_{n_0})^{-1/2}}{\sqrt{2\pi}}
 \abs{M^nz}+C_0\norm{M^n}^2.
\end{align*}
Since $\abs{M^nz}\le\norm{M^n}\abs z$ and, by Gelfand's formula,
$\norm{M^n}^{1/n}\to\rho(M)<1$, the $n$th root of the right-hand
side has limit superior at most $\rho(M)$, which proves
\eqref{eq:TVonepoint}.

Finally, take the initial state $u$. For all sufficiently large
$n$, the reverse triangle inequality and the covariance estimate
above give
\begin{align}
 \norm{\delta_uQ^n-\pi}_{\TV}
 &\ge2\Phi(t_n/2)-1-C_0\norm{M^n}^2\nonumber\\
 &\ge\frac{t_n}{2\sqrt{2\pi}}-C_0\norm{M^n}^2,
 \label{eq:TVlower}
\end{align}
where the second inequality uses
$2\Phi(t/2)-1\ge t/(2\sqrt{2\pi})$ for all sufficiently small
$t\ge0$. We showed above that $t_n^{1/n}\to\rho(M)$, whereas
Gelfand's formula gives
$\bigl(C_0\norm{M^n}^2\bigr)^{1/n}\to\rho(M)^2<\rho(M)$.
Consequently, the right-hand side of \eqref{eq:TVlower} is
positive for all sufficiently large $n$, and its $n$th root
converges to $\rho(M)$. Thus \eqref{eq:TVlower} gives
$\liminf_{n\to\infty}\norm{\delta_uQ^n-\pi}_{\TV}^{1/n}
\ge\rho(M)$. Combined with \eqref{eq:TVonepoint}, this proves
that the limit exists and equals $\rho(M)$ for the initial state
$u$.
\end{proof}

Beyond the quadratic case treated in this section and its extensions in Section~\ref{sec:BAOAB}, Lemma~\ref{lem:gaussianTVrate} has two further applications, given in the appendices. Applied to the left-endpoint exponential integrator, it shows that every fixed choice of step size and friction is either unstable on a Gaussian target with curvature in $[1,\kappa]$, or admits such a target and a deterministic initial state whose asymptotic total variation contraction factor is at least $1-64/\kappa$ (Proposition~\ref{prop:exp-euler-gaussian}). Applied to the kinetic Langevin diffusion \eqref{eq:KLD} on a Gaussian target with smallest curvature $K$, it shows that, for every friction parameter, some deterministic initial state has asymptotic total variation decay exponent at most $\sqrt K$, and that the resulting worst-case exponent is exactly $\sqrt K$ at critical friction (Proposition~\ref{prop:diffusion-coldstart}).

Recall from \eqref{eq:hbparamsintro} that $q_\kappa=(1-C_\star/\kappa)/(1+C_\star/\kappa)$, the nonaccelerated rate appearing in case \emph{(Q)} of Theorem~\ref{thm:dichotomy}. The next proposition establishes this case: if the worst-case spectral radius $\rhoq(s,\beta;\kappa)$ from \eqref{eq:A_lambda_intro} is at least $q_\kappa$, then Lemma~\ref{lem:gaussianTVrate}, applied to the OBABO chain on a Gaussian target whose phase matrix attains this spectral radius, yields two initial states satisfying \eqref{eq:quadrootrateintro}.

\begin{proposition}\label{prop:quadraticbranch}
Assume the setting of Theorem~\ref{thm:dichotomy}. If $\rhoq(s,\beta;\kappa)\ge q_\kappa$, then case \emph{(Q)} of Theorem~\ref{thm:dichotomy} holds.
\end{proposition}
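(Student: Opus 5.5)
The argument applies Lemma~\ref{lem:gaussianTVrate} directly to the OBABO chain on a well-chosen two-dimensional Gaussian target. The map $\lambda\mapsto\rho(A_\lambda(s,\beta))$ is continuous on the compact interval $[1,\kappa]$, since the eigenvalues of a $2\times2$ matrix depend continuously on its entries. Hence the maximum defining $\rhoq(s,\beta;\kappa)$ is attained at some $\lambda\in[1,\kappa]$, and we fix such a $\lambda$ and take $U_\lambda(x)=\lambda\abs x^2/2$ on $\R^2$.

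On $U_\lambda$ the OBABO update \eqref{eq:obabo1}-\eqref{eq:obabo5} is affine with additive Gaussian noise. Order the phase-space coordinates as the two pairs $(x^{(i)},v^{(i)})$, $i=1,2$. The chain is then $Y_{n+1}=\mathsf M Y_n+\mathsf G\xi_{n+1}$ on $\R^4$, with $\mathsf M=\Id_2\otimes M_\lambda$ and $\mathsf G=\Id_2\otimes N$. Here $M_\lambda$ is as in \eqref{eq:Mlambda}, and $N$ is the $2\times2$ noise matrix in the proof of Lemma~\ref{lem:quadratic-stability}, with $\kappa$ replaced by $\lambda$. By \eqref{eq:rhosame},
\[
\rho(\mathsf M)=\rho(M_\lambda)=\rho(A_\lambda(s,\beta))=\rhoq(s,\beta;\kappa).
\]

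Next I check the hypotheses of Lemma~\ref{lem:gaussianTVrate}. First, since $h<2/\sqrt\kappa$, Lemma~\ref{lem:quadratic-stability} gives $\rho(A_\lambda(s,\beta))<1$, so $\rho(\mathsf M)<1$. Second, $\det N=h\sigma^2>0$ because $\sigma^2=1-\beta>0$ for $\gamma,h>0$. Therefore $\mathsf G\mathsf G^\top\succ0$, and the transition covariance is nonsingular with $n_0=1$.

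The lemma then yields a unique invariant law $\pi_\lambda$, together with two initial states $z,\widetilde z\in\R^4$ for which \eqref{eq:TVrootM} holds with limit $\rho(\mathsf M)=\rhoq(s,\beta;\kappa)$. The assumed inequality $\rhoq(s,\beta;\kappa)\ge q_\kappa$ completes \eqref{eq:quadrootrateintro}.

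The only point needing care is the identification of the phase-space iteration matrix with the block-diagonal $\mathsf M$ and its spectral radius. This follows from the coordinatewise decoupling already used in the proof of Lemma~\ref{lem:quadratic-stability}, together with the equality of characteristic polynomials \eqref{eq:charpoly}, so I expect no real obstacle.
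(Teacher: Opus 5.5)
Your proposal is correct and follows essentially the same route as the paper. The paper likewise picks a maximizing $\lambda$, identifies the phase matrix as two copies of $M_\lambda$ with spectral radius $\rhoq(s,\beta;\kappa)$ via \eqref{eq:rhosame}, verifies that the one-step noise covariance is nonsingular (it phrases this as the triangular structure of the noise map with diagonal blocks $h\sigma\Id$ and $\sigma\Id$, which is your $\det N=h\sigma^2>0$), and applies Lemma~\ref{lem:gaussianTVrate} with $n_0=1$.
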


\begin{proof}
Choose $\lambda\in[1,\kappa]$ attaining the maximum in \eqref{eq:A_lambda_intro}. For $U_\lambda(x)=\lambda\abs{x}^2/2$ on $\R^2$, the OBABO chain is a Gaussian autoregressive process. Let $\cM_\lambda$ denote its phase matrix. Up to a permutation of coordinates, $\cM_\lambda=M_\lambda\oplus M_\lambda$ with $M_\lambda$ from \eqref{eq:Mlambda}, and hence, by \eqref{eq:rhosame},
\begin{equation*}
 \rho(\cM_\lambda)=\rho(M_\lambda)=\rho(A_\lambda(s,\beta))=\rhoq(s,\beta;\kappa)\ge q_\kappa.
\end{equation*}
The one-step noise covariance is nonsingular because the map from the Gaussian variables $(\xi^{(1)},\xi^{(2)})$ to the state in \eqref{eq:obabo1}-\eqref{eq:obabo5} is triangular with diagonal blocks $h\sigma\Id$ and $\sigma\Id$. Numerical stability gives $\rho(\cM_\lambda)<1$. Lemma~\ref{lem:gaussianTVrate}, applied with $M=\cM_\lambda$, $N=N'=4$, and $n_0=1$, gives a unique invariant law and initial states $z,\widetilde z\in\R^4$ with
\begin{equation*}
 \lim_{n\to\infty}
 \norm{\delta_z(Q_{h,\gamma}^{U_\lambda})^n-\delta_{\widetilde z}(Q_{h,\gamma}^{U_\lambda})^n}_{\TV}^{1/n}
 =\rho(\cM_\lambda)=\rhoq(s,\beta;\kappa)\ge q_\kappa,
\end{equation*}
which is \eqref{eq:quadrootrateintro}.
\end{proof}

For comparison, applying Lemma~\ref{lem:gaussianTVrate} under the optimal quadratic tuning recovers the accelerated Gaussian benchmark.

\begin{corollary}[Optimal Gaussian benchmark]\label{cor:optimalGaussian}
Let $\kappa>1$ and set
\begin{equation*}
 h_*=\frac{2}{\sqrt{\kappa+1}},
 \qquad
 \gamma_*=\sqrt{\kappa+1}\,\log\frac{\sqrt\kappa+1}{\sqrt\kappa-1},
\end{equation*}
so that
\begin{equation*}
 \beta_*=e^{-\gamma_*h_*}=\left(\frac{\sqrt\kappa-1}{\sqrt\kappa+1}\right)^2,
 \qquad
 s_*=\frac{h_*^2}{2}(1+\beta_*)=\frac{4}{(\sqrt\kappa+1)^2}.
\end{equation*}
Let $U_H(x)=\frac12x^\top Hx$ on $\R^d$ with $\Id_d\preceq H\preceq\kappa\Id_d$, and write $Q=Q_{h_*,\gamma_*}^{U_H}$. Then $Q$ has a unique invariant law $\pi_H$, and $\pi_H$ is a centered Gaussian. For every initial state $z\in\R^d\times\R^d$,
\begin{equation*}
 \limsup_{n\to\infty}\norm{\delta_zQ^n-\pi_H}_{\TV}^{1/n}
 \le\sqrt{\beta_*}=\frac{\sqrt\kappa-1}{\sqrt\kappa+1},
\end{equation*}
and for suitable $z$ the limit exists and equals $\sqrt{\beta_*}$. Moreover, there are initial states $z,\widetilde z$ with
\begin{equation*}
 \lim_{n\to\infty}\norm{\delta_zQ^n-\delta_{\widetilde z}Q^n}_{\TV}^{1/n}
 =\sqrt{\beta_*}.
\end{equation*}
\end{corollary}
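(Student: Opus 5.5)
The plan is to reduce the corollary to Lemma~\ref{lem:gaussianTVrate} applied to the phase-space autoregression of OBABO on $U_H$. Two facts have to be checked along the way: the parameters satisfy $s_*=h_*^2(1+\beta_*)/2$ as displayed, and the maximum over $\lambda$ of the companion spectral radius equals $\sqrt{\beta_*}$. The identities $\beta_*=e^{-\gamma_*h_*}$ and $s_*=4/(\sqrt\kappa+1)^2$ are direct substitutions. For the second one, note that $h_*^2/2=2/(\kappa+1)$ and $1+\beta_*=2(\kappa+1)/(\sqrt\kappa+1)^2$.

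First, diagonalize. Write $H=O^\top\Lambda O$ with eigenvalues $\lambda_1,\dots,\lambda_d\in[1,\kappa]$. The orthogonal change of variables $(x,v)\mapsto(Ox,Ov)$ preserves standard Gaussian noise and total variation. It splits the chain into $d$ independent two-dimensional autoregressions with phase matrices $M_{\lambda_i}$ from \eqref{eq:Mlambda}. So, up to a permutation, the full phase matrix is $\bigoplus_i M_{\lambda_i}$, and its spectral radius is $\max_i\rho(M_{\lambda_i})=\max_i\rho(A_{\lambda_i}(s_*,\beta_*))$ by \eqref{eq:rhosame}. The one-step noise covariance is nonsingular by the same triangular argument as in Proposition~\ref{prop:quadraticbranch}. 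Hence, once the spectral radius is shown to be below one, Lemma~\ref{lem:gaussianTVrate} with $n_0=1$ gives the unique centered Gaussian invariant law $\pi_H$, the limsup bound, attainment for a suitable $z$, and the two-point statement, all with $\rho(M)$ equal to that spectral radius.

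The main step is to show that $\rho(A_\lambda(s_*,\beta_*))=\sqrt{\beta_*}$ for every $\lambda\in[1,\kappa]$. The characteristic polynomial \eqref{eq:charpoly} has discriminant $(1+\beta_*-s_*\lambda)^2-4\beta_*$. Whenever $\abs{1+\beta_*-s_*\lambda}\le2\sqrt{\beta_*}$, the roots are complex conjugate or a double real root, and both have modulus $\sqrt{\det}=\sqrt{\beta_*}$.

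It therefore suffices to check this condition at the endpoints $\lambda=1$ and $\lambda=\kappa$, since the expression is affine in $\lambda$. Set $a=\sqrt\kappa$. Then $1+\beta_*=2(a^2+1)/(a+1)^2$ and $2\sqrt{\beta_*}=2(a-1)/(a+1)$.
At $\lambda=1$: $1+\beta_*-s_*=(2a^2-2)/(a+1)^2=2(a-1)/(a+1)$.
At $\lambda=\kappa$: $1+\beta_*-s_*\kappa=(2-2a^2)/(a+1)^2=-2(a-1)/(a+1)$.
The endpoint values are exactly $\pm2\sqrt{\beta_*}$. This gives equality of the moduli on the whole interval and the value $\sqrt{\beta_*}<1$.

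Thus the spectral radius in every block is $\sqrt{\beta_*}$, regardless of which eigenvalues $H$ has. Lemma~\ref{lem:gaussianTVrate} then yields all three assertions with rate $\sqrt{\beta_*}=(\sqrt\kappa-1)/(\sqrt\kappa+1)$. The only step needing care is the endpoint discriminant computation; the rest is bookkeeping.
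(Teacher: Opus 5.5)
Your proposal is correct and follows essentially the same route as the paper's proof. Both diagonalize $H$ orthogonally and check that the affine coefficient $1+\beta_*-s_*\lambda$ equals $\pm2\sqrt{\beta_*}$ at $\lambda=1,\kappa$, so that every block has spectral radius exactly $\sqrt{\beta_*}<1$. Both then conclude by Lemma~\ref{lem:gaussianTVrate} with $n_0=1$, using the triangular nonsingularity of the one-step noise.
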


\begin{proof}
Since $\gamma_*h_*=2\log\frac{\sqrt\kappa+1}{\sqrt\kappa-1}$, the stated value of $\beta_*=e^{-\gamma_*h_*}$ holds, and $1+\beta_*=\frac{2(\kappa+1)}{(\sqrt\kappa+1)^2}$ gives $s_*=\frac{h_*^2}{2}(1+\beta_*)=\frac{4}{(\sqrt\kappa+1)^2}$. Since $0<h_*<2/\sqrt\kappa$, the tuning is numerically stable by Lemma~\ref{lem:quadratic-stability}.

Write $H=O\Lambda O^\top$ with $O$ orthogonal and $\Lambda$ diagonal with entries $\lambda_1,\dots,\lambda_d\in[1,\kappa]$. The change of variables $(x,v)\mapsto(O^\top x,O^\top v)$ transforms the chain into $d$ independent two-dimensional chains with phase matrices $M_{\lambda_i}$ from \eqref{eq:Mlambda}, so the chain is a Gaussian autoregressive process $Z_{n+1}=M_HZ_n+G\xi_{n+1}$ with $M_H$ orthogonally similar to the direct sum of the $M_{\lambda_i}$. The coefficient $1+\beta_*-s_*\lambda$ in \eqref{eq:charpoly} is affine in $\lambda$, equal to $2\sqrt{\beta_*}$ at $\lambda=1$ and to $-2\sqrt{\beta_*}$ at $\lambda=\kappa$; hence $\abs{1+\beta_*-s_*\lambda}\le2\sqrt{\beta_*}$ for every $\lambda\in[1,\kappa]$, and both roots of \eqref{eq:charpoly} have modulus $\sqrt{\beta_*}$: real and repeated at the endpoints, a complex conjugate pair for $\lambda\in(1,\kappa)$. Every eigenvalue of $M_H$ therefore has modulus $\sqrt{\beta_*}$, and $\rho(M_H)=\sqrt{\beta_*}<1$.

The one-step noise covariance is nonsingular, as in the proof of Proposition~\ref{prop:quadraticbranch}. All conclusions now follow from Lemma~\ref{lem:gaussianTVrate}, applied with $M=M_H$ and $n_0=1$: the chain has a unique invariant law $\pi_H$, a centered Gaussian; \eqref{eq:TVonepoint} gives the upper bound, with $\rho(M_H)=\sqrt{\beta_*}$, for every initial state, and equality as a limit for a suitable initial state; and \eqref{eq:TVrootM} provides the pair of initial states.
\end{proof}

\subsection{Metastability of noisy heavy-ball recursions}\label{sec:metastability-lemma}

Lemma~\ref{lem:metastability} gives the confinement and separation estimates used in the cycling case. It applies to an abstract noisy heavy-ball recursion and uses the noise variables only through one-step tail bounds; in particular, they need not be independent. Section~\ref{sec:transfer} applies it to OBABO, and Section~\ref{sec:BAOAB} applies it to the remaining Strang splittings.

The underlying small-noise mechanism is classical: stable deterministic attractors can persist for exponentially long times under small random perturbations; see \citep[Chapter~6]{FreidlinWentzell2012}. Discrete-time versions were developed by Kifer \citep{Kifer1990}; see also Faure and Schreiber \citep{FaureSchreiber2014} for randomly perturbed maps near stable periodic orbits. The contribution here is the explicit confinement and total variation separation estimates needed for the mixing time lower bounds. Figure~\ref{fig:transfer-construction} illustrates the argument. The assumptions and notation of Lemma~\ref{lem:metastability} follow.

\begin{figure}[t]
\centering
\begin{tikzpicture}[>=stealth]
 \draw[densely dotted,gray,semithick] (0,0) circle (2.3);
 \fill (0,0) circle (1.2pt);
 \node[below right] at (0,0) {$0$};
 \draw[->,gray] (0,0) -- (205:2.3);
 \node[gray] at (213:1.35) {$R$};
 \foreach \a in {0,60,...,300} {
   \fill[gray!15] (\a:2.3) circle (0.45);
   \draw[gray!70] (\a:2.3) circle (0.45);
 }
 \foreach \a in {0,60,...,300} \fill (\a:2.3) circle (1.4pt);
 \node[right=4pt] at (0:2.3) {$Rx_0^\circ$};
 \node[above right=2pt] at (60:2.3) {$Rx_1^\circ$};
 \node[above left=2pt] at (120:2.3) {$Rx_2^\circ$};
 \draw[dashed,black,thick] (60:2.3) circle (0.9);
 \draw[->] (60:2.3) -- +(105:0.9);
 \node[font=\scriptsize] at (75:3.30) {$r_0R$};
 \draw[<->] (300:2.3) -- +(-60:0.45);
 \node[font=\scriptsize] at (1.66,-2.50) {$aR/2$};
 \draw[->,thin,gray] (2.300,0) -- (1.000,2.052);
 \draw[->,thin,gray] (1.000,2.052) -- (-1.010,1.892);
 \draw[->,thin,gray] (-1.010,1.892) -- (-2.200,-0.120);
 \draw[->,thin,gray] (-2.200,-0.120) -- (-1.270,-2.052);
 \draw[->,thin,gray] (-1.270,-2.052) -- (1.100,-1.842);
 \draw[->,thin,gray] (1.100,-1.842) -- (2.160,0.100);
 \foreach \p in {(1.000,2.052),(-1.010,1.892),(-2.200,-0.120),(-1.270,-2.052),(1.100,-1.842),(2.160,0.100)}
   \draw[fill=white] \p circle (1.3pt);
 \node[font=\scriptsize, left=3pt] at (-2.200,-0.120) {$Y_3$};
\end{tikzpicture}
\caption{Schematic of Lemma~\ref{lem:metastability} for a six-point
roots-of-unity cycle. The gray dotted circle has radius $R$ and
passes through the dilated cycle points $Rx_j^\circ$. The black
dashed circle centered at $Rx_1^\circ$ is the boundary of the
ball
$B(Rx_1^\circ,r_0R)$, on which the gradient of $U_R$ has the
linear form specified in \eqref{eq:transfer-local}. The shaded
balls have radius $aR/2$, and the open dots are the noisy
iterates. On the event $\max_{i\le n}\abs{\zeta_i}\le\delta'R$,
each iterate remains in the shaded ball around the corresponding
cycle point. The balls around consecutive cycle points are
disjoint, giving the total variation separation in
Lemma~\ref{lem:metastability}(ii). For OBABO, this event fails by
time $n$ with probability at most $2ne^{-c_0R^2}$.}
\label{fig:transfer-construction}
\end{figure}

\emph{Cycle and linear stability.} Fix $s>0$, $\beta\in(0,1)$, and
integers $d\ge1$ and $m\ge2$, and let $U:\R^d\to\R$ be
differentiable.

\begin{assumption}[Periodic orbit and local gradient identity]\label{ass:cycle}
There exist an $m$-periodic sequence $(x_j^\circ)_{j\in\mathbb Z}$
in $\R^d$ with $x_0^\circ,\dots,x_{m-1}^\circ$ distinct, an
$m$-periodic sequence $(H_j)_{j\in\mathbb Z}$ of symmetric
$d\times d$ matrices, and a constant $r_0>0$ such that, for every
$j\in\mathbb Z$,
\begin{equation}\label{eq:transfer-cycle}
 x_{j+1}^\circ
 =(1+\beta)x_j^\circ-\beta x_{j-1}^\circ
 -s\nabla U(x_j^\circ)
\end{equation}
and
\begin{equation}\label{eq:transfer-local}
 \nabla U(x_j^\circ+u)
 =\nabla U(x_j^\circ)+H_ju,
 \qquad u\in\R^d,\ \abs u\le r_0.
\end{equation}
\end{assumption}

Fix points $(x_j^\circ)_{j\in\mathbb Z}$, matrices
$(H_j)_{j\in\mathbb Z}$, and a constant $r_0>0$ as in
Assumption~\ref{ass:cycle}; indices are understood modulo $m$.

The matrices $H_j$ in \eqref{eq:transfer-local} may vary from one cycle point to another. In the construction used for the main results, $H_j=\Id_d$ at every cycle point, by Proposition~\ref{prop:smoothcycle}. In the explicit instance of Section~\ref{sec:LRP}, which is one-dimensional, $H_j=25$ at every cycle point.

Define
\begin{equation*}
 \mathsf A_j=
 \begin{pmatrix}(1+\beta)\Id_d-sH_j&\;-\beta\Id_d\\
                 \Id_d&\;0\end{pmatrix},
 \qquad
 \mathsf B=\begin{pmatrix}\Id_d\\0\end{pmatrix}.
\end{equation*}
These matrices govern deviations from the cycle; the corresponding
noisy error recursion is derived in the proof of
Lemma~\ref{lem:metastability}.
\begin{assumption}[Spectral radius condition]\label{ass:monodromy}
The matrices $\mathsf A_j$ satisfy
\begin{equation}\label{eq:monodromy-general}
 \rho\bigl(\mathsf A_{m-1}\cdots\mathsf A_0\bigr)<1.
\end{equation}
\end{assumption}

Because $\beta>0$, every $\mathsf A_j$ is invertible. Writing
$\mathsf M_j=\mathsf A_{j+m-1}\cdots\mathsf A_j$ for the
full-cycle products, periodicity gives
$\mathsf M_{j+1}=\mathsf A_j\mathsf M_j\mathsf A_j^{-1}$. The
full-cycle products are therefore similar and have a common
spectral radius, so \eqref{eq:monodromy-general} gives
$\rho(\mathsf M_j)<1$ for every $j\in\mathbb Z$. For $j\in\mathbb Z$ and $0\le\ell<k$, define
\begin{equation*}
 \Psi_j(k,\ell)
 =\mathsf A_{j+k-1}\cdots\mathsf A_{j+\ell},
 \qquad
 \Psi_j(\ell,\ell)=\Id_{2d}.
\end{equation*}
These products decay geometrically. Indeed, write $k-\ell=pm+r$
with integers $p\ge0$ and $0\le r<m$. Grouping the factors of
$\Psi_j(k,\ell)$ into $p$ complete periods and a remainder, and
using $\mathsf A_{i+m}=\mathsf A_i$, gives
\begin{equation*}
 \Psi_j(k,\ell)
 =\Psi_j(k,\ell+pm)\,\mathsf M_{j+\ell}^{\,p},
\end{equation*}
where $\Psi_j(k,\ell+pm)$ is a product of $r$ consecutive
matrices $\mathsf A_i$. Up to periodicity there are only $m$
full-cycle products and finitely many products of fewer than $m$
consecutive matrices $\mathsf A_i$; since every full-cycle
product has spectral radius less than one, Gelfand's formula
gives constants
$C\ge1$ and $q\in(0,1)$ such that
$\norm{\Psi_j(k,\ell)}\le Cq^{k-\ell}$ for all
$j\in\mathbb Z$ and all $0\le\ell\le k$. Fix such a pair
$(C,q)$. In particular,
\begin{equation}\label{eq:impulsegain-general}
 G_0:=\max_{0\le j<m}\sup_{k\ge1}
 \sum_{i=1}^k\norm{\Psi_j(k,i)\mathsf B}
 \le\frac{C}{1-q}<\infty.
\end{equation}
Set
\begin{equation}\label{eq:tuberadius-general}
 d_\circ=\min_{0\le j<m}\abs{x_{j+1}^\circ-x_j^\circ}>0,
 \qquad
 a=\frac14\min\{r_0,d_\circ\}.
\end{equation}

\emph{Noisy recursion.} Fix $R\ge1$, $j\in\mathbb Z$, an
integer $n\ge1$, and constants $\varepsilon_0\ge0$ and
$\delta'>0$, and set $U_R(x)=R^2U(x/R)$.

Let $(Y_k)_{k\ge-1}$ and $(\zeta_i)_{i\ge1}$ be $\R^d$-valued
random sequences, defined on a common probability space,
satisfying
\begin{equation}\label{eq:abstract-recursion}
 Y_{k+1}=(1+\beta)Y_k-\beta Y_{k-1}-s\nabla U_R(Y_k)+\zeta_{k+1},
 \qquad k\ge0.
\end{equation}

Define the initial
error $E_0=(Y_0-Rx_j^\circ,\;Y_{-1}-Rx_{j-1}^\circ)$ and the
event
\begin{equation}\label{eq:good-general}
 \mathcal G=\bigl\{\abs{E_0}\le\varepsilon_0R\bigr\}\cap
 \Bigl\{\max_{1\le i\le n}\abs{\zeta_i}\le\delta'R\Bigr\}.
\end{equation}
The sequence $(Y_k)_{k\ge-1}$ and the event $\mathcal G$ depend
on $j$; when two values of $j$ are compared, they are denoted
$(Y_k^{(j)})_{k\ge-1}$ and $\mathcal G_j$.

\begin{lemma}[Metastability of noisy heavy-ball recursions]\label{lem:metastability}
Suppose Assumptions~\ref{ass:cycle} and~\ref{ass:monodromy}
hold, and let $R\ge1$, $j\in\mathbb Z$, $n\ge1$,
$\varepsilon_0\ge0$, $\delta'>0$, the sequences
$(Y_k)_{k\ge-1}$ and $(\zeta_i)_{i\ge1}$ satisfying the
recursion \eqref{eq:abstract-recursion}, and the event
$\mathcal G$ in \eqref{eq:good-general} be as above. Assume
\begin{equation}\label{eq:smallness-general}
 C\varepsilon_0+G_0\delta'\le\frac{a}{2},
\end{equation}
where $C$ is the constant fixed after
Assumption~\ref{ass:monodromy}, and $G_0$ and $a$ are defined in
\eqref{eq:impulsegain-general} and \eqref{eq:tuberadius-general}.
Then the following conclusions hold.
\begin{enumerate}
\item[\rm(i)] On $\mathcal G$, $\abs{Y_k-Rx_{j+k}^\circ}\le aR/2$ for $0\le k\le n$.
\item[\rm(ii)] Let $(Y_k^{(j)})$ and $(Y_k^{(j+1)})$ be sequences as above with consecutive indices $j$ and $j+1$. Then
\begin{equation*}
 \norm{\Law(Y_n^{(j)})-\Law(Y_n^{(j+1)})}_{\TV}
 \ge1-\Pp(\mathcal G_j^c)-\Pp(\mathcal G_{j+1}^c).
\end{equation*}
The same lower bound holds for the laws of random elements $Z_n^{(j)}$ and $Z_n^{(j+1)}$ if there is a common measurable map $f$ such that $Y_n^{(r)}=f(Z_n^{(r)})$ for $r\in\{j,j+1\}$.
\item[\rm(iii)] Suppose $E_0=0$ and
\begin{equation*}
 \sup_{i\ge1}\Pp\bigl(\abs{\zeta_i}>\delta'R\bigr)\le Ae^{-bR^2},
\end{equation*}
where $A,b>0$ are independent of $R$, $n$, and $i$. Then $\Pp(\mathcal G^c)\le Ane^{-bR^2}$. If the sequences with consecutive indices $j$ and $j+1$ both satisfy these assumptions, then
\begin{equation*}
 \norm{\Law(Y_n^{(j)})-\Law(Y_n^{(j+1)})}_{\TV}\ge1-2Ane^{-bR^2}.
\end{equation*}
\end{enumerate}
\end{lemma}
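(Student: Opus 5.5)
The plan is to prove (i) by a bootstrap (continuity/induction) argument on the error recursion around the dilated cycle, and then derive (ii) and (iii) from (i) by disjointness of tubes and a union bound.

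\emph{Error recursion.} Set $e_k=Y_k-Rx_{j+k}^\circ$ and $E_k=(e_k,e_{k-1})\in\R^{2d}$. Since $\nabla U_R(y)=R\nabla U(y/R)$, the identity \eqref{eq:transfer-local} scales to
\[
\nabla U_R(Rx_i^\circ+w)=R\nabla U(x_i^\circ)+H_iw
\qquad\text{for }\abs w\le r_0R .
\]
Multiplying the cycle equation \eqref{eq:transfer-cycle} by $R$ and subtracting it from \eqref{eq:abstract-recursion} gives, as long as $\abs{e_k}\le r_0R$,
\[
E_{k+1}=\mathsf A_{j+k}E_k+\mathsf B\zeta_{k+1}.
\]
Iterating yields
\[
E_k=\Psi_j(k,0)E_0+\sum_{i=1}^k\Psi_j(k,i)\mathsf B\zeta_i .
\]
The products are only defined for $j$ in the fundamental range, but $\Psi_j$ depends on $j$ modulo $m$, so the bounds $\norm{\Psi_j(k,\ell)}\le Cq^{k-\ell}$ and \eqref{eq:impulsegain-general} apply.

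\emph{Proof of (i).} Argue by induction on $k\le n$ with the hypothesis $\abs{e_i}\le aR/2$ for all $i\le k$; note that $\abs{e_0}\le\abs{E_0}\le\varepsilon_0R\le aR/2$ by \eqref{eq:smallness-general}, since $C\ge1$. If the hypothesis holds up to $k$, then $\abs{e_i}\le aR/2<r_0R$, so the linear recursion is valid for the steps $0,\dots,k$. Hence
\[
\abs{e_{k+1}}\le\abs{E_{k+1}}\le Cq^{k+1}\varepsilon_0R+G_0\delta'R\le\frac{aR}{2}.
\]
The one point to watch is that linearity at step $k$ only requires $\abs{e_k}\le r_0R$, which the hypothesis provides. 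This closes the induction on $\mathcal G$.

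\emph{Proof of (ii).} By (i), on $\mathcal G_j$ we have $Y_n^{(j)}\in B(Rx_{j+n}^\circ,aR/2)$, and on $\mathcal G_{j+1}$ we have $Y_n^{(j+1)}\in B(Rx_{j+n+1}^\circ,aR/2)$. Since $a\le d_\circ/4$, these balls are disjoint: their centers are at distance at least $d_\circ R\ge 4aR>aR$. Take $A$ to be the first ball. Then
\[
\Pp(Y_n^{(j)}\in A)-\Pp(Y_n^{(j+1)}\in A)\ge 1-\Pp(\mathcal G_j^c)-\Pp(\mathcal G_{j+1}^c).
\]
For the statement about $Z_n^{(r)}$, use the set $f^{-1}(A)$ instead; equivalently, total variation does not increase under pushforward by $f$.

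\emph{Proof of (iii).} When $E_0=0$ the first event in \eqref{eq:good-general} holds surely. A union bound over $i\le n$ then gives $\Pp(\mathcal G^c)\le Ane^{-bR^2}$, and inserting this into (ii) gives the stated bound.

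The main obstacle is the bootstrap in (i): confirming that the local affine identity is legitimately available at every step before confinement has been established. The strict inequality $a/2<r_0$ handles this.
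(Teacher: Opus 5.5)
Your proposal is correct and follows the paper's proof essentially step for step. The paper also proves (i) by induction, justifying the local affine identity at each step from $aR/2<r_0R$ and then iterating the error recursion with the bounds $C$ and $G_0$. It proves (ii) with the same pair of disjoint balls of radius $aR/2$ and the pushforward remark, and (iii) with the same union bound. One small correction: the paper defines $\Psi_j$ and its geometric bound for every $j\in\mathbb Z$, so your caveat about a fundamental range is unnecessary, though it does no harm.
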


To apply Lemma~\ref{lem:metastability}, one verifies
Assumptions~\ref{ass:cycle} and~\ref{ass:monodromy}, represents
the process under study in the form \eqref{eq:abstract-recursion},
and chooses $\varepsilon_0$ and $\delta'$ so that
\eqref{eq:smallness-general} holds; part (iii)
additionally requires $E_0=0$ and the stated tail bound on the
noise variables. The constants $C$, $q$, $G_0$, and $a$ are then
fixed as above, while $n$, $\varepsilon_0$, and $\delta'$ remain
free subject to \eqref{eq:smallness-general}.

\begin{proof}
Let $e_k=Y_k-Rx_{j+k}^\circ$ and $E_k=(e_k,e_{k-1})$, and work on
the event $\mathcal G$. We prove (i) by induction on $k$: for
every $k\in\{0,\dots,n\}$, we show that $\abs{e_i}\le aR/2$
for all $0\le i\le k$. For the base
case $k=0$, the bound $\abs{E_0}\le\varepsilon_0R$ on
$\mathcal G$, the inequality \eqref{eq:smallness-general}, and
$C\ge1$ give $\abs{e_0}\le\varepsilon_0R\le aR/2$.

Let $k<n$ and suppose that $\abs{e_i}\le aR/2$ for all
$0\le i\le k$; we show the same bound for $e_{k+1}$. Since
$U_R(x)=R^2U(x/R)$, the gradients satisfy
$\nabla U_R(Rx)=R\nabla U(x)$, so \eqref{eq:transfer-local}
dilates to
\begin{equation*}
 \nabla U_R(Rx_i^\circ+u)
 =\nabla U_R(Rx_i^\circ)+H_iu,
 \qquad \abs u\le r_0R,\quad i\in\mathbb Z.
\end{equation*}
For $0\le i\le k$ we have $\abs{e_i}\le aR/2<r_0R$, so this
identity with the index $j+i$ and $u=e_i$ gives
$\nabla U_R(Y_i)=\nabla U_R(Rx_{j+i}^\circ)+H_{j+i}e_i$.
Subtracting \eqref{eq:transfer-cycle}, multiplied by $R$, from
\eqref{eq:abstract-recursion} therefore gives the error recursion
\begin{equation*}
 E_{i+1}=\mathsf A_{j+i}E_i+\mathsf B\zeta_{i+1},
 \qquad 0\le i\le k.
\end{equation*}
Iterating this recursion from $i=0$ to $i=k$ yields
$E_{k+1}=\Psi_j(k+1,0)E_0
+\sum_{i=1}^{k+1}\Psi_j(k+1,i)\mathsf B\zeta_i$, and hence,
on $\mathcal G$,
\begin{equation*}
 \abs{E_{k+1}}
 \le\norm{\Psi_j(k+1,0)}\abs{E_0}
 +\sum_{i=1}^{k+1}\norm{\Psi_j(k+1,i)\mathsf B}\abs{\zeta_i}
 \le\bigl(C\varepsilon_0+G_0\delta'\bigr)R\le\frac{aR}2,
\end{equation*}
using \eqref{eq:smallness-general} in the last step. In
particular $\abs{e_{k+1}}\le aR/2$, which completes the induction and proves (i).

For (ii), set
\begin{equation*}
 B_n=B(Rx_{j+n}^\circ,aR/2),
 \qquad
 \widetilde B_n=B(Rx_{j+n+1}^\circ,aR/2).
\end{equation*}
The distance between their centers is at least $d_\circ R$,
whereas the sum of their radii is $aR$. Since $4a\le d_\circ$ by
\eqref{eq:tuberadius-general}, the balls $B_n$ and
$\widetilde B_n$ are disjoint. Part (i),
applied at $j$ and $j+1$, consequently gives
\begin{align*}
 \norm{\Law(Y_n^{(j)})-\Law(Y_n^{(j+1)})}_{\TV}
 &\ge\Pp\bigl(Y_n^{(j)}\in B_n\bigr)
  -\Pp\bigl(Y_n^{(j+1)}\in B_n\bigr)\\
 &\ge1-\Pp(\mathcal G_j^c)-\Pp(\mathcal G_{j+1}^c).
\end{align*}
Applying a common measurable map cannot increase total variation
distance. Since the laws of the $Y_n^{(r)}$ are the images of the
laws of the $Z_n^{(r)}$ under $f$, the lower bound for the former
implies the same lower bound for the latter.

Finally, under the assumptions of (iii), the initial-error event
holds surely, and the union bound gives
\begin{equation*}
 \Pp(\mathcal G^c)
 \le\sum_{i=1}^n\Pp\bigl(\abs{\zeta_i}>\delta'R\bigr)
 \le Ane^{-bR^2}.
\end{equation*}
This calculation does not require the noise variables
$(\zeta_i)_{i\ge1}$ to be independent. Applying (ii) at $j$ and $j+1$ proves the final assertion.
\end{proof}

\subsection{From an attracting cycle to OBABO metastability}\label{sec:transfer}

Theorem~\ref{thm:transfer} below is the main stochastic ingredient of the cycling case. It applies Lemma~\ref{lem:metastability} to the OBABO positions, and it establishes existence, uniqueness, and moment estimates for the invariant law. Together, these results convert the separation between chains started near consecutive cycle points into metastability relative to the invariant law.

\begin{theorem}[Metastability from an attracting cycle]\label{thm:transfer}
Fix $\kappa>1$ and OBABO parameters $h,\gamma>0$ with $h<2/\sqrt\kappa$, and set $\beta=e^{-\gamma h}$, $s=h^2(1+\beta)/2$, and $\tau^2=h^2(1-\beta^2)$. Suppose $U\in\cU_\kappa^d$ for $d\in\{1,2\}$ satisfies Assumptions~\ref{ass:cycle} and~\ref{ass:monodromy} with these parameters $(s,\beta)$ and that, in addition, there are $\lambda_\ast\in[1,\kappa]$ and $b_\ast>0$ with
\begin{equation}\label{eq:transfer-bounded}
 \nabla U(x)=\lambda_\ast x+w(x),
 \qquad \sup_x\abs{w(x)}\le b_\ast.
\end{equation}
Then for $U_R(x)=R^2U(x/R)$ and $Q_R=Q_{h,\gamma}^{U_R}$ there are constants $c_0,C_0>0$, independent of $R$, such that $Q_R$ has a unique invariant law $\pi_R$ and, for every $R\ge1$ and every $j\in\mathbb Z$, there exist two initial states $z_R^{(j)},z_R^{(j+1)}\in\R^{2d}$, constructed explicitly in \eqref{eq:cyclepair-general}-\eqref{eq:cyclevelocity-general} below from the cycle points with indices $j$ and $j+1$, such that at least one of them, denoted by $z_R$, satisfies
\begin{equation*}
 W_2(\delta_{z_R},\pi_R)\le C_0(1+R),
\end{equation*}
and
\begin{equation*}
 \norm{\delta_{z_R}Q_R^n-\pi_R}_{\TV}\ge\frac38,
 \qquad
 0\le n\le N_R:=\left\lfloor\frac1{16}e^{c_0R^2}\right\rfloor.
\end{equation*}
\end{theorem}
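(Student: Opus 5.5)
The argument has three parts: existence and uniqueness of $\pi_R$ with a moment bound, separation of two explicit cold starts via Lemma~\ref{lem:metastability}, and a triangle-inequality step that turns this separation into distance from $\pi_R$.

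\emph{Invariant law and moments.} For existence and uniqueness I will use the strong convexity of $U_R$, since $U_R\in\cU_\kappa^d$ by the dilation identity $\nabla^2U_R(x)=\nabla^2U(x/R)$. The synchronous-coupling contraction of Leimkuhler, Paulin and Whalley, invoked as in Theorem~\ref{thm:upper}, gives Wasserstein contraction in a twisted metric for the stated $(h,\gamma)$. This yields a unique invariant law with finite second moment. If contraction is not available for all stable $(h,\gamma)$, I will instead use a Foster--Lyapunov drift built from \eqref{eq:transfer-bounded}. After dilation, $\nabla U_R(x)=\lambda_\ast x+Rw(x/R)$, so the chain is a Schur-stable linear Gaussian AR process, stable because $h<2/\sqrt\kappa$ and $\lambda_\ast\in[1,\kappa]$, perturbed by a bounded drift of size $\le sRb_\ast$. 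The Lyapunov function is a quadratic form $V(z)=z^\top Pz$, with $P$ solving the discrete Lyapunov equation for the phase matrix $M_{\lambda_\ast}$. It gives $\E V(Z_{n+1})\le(1-c)\E V(Z_n)+C(1+R)^2$. Combined with the nondegenerate Gaussian noise (a minorization on compacts), Harris' theorem then gives uniqueness and $\int\abs z^2\,d\pi_R\le C(1+R)^2$. Since $W_2(\delta_z,\pi_R)^2=\int\abs{z-y}^2\,d\pi_R(y)$, we get $W_2(\delta_{z},\pi_R)\le\abs z+C(1+R)$.

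\emph{Initial states and separation.} Following Proposition~\ref{prop:HB} with $\xi_0^{(2)}=0$, I set $z_R^{(j)}=(Rx_j^\circ,v_j)$, where
\[
v_j=r\Bigl(\tfrac{R(x_j^\circ-x_{j-1}^\circ)}h-\tfrac h2\nabla U_R(Rx_j^\circ)\Bigr).
\]
Then $X_{-1}=Rx_{j-1}^\circ$ and the initial error satisfies $E_0=0$. Both components are $O(R)$, so $\abs{z_R^{(j)}}\le C_0'R$, which gives the $W_2$ bound. The position process then satisfies \eqref{eq:abstract-recursion} with independent Gaussian $\zeta_i$ of covariance at most $\tau^2\Id_d$. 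A chi-square tail bound gives
\[
\Pp\bigl(\abs{\zeta_i}>\delta'R\bigr)\le 2e^{-\delta'^2R^2/(4\tau^2)}
\]
for $d\le2$. I choose $\varepsilon_0=0$ and $\delta'=a/(2G_0)$. Lemma~\ref{lem:metastability}(iii), with the phase state mapped to its position by a common map $f$, then gives
\[
\norm{\delta_{z^{(j)}}Q_R^n-\delta_{z^{(j+1)}}Q_R^n}_{\TV}\ge1-4ne^{-c_0R^2}.
\]
For $n\le N_R$ the right-hand side is at least $3/4$.

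\emph{Conclusion.} By the triangle inequality, the two distances $\norm{\delta_{z^{(j)}}Q_R^n-\pi_R}_{\TV}$ and $\norm{\delta_{z^{(j+1)}}Q_R^n-\pi_R}_{\TV}$ sum to at least $3/4$, so for each $n$ one of them is at least $3/8$. This choice may depend on $n$, whereas the statement asks for a single $z_R$ valid for all $n\le N_R$. To fix this, I will use a stronger version of the separation: by part (i), for every $n\le N_R$ the chain started at $z^{(j)}$ lies, with probability near $1$, in the ball around $Rx_{j+n}^\circ$. I take $B=\bigcup_k B(Rx_{k}^\circ,aR/2)$ over the cycle points of a fixed residue class. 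Separation then holds via $\pi_R$ of each ball. Summing $\pi_R$ over the $m$ balls around $Rx_0^\circ,\dots,Rx_{m-1}^\circ$, one residue $i$ has $\pi_R(B_i)\le 1/m\le1/3$. I then pick $z_R$ among $z^{(j)},z^{(j+1)}$ so that its orbit visits $B_i$ rarely. If exact uniformity across all $n$ fails this way, I will fall back on a pigeonhole argument over the $m$ phases.

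\emph{Main obstacle.} I expect the main difficulty to be the uniform-in-$R$ moment bound for $\pi_R$ under an arbitrary stable $(h,\gamma)$. The drift argument handles this via \eqref{eq:transfer-bounded}, but the perturbation constant must be tracked as $O(R)$.
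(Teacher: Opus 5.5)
Your proposal has a genuine gap at the final step, and the fix you sketch does not work. The missing idea is that $n\mapsto\norm{\delta_zQ_R^n-\pi_R}_{\TV}$ is nonincreasing for every $z$. Since $\pi_RQ_R=\pi_R$ and applying a Markov kernel cannot increase total variation,
\[
 \norm{\delta_zQ_R^{n+1}-\pi_R}_{\TV}
 =\norm{(\delta_zQ_R^{n}-\pi_R)Q_R}_{\TV}
 \le\norm{\delta_zQ_R^{n}-\pi_R}_{\TV}.
\]
You therefore need the triangle inequality only at the single time $n=N_R$. There your pair bound gives separation at least $3/4$, so one of $z_R^{(j)},z_R^{(j+1)}$ is at distance at least $3/8$ from $\pi_R$. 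Call it $z_R$. Monotonicity then carries this bound back to every $0\le n\le N_R$. This is exactly how the paper closes the proof, and it makes any $n$-dependent choice unnecessary.

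The ball/residue argument you propose instead fails as stated. Each of the two orbits sits near every tube $B(Rx_i^\circ,aR/2)$ once per period, so neither start ``visits $B_i$ rarely''. A tube with $\pi_R(B_i)\le1/m$ gives a lower bound only at the times $n$ with $j+n\equiv i$ modulo $m$. At all other times you would need control of $\pi_R(B_{j+n})$, which your argument does not provide; a priori a single tube could carry more than half of $\pi_R$. Moreover, $a\le d_\circ/4$ separates only balls around \emph{consecutive} cycle points, so under Assumption~\ref{ass:cycle} the $m$ balls need not be pairwise disjoint. Hence $\min_i\pi_R(B_i)\le1/m$ is not available in general, and the pigeonhole fallback is not an argument.

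Apart from this, your steps are sound. The initialization with $\xi_0^{(2)}=0$, the tail bound (with a weaker but valid constant), the use of Lemma~\ref{lem:metastability}(iii), and the $W_2$ bound all go through. You are right that the Leimkuhler--Paulin--Whalley contraction is unavailable here, since its step-size condition is much more restrictive than $h<2/\sqrt\kappa$.

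Your Foster--Lyapunov-plus-Harris route on phase space is a valid alternative to the paper's argument. The paper instead runs the drift on the position-pair chain, gets existence by Krylov--Bogoliubov, lifts to phase space with an independent Gaussian $\xi_0^{(2)}$, and proves uniqueness via strong Feller plus irreducibility. Your route is closer to Lemma~\ref{lem:drift}, provided you verify minorization on sublevel sets of $V$ from the positive continuous transition density.
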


\begin{proof}
With $a$ as in \eqref{eq:tuberadius-general} and $G_0$ as in \eqref{eq:impulsegain-general}, set
\begin{equation*}
 \delta=\frac{a}{2G_0},
 \qquad
 c_0=\frac{\delta^2}{2\tau^2}>0.
\end{equation*}
For fixed $j\in\mathbb Z$, initialize
\begin{equation}\label{eq:cyclepair-general}
 (X_{-1},X_0)=(Rx_{j-1}^\circ,Rx_j^\circ)
\end{equation}
and choose
\begin{equation}\label{eq:cyclevelocity-general}
 V_0=r\left(\frac{R(x_j^\circ-x_{j-1}^\circ)}h
 -\frac h2\nabla U_R(Rx_j^\circ)\right),
\end{equation}
where $r=e^{-\gamma h/2}$ as in \eqref{eq:r_eta_sigma}; this choice corresponds to $\xi_0^{(2)}=0$ in \eqref{eq:vpairrelation}. Write $z_R^{(j)}=(Rx_j^\circ,V_0)$ for the resulting initial state of the OBABO chain. By Proposition~\ref{prop:HB}, the positions $(X_k)_{k\ge-1}$ of the chain started at $z_R^{(j)}$ satisfy the recursion \eqref{eq:abstract-recursion} with $Y_k=X_k$ and $E_0=0$, and the noise variables are $\zeta_i\sim\cN(0,\tau^2\Id_d)$ for $i\ge2$ and $\zeta_1\sim\cN(0,h^2(1-\beta)\Id_d)$, with $h^2(1-\beta)\le\tau^2$.

We bound the noise tails. Let $Z\sim\cN(0,\Id_d)$. For $d=2$, the variable $\abs Z^2$ has the chi-squared distribution with two degrees of freedom, that is, the exponential distribution with mean two, so $\Pp(\abs Z>t)=e^{-t^2/2}$ for $t\ge0$. For $d=1$, the Chernoff bound gives $\Pp(\abs Z>t)\le2e^{-t^2/2}$ for $t\ge0$. Since $\zeta_i$ has the law of $\sigma_iZ$ with $\sigma_i\le\tau$, in both cases
\begin{equation*}
 \Pp\bigl(\abs{\zeta_i}>\delta R\bigr)
 \le2e^{-\delta^2R^2/(2\tau^2)}
 =2e^{-c_0R^2},
 \qquad i\ge1.
\end{equation*}
Lemma~\ref{lem:metastability}(iii), applied with $\varepsilon_0=0$, $\delta'=\delta$, $A=2$, and $b=c_0$, therefore gives the lower bound $1-4ne^{-c_0R^2}$ for the total variation distance between the laws of the positions at time $n$ of the chains started at $z_R^{(j)}$ and $z_R^{(j+1)}$. Since the positions are a measurable function of the state, the final assertion of Lemma~\ref{lem:metastability}(ii) then gives
\begin{equation}\label{eq:pairTV-general}
 \norm{\delta_{z_R^{(j)}}Q_R^n-
       \delta_{z_R^{(j+1)}}Q_R^n}_{\TV}
 \ge1-4ne^{-c_0R^2}.
\end{equation}
At $N_R=\lfloor e^{c_0R^2}/16\rfloor$ the right-hand side is at least $3/4$.

It remains to establish existence, uniqueness, and a quadratic moment bound for the invariant law. By \eqref{eq:transfer-bounded},
\begin{equation}\label{eq:dilatedremainder-general}
 \nabla U_R(x)=\lambda_\ast x+Rw(x/R),
 \qquad \sup_x\abs{Rw(x/R)}\le b_\ast R.
\end{equation}
For the invariant-law argument, consider the homogeneous
position-pair chain $S_k=(X_k,X_{k-1})$ satisfying
\begin{equation}\label{eq:pair-general}
 S_{k+1}=\mathsf A_\ast S_k+F_R(S_k)+\mathsf B\zeta_{k+1},
 \qquad \sup_y\abs{F_R(y)}\le sb_\ast R,
\end{equation}
where $(\zeta_k)_{k\ge1}$ are i.i.d.\ $\cN(0,\tau^2\Id_d)$
variables independent of $S_0$ and, for
$y=(y_1,y_2)\in\R^d\times\R^d$,
\begin{equation*}
 \mathsf A_\ast=
 \begin{pmatrix}(1+\beta-s\lambda_\ast)\Id_d&\;-\beta\Id_d\\
                 \Id_d&\;0\end{pmatrix},
 \qquad
 F_R(y)=\bigl(-sRw(y_1/R),\,0\bigr).
\end{equation*}
The linear part is governed by $\mathsf A_\ast$, and the
remainder $F_R$ is bounded. By Proposition~\ref{prop:HB}, this is
the position-pair recursion of the OBABO chain under the Gaussian
auxiliary-variable convention $\xi_0^{(2)}\sim\cN(0,\Id_d)$.
Numerical stability gives
$\rho(\mathsf A_\ast)<1$, so by Gelfand's formula the series
$P=\sum_{i=0}^\infty(\mathsf A_\ast^\top)^i\mathsf A_\ast^i$
converges; it satisfies $P\succeq\Id_{2d}$ and solves the
discrete Lyapunov equation
\begin{equation}\label{eq:lyap-general}
 \mathsf A_\ast^\top P\mathsf A_\ast-P=-\Id_{2d}.
\end{equation}
Set
\begin{equation*}
 \alpha=\frac{1}{2\norm P}\in(0,1),
 \qquad
 K=\bigl(2\norm{\mathsf A_\ast^\top P}^2+\norm P\bigr)s^2b_\ast^2
 +\tau^2\operatorname{tr}(P).
\end{equation*}
For $\cV(y)=y^\top Py$, expansion of \eqref{eq:pair-general} gives
\begin{align}
 \E[\cV(S_{k+1})\mid S_k=y]
 &=\cV(y)-\abs y^2+2y^\top\mathsf A_\ast^\top PF_R(y)
   +F_R(y)^\top PF_R(y)+\tau^2\operatorname{tr}(\mathsf B^\top P\mathsf B)\notag\\
 &\le \cV(y)-\frac12\abs y^2+KR^2
 \le(1-\alpha)\cV(y)+KR^2.
 \label{eq:drift-general}
\end{align}
The first inequality uses the Cauchy-Schwarz inequality, Young's
inequality $2ab\le\varepsilon a^2+\varepsilon^{-1}b^2$ with
$\varepsilon=\tfrac12$ in the form
\begin{equation*}
 2y^\top\mathsf A_\ast^\top PF_R(y)
 \le\frac12\abs y^2+2\norm{\mathsf A_\ast^\top P}^2\abs{F_R(y)}^2,
\end{equation*}
the bounds $\abs{F_R(y)}\le sb_\ast R$ and
$\operatorname{tr}(\mathsf B^\top P\mathsf B)
\le\operatorname{tr}(P)$, and $R\ge1$; the second uses
$\abs y^2\ge\cV(y)/\norm P$ and $\norm P\ge1$.
The transition kernel of $(S_k)_{k\ge0}$ is Feller because
$y\mapsto\mathsf A_\ast y+F_R(y)$ is continuous. Start the
chain at $S_0=0$. Taking expectations in the drift
inequality \eqref{eq:drift-general} and iterating gives
$\E[\cV(S_k)]\le KR^2/\alpha$ for every $k\ge0$; since
$P\succeq\Id_{2d}$, the second moments $\E\abs{S_k}^2$ are
bounded uniformly in $k$, and Markov's inequality shows that the
averaged laws $\mu_N=N^{-1}\sum_{k=1}^N\Law(S_k)$ are tight. By
the Krylov-Bogoliubov theorem, every weak limit point $\bar\pi_R$
of $(\mu_N)_{N\ge1}$ is an invariant law. Since
$y\mapsto\abs y^2$ is nonnegative and continuous, the
portmanteau theorem gives
$\int\abs y^2\,\bar\pi_R(dy)
\le\liminf_{N\to\infty}\int\abs y^2\,d\mu_N
\le KR^2/\alpha$; hence, with $C=K/\alpha$,
\begin{equation}\label{eq:pairmom-general}
 \int\abs y^2\,\bar\pi_R(dy)\le C(1+R^2).
\end{equation}

In summary, the Markov chain \eqref{eq:pair-general} on $\R^{2d}$ admits an invariant law $\bar\pi_R$ satisfying the moment bound \eqref{eq:pairmom-general}; uniqueness of $\bar\pi_R$ is not needed, while uniqueness of the invariant law on phase space is established below.

To lift $\bar\pi_R$ to phase space, sample $(X_0,X_{-1})\sim\bar\pi_R$ and an independent $\xi_0^{(2)}\sim\cN(0,\Id_d)$, define $V_0$ by \eqref{eq:vpairrelation}, and let $\pi_R$ denote the law of $(X_0,V_0)$. The pair update uses the noise variable
\begin{equation*}
 \zeta_1=h\sigma(r\xi_0^{(2)}+\xi_1^{(1)})\sim\cN(0,\tau^2\Id_d),
\end{equation*}
which is independent of $(X_0,X_{-1})$, so $(X_1,X_0)\sim\bar\pi_R$. The variable $X_1$ depends on $\xi_1^{(1)}$ but not on the final-half-step noise $\xi_1^{(2)}$, so $\xi_1^{(2)}$ is independent of $(X_1,X_0)$, and \eqref{eq:vfrompair} expresses $V_1$ in terms of $(X_1,X_0)$ and $\xi_1^{(2)}$. The triples $(X_1,X_0,\xi_1^{(2)})$ and $(X_0,X_{-1},\xi_0^{(2)})$ therefore have the same law, so $(X_1,V_1)\sim\pi_R$, and $\pi_R$ is invariant.

We bound the second moment of $\pi_R$. By \eqref{eq:vpairrelation} and \eqref{eq:dilatedremainder-general},
\begin{equation*}
 V_0=\frac rh(X_0-X_{-1})-\frac{rh}2\lambda_\ast X_0
 -\frac{rh}2Rw(X_0/R)+\sigma\xi_0^{(2)}.
\end{equation*}
The Cauchy-Schwarz inequality
$(a_1+a_2+a_3+a_4)^2\le4(a_1^2+a_2^2+a_3^2+a_4^2)$, the bounds
$\abs{X_0-X_{-1}}^2\le2(\abs{X_0}^2+\abs{X_{-1}}^2)$ and
$\abs{Rw(X_0/R)}\le b_\ast R$, the identity
$\E\abs{\xi_0^{(2)}}^2=d$, and the moment bound
\eqref{eq:pairmom-general} with $C=K/\alpha$ and $R\ge1$ give
\begin{equation*}
 \E\abs{V_0}^2
 \le\Bigl(\frac{8r^2}{h^2}+r^2h^2\lambda_\ast^2\Bigr)
 \frac{2K}{\alpha}R^2+r^2h^2b_\ast^2R^2+4\sigma^2d.
\end{equation*}
Adding $\E\abs{X_0}^2\le(2K/\alpha)R^2$ and using $\sigma\le1$
and $R\ge1$,
\begin{equation}\label{eq:phasemom-general}
 \int\abs z^2\,\pi_R(dz)\le K'R^2,
 \qquad
 K':=\Bigl(1+\frac{8r^2}{h^2}+r^2h^2\lambda_\ast^2\Bigr)
 \frac{2K}{\alpha}+r^2h^2b_\ast^2+4d.
\end{equation}

For fixed $z=(x,v)$, the map
\begin{equation*}
 (\xi^{(1)},\xi^{(2)})\longmapsto(X_1,V_1)
\end{equation*}
is a triangular $C^1$ bijection: $X_1$ is affine in $\xi^{(1)}$
with coefficient $h\sigma\Id_d$, and, for fixed $\xi^{(1)}$,
$V_1$ is affine in $\xi^{(2)}$ with coefficient $\sigma\Id_d$.
Its Jacobian determinant is the positive constant
$h^d\sigma^{2d}$. The change-of-variables formula therefore gives
$Q_R$ an everywhere positive jointly continuous density
$q_R(z,z')$. If $z_k\to z$, then $q_R(z_k,z')\to q_R(z,z')$ for every $z'$ by joint continuity, and each density integrates to one, so Scheff\'e's lemma \citep[Theorem~16.12]{Billingsley1995} gives
\begin{equation*}
 \int\abs{q_R(z_k,z')-q_R(z,z')}\,dz'\longrightarrow0,
\end{equation*}
so $Q_R$ is strong Feller. Its strictly positive density also
implies irreducibility. The standard strong Feller irreducibility
criterion therefore shows that $Q_R$ has at most one invariant
probability measure, and hence the invariant law $\pi_R$
constructed above is unique. By \eqref{eq:cyclepair-general}, \eqref{eq:cyclevelocity-general}, and \eqref{eq:dilatedremainder-general}, the initial states satisfy, for $i\in\{j,j+1\}$,
\begin{equation*}
 \abs{z_R^{(i)}}\le z_{\max}R,
 \qquad
 z_{\max}:=x_{\max}\Bigl(1+\frac{2r}h+\frac{rh}2\lambda_\ast\Bigr)
 +\frac{rh}2b_\ast,
\end{equation*}
where $x_{\max}:=\max_{0\le i<m}\abs{x_i^\circ}$. The product
coupling gives
\begin{equation*}
 W_2(\delta_z,\pi_R)^2
 \le\int\abs{z-y}^2\,\pi_R(dy)
 \le2\abs z^2+2\int\abs y^2\,\pi_R(dy),
\end{equation*}
so \eqref{eq:phasemom-general} gives
\begin{equation*}
 W_2(\delta_{z_R^{(j)}},\pi_R)+W_2(\delta_{z_R^{(j+1)}},\pi_R)
 \le C_0(1+R),
 \qquad
 C_0:=2\sqrt2\,\bigl(z_{\max}^2+K'\bigr)^{1/2}.
\end{equation*}
At time $N_R$, \eqref{eq:pairTV-general} and the triangle inequality through $\pi_R$ show that one of the two starts, denoted $z_R$, has TV distance at least $3/8$. TV distance to an invariant law is nonincreasing, so the same bound holds for every $0\le n\le N_R$.
\end{proof}

\subsection{Proofs of Theorem~\ref{thm:dichotomy} and Corollary~\ref{cor:mixing}}\label{sec:proofmain}

\begin{proof}[Proof of Theorem~\ref{thm:dichotomy}]
If $\rhoq(s,\beta;\kappa)\ge q_\kappa$, Proposition~\ref{prop:quadraticbranch} gives case (Q). Otherwise Lemma~\ref{lem:interiorcycle} produces an integer $m\ge3$ with $P_m(s,\beta;\kappa)<0$, and Proposition~\ref{prop:smoothcycle} supplies a smooth roots-of-unity cycle of period $m$, with distinct points, satisfying Assumption~\ref{ass:cycle} and the bound \eqref{eq:transfer-bounded} with $d=2$, $\lambda_\ast=1$, and the constant local Hessians $H_j\equiv\Id_2$; the matrices $\mathsf A_j$ then all equal $\mathsf A_1$, and Assumption~\ref{ass:monodromy} holds because $\rho(\mathsf A_1^m)=\rho(\mathsf A_1)^m<1$. Theorem~\ref{thm:transfer} gives case (C).
\end{proof}

\begin{proof}[Proof of Corollary~\ref{cor:mixing}]
(i) Let $M$ be the phase matrix of the OBABO chain on $U_\lambda$, and let $\Sigma_n$ denote its $n$-step noise covariance, which is nonsingular for $n\ge1$ as in the proof of Proposition~\ref{prop:quadraticbranch}. By \eqref{eq:equalcovTV} and the global bound $2\Phi(t/2)-1\le t/\sqrt{2\pi}$, the total variation distance between the $n$-step laws from any two initial states $z,\widetilde z$ is at most $\lambda_{\min}(\Sigma_1)^{-1/2}\norm{M^n}\abs{z-\widetilde z}/\sqrt{2\pi}$, so Gelfand's formula $\rho(M)=\lim_{n\to\infty}\norm{M^n}^{1/n}$ gives, for any two initial states,
\begin{equation*}
 \limsup_{n\to\infty}
 \norm{\delta_z(Q_{h,\gamma}^{U_\lambda})^n-
       \delta_{\widetilde z}(Q_{h,\gamma}^{U_\lambda})^n}_{\TV}^{1/n}
 \le\rho(M).
\end{equation*}
Consequently, \eqref{eq:quadrootrateintro} implies $\rho(M)\ge q_\kappa$. As in the proof of Proposition~\ref{prop:quadraticbranch}, the chain is a Gaussian autoregressive process with nonsingular one-step noise covariance. Choose $z-\widetilde z=u\ne0$ as follows: if $M$ has a real eigenvalue of modulus $\rho(M)$, let $u$ be a corresponding real eigenvector, so that $\abs{M^nu}=\rho(M)^n\abs u$; otherwise take $u$ in the two-dimensional real invariant subspace of a complex conjugate pair of modulus $\rho(M)$, on which $M$ acts, in a suitable basis, as a rotation scaled by $\rho(M)$. In either case $\abs{M^nu}\ge c_1\rho(M)^n$ for every $n\ge0$ and some $c_1>0$. Since $\Sigma_n\preceq\Sigma_\infty$, the equal-covariance formula \eqref{eq:equalcovTV} gives, with $Q=Q_{h,\gamma}^{U_\lambda}$ and constants $c_2,c>0$ independent of $n$,
\begin{equation*}
 \norm{\delta_z Q^n-\delta_{\widetilde z}Q^n}_{\TV}
 \ \ge\ 2\Phi\bigl(c_2\,\rho(M)^n\bigr)-1
 \ \ge\ c\,\rho(M)^n,
 \qquad n\ge1.
\end{equation*}
The second inequality holds with
\begin{equation*}
 c=\min_{0\le t\le1}\frac{2\Phi(c_2t)-1}{t}:
\end{equation*}
the ratio extends continuously to the positive value $c_2\sqrt{2/\pi}$ at $t=0$, so it is continuous and positive on the compact interval $[0,1]$ and its minimum $c$ is positive, and $\rho(M)^n\in(0,1]$ for every $n\ge1$. Moreover $c\le2\Phi(c_2)-1<1$, so the bound also holds at $n=0$, because distinct point masses are at total variation distance one. By the triangle inequality, for every $n\ge0$ at least one state in $\{z,\widetilde z\}$ is at total variation distance at least $c\rho(M)^n/2$ from $\pi_\lambda$. Let $n^*=\lfloor\log(c/4\eps)/\log(1/q_\kappa)\rfloor$, so that $c\,q_\kappa^{n^*}/2\ge2\eps$. At time $n^*$, choose $z_\eps\in\{z,\widetilde z\}$ with $\norm{\delta_{z_\eps}Q^{n^*}-\pi_\lambda}_{\TV}\ge2\eps>\eps$; since total variation distance to an invariant law is nonincreasing, $t_{\mathrm{mix}}(z_\eps,\eps;Q,\pi_\lambda)>n^*$. Finally, $\log(1/q_\kappa)\le3C_\star/\kappa$ for $\kappa\ge2C_\star$, so $n^*\ge\frac{\kappa}{3C_\star}\log\frac{c}{4\eps}-1$.

(ii) By \eqref{eq:cyclemetaintro}, $\norm{\delta_{z_R}(Q_{h,\gamma}^{U_R})^n-\pi_R}_{\TV}\ge3/8>\eps$ for every $0\le n\le\lfloor\tfrac1{16}e^{c_0R^2}\rfloor$; the moment bound is \eqref{eq:cyclemomentintro}.
\end{proof}

\subsection{An explicit one-dimensional instance}\label{sec:LRP}

We now prove Corollary~\ref{cor:LRPexplicit} by constructing an
explicit one-dimensional example. Although the
example is not needed for the proof of Theorem~\ref{thm:dichotomy}, it
gives closed-form constants and a transparent illustration of how
an attracting cycle yields metastability.

At the parameters \eqref{eq:criticalparams}, Proposition~\ref{prop:HB} gives
\begin{equation*}
 s=\frac19,\qquad \beta=\frac49,
\end{equation*}
the heavy-ball parameters optimal for quadratics with curvature in $[1,25]$. Let
\begin{equation*}
 \vartheta(t)=
 \begin{cases}0,&t\le0,\\e^{-1/t},&t>0,\end{cases}
 \qquad
 \chi(t)=\frac{\vartheta(t+1)}{\vartheta(t+1)+\vartheta(1-t)}.
\end{equation*}
Here $\vartheta\in C^\infty(\R)$ vanishes on $(-\infty,0]$ and is
positive on $(0,\infty)$, and $\chi$ is the standard smooth step
function: $\chi\in C^\infty(\R)$ is nondecreasing, with $\chi=0$
on $(-\infty,-1]$ and $\chi=1$ on $[1,\infty)$.
Fix $\delta_0=1/100$ and define $U(0)=U'(0)=0$ and
\begin{equation*}
 U''(x)=25-24\chi\left(\frac{x-1}{\delta_0}\right)
              +24\chi\left(\frac{x-2}{\delta_0}\right).
\end{equation*}
Then $U\in C^\infty(\R)$, $1\le U''\le25$, and outside the intervals $(1-\delta_0,1+\delta_0)$ and $(2-\delta_0,2+\delta_0)$,
\begin{equation}\label{eq:Upieces}
 U'(x)=
 \begin{cases}
 25x,&x\le1-\delta_0,\\
 x+24,&1+\delta_0\le x\le2-\delta_0,\\
 25x-24,&x\ge2+\delta_0.
 \end{cases}
\end{equation}
This is a smooth version of the example in Lessard, Recht and Packard \citep[Section~4.6]{LessardRechtPackard2016}. Figure~\ref{fig:cycling-region} shows the tuning relative to the numerical stability and strict cycling conditions.

\begin{figure}[t]
\centering
\begin{tikzpicture}
\begin{axis}[
  width=0.62\textwidth, height=5.6cm,
  xlabel={momentum $\beta$}, ylabel={heavy-ball step size $s$},
  xmin=0, xmax=1, ymin=0.06, ymax=0.17,
  tick label style={font=\scriptsize},
  label style={font=\scriptsize},
  legend style={font=\scriptsize, draw=none, fill=none},
  legend pos=north west,
]
\addplot[name path=stab, thick, blue, domain=0:1, samples=100] {0.08*(1+x)};
\addlegendentry{$s=2(1+\beta)/25$}
\addplot[name path=cyc, thick, red, dashed, domain=0:1, samples=200]
  {(x+0.5+0.04*(1+x/2)) - sqrt((x+0.5+0.04*(1+x/2))^2 - 0.12*(1+x+x^2))};
\addlegendentry{$P_3(s,\beta;25)=0$}
\addplot[gray!30, forget plot] fill between[of=stab and cyc, soft clip={domain=0.3118:1}];
\addplot[only marks, mark=*, mark size=1.6pt, forget plot] coordinates {(0.444444,0.111111)};
\node[font=\scriptsize, anchor=west] at (axis cs:0.475,0.1113) {$(4/9,\,1/9)$};
\end{axis}
\end{tikzpicture}
\caption{Numerically stable tunings and the period-$3$ cycling region at
$\kappa=25$, in the plane of momentum $\beta$ and heavy-ball step size $s$.
Below the line $s=2(1+\beta)/25$ the tuning is numerically stable
(Lemma~\ref{lem:quadratic-stability}); above the curve
$P_3(s,\beta;25)=0$, within the displayed range, the strict cycling
condition $P_3<0$ holds. The marked point is the heavy-ball tuning
optimal for quadratics with curvature in $[1,25]$, used in the
instance of Section~\ref{sec:LRP}.}
\label{fig:cycling-region}
\end{figure}

Define the cycle points from Lessard, Recht and Packard
\citep[Appendix~B]{LessardRechtPackard2016} by
\begin{equation*}
 a=\frac{2592}{1225},\qquad
 b=\frac{792}{1225},\qquad
 c=-\frac{2208}{1225},
\end{equation*}
and define the period-three sequence $(p_j)_{j\in\mathbb Z}$ by
\begin{equation*}
 p_{3j}=a,\qquad p_{3j+1}=b,\qquad p_{3j+2}=c,
 \qquad j\in\mathbb Z.
\end{equation*}
These are the values denoted $r$, $p$, $q$ in \citep[equation~(B.3)]{LessardRechtPackard2016}, visited in the same cyclic order; the two definitions differ only in the value assigned to the indices divisible by three.

The next lemma establishes the two properties of $(p_k)$ used below. It shows that $(p_k)$ is an exact period-three orbit of the noise-free position recursion at the parameters $s=1/9$ and $\beta=4/9$, and it bounds the effect of additive perturbations on this orbit with explicit constants. The lemma differs from the treatment in \citep[Appendix~B]{LessardRechtPackard2016} in two respects. First, the potential there is $C^1$ with piecewise-linear gradient, while $U$ here is $C^\infty$. Second, the perturbation there enters only through the initial condition, and the conclusion is that the cycle is attracting; here an additive perturbation enters at every step, and the conclusion is the uniform bound \eqref{eq:LRPperturbation}.

\begin{lemma}[Exact cycle and stability under additive perturbations]
\label{lem:LRPcycle}
The deterministic recurrence
\begin{equation*}
 x_{k+1}=\frac{13}{9}x_k-\frac49x_{k-1}-\frac19U'(x_k)
\end{equation*}
has the period-three orbit $(p_k)$. Consider the perturbed recurrence
\begin{equation*}
 x_{k+1}
 =\frac{13}{9}x_k-\frac49x_{k-1}-\frac19U'(x_k)+\eps_{k+1},
 \qquad x_{-1}=p_{-1},\quad x_0=p_0,
\end{equation*}
and set $e_k=x_k-p_k$. Suppose that $x_k$ and $p_k$ lie in the same
interval of \eqref{eq:Upieces} for every $0\le k\le n-1$. Then, for
$0\le k\le n-1$,
\begin{equation}\label{eq:LRPerror}
 e_{k+1}=-\frac43e_k-\frac49e_{k-1}+\eps_{k+1},
\end{equation}
and
\begin{equation}\label{eq:LRPperturbation}
 \max_{0\le k\le n}\abs{e_k}
 \le9\max_{1\le j\le n}\abs{\eps_j}.
\end{equation}
In particular, if
$\max_{1\le j\le n}\abs{\eps_j}\le1/180$, then
$\abs{e_k}\le1/20$ for every $0\le k\le n$, and $x_k$ and $p_k$
lie in the same interval of \eqref{eq:Upieces} at every such step.
\end{lemma}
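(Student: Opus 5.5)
The plan is to verify the cycle directly, derive the error recursion from the piecewise-affine form of $U'$, and bound the error by solving that recursion explicitly.

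\emph{Cycle verification.} First note where the cycle points lie. We have $a=2592/1225\approx2.116\ge2+\delta_0$, $b=792/1225\approx0.647\le1-\delta_0$, and $c=-2208/1225\approx-1.80\le1-\delta_0$. By \eqref{eq:Upieces},
\begin{equation*}
 U'(a)=25a-24,\qquad U'(b)=25b,\qquad U'(c)=25c.
\end{equation*}
In every case $U'(p_k)=25p_k+\text{const}$. The recurrence then reads $x_{k+1}=\frac{13}{9}x_k-\frac49x_{k-1}-\frac{25}{9}x_k+(\text{const})/9$, which simplifies to $x_{k+1}=-\frac43x_k-\frac49x_{k-1}+\frac{24}{9}\mathbf 1_{\{x_k=a\}}$. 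Checking the three cyclic relations is routine rational arithmetic:
\begin{align*}
 b&=-\tfrac43a-\tfrac49c+\tfrac83,\\
 c&=-\tfrac43b-\tfrac49a,\\
 a&=-\tfrac43c-\tfrac49b.
\end{align*}
(Multiply through by $1225\cdot9$ to confirm each.)

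\emph{Error recursion.} If $x_k$ and $p_k$ lie in the same interval of \eqref{eq:Upieces}, then $U'$ is affine with slope $25$ on that interval. Every cycle point lies in a slope-$25$ piece, so $U'(x_k)-U'(p_k)=25e_k$. Subtracting the exact cycle relation from the perturbed recurrence gives
\begin{equation*}
 e_{k+1}=\Bigl(\tfrac{13}{9}-\tfrac{25}{9}\Bigr)e_k-\tfrac49e_{k-1}+\eps_{k+1}
 =-\tfrac43e_k-\tfrac49e_{k-1}+\eps_{k+1},
\end{equation*}
which is \eqref{eq:LRPerror}.

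\emph{Uniform bound.} The characteristic polynomial is $z^2+\frac43z+\frac49=(z+\frac23)^2$, a double root at $-2/3$. The initial errors vanish, $e_0=e_{-1}=0$. Solving the recursion gives
\begin{equation*}
 e_k=\sum_{j=1}^k g_{k-j}\eps_j,\qquad g_i=(i+1)(-2/3)^i,
\end{equation*}
which one checks is the impulse response. Hence
\begin{equation*}
 \abs{e_k}\le\max_j\abs{\eps_j}\sum_{i\ge0}(i+1)(2/3)^i=\frac{1}{(1-2/3)^2}\max_j\abs{\eps_j}=9\max_j\abs{\eps_j},
\end{equation*}
which proves \eqref{eq:LRPperturbation}.

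\emph{Final assertion (the main obstacle).} The hypothesis of the previous step is circular: staying in the same interval is needed to get the error recursion, and the error bound is what guarantees staying in the same interval. We resolve this by induction on $k$. Suppose $\max\abs{\eps_j}\le1/180$ and that $\abs{e_i}\le1/20$ for all $i\le k$. Each $p_i$ lies at distance more than $1/20+\delta_0$ from the transition intervals. The distances are
\begin{align*}
 a-(2+\delta_0)&\approx0.106>0.06,\\
 (1-\delta_0)-b&\approx0.34,
\end{align*}
and $c$ is far below $1-\delta_0$. So $x_i$ stays in the same piece as $p_i$ for all $i\le k$. The error recursion is therefore valid up to step $k$, and the bound just proved gives $\abs{e_{k+1}}\le9/180=1/20$, closing the induction.

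The only real point of care is this margin check for $a$: since $0.106$ exceeds $0.06$ only modestly, it should be verified exactly with rationals rather than decimals.
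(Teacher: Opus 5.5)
Your proposal is correct and follows the paper's proof essentially step for step. Both arguments check the three cycle relations directly using $U'(a)=25a-24$, $U'(b)=25b$, $U'(c)=25c$, subtract to obtain the error recursion with double root $-2/3$, bound the impulse response by $\sum_{i\ge0}(i+1)(2/3)^i=9$, and close the circularity by induction using the $1/20$-margins of the cycle points. Your margin check is stronger than needed (it suffices that $a-\tfrac1{20}>2+\delta_0$ and $b+\tfrac1{20},\,c+\tfrac1{20}<1-\delta_0$, as the paper states), but it holds exactly, since $a-2=142/1225$.
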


\begin{proof}
Since $a>2+\delta_0$ and $b,c<1-\delta_0$, equation
\eqref{eq:Upieces} gives
\begin{equation*}
 U'(a)=25a-24,\qquad U'(b)=25b,\qquad U'(c)=25c.
\end{equation*}
Direct substitution gives
\begin{align*}
 \frac{13}{9}a-\frac49c-\frac19(25a-24)&=b,\\
 \frac{13}{9}b-\frac49a-\frac19(25b)&=c,\\
 \frac{13}{9}c-\frac49b-\frac19(25c)&=a,
\end{align*}
which proves that $(p_k)$ is a period-three orbit.

When $x_k$ and $p_k$ lie in the same interval of
\eqref{eq:Upieces}, one has
$U'(x_k)-U'(p_k)=25e_k$. Subtracting the recurrence for $p_k$ from
that for $x_k$ gives \eqref{eq:LRPerror}. Since the characteristic
polynomial is
\begin{equation*}
 r^2+\frac43r+\frac49=\left(r+\frac23\right)^2,
\end{equation*}
iteration of \eqref{eq:LRPerror} yields
\begin{equation*}
 e_k=\sum_{j=1}^k
 (k-j+1)\left(-\frac23\right)^{k-j}\eps_j.
\end{equation*}
Therefore
\begin{equation*}
 \abs{e_k}
 \le\max_{1\le j\le k}\abs{\eps_j}
 \sum_{i=0}^{\infty}(i+1)\left(\frac23\right)^i
 =9\max_{1\le j\le k}\abs{\eps_j},
\end{equation*}
proving \eqref{eq:LRPperturbation}.

Finally,
\begin{equation*}
 a-\frac1{20}>2+\delta_0,\qquad
 b+\frac1{20}<1-\delta_0,\qquad
 c+\frac1{20}<1-\delta_0.
\end{equation*}
Thus the $1/20$-neighborhood of each cycle point lies in the same
interval of \eqref{eq:Upieces}. If
$\max_j\abs{\eps_j}\le1/180$, the preceding bound and induction in
$k$ give $\abs{e_k}\le1/20$ at every step, completing the proof.
\end{proof}

Lemma~\ref{lem:LRPcycle} applies as long as every perturbation is
at most $1/180$. Dilation produces this small-perturbation regime
with high probability. For $U_R(x)=R^2U(x/R)$, the cycle points
scale to $Rp_k$ and the perturbation tolerance scales to $R/180$,
while the position noise of the OBABO chain does not grow with $R$:
for either initialization below, the position-noise variances are
bounded above by
\begin{equation}\label{eq:LRPnoise}
 h_\star^2(1-\beta_\star^2)=\frac{10}{81}.
\end{equation}
Initialize the chain on the cycle with $(X_{-1},X_0)=(Rc,Ra)$ and the corresponding velocity
\begin{equation}\label{eq:zcycle}
 z_R^{\mathrm{cyc}}
 =\left(Ra,\,
 r\left(\frac{R(a-c)}{h_\star}-\frac{h_\star}{2}U_R'(Ra)\right)\right)
 =\left(Ra,\frac{360\sqrt{26}}{637}R\right),
\end{equation}
and let $z_R^0=(0,0)$.

\begin{proposition}[Explicit separation]\label{prop:LRPseparation}
For every $R\ge1$ and $n\ge0$,
\begin{equation}\label{eq:LRPpairTV}
 \norm{\delta_{z_R^{\mathrm{cyc}}}Q_R^n-
       \delta_{z_R^0}Q_R^n}_{\TV}
 \ge1-4n\exp\left(-\frac{R^2}{8000}\right).
\end{equation}
Moreover, there is a constant $C<\infty$, independent of $R$, such
that $Q_R$ has a unique invariant law $\pi_R$ satisfying
\begin{equation*}
 \int(\abs{x}^2+\abs{v}^2)\,\pi_R(dx,dv)\le CR^2,
 \qquad R\ge1.
\end{equation*}
Consequently the bound \eqref{eq:LRPexplicitbound} holds, proving Corollary~\ref{cor:LRPexplicit}.
\end{proposition}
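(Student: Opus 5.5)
The plan mirrors the proof of Theorem~\ref{thm:transfer}, with the abstract constants replaced by the explicit ones from Lemma~\ref{lem:LRPcycle}. There are three parts: separation of the chain started at $z_R^{\mathrm{cyc}}$ from the chain started at $z_R^0$, existence and uniqueness of $\pi_R$ with a second-moment bound, and the deduction of \eqref{eq:LRPexplicitbound}.

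\emph{Separation.} Start the chain at $z_R^{\mathrm{cyc}}$. The velocity in \eqref{eq:zcycle} is \eqref{eq:vpairrelation} with $\xi_0^{(2)}=0$ and $(X_{-1},X_0)=(Rc,Ra)=(Rp_{-1},Rp_0)$, so Proposition~\ref{prop:HB} gives a noisy heavy-ball recursion with $s=1/9$ and $\beta=4/9$. Dividing by $R$ and using $U_R'(x)=RU'(x/R)$, the rescaled positions $x_k=X_k/R$ satisfy the perturbed recurrence of Lemma~\ref{lem:LRPcycle} with $\eps_k=\zeta_k/R$. Each $\zeta_k$ has variance at most $10/81$ by \eqref{eq:LRPnoise}. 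On the event $\max_{k\le n}\abs{\zeta_k}\le R/180$, Lemma~\ref{lem:LRPcycle} gives $\abs{X_k-Rp_k}\le R/20$, so $X_n$ lies in $B(Rp_n,R/20)$.

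The second chain starts at $z_R^0=(0,0)$. Here $(X_{-1},X_0)=(0,0)$, and $X_0=0$ lies in the linear region $U_R'(x)=25x$. I would treat it as a perturbation of the fixed point $0$ of the recurrence. Near $0$ the error recursion has the same characteristic polynomial $(r+2/3)^2$, so the same bound $\abs{X_k}\le 9\max_j\abs{\zeta_j}\le R/20$ holds on the same event. The required region is $\abs{x/R}\le 1/20<1-\delta_0$, which is satisfied.

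The balls $B(Rp_n,R/20)$ and $B(0,R/20)$ are disjoint, since $\min(\abs a,\abs b,\abs c)=792/1225>1/10$. Hence the total variation distance between the two laws at time $n$ is at least $1-2\Pp(\text{bad event})$.

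For the tail bound, use the one-dimensional Gaussian estimate $\Pp(\abs{\zeta}>R/180)\le2\exp\bigl(-R^2/(2\cdot180^2\cdot10/81)\bigr)=2e^{-R^2/8000}$, since $2\cdot 32400\cdot10/81=8000$. A union bound over $n$ steps and both chains then gives \eqref{eq:LRPpairTV}.

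\emph{Invariant law.} Write $U'(x)=x+w(x)$ with $\abs w\le 24\cdot 2+\ldots$ bounded; concretely, $U'(x)=25x+w(x)$ with $w$ bounded by about $24$. Then apply the Lyapunov, Krylov--Bogoliubov and strong Feller part of the proof of Theorem~\ref{thm:transfer} verbatim with $\lambda_\ast=25$. This is valid because $h_\star<2/5$, so $\rho(\mathsf A_\ast)<1$. It yields a unique $\pi_R$ with second moment at most $CR^2$.

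\emph{Conclusion.} Both starting points satisfy $\abs{z}\le CR$. The product coupling then gives $W_2(\delta_{z},\pi_R)\le CR$ after enlarging $C$. For $n\le N_R=\lfloor e^{R^2/8000}/16\rfloor$, the pair distance is at least $3/4$. By the triangle inequality, one of the two starts is at distance at least $3/8$ from $\pi_R$ at time $N_R$. Monotonicity of total variation distance to an invariant law extends this to all $n\le N_R$.

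\emph{Main obstacle.} The step needing the most care is checking that the constants really give exactly $1/8000$ and $4n$. In particular, the first noise $\zeta_1$ has smaller variance $h_\star^2(1-\beta_\star)$, which is dominated by $10/81$ and so causes no problem. The $z_R^0$ chain also needs its own confinement argument, since it is not covered by Lemma~\ref{lem:LRPcycle} as stated; the plan handles it by the same characteristic-polynomial estimate around the fixed point $0$.
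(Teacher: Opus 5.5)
Your proposal is correct and follows the paper's proof essentially step for step. The shared steps are: confinement of the cycle chain via Lemma~\ref{lem:LRPcycle} with $\eps_j=\zeta_j/R$; confinement of the origin chain by the same error recursion around the fixed point $0$; disjoint $R/20$ tubes; the tail bound $2e^{-R^2/8000}$ from the variance bound $10/81$; the invariant-law argument of Theorem~\ref{thm:transfer} with $\lambda_\ast=25$; and the triangle inequality plus monotonicity at $N_R$. The only blemish is your aborted first decomposition $U'(x)=x+w(x)$, for which $w$ is unbounded; the corrected form $U'(x)=25x+w(x)$ with $\abs{w}\le24$ is the one that makes the drift argument work.
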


\begin{proof}
The case $n=0$ is immediate, so assume $n\ge1$. For the chain initialized on the cycle, set $Y_k=X_k/R$. Since $U_R'(Ry)=RU'(y)$, the variables $Y_k$ satisfy the perturbed recurrence of Lemma~\ref{lem:LRPcycle} with perturbations $\eps_j=\zeta_j/R$. Hence, on the event $\max_{1\le j\le n}\abs{\zeta_j}\le R/180$, one has $\abs{X_k-Rp_k}\le R/20$ for $0\le k\le n$. The origin is a fixed point of the same deterministic recurrence: by \eqref{eq:Upieces}, $U_R'(x)=25x$ for $x\le(1-\delta_0)R$, so displacements from zero obey the error recursion \eqref{eq:LRPerror} with the same perturbation bound \eqref{eq:LRPperturbation}, and the tube $\abs x\le R/20$ lies inside this affine region; hence, on the same event, the chain initialized at the origin stays in the $R/20$ tube around zero. The tubes are disjoint because $\min\{\abs a,\abs b,\abs c\}>1/10$. By \eqref{eq:LRPnoise} and the Gaussian tail bound,
\begin{equation*}
 \Pp(\abs{\zeta_j}>R/180)\le2e^{-R^2/8000}.
\end{equation*}
The two laws therefore concentrate, on the respective events, on disjoint tubes, and as in Lemma~\ref{lem:metastability}(ii) a union bound for each chain gives \eqref{eq:LRPpairTV}.

For the invariant-law assertion, Theorem~\ref{thm:transfer} applies with $H_j=25$ and $\lambda_\ast=25$: $h_\star<2/5$, $U'(x)=25x+w(x)$ with $\abs{w(x)}\le24$, and the local error matrix has repeated eigenvalue $-2/3$. The theorem gives uniqueness, while \eqref{eq:phasemom-general} gives $\int(\abs x^2+\abs v^2)\,d\pi_R\le CR^2$, with $C$ independent of $R$.

At $N_R=\lfloor16^{-1}e^{R^2/8000}\rfloor$, the right-hand side of \eqref{eq:LRPpairTV} is at least $3/4$. The positive-time window is nonempty once
\begin{equation*}
 R\ge\sqrt{8000\log16}\approx148.9.
\end{equation*}
For smaller $R$, the statement remains valid but the displayed time window may contain only $n=0$.

The triangle inequality through $\pi_R$ makes one of the two starts at least $3/8$ from stationarity. Monotonicity of TV distance to an invariant law extends the bound to every earlier time, while the moment bound and \eqref{eq:zcycle} give $W_2(\delta_{z_R},\pi_R)\le CR$.
\end{proof}

\section{Proof of the mixing time upper bound and sharpness of the diffusive scale}\label{sec:upper}

This section shows that the diffusive scale is optimal for fixed-parameter OBABO tunings. Theorem~\ref{thm:upper} provides a cold-start upper bound of the form \eqref{eq:falseuniform} with $e^{-cn/\kappa}$ in place of $e^{-cn/\tau(\kappa)}$, for an explicit numerically stable tuning; its proof combines the Wasserstein contraction estimates of Leimkuhler, Paulin and Whalley \citep{LeimkuhlerPaulinWhalley2024} with the regularization estimate of Bou-Rabee, Cox and Schieven \citep{BouRabeeCoxSchieven2026}. Chak and Monmarch\'e \citep[Theorem~2.2]{ChakMonmarche2026} prove a related Wasserstein-to-total-variation regularization estimate for generalized Hamiltonian Monte Carlo, allowing nonconvex potentials and stochastic gradients. In the exact-gradient setting, their result requires the Hessian of the potential to be Lipschitz. By contrast, the estimate of \citep{BouRabeeCoxSchieven2026} applies directly to OBABO for every step size and every number of steps under the assumptions of \eqref{eq:classUkappa}, with a fully explicit, dimension-free constant, and is therefore the one used here. The section concludes with the proof of Corollary~\ref{cor:sharp}.

\begin{theorem}[Diffusive upper bound]\label{thm:upper}
There exists a numerically stable fixed-parameter OBABO tuning rule such that, for every $\kappa>1$, every $d\ge1$, and every $U\in\cU_\kappa^d$, the kernel $Q=Q_{h_\kappa,\gamma_\kappa}^U$ has a unique invariant law $\pi$, and for every $z\in\R^d\times\R^d$ and every $n\ge0$,
\begin{equation*}
 \norm{\delta_zQ^n-\pi}_{\TV}
 \le C\kappa\bigl(1+W_2(\delta_z,\pi)\bigr)
 \exp\left(-\frac{cn}{\kappa}\right),
 \qquad
 C=143,
 \quad
 c=\frac{1}{128(1-e^{-1})}.
\end{equation*}
\end{theorem}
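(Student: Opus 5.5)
The plan is to use the tuning of Corollary~\ref{cor:sharp}(i), $h_\kappa=1/(4\sqrt\kappa)$ and $\gamma_\kappa=4\sqrt\kappa$ (rescaled to general $(K,L)$ by the change of variables of Section~\ref{sec:notation}), which is numerically stable since $h_\kappa<2/\sqrt\kappa$. The proof splits the $n$ steps into a Wasserstein contraction phase of $n-n_1$ steps followed by a regularization phase of $n_1$ steps. The two phases are then combined through the invariant law.

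\textbf{Step 1 (Wasserstein contraction).} We first invoke the synchronous-coupling result of Leimkuhler, Paulin and Whalley for OBABO. For $h\lesssim 1/\sqrt L$ and $\gamma\asymp\sqrt L$, with unit strong convexity, it gives a twisted Euclidean norm $\norm{\cdot}_{a,b}$ on phase space, equivalent to the Euclidean norm with constants of order one, such that synchronously coupled chains satisfy
\[
 \norm{z_n-\widetilde z_n}_{a,b}\le(1-c'h/\gamma)^n\norm{z_0-\widetilde z_0}_{a,b}.
\]
Here $h/\gamma=1/(16\kappa)$, so the rate per step is of order $\kappa^{-1}$. This contraction is deterministic and pathwise. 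It gives existence and uniqueness of $\pi$ by a Banach fixed-point argument in $W_2$, together with
\[
 W_2(\delta_zQ^k,\pi)\le C''e^{-c'' k/\kappa}\,W_2(\delta_z,\pi).
\]
The main bookkeeping in this step is to verify that the tuning lies in their admissible parameter range and to track the norm-equivalence constants.

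\textbf{Step 2 (regularization).} Next, we apply the Wasserstein-to-total-variation estimate of Bou-Rabee, Cox and Schieven. For OBABO and any $n_1$ steps, it bounds
\[
 \norm{\mu Q^{n_1}-\nu Q^{n_1}}_{\TV}\le C_{\mathrm{reg}}(n_1,h,\gamma)\,W_1(\mu,\nu),
\]
where $C_{\mathrm{reg}}$ is a dimension-free constant that blows up as a negative power of $n_1h$ when $n_1 h$ is small relative to the friction time. We take $n_1$ of order the number of steps in one relaxation time $1/\gamma$, or a fixed multiple of $\sqrt\kappa$ steps. The resulting constant should be polynomial in $\kappa$; I expect order $\kappa$, which produces the prefactor $C\kappa$. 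Pinning this down from their explicit formula at this tuning is the main obstacle, since the stated constants $143$ and $c=1/(128(1-e^{-1}))$ must emerge from it. The factor $1-e^{-1}$ suggests that $n_1$ is chosen so that $\gamma h n_1=1$, that is, $n_1=4$ steps of friction time. More precisely, $e^{-\gamma hn_1}=e^{-1}$, and the contraction rate is of the form $(1-e^{-\gamma h})/(\ldots)$.

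\textbf{Step 3 (combination).} Using invariance, $\pi=\pi Q^{n_1}$, we estimate
\[
 \norm{\delta_zQ^n-\pi}_{\TV}\le C_{\mathrm{reg}}\,W_1(\delta_zQ^{n-n_1},\pi)\le C_{\mathrm{reg}}C''e^{-c''(n-n_1)/\kappa}W_2(\delta_z,\pi).
\]
Since $n_1/\kappa$ is bounded, the factor $e^{c''n_1/\kappa}$ is absorbed into the constant. For $n<n_1$ the claim is trivial, because the right-hand side exceeds one once $C\ge e^{cn_1/\kappa}$. The term $1+W_2$ covers both cases.
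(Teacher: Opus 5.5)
Your architecture matches the paper's proof. It uses the same tuning ($h=1/(4\sqrt L)$, $\gamma=4\sqrt L$, so $\gamma_\kappa h_\kappa=1$), turns the Leimkuhler--Paulin--Whalley synchronous contraction into a $W_2$ bound to $\pi$, applies the Bou-Rabee--Cox--Schieven regularization over a final block of $n_1$ steps, combines the two through $\pi Q^{n_1}=\pi$, and treats small $n$ trivially. The gap is in the quantitative part you defer, which is what the theorem actually asserts ($C\kappa$ with $C=143$, and the stated $c$). There, your working assumptions are wrong.

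\emph{The twisted norm.} It is not equivalent to the Euclidean norm with order-one constants. At this tuning $a=\kappa^{-1}$ and $b=h_\kappa/(1-e^{-1})$. The contraction reads $\norm{Z_n-\widetilde Z_n}_{a,b}\le7(1-c(h_\kappa))^{(n-1)/2}\norm{z-\widetilde z}_{a,b}$, with $c(h_\kappa)=h_\kappa^2/(4(1-e^{-\gamma_\kappa h_\kappa}))=1/(64(1-e^{-1})\kappa)$. Because of the prefactor $7$, a Banach argument must be applied to an iterate; the paper instead uses their Proposition~2.4. Passing to Euclidean $W_2$ costs $\max\{a,a^{-1}\}=\kappa$, giving $W_2(\delta_zQ^n,\pi)\le\sqrt{147\kappa}\,(1-c(h_\kappa))^{(n-1)/2}W_2(\delta_z,\pi)$. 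So a factor $\sqrt\kappa$ of the prefactor already comes from the contraction step, and the regularization constant must be $O(\sqrt\kappa)$, not $O(\kappa)$ as you expect. Likewise, $1-e^{-1}$ is the factor $1-e^{-\gamma h}$ inside this contraction rate, not a feature of $n_1$. Your condition $\gamma_\kappa h_\kappa n_1=1$ would force $n_1=1$, since $\gamma_\kappa h_\kappa=1$ already. The rate $c=\kappa\,c(h_\kappa)/2$ then comes from $(1-t)^{n/2}\le e^{-tn/2}$ after taking square roots.

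\emph{The choice of $n_1$.} It must be a fixed number of steps, independent of $\kappa$. In the constant $\mathcal R_{n_1}$ the three terms scale like $\gamma^{-1/2}(n_1h)^{-3/2}$, $\gamma^{1/2}(n_1h)^{-1/2}$ and $\gamma^{-1/2}\kappa(n_1h)^{1/2}$. With $\gamma\asymp\sqrt\kappa$, all three are $O(\sqrt\kappa)$ exactly when $n_1h\asymp\kappa^{-1/2}$.
\begin{itemize}
\item A fixed multiple of $\sqrt\kappa$ steps makes the last term of order $\kappa^{3/4}$. The overall prefactor becomes $\kappa^{5/4}$, which does not give the theorem.
\item One friction time ($n_1=1$) has the right order, but its first term alone is $20\sqrt\kappa$. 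Since $\sqrt{147}\cdot20>143$, it cannot yield the stated constant.
\end{itemize}
The paper takes $n_1=24$, computes $\mathcal R_{24}\le8.62\sqrt\kappa$ from the explicit formula, and concludes with $\sqrt{147}\cdot8.62\cdot\exp\bigl(25/(128(1-e^{-1}))\bigr)\le143$. The exponential there absorbs the shift to $(1-c(h_\kappa))^{(n-25)/2}$.
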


\begin{proof}
A fixed-parameter tuning rule assigns parameters to every curvature triple $(K,L,d)$, and the numerical stability condition \eqref{eq:stabletuningintro} quantifies over all such triples, so we first specify the rule in these variables and then evaluate it on the normalized class. Consider the fixed-parameter rule
\begin{equation*}
 h_{K,L,d}=\frac1{4\sqrt L},
 \qquad
 \gamma_{K,L,d}=4\sqrt L,
\end{equation*}
which depends on $L$ alone and satisfies \eqref{eq:stabletuningintro}. On the class $\cU_\kappa^d$, where $K=1$ and $L=\kappa$, it gives $h_\kappa=\frac1{4\sqrt\kappa}$ and $\gamma_\kappa=4\sqrt\kappa$, so that $\gamma_\kappa h_\kappa=1$ and $\beta=e^{-\gamma_\kappa h_\kappa}=e^{-1}$. It satisfies the step size condition $h<(1-e^{-\gamma h})/(2\sqrt\kappa)$ of \citep[Theorem~5.2]{LeimkuhlerPaulinWhalley2024}, since $\tfrac14<(1-e^{-1})/2$. Set
\begin{equation*}
 a=\kappa^{-1},\qquad
 b=\frac{h_\kappa}{1-e^{-1}},\qquad
 c(h_\kappa)=\frac{h_\kappa^2}{4(1-e^{-1})}=\frac{1}{64(1-e^{-1})\kappa},
\end{equation*}
and define the weighted norm on phase space by
\begin{equation*}
 \norm{(x,v)}_{a,b}^2=\abs x^2+2b\langle x,v\rangle+a\abs v^2;
\end{equation*}
a direct check gives $b^2<a/4$, so this is indeed a norm. That theorem states that two OBABO chains driven by the same sequence of Gaussian variables and started at $z,\widetilde z$ satisfy, for every $n\ge1$, almost surely,
\begin{equation*}
 \norm{Z_n-\widetilde Z_n}_{a,b}
 \le7\bigl(1-c(h_\kappa)\bigr)^{(n-1)/2}\norm{z-\widetilde z}_{a,b}.
\end{equation*} Squaring this estimate and applying \citep[Proposition~2.4]{LeimkuhlerPaulinWhalley2024}, which converts such a contraction into a Wasserstein estimate and shows that the kernel has a unique invariant law $\pi$, with finite second moment, gives
\begin{align*}
 W_2^2(\delta_zQ^n,\pi)
 &\le3\cdot\frac{49}{1-c(h_\kappa)}\max\{a,a^{-1}\}
 \bigl(1-c(h_\kappa)\bigr)^{n}W_2^2(\delta_z,\pi)\\
 &\le147\kappa\bigl(1-c(h_\kappa)\bigr)^{n-1}W_2^2(\delta_z,\pi),
\end{align*}
since $\max\{a,a^{-1}\}=\kappa$ for $\kappa\ge1$. Hence, for every $n\ge1$,
\begin{equation*}
 W_1(\delta_zQ^n,\pi)\le W_2(\delta_zQ^n,\pi)
 \le\sqrt{147\kappa}\,\bigl(1-c(h_\kappa)\bigr)^{(n-1)/2}W_2(\delta_z,\pi).
\end{equation*}
By \citep[Theorem~3.2 and Corollary~3.3]{BouRabeeCoxSchieven2026}, applied with $r$ steps for an integer $r\ge1$,
\begin{equation}\label{eq:TVreg}
 \norm{\nu Q^r-\widetilde\nu Q^r}_{\TV}\le\mathcal R_r\,W_1(\nu,\widetilde\nu)
 \qquad\text{for all probability measures }\nu,\widetilde\nu,
\end{equation}
where
\begin{equation*}
 \mathcal R_r=\gamma_\kappa^{-1/2}
 \left[\frac{5}{(rh_\kappa)^{3/2}}
 +\frac{12+5\gamma_\kappa}{(rh_\kappa)^{1/2}}
 +\frac{\kappa(\gamma_\kappa h_\kappa)^{1/2}}{(1-e^{-\gamma_\kappa h_\kappa})^{1/2}}
 \Bigl(1+\frac{rh_\kappa}{1+r\gamma_\kappa h_\kappa}\Bigr)(rh_\kappa)^{1/2}\right].
\end{equation*}
The estimate \eqref{eq:TVreg} requires no restriction on the step size, and $\mathcal R_r$ does not depend on the dimension. The number $r$ of regularization steps is free. To obtain a convenient constant uniformly over $\kappa>1$, we take $r=24$. At the tuning above, $\gamma_\kappa h_\kappa=1$ and $24h_\kappa=6/\sqrt\kappa$, so, with $c_\ast=\bigl(2\sqrt{2(1-e^{-1})}\bigr)^{-1}$,
\begin{equation*}
 \mathcal R_{24}
 =\Bigl(\frac{5}{12\sqrt6}+\frac{10}{\sqrt6}+2\sqrt3\,c_\ast\Bigr)\sqrt\kappa
 +\sqrt6+\frac{12\sqrt3}{25}\,c_\ast
 \ \le\ 8.62\sqrt\kappa,
 \qquad\kappa\ge1.
\end{equation*}
For $n\ge25$, write $Q^n=Q^{n-24}Q^{24}$: the first $n-24$ steps contract in Wasserstein distance, and the last $24$ steps convert Wasserstein distance into total variation through \eqref{eq:TVreg}. Since $\pi Q^{24}=\pi$, applying \eqref{eq:TVreg} with $\nu=\delta_zQ^{n-24}$ and $\widetilde\nu=\pi$ gives
\begin{equation*}
 \norm{\delta_zQ^n-\pi}_{\TV}
 \le\sqrt{147\kappa}\,\mathcal R_{24}\bigl(1-c(h_\kappa)\bigr)^{(n-25)/2}W_2(\delta_z,\pi)
 \le C\kappa\bigl(1+W_2(\delta_z,\pi)\bigr)e^{-cn/\kappa},
\end{equation*}
using $(1-t)^{(n-25)/2}\le e^{25t/2}e^{-tn/2}$ for $t=c(h_\kappa)$, the bound $e^{25t/2}\le\exp\bigl(\tfrac{25}{128(1-e^{-1})}\bigr)$, valid since $\kappa>1$ gives $t\le\tfrac{1}{64(1-e^{-1})}$, together with $tn/2=cn/\kappa$ and $\sqrt{147}\cdot8.62\cdot\exp\bigl(\tfrac{25}{128(1-e^{-1})}\bigr)\le143=C$. For $n\le24$ the left side is at most one, while $C\kappa e^{-cn/\kappa}\ge143\,e^{-24c}>1$, so the bound holds for every $n\ge0$.
\end{proof}

\begin{proof}[Proof of Corollary~\ref{cor:sharp}]
The first assertion follows by solving the bound in
Theorem~\ref{thm:upper}: its right-hand side is at most $\eps$ once
$n\ge(\kappa/c)\log(C\kappa(1+W_2(\delta_z,\pi))/\eps)$, so
\begin{equation*}
 t_{\mathrm{mix}}(z,\eps;Q,\pi)
 \le\left\lceil\frac{\kappa}{c}
 \log\frac{C\kappa(1+W_2(\delta_z,\pi))}{\eps}\right\rceil.
\end{equation*}
Set $C_1=\max\{C,\,1/c+1,\,e\}=143$; here $1/c+1=128(1-e^{-1})+1<83$. Write $L=\log(C_1\kappa(1+W_2(\delta_z,\pi))/\eps)$; since $C_1\ge e$, $\kappa>1$, and $\eps\in(0,1)$, we have $L\ge1$. Therefore
\begin{equation*}
 \left\lceil\frac{\kappa}{c}
 \log\frac{C\kappa(1+W_2(\delta_z,\pi))}{\eps}\right\rceil
 \le\frac{\kappa}{c}L+1
 \le\Bigl(\frac1c+1\Bigr)\kappa L
 \le C_1\kappa L,
\end{equation*}
using $C\le C_1$ in the first inequality and $\kappa L\ge1$ in the second, which is the display in part \emph{(i)}. For the second, assume for contradiction that, for every
$\kappa\ge\kappa_0$, every target in $\cU_\kappa^2$ whose kernel has
an invariant law $\pi$, every initial state $z$, and every $n$,
\begin{equation*}
 \norm{\delta_z(Q_{h_\kappa,\gamma_\kappa}^U)^n-\pi}_{\TV}
 \le C\kappa^a\bigl(1+W_2(\delta_z,\pi)\bigr)^b
 \exp\left(-\frac{cn}{\tau(\kappa)}\right);
\end{equation*}
that is, \eqref{eq:falseuniform} never holds. Fix
$C_\star>C_{\mathrm{GTD}}$ and note $1-q_\kappa\le2C_\star/\kappa$.
Since $\tau(\kappa)=o(\kappa)$, we have $c\kappa/\tau(\kappa)\ge5C_\star$
for all sufficiently large $\kappa$. Fix such a $\kappa$ with
$\kappa\ge\kappa_0$ and $\kappa>C_\star^2$. If
$c/\tau(\kappa)\ge1$, then
$e^{-c/\tau(\kappa)}\le e^{-1}<q_\kappa$, since
$C_\star/\kappa<1/C_\star<1/27$; if $c/\tau(\kappa)<1$, then
$1-e^{-c/\tau(\kappa)}\ge\frac{c}{2\tau(\kappa)}\ge\frac{5C_\star}{2\kappa}
>1-q_\kappa$. In either case,
\begin{equation}\label{eq:qbeatsballistic}
 q_\kappa>e^{-c/\tau(\kappa)}.
\end{equation}
Apply Theorem~\ref{thm:dichotomy} to the tuning
$(h_\kappa,\gamma_\kappa)$. In case (Q), the assumed bound applied to
the two initial states $z$ and $\widetilde z$ of
\eqref{eq:quadrootrateintro}, followed by the triangle inequality
through the invariant law, gives
\begin{equation*}
 \norm{\delta_z(Q_{h_\kappa,\gamma_\kappa}^{U_\lambda})^n
       -\delta_{\widetilde z}(Q_{h_\kappa,\gamma_\kappa}^{U_\lambda})^n}_{\TV}
 \le C_{z,\widetilde z,\kappa}\,e^{-cn/\tau(\kappa)}
\end{equation*}
with a constant independent of $n$. Taking $n$th roots and letting
$n\to\infty$ contradicts \eqref{eq:quadrootrateintro} and
\eqref{eq:qbeatsballistic}. In case (C), apply the assumed bound at
the state $z_R$ and the time $N_R=\lfloor e^{c_0R^2}/16\rfloor$.
Equations \eqref{eq:cyclemomentintro} and \eqref{eq:cyclemetaintro}
give
\begin{equation*}
 \frac38
 \le C\kappa^a\bigl(1+C_0(1+R)\bigr)^b
      \exp\left(-\frac{cN_R}{\tau(\kappa)}\right).
\end{equation*}
Since $\kappa$ is fixed and $0<\tau(\kappa)<\infty$, the right-hand
side tends to zero as $R\to\infty$, a contradiction. The final
assertion is the case $\tau(\kappa)=\kappa^\alpha$ with $\alpha<1$.
\end{proof}

\begin{remark}[Other Strang splittings]\label{rem:upper-other}
The contraction estimates of \citep{LeimkuhlerPaulinWhalley2024} cover further splitting schemes, including BAOAB, so the first stage of the argument is not specific to OBABO. The regularization estimate of \citep{BouRabeeCoxSchieven2026} is stated for OBABO, and we therefore state Theorem~\ref{thm:upper} for OBABO only; the analogous upper bounds for the other splittings of Section~\ref{sec:BAOAB} require the corresponding regularization estimates, which we do not pursue here.
\end{remark}

\section{Extension to standard Strang splittings}\label{sec:BAOAB}

This section extends the mixing time lower bounds from OBABO to the remaining standard Strang splittings of \eqref{eq:KLD}. For $t>0$, the three splitting components on $\R^d\times\R^d$ are
\begin{align*}
 \mathrm A_t(x,v)&=(x+tv,\,v),\qquad
 \mathrm B_t(x,v)=(x,\,v-t\nabla U(x)),\\
 \mathrm O_t(x,v)&=(x,\,e^{-\gamma t}v+(1-e^{-2\gamma t})^{1/2}\xi),
\end{align*}
where $\xi\sim\cN(0,\Id_d)$ is drawn independently at each application of $\mathrm O_t$. The six standard Strang splittings with step size $h$ compose the three letters in a palindrome, the middle letter as a full step and the outer letters as half steps: OBABO and BOAOB, with middle $\mathrm A_h$; OABAO and AOBOA, with middle $\mathrm B_h$; and BAOAB and ABOBA, with middle $\mathrm O_h$. Eliminating the velocity yields, for every splitting, an exact noisy heavy-ball recursion with momentum $\beta=e^{-\gamma h}$: the heavy-ball step size is $s=h^2(1+\beta)/2$ for OBABO, BAOAB, ABOBA, and OABAO, and $s=\sqrt\beta\,h^2$ for AOBOA and BOAOB, and the Gaussian noise terms are independent or $1$-dependent with explicit covariances (Propositions~\ref{prop:HB-BAOAB} and~\ref{prop:HB-palindromic}). The dichotomy and the resulting cold-start lower bounds then carry over to every splitting (Theorems~\ref{thm:dichotomy-BAOAB} and~\ref{thm:palindromic}, Corollaries~\ref{cor:main-BAOAB} and~\ref{cor:main-palindromic}). The lower bounds are therefore not specific to OBABO.

We treat BAOAB separately and first because it is a widely used splitting \citep{LeimkuhlerMatthews2013}. Its deterministic position recursion is the same heavy-ball recursion as for OBABO, but its eliminated noise is $1$-dependent rather than independent. Consequently, the position-pair process is no longer Markov, and the invariant-law argument must be carried out directly on phase space. The remaining four splittings add further complications: transformed recursion variables for ABOBA and AOBOA, and, for OABAO, recursion variables that are not functions of the current state together with a modified initialization. They are treated together in Section~\ref{sec:palindromic}.

The invariant-law arguments for the splittings below share a common structure: one step of the chain on phase space is a Schur-stable linear map, plus Gaussian noise and a remainder that is bounded uniformly in the state and in the within-step noise. The next lemma isolates the corresponding drift estimate.

\begin{lemma}[Invariant law for perturbed linear recursions]\label{lem:drift}
Let $\mathsf M\in\R^{d_1\times d_1}$ satisfy $\rho(\mathsf M)<1$, let $\mathsf G\in\R^{d_1\times d_2}$, and let $F:\R^{d_1}\times\R^{d_2}\to\R^{d_1}$ be continuous with $B:=\sup_{z,\xi}\abs{F(z,\xi)}<\infty$. Let $(\xi_k)_{k\ge1}$ be independent $\cN(0,\Id_{d_2})$ variables, and consider the Markov chain on $\R^{d_1}$ defined by
\begin{equation}\label{eq:perturbed-linear}
 Z_{k+1}=\mathsf MZ_k+\mathsf G\xi_{k+1}+F(Z_k,\xi_{k+1}),
 \qquad k\ge0.
\end{equation}
Then the chain admits an invariant probability law $\pi$ satisfying
\begin{equation}\label{eq:drift-moment}
 \int\abs z^2\,\pi(dz)
 \le8\norm P^3\bigl(\operatorname{tr}(\mathsf G\mathsf G^\top)+B^2\bigr),
 \qquad
 P:=\sum_{j=0}^\infty(\mathsf M^\top)^j\mathsf M^j.
\end{equation}
\end{lemma}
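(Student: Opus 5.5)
The plan mirrors the invariant-law argument already carried out in the proof of Theorem~\ref{thm:transfer}: a quadratic Lyapunov function built from the discrete Lyapunov equation, a geometric drift inequality, and Krylov--Bogoliubov. Since $\rho(\mathsf M)<1$, Gelfand's formula makes the series defining $P$ converge, and $P\succeq\Id_{d_1}$ solves $\mathsf M^\top P\mathsf M-P=-\Id_{d_1}$. Set $\cV(z)=z^\top Pz$.

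Write $W=\mathsf G\xi+F(z,\xi)$ for the noise-plus-remainder term at state $z$, so that $Z_{k+1}=\mathsf Mz+W$ given $Z_k=z$. Then
\[
 \E[\cV(Z_{k+1})\mid Z_k=z]=\cV(z)-\abs z^2+2\,\E\bigl[z^\top\mathsf M^\top PW\bigr]+\E\bigl[W^\top PW\bigr].
\]
One point needs care: $F$ depends on $\xi$, so the cross term with $\mathsf G\xi$ does not vanish by centering. I will therefore not separate the two pieces, and will treat $W$ as a whole.

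Two bounds then suffice. First, $\E\abs W^2\le 2\operatorname{tr}(\mathsf G\mathsf G^\top)+2B^2$. Second, by Young's inequality the cross term obeys $2\abs{z^\top\mathsf M^\top PW}\le\tfrac12\abs z^2+2\norm{\mathsf M^\top P}^2\abs W^2$. Since $\norm{\mathsf M^\top P}\le\norm{\mathsf M}\norm P$ may be awkward, I would instead use $\mathsf M^\top P\mathsf M\preceq P$. This gives $\norm{P^{1/2}\mathsf M}\le\norm{P}^{1/2}$, hence $\norm{\mathsf M^\top P}\le\norm P$. Combining,
\[
 \E[\cV(Z_{k+1})\mid Z_k=z]\le\cV(z)-\tfrac12\abs z^2+\bigl(2\norm P^2+\norm P\bigr)\E\abs W^2 ,
\]
and with $\norm P\ge1$ the additive constant is at most $6\norm P^2\bigl(\operatorname{tr}(\mathsf G\mathsf G^\top)+B^2\bigr)=:K$. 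Using $\abs z^2\ge\cV(z)/\norm P$ turns this into the geometric drift $\E\cV(Z_{k+1})\le(1-\alpha)\E\cV(Z_k)+K$ with $\alpha=1/(2\norm P)$.

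Starting from $Z_0=0$, iteration gives $\sup_k\E\abs{Z_k}^2\le\sup_k\E\cV(Z_k)\le K/\alpha=12\norm P^3\bigl(\operatorname{tr}(\mathsf G\mathsf G^\top)+B^2\bigr)$. This is the right order but the constant is $12$ instead of $8$. I expect the constant bookkeeping to be the only real obstacle. To reach $8$, I would avoid the crude step $2\norm P^2+\norm P\le 3\norm P^2$, for instance by writing $\E\cV(\mathsf Mz+W)\le(1+\epsilon)\cV(\mathsf Mz)+(1+\epsilon^{-1})\E\cV(W)$ with $\epsilon$ tuned. Alternatively, I would take Cesàro limits directly from the unrolled drift, using the sharper stationary bound: in stationarity the $\cV$-terms cancel, leaving $\int\abs z^2\,d\pi\le$ a constant times $\norm P$ instead of $\norm P/\alpha$. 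The precise route will be chosen to land at or below $8\norm P^3(\dots)$.

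Finally, existence follows as in Theorem~\ref{thm:transfer}. The kernel is Feller, because $z\mapsto\E\,g(\mathsf Mz+\mathsf G\xi+F(z,\xi))$ is continuous for bounded continuous $g$ by continuity of $F$ and dominated convergence. Uniform second moments make the Cesàro averages of $\Law(Z_k)$ tight. Krylov--Bogoliubov then yields an invariant $\pi$, and the portmanteau theorem, applied with the nonnegative continuous function $\abs z^2$, transfers the moment bound to $\pi$, giving \eqref{eq:drift-moment}.
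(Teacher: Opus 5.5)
Your plan is essentially the paper's proof. Both use the Lyapunov function $\cV(z)=z^\top Pz$ with $\mathsf M^\top P\mathsf M=P-\Id_{d_1}$, a geometric drift iterated from $Z_0=0$, and the Feller property plus Krylov--Bogoliubov, with a lower-semicontinuity transfer of the moment bound.

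The only unfinished point is the constant. Your main line (Young's inequality with weight $\tfrac12$ on $\abs z^2$, and $\norm{\mathsf M^\top P}\le\norm P$) is correct, but it proves the bound with $12$ in place of $8$. You therefore need to carry out your first alternative, which is exactly what the paper does:
\begin{itemize}
\item bound the cross term in the $P$-inner product, $2(\mathsf Mz)^\top PW\le\theta\,\cV(\mathsf Mz)+\theta^{-1}W^\top PW$;
\item use $\cV(\mathsf Mz)=\cV(z)-\abs z^2\le(1-1/\norm P)\cV(z)$ and $\E[W^\top PW]\le2\norm P\bigl(\operatorname{tr}(\mathsf G\mathsf G^\top)+B^2\bigr)$;
\item take $\theta=1/(2\norm P-1)$.
\end{itemize}
Then $(1+\theta)(1-1/\norm P)\le1-1/(2\norm P)$, and the additive constant is $2(1+\theta^{-1})\norm P(\cdots)=4\norm P^2(\cdots)$. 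Dividing by $\alpha=1/(2\norm P)$ gives $8\norm P^3(\cdots)$.

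Your second alternative does not close the gap as described. Cancelling $\int\cV\,d\pi$ in your drift inequality gives $\int\abs z^2\,d\pi\le2K=12\norm P^2(\cdots)$. This exceeds $8\norm P^3(\cdots)$ when $\norm P<3/2$, unless you also sharpen the estimate to $\norm{\mathsf M^\top P}^2\le\norm P(\norm P-1)$.
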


\begin{proof}
Since $\rho(\mathsf M)<1$, the series defining $P$ converges; $P$ is symmetric, satisfies $P\succeq\Id_{d_1}$, and solves the discrete Lyapunov equation $\mathsf M^\top P\mathsf M=P-\Id_{d_1}$. Set $V(z)=z^\top Pz$ and $W_{k+1}=\mathsf G\xi_{k+1}+F(Z_k,\xi_{k+1})$, so that $Z_{k+1}=\mathsf MZ_k+W_{k+1}$ and $\E[\abs{W_{k+1}}^2\mid Z_k]\le2\operatorname{tr}(\mathsf G\mathsf G^\top)+2B^2$. For every $\theta>0$, Young's inequality $2a^\top Pw\le\theta\,a^\top Pa+\theta^{-1}w^\top Pw$ and the discrete Lyapunov equation for $P$ give
\begin{equation*}
 \E\bigl[V(Z_{k+1})\mid Z_k=z\bigr]
 \le(1+\theta)\bigl(V(z)-\abs z^2\bigr)
 +2(1+\theta^{-1})\norm P\bigl(\operatorname{tr}(\mathsf G\mathsf G^\top)+B^2\bigr).
\end{equation*}
The difference $V(z)-\abs z^2$ is nonnegative because $P\succeq\Id_{d_1}$, and it is at most $(1-1/\norm P)V(z)$ because $V(z)\le\norm P\abs z^2$. The choice $\theta=1/(2\norm P-1)$ gives $(1+\theta)(1-1/\norm P)\le1-\alpha$ with $\alpha=1/(2\norm P)$, so
\begin{equation*}
 \E\bigl[V(Z_{k+1})\mid Z_k=z\bigr]
 \le(1-\alpha)V(z)+K_0,
 \qquad
 K_0:=4\norm P^2\bigl(\operatorname{tr}(\mathsf G\mathsf G^\top)+B^2\bigr),
\end{equation*}
and iterating from $Z_0=0$ gives $\E V(Z_k)\le K_0/\alpha$ for every $k\ge0$. Since $F$ is continuous and bounded, dominated convergence shows that $z\mapsto\E f(\mathsf Mz+\mathsf G\xi_1+F(z,\xi_1))$ is continuous for every bounded continuous $f$, so the chain is Feller. Since $V(z)\ge\abs z^2$, the Ces\`aro averages $\pi_n=n^{-1}\sum_{k=0}^{n-1}\mathrm{Law}(Z_k)$ satisfy $\int V\,d\pi_n\le K_0/\alpha$ and are tight; a weak limit point $\pi$ of a subsequence exists by Prokhorov's theorem, and the Krylov-Bogoliubov theorem shows that $\pi$ is an invariant probability law. For every $\ell>0$ the function $V\wedge\ell$ is bounded and continuous, so $\int V\wedge\ell\,d\pi\le K_0/\alpha$; letting $\ell\to\infty$ by monotone convergence gives $\int V\,d\pi\le K_0/\alpha=8\norm P^3(\operatorname{tr}(\mathsf G\mathsf G^\top)+B^2)$. The bound \eqref{eq:drift-moment} follows since $\abs z^2\le V(z)$.
\end{proof}

Lemma~\ref{lem:drift} asserts existence only. Uniqueness fails in this generality: for $\mathsf M=0$, $\mathsf G=0$, and $F(z,\xi)=f(z)$ with $f$ bounded, continuous, and equal to the identity on the unit ball, every point of the unit ball is a fixed point of the chain, and each point mass there is invariant. In the applications below, uniqueness is established separately: the one-step kernels for BOAOB and OABAO, and the two-step kernels for BAOAB, ABOBA, and AOBOA, have everywhere positive continuous densities.

\subsection{BAOAB}\label{sec:baoab-sub}

Fix $h,\gamma>0$ and write
\begin{equation*}
 \beta=e^{-\gamma h},\qquad \varsigma=(1-\beta^2)^{1/2}.
\end{equation*}
One step, driven by a single $\xi\sim\cN(0,\Id_d)$, maps the state $(x,v)$ to the updated state $(x^+,v^+)$ through the intermediate quantities $v^{(a)},x^{(b)},v^{(c)}$:
\begin{align}
 v^{(a)}&=v-\frac h2\nabla U(x),\label{eq:baoab1}\\
 x^{(b)}&=x+\frac h2 v^{(a)},\nonumber\\
 v^{(c)}&=\beta v^{(a)}+\varsigma\xi,\nonumber\\
 x^+&=x^{(b)}+\frac h2 v^{(c)},\label{eq:baoab4}\\
 v^+&=v^{(c)}-\frac h2\nabla U(x^+).\label{eq:baoab5}
\end{align}
The first and last lines are the two $\mathrm B$ half steps, the
second and fourth lines the two $\mathrm A$ half steps, and the
middle line the full $\mathrm O$ step. Iterating this update with i.i.d.\ copies of $\xi$ defines a Markov chain on $\R^d\times\R^d$, and we denote its transition kernel by $\widetilde Q_{h,\gamma}^U$; see \citep{LeimkuhlerMatthews2013}.

\begin{proposition}[Noisy heavy-ball representation of BAOAB]\label{prop:HB-BAOAB}
Let $(X_k,V_k)_{k\ge0}$ be the BAOAB chain, with $\xi_k$ the Gaussian variable used in the $k$th step. Let $\xi_0\sim\cN(0,\Id_d)$ be independent of $(\xi_k)_{k\ge1}$, or alternatively let $\xi_0$ be the zero vector, and define the auxiliary variable $X_{-1}$ by
\begin{equation}\label{eq:vpair-baoab}
 V_0=\frac{2\beta}{(1+\beta)h}(X_0-X_{-1})
 +\frac{\varsigma}{1+\beta}\,\xi_0-\frac h2\nabla U(X_0).
\end{equation}
Then, for $k\ge0$,
\begin{equation}\label{eq:noisyHB-baoab}
 X_{k+1}=X_k+\beta(X_k-X_{k-1})-s\nabla U(X_k)+\widetilde\zeta_{k+1},
 \qquad
 s=\frac{h^2}{2}(1+\beta),
\end{equation}
where
\begin{equation}\label{eq:zeta-baoab}
 \widetilde\zeta_{k+1}=\frac h2\varsigma\,(\xi_k+\xi_{k+1}).
\end{equation}
If $\xi_0\sim\cN(0,\Id_d)$, the sequence $(\widetilde\zeta_k)_{k\ge1}$ is a centered stationary $1$-dependent Gaussian sequence with
\begin{equation}\label{eq:zetacov-baoab}
 \E[\widetilde\zeta_k\widetilde\zeta_k^\top]=\frac{h^2}{2}(1-\beta^2)\Id_d,
 \qquad
 \E[\widetilde\zeta_k\widetilde\zeta_{k+1}^\top]=\frac{h^2}{4}(1-\beta^2)\Id_d.
\end{equation}
If \eqref{eq:vpair-baoab} is imposed with $\xi_0=0$, then $\widetilde\zeta_1=\frac h2\varsigma\,\xi_1$ has the smaller covariance $\frac{h^2}{4}(1-\beta^2)\Id_d$, while \eqref{eq:zetacov-baoab} holds from the second update onward.
\end{proposition}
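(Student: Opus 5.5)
The argument mirrors the proof of Proposition~\ref{prop:HB}: compose the five substeps \eqref{eq:baoab1}--\eqref{eq:baoab5} into a single expression for $X_{k+1}-X_k$, then express $V_k$ through the previous position pair and eliminate it.

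\emph{Position increment.} Combining the first four lines, the drift contributes $\tfrac h2 v^{(a)}+\tfrac h2(\beta v^{(a)}+\varsigma\xi)$. Hence
\begin{equation*}
 X_{k+1}-X_k=\frac h2(1+\beta)\Bigl(V_k-\frac h2\nabla U(X_k)\Bigr)+\frac h2\varsigma\,\xi_{k+1}.
\end{equation*}

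\emph{Velocity from the position pair.} For $k\ge1$, the step producing $X_k$ gives $V_k=v^{(c)}-\tfrac h2\nabla U(X_k)$. The fourth line gives $v^{(c)}=\tfrac2h(X_k-x^{(b)})$. The $\mathrm A$ and $\mathrm O$ lines give $X_k-X_{k-1}=\tfrac h2(1+\beta)v^{(a)}+\tfrac h2\varsigma\xi_k$, and also $v^{(c)}=\beta v^{(a)}+\varsigma\xi_k$. Solving for $v^{(a)}$ and substituting yields
\begin{equation*}
 v^{(c)}=\frac{2\beta}{(1+\beta)h}(X_k-X_{k-1})+\varsigma\xi_k\Bigl(1-\frac{\beta}{1+\beta}\Bigr)
 =\frac{2\beta}{(1+\beta)h}(X_k-X_{k-1})+\frac{\varsigma}{1+\beta}\xi_k.
\end{equation*}
This is exactly the form of \eqref{eq:vpair-baoab}. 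For $k=0$, relation \eqref{eq:vpair-baoab} is imposed by definition, so the identity holds for all $k\ge0$.

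\emph{Elimination.} Substituting the velocity identity into the increment formula gives
\begin{equation*}
 X_{k+1}-X_k=\beta(X_k-X_{k-1})+\frac h2\varsigma\,\xi_k-\frac h2(1+\beta)h\nabla U(X_k)+\frac h2\varsigma\,\xi_{k+1}.
\end{equation*}
The gradient coefficient works out as follows. It collects $\tfrac h2(1+\beta)\cdot\tfrac h2$ from the increment and $\tfrac h2(1+\beta)\cdot\tfrac h2$ from $V_k$, for a total of $\tfrac{h^2}{2}(1+\beta)=s$. This proves \eqref{eq:noisyHB-baoab} and \eqref{eq:zeta-baoab}. I expect this bookkeeping of the two half-kick contributions and the $\xi_k$ coefficient to be the only delicate step.

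\emph{Noise statistics.} Since $\widetilde\zeta_k$ is a function of $(\xi_{k-1},\xi_k)$, terms more than one index apart are independent; this is the $1$-dependence. With $\xi_0$ Gaussian, the covariances follow directly:
\begin{equation*}
 \E[\widetilde\zeta_k\widetilde\zeta_k^\top]=\tfrac{h^2}{4}\varsigma^2\cdot2\,\Id_d,\qquad
 \E[\widetilde\zeta_k\widetilde\zeta_{k+1}^\top]=\tfrac{h^2}{4}\varsigma^2\,\Id_d,
\end{equation*}
with $\varsigma^2=1-\beta^2$. This gives \eqref{eq:zetacov-baoab} and stationarity. If $\xi_0=0$, then $\widetilde\zeta_1=\tfrac h2\varsigma\,\xi_1$, with the stated smaller covariance. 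From $k\ge2$ onward both noise terms are present, so \eqref{eq:zetacov-baoab} holds again.
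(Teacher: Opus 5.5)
Your proposal is correct and follows the paper's proof essentially step for step: the same one-step position increment, the same elimination of $v^{(a)}_k$ from $X_k-X_{k-1}=\frac h2\bigl(v^{(a)}_k+v^{(c)}_k\bigr)$ and $v^{(c)}_k=\beta v^{(a)}_k+\varsigma\xi_k$ to express $V_k$ (with \eqref{eq:vpair-baoab} covering $k=0$), and the same count of the two half-kick contributions $\frac{h^2}{4}(1+\beta)$ that sum to $s$. As in the paper, the $1$-dependence and covariance claims follow directly because each $\widetilde\zeta_k$ is a moving average of the consecutive independent Gaussians $\xi_{k-1}$ and $\xi_k$.
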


\begin{proof}
From \eqref{eq:baoab1}-\eqref{eq:baoab4}, one step gives
\begin{equation}\label{eq:xstep-baoab}
 X_{k+1}=X_k+\frac h2(1+\beta)\Bigl(V_k-\frac h2\nabla U(X_k)\Bigr)
 +\frac h2\varsigma\,\xi_{k+1}.
\end{equation}
Within step $k$, $X_k-X_{k-1}=\frac h2(v^{(a)}_k+v^{(c)}_k)$ and $v^{(c)}_k=\beta v^{(a)}_k+\varsigma\xi_k$; eliminating $v^{(a)}_k$ and inserting $V_k=v^{(c)}_k-\frac h2\nabla U(X_k)$ from \eqref{eq:baoab5} yields
\begin{equation}\label{eq:vfrompair-baoab}
 V_k=\frac{2\beta}{(1+\beta)h}(X_k-X_{k-1})
 +\frac{\varsigma}{1+\beta}\,\xi_k-\frac h2\nabla U(X_k),
\end{equation}
which for $k=0$ is the imposed relation \eqref{eq:vpair-baoab}. Substituting \eqref{eq:vfrompair-baoab} into \eqref{eq:xstep-baoab} and collecting the gradient terms, whose coefficient is $\frac{h^2}{4}(1+\beta)+\frac{h^2}{4}(1+\beta)=s$, gives \eqref{eq:noisyHB-baoab}-\eqref{eq:zeta-baoab}. The covariances \eqref{eq:zetacov-baoab} are immediate, and $\widetilde\zeta_j$, $\widetilde\zeta_k$ are independent for $\abs{j-k}\ge2$.
\end{proof}

Thus BAOAB has the same deterministic heavy-ball recursion as OBABO at the same $(h,\gamma)$, while the independent noise is replaced by a $1$-dependent sequence with half the covariance. Moreover, the phase-space matrix $\widetilde{\mathsf M}_\lambda$ of one step on the quadratic potential $U_\lambda(x)=\lambda\abs x^2/2$ has trace
\begin{equation*}
 (1+\beta)(1-h^2\lambda/2)=1+\beta-s\lambda
\end{equation*}
and determinant $\beta$: the $\mathrm B$ and $\mathrm A$ substeps are unit-determinant shears, and in each coordinate the $\mathrm O$ substep multiplies the velocity by $\beta$. Hence $\widetilde{\mathsf M}_\lambda$ has the characteristic polynomial \eqref{eq:charpoly}, and Lemma~\ref{lem:quadratic-stability} holds without change.

\begin{theorem}[Non-acceleration dichotomy for BAOAB]\label{thm:dichotomy-BAOAB}
Theorem~\ref{thm:dichotomy} holds after replacing $Q_{h,\gamma}^U$ by the BAOAB kernel $\widetilde Q_{h,\gamma}^U$ and the time window in \eqref{eq:cyclemetaintro} by $0\le n\le\lfloor\tfrac1{32}e^{c_0R^2}\rfloor$, with the same parameters \eqref{eq:hbparamsintro}.
\end{theorem}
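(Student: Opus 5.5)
The argument mirrors the proof of Theorem~\ref{thm:dichotomy}, splitting on whether $\rhoq(s,\beta;\kappa)\ge q_\kappa$. Since the pair $(s,\beta)$ attached to BAOAB at $(h,\gamma)$ coincides with the OBABO pair, and the stability condition is unchanged, the deterministic ingredients (Lemma~\ref{lem:interiorcycle} and Proposition~\ref{prop:smoothcycle}) apply verbatim. Only the stochastic ingredients (Proposition~\ref{prop:quadraticbranch} and Theorem~\ref{thm:transfer}) must be redone for the kernel $\widetilde Q_{h,\gamma}^U$.

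\emph{Case (Q).} Take $\lambda$ attaining the maximum in \eqref{eq:A_lambda_intro}. On $U_\lambda$ in $\R^2$ the BAOAB chain is a Gaussian autoregression $Z_{n+1}=\widetilde{\mathcal M}_\lambda Z_n+G\xi_{n+1}$, with $\widetilde{\mathcal M}_\lambda=\widetilde{\mathsf M}_\lambda\oplus\widetilde{\mathsf M}_\lambda$ up to permutation. By the remark preceding the theorem, $\widetilde{\mathsf M}_\lambda$ has characteristic polynomial \eqref{eq:charpoly}, so $\rho(\widetilde{\mathcal M}_\lambda)=\rhoq(s,\beta;\kappa)<1$. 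The one-step noise has rank only $d$, since one Gaussian enters per step, so $GG^\top$ is singular. I will therefore apply Lemma~\ref{lem:gaussianTVrate} with $n_0=2$. The two-step map $(\xi_1,\xi_2)\mapsto Z_2$ is affine. In the position coordinate, $\xi_2$ enters with coefficient $\tfrac h2\varsigma$, and in the velocity coordinate it enters with coefficient $\varsigma(1-\tfrac{h^2\lambda}{4})$ plus a contribution from $\xi_1$. One must check that the $2d\times 2d$ coefficient matrix is invertible, which amounts to a $2\times2$ determinant nonvanishing per coordinate. This gives $\Sigma_2\succ0$, and Lemma~\ref{lem:gaussianTVrate} yields \eqref{eq:quadrootrateintro}.

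\emph{Case (C).} Use the cycle of Proposition~\ref{prop:smoothcycle}, with $H_j=\Id_2$ and $\lambda_\ast=1$. Initialize via \eqref{eq:vpair-baoab} with $\xi_0=0$ and $(X_{-1},X_0)=(Rx_{j-1}^\circ,Rx_j^\circ)$, so that $E_0=0$. By Proposition~\ref{prop:HB-BAOAB}, the positions satisfy \eqref{eq:abstract-recursion} with 1-dependent noises $\widetilde\zeta_i$ of variance at most $\tfrac{h^2}{2}(1-\beta^2)$. Lemma~\ref{lem:metastability}(iii) needs only marginal tail bounds, so it gives pair separation $1-4ne^{-c_0R^2}$ for a suitable $c_0$. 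Positions are functions of the state, so the bound transfers to phase space. The constant $\tfrac1{32}$ in the statement leaves room for adjusting the tail constants or the uniqueness argument.

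\emph{Invariant law.} Existence, with second moment $O(R^2)$, comes from Lemma~\ref{lem:drift} applied directly on phase space. One step of BAOAB is $\widetilde{\mathsf M}_{1}\oplus\widetilde{\mathsf M}_1$ acting on $(x,v)$, plus $G\xi$, plus a remainder built from $Rw(\cdot/R)$ evaluated at $x$ and $x^+$. This remainder is continuous and bounded by a constant times $b_\ast R$, uniformly in the state and the noise. Applied with $B=O(R)$, Lemma~\ref{lem:drift} gives the moment bound, and hence $W_2(\delta_{z_R},\pi_R)\le C_0(1+R)$, since $\abs{z_R}=O(R)$.

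\emph{Uniqueness.} This is the main obstacle, because the one-step kernel is degenerate. I would show that the two-step kernel has an everywhere positive continuous density. The map $(\xi_1,\xi_2)\mapsto(X_2,V_2)$ is triangular: $X_2$ depends on $\xi_2$ affinely with coefficient $\tfrac h2\varsigma$, given $\xi_1$ and the state. The issue is that $V_2$ depends on $\xi_2$ through the nonlinear $\nabla U(X_2)$. I would therefore instead solve for $\xi_1$ from $X_1$ and for $\xi_2$ from $X_2$, and use that $(X_1,X_2)\mapsto(X_2,V_2)$ is a $C^1$ bijection via \eqref{eq:vfrompair-baoab}, since $V_2$ is affine in $X_1$ given $X_2,\xi_2$. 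A Jacobian determinant that is a nonzero constant then gives a positive jointly continuous density. Scheff\'e's lemma then gives strong Feller, and together with irreducibility this yields uniqueness of the invariant law of $\widetilde Q^2$, and hence of $\widetilde Q$. Finally, the triangle inequality through $\pi_R$ and monotonicity of total variation to $\pi_R$ give the bound $3/8$ on the stated window.
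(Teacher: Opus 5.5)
Your overall plan matches the paper's: Lemma~\ref{lem:gaussianTVrate} with $n_0=2$ in case (Q), Lemma~\ref{lem:metastability} with $1$-dependent noise, Lemma~\ref{lem:drift} on phase space, and a positive density for the two-step kernel. However, the uniqueness step, which you correctly flag as the main obstacle, rests on a false claim.

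\emph{The gap in uniqueness.} In the change of variables $(X_1,X_2)\mapsto(X_2,V_2)$ you cannot hold $\xi_2$ fixed. Once $\xi_1$ is solved from $X_1$, the identity \eqref{eq:noisyHB-baoab} gives $\xi_2=\frac{2}{h\varsigma}\bigl(X_2-(2+\beta)X_1+s\nabla U_R(X_1)\bigr)$ plus a term depending only on the initial state. So $\xi_2$ depends on $X_1$ through $\nabla U_R(X_1)$. Substituting this into \eqref{eq:vfrompair-baoab} at $k=2$ shows that, at fixed $X_2$, $V_2=-\nabla\Psi(X_1)+\mathrm{const}$ with $\Psi(x)=\frac2h\abs x^2-hU_R(x)$. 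This map has three problems for your argument:
\begin{itemize}
\item it is not affine in $X_1$;
\item its Jacobian $h\nabla^2U_R(X_1)-\frac4h\Id_d$, which is \eqref{eq:V2X1}, is not constant for the non-quadratic $U_R$, so the Jacobian of $(\xi_1,\xi_2)\mapsto(X_2,V_2)$ is not a nonzero constant either;
\item a nonvanishing Jacobian gives only a local diffeomorphism, not the global bijection you need for a well-defined, everywhere positive density.
\end{itemize}
The missing idea is that $\nabla^2\Psi\succeq(\frac4h-h\kappa)\Id_d\succ0$ under $h<2/\sqrt\kappa$. Hence $\Psi$ is strongly convex, and $\nabla\Psi$, and with it $X_1\mapsto V_2$, is a global $C^1$ diffeomorphism of $\R^d$. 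This is exactly where numerical stability enters the uniqueness argument, and your proposal never uses it there.

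\emph{The case (Q) check you leave open.} The same condition closes it. In one coordinate, the one-step noise column of the BAOAB chain on $U_\lambda$ is $g=\varsigma\bigl(\frac h2,\,1-\frac{h^2\lambda}4\bigr)^\top$. A direct computation gives $\det[\widetilde{\mathsf M}_\lambda g,\ g]=h\varsigma^2\bigl(1-\frac{h^2\lambda}4\bigr)$. This is nonzero only because $h^2\lambda\le h^2\kappa<4$; the paper phrases the same fact as negative definiteness of \eqref{eq:V2X1}.

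\emph{Your tube estimate.} This part is correct and slightly sharper than the paper's. You bound the marginal tails of $\widetilde\zeta_i$ directly (variance at most $\frac{h^2}2(1-\beta^2)$) and invoke Lemma~\ref{lem:metastability}(iii), which uses only a union bound. This gives separation $1-4ne^{-c_0R^2}$, and hence even the window $\lfloor\frac1{16}e^{c_0R^2}\rfloor$. The paper instead bounds $\abs{\widetilde\zeta_i}\le\frac h2\varsigma(\abs{\xi_{i-1}}+\abs{\xi_i})$ and takes a union bound over $n+1$ underlying Gaussian vectors. That costs a factor $2$, which is why the statement has $\frac1{32}$.
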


\begin{proof}
We indicate the modifications to the proofs of Section~\ref{sec:proofs}; the deterministic results of Section~\ref{sec:GTD} are unchanged, since they depend only on $(s,\beta)$.

\emph{Quadratic case.} On $U_\lambda$ in dimension two, the chain is a Gaussian autoregressive process whose phase matrix consists of two copies of $\widetilde{\mathsf M}_\lambda$, so its spectral radius again equals $\rhoq(s,\beta;\kappa)$ at the maximizing mode. The one-step noise is $d$-dimensional and hence degenerate on the $2d$-dimensional phase space, but the two-step covariance is nonsingular: given the current state, $X_1$ is an affine function of $\xi_1$ with derivative $\frac h2\varsigma\Id_d$; given $\xi_1$, the next position $X_2$ is affine in $\xi_2$ with the same derivative; and $(X_1,X_2)\mapsto(X_2,V_2)$ is an affine bijection, since its linear part is invertible:
\begin{equation}\label{eq:V2X1}
 \frac{\partial V_2}{\partial X_1}
 =h\nabla^2U(X_1)-\frac4h\,\Id_d
\end{equation}
is negative definite whenever $h^2\nabla^2U\prec4\,\Id_d$, which numerical stability guarantees. Lemma~\ref{lem:gaussianTVrate} applies with $n_0=2$ and therefore yields two initial states whose asymptotic pairwise total variation contraction factor is $\rhoq(s,\beta;\kappa)$, proving case \emph{(Q)}.

\emph{Tube estimate.} Lemma~\ref{lem:metastability} uses the noise variables only through the bound in the event \eqref{eq:good-general}, so their $1$-dependence is immaterial. Set $\delta=a/(2G_0)$, with $a$ from \eqref{eq:tuberadius-general} and $G_0$ from \eqref{eq:impulsegain-general}. Here $\abs{\widetilde\zeta_i}\le\frac h2\varsigma(\abs{\xi_{i-1}}+\abs{\xi_i})$, so, writing $\mathcal G$ for the event \eqref{eq:good-general} with $\varepsilon_0=0$ and $\delta'=\delta$,
\begin{equation*}
 \Bigl\{\max_{0\le i\le n}\abs{\xi_i}\le\frac{\delta R}{h\varsigma}\Bigr\}
 \subseteq\mathcal G,
 \qquad
 \Pp(\mathcal G^c)\le2(n+1)e^{-c_0R^2},
 \qquad
 c_0=\frac{\delta^2}{2h^2(1-\beta^2)},
\end{equation*}
by the same radial Gaussian tail bound applied to each $\xi_i$. Lemma~\ref{lem:metastability}(ii) then gives the pair separation $1-4(n+1)e^{-c_0R^2}\ge1-8ne^{-c_0R^2}$ for $n\ge1$, that is, \eqref{eq:pairTV-general} with $8n$ in place of $4n$, and the window $N_R=\lfloor\tfrac1{32}e^{c_0R^2}\rfloor$ gives the separation $3/4$ as before.

\emph{Invariant law.} The position-pair chain is no longer Markov, so we run the drift argument directly on the chain $Z_k=(X_k,V_k)$ on phase space. With $\nabla U_R(x)=\lambda_\ast x+Rw(x/R)$ as in \eqref{eq:dilatedremainder-general}, one BAOAB step is
\begin{equation*}
 Z_{k+1}=\widetilde{\mathsf M}_{\lambda_\ast}Z_k+\widetilde{\mathsf G}\xi_{k+1}+\widetilde F_R(Z_k,\xi_{k+1}),
 \qquad
 \sup_{z,\xi}\abs{\widetilde F_R(z,\xi)}\le CR,
\end{equation*}
where $\widetilde{\mathsf G}$ is the constant input matrix and $\widetilde F_R$ collects the bounded gradient remainders; the final kick evaluates $\nabla U_R$ at $X_{k+1}$, which depends on $\xi_{k+1}$, so the remainder depends on the within-step noise as well as on the state, as \eqref{eq:perturbed-linear} allows. The map $\widetilde F_R$ is continuous, and $\rho(\widetilde{\mathsf M}_{\lambda_\ast})<1$ by numerical stability, as noted after Proposition~\ref{prop:HB-BAOAB}. Lemma~\ref{lem:drift} with $B=CR$ and the moment bound \eqref{eq:drift-moment} therefore yield an invariant law $\pi_R$ with $\int\abs z^2\,\pi_R(dz)\le C(1+R^2)$. For uniqueness, the two-step kernel has an everywhere positive, jointly continuous density: $(\xi_1,\xi_2)\mapsto(X_1,X_2)$ is a triangular $C^1$ bijection with constant Jacobian determinant $(\tfrac h2\varsigma)^{2d}$, and $(X_1,X_2)\mapsto(X_2,V_2)$ is a $C^1$ bijection: at fixed $X_2$, the map $X_1\mapsto V_2$ is, by \eqref{eq:V2X1} applied with $U_R$, up to an additive constant, the negative gradient of
\begin{equation*}
 \Psi(x)=\frac2h\abs x^2-hU_R(x).
\end{equation*}
Under \eqref{eq:stabletuningintro},
\begin{equation*}
 \nabla^2\Psi(x)=\frac4h\Id_d-h\nabla^2U_R(x)
 \succeq\Bigl(\frac4h-h\kappa\Bigr)\Id_d\succ0.
\end{equation*}
Hence $\Psi$ is strongly convex, so $\nabla\Psi$, and therefore $X_1\mapsto V_2$, is a global $C^1$ diffeomorphism. Scheff\'e's lemma \citep[Theorem~16.12]{Billingsley1995} then gives the strong Feller property for $(\widetilde Q^{U_R}_{h,\gamma})^2$, positivity gives Lebesgue irreducibility, and the invariant law is unique. The moment bound, the cycle initialization \eqref{eq:vpair-baoab} with $\xi_0=0$, and the triangle inequality complete the proof exactly as in Theorem~\ref{thm:transfer}.
\end{proof}

\begin{corollary}[No fixed-parameter acceleration for BAOAB]\label{cor:main-BAOAB}
Corollary~\ref{cor:sharp}(ii) holds after replacing OBABO by BAOAB and
$Q_{h_\kappa,\gamma_\kappa}^U$ by
$\widetilde Q_{h_\kappa,\gamma_\kappa}^U$ throughout: no
numerically stable fixed-parameter BAOAB tuning admits a cold-start
exponential bound, uniform over the class \eqref{eq:classUkappa},
with convergence rate of larger order than $\kappa^{-1}$, under the same polynomial-prefactor condition.
\end{corollary}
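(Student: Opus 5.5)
The plan is to repeat the proof of Corollary~\ref{cor:sharp}(ii) word for word, with Theorem~\ref{thm:dichotomy-BAOAB} in place of Theorem~\ref{thm:dichotomy}. Fix a numerically stable BAOAB tuning rule $\mathfrak t$, the constants $C,c,a,b$, a rate $\tau(\kappa)=o(\kappa)$, and $\kappa_0>1$. Argue by contradiction: suppose that for every $\kappa\ge\kappa_0$, every $U\in\cU_\kappa^2$ whose kernel $\widetilde Q_{h_\kappa,\gamma_\kappa}^U$ has an invariant law $\pi$, every $z\in\R^4$ and every $n\in\mathbb N$, the reverse of \eqref{eq:falseuniform} holds. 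The numerical stability condition for BAOAB is the same as for OBABO, namely $h<2/\sqrt\kappa$. This follows from the remark after Proposition~\ref{prop:HB-BAOAB}: the BAOAB phase matrix has characteristic polynomial \eqref{eq:charpoly}, so Lemma~\ref{lem:quadratic-stability} applies unchanged. Hence the hypotheses of Theorem~\ref{thm:dichotomy-BAOAB} hold for $(h_\kappa,\gamma_\kappa)$.

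Next I choose $\kappa$ exactly as in the OBABO proof. Fix $C_\star>C_{\mathrm{GTD}}$, and take $\kappa\ge\max\{\kappa_0,2C_\star\}$ with $\kappa>C_\star^2$ and $c\kappa/\tau(\kappa)\ge5C_\star$, which is possible because $\tau(\kappa)=o(\kappa)$. The elementary two-case argument then gives \eqref{eq:qbeatsballistic}, namely $q_\kappa>e^{-c/\tau(\kappa)}$. This step involves only $q_\kappa$ and $\tau$, so nothing in it depends on the splitting.

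\emph{Case (Q).} Theorem~\ref{thm:dichotomy-BAOAB} gives $\lambda$ and two states $z,\widetilde z$ for $\widetilde Q^{U_\lambda}_{h,\gamma}$ whose pairwise total variation distance decays at exactly the root rate $\rhoq\ge q_\kappa$. The invariant law $\pi_\lambda$ exists and is unique by Lemma~\ref{lem:gaussianTVrate} with $n_0=2$, as established in the proof of Theorem~\ref{thm:dichotomy-BAOAB}. Apply the assumed bound to $z$ and to $\widetilde z$ and use the triangle inequality through $\pi_\lambda$. This bounds the pairwise distance by a constant independent of $n$ times $e^{-cn/\tau(\kappa)}$. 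Taking $n$th roots gives $\rhoq\le e^{-c/\tau(\kappa)}<q_\kappa$, which is a contradiction.

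\emph{Case (C).} For every $R\ge1$, Theorem~\ref{thm:dichotomy-BAOAB} gives $U_R\in\cU_\kappa^2$, the unique invariant law $\pi_R$, and $z_R$ with $W_2(\delta_{z_R},\pi_R)\le C_0(1+R)$. It also gives distance at least $3/8$ up to time $N_R=\lfloor e^{c_0R^2}/32\rfloor$. The only change from the OBABO case is the factor $1/32$ in place of $1/16$, and it does not matter because $N_R$ still grows like $e^{c_0R^2}$. Inserting these into the assumed bound at $n=N_R$ gives $3/8\le C\kappa^a(1+C_0(1+R))^b\exp(-cN_R/\tau(\kappa))$. With $\kappa$ fixed, the right-hand side tends to zero as $R\to\infty$, since a polynomial in $R$ is beaten by a doubly exponential decay. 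This is again a contradiction.

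In both cases some target in $\cU_\kappa^2$ and some pair $(z,n)$ satisfy \eqref{eq:falseuniform}, which proves the corollary. The final assertion is the special case $\tau(\kappa)=\kappa^\alpha$ with $\alpha<1$.

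I do not expect a genuine obstacle. The only point needing care is that case (Q) for BAOAB gives the exact limiting rate for a pair of states, not merely for a single state. That is precisely the form \eqref{eq:quadrootrateintro}, which Theorem~\ref{thm:dichotomy-BAOAB} asserts to be preserved.
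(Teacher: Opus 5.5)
Your proposal is correct and is essentially the paper's proof: the paper also reruns the lower-bound argument of Corollary~\ref{cor:sharp}(ii) with Theorem~\ref{thm:dichotomy-BAOAB} in place of Theorem~\ref{thm:dichotomy}. It likewise notes that the smaller window $\lfloor\tfrac1{32}e^{c_0R^2}\rfloor$ only changes a fixed constant. Your additional checks are consistent with the paper: that BAOAB has the same stability condition $h<2/\sqrt\kappa$, and that $\pi_\lambda$ exists uniquely via Lemma~\ref{lem:gaussianTVrate} with $n_0=2$.
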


\begin{proof}
Repeat the proof of the lower-bound assertion of
Corollary~\ref{cor:sharp}, applying
Theorem~\ref{thm:dichotomy-BAOAB} in place of
Theorem~\ref{thm:dichotomy}. The smaller time window
$\lfloor\tfrac1{32}e^{c_0R^2}\rfloor$ affects only a fixed constant
and does not change the argument.
\end{proof}

\subsection{The remaining Strang splittings}\label{sec:palindromic}

We now treat ABOBA, AOBOA, BOAOB, and OABAO. Throughout, $\beta=e^{-\gamma h}$, $r=e^{-\gamma h/2}$, $\sigma=(1-\beta)^{1/2}$, and $\varsigma=(1-\beta^2)^{1/2}$, and $(X_k,V_k)_{k\ge0}$ denotes the chain of the indicated splitting, with $\xi_{k+1}$, respectively $(\xi_{k+1}^{(1)},\xi_{k+1}^{(2)})$ in order of application, the standard Gaussian variables used in the $(k{+}1)$st step.

\needspace{8\baselineskip}%
\begin{proposition}[Noisy heavy-ball representations of the remaining Strang splittings]\label{prop:HB-palindromic}
The following identities hold.
\begin{enumerate}
\item[\rm(i)] \emph{ABOBA.} The force-evaluation points $Y_{m+1}=X_m+\frac h2V_m$, together with $Y_0:=X_0-\frac h2V_0$, satisfy $Y_{m+1}-Y_m=hV_m$ for $m\ge0$ and, for $m\ge1$,
\begin{equation}\label{eq:HB-aboba}
 Y_{m+1}=Y_m+\beta(Y_m-Y_{m-1})-s\nabla U(Y_m)+h\varsigma\,\xi_m,
 \qquad s=\frac{h^2}{2}(1+\beta);
\end{equation}
the noise variables $h\varsigma\xi_m$ are independent with covariance $h^2(1-\beta^2)\Id_d$.
\item[\rm(ii)] \emph{AOBOA.} The force-evaluation points $Y_{m+1}=X_m+\frac h2V_m$, together with $Y_0:=X_0-\frac h2V_0$, satisfy $Y_{m+1}-Y_m=hV_m$ for $m\ge0$ and, for $m\ge1$,
\begin{equation}\label{eq:HB-aoboa}
 Y_{m+1}=Y_m+\beta(Y_m-Y_{m-1})-s\nabla U(Y_m)+h\sigma\bigl(r\xi_m^{(1)}+\xi_m^{(2)}\bigr),
 \qquad s=\sqrt\beta\,h^2;
\end{equation}
the noise variables are independent with covariance $h^2(1-\beta^2)\Id_d$.
\item[\rm(iii)] \emph{BOAOB.} Let $\xi_0^{(2)}\sim\cN(0,\Id_d)$ be independent of the variables driving the chain, or alternatively let $\xi_0^{(2)}$ be the zero vector, and define the auxiliary variable $X_{-1}$ by
\begin{equation}\label{eq:vpair-boaob}
 V_0=\frac rh(X_0-X_{-1})+\sigma\xi_0^{(2)}-\frac h2\nabla U(X_0).
\end{equation}
Then, for $k\ge0$,
\begin{equation}\label{eq:HB-boaob}
 X_{k+1}=X_k+\beta(X_k-X_{k-1})-s\nabla U(X_k)+h\sigma\bigl(r\xi_k^{(2)}+\xi_{k+1}^{(1)}\bigr),
 \qquad s=\sqrt\beta\,h^2;
\end{equation}
the noise variables are independent. If $\xi_0^{(2)}\sim\cN(0,\Id_d)$, every noise variable has covariance $h^2(1-\beta^2)\Id_d$; if $\xi_0^{(2)}=0$, the first has the smaller covariance $h^2(1-\beta)\Id_d$, and each subsequent one has covariance $h^2(1-\beta^2)\Id_d$.
\item[\rm(iv)] \emph{OABAO.} Set $\chi_m=r\xi_{m-1}^{(2)}+\xi_m^{(1)}$ for $m\ge2$. The force-evaluation points $Y_m=X_{m-1}+\frac h2\bigl(rV_{m-1}+\sigma\xi_m^{(1)}\bigr)$ satisfy, for $m\ge2$,
\begin{equation}\label{eq:HB-oabao}
 Y_{m+1}=Y_m+\beta(Y_m-Y_{m-1})-s\nabla U(Y_m)+\frac h2\sigma\bigl(\chi_m+\chi_{m+1}\bigr),
 \qquad s=\frac{h^2}{2}(1+\beta).
\end{equation}
The variables $\sigma\chi_m$, $m\ge2$, are independent $\cN(0,(1-\beta^2)\Id_d)$, so the noise is a centered $1$-dependent Gaussian sequence with the covariances \eqref{eq:zetacov-baoab}; in law it coincides with the BAOAB noise \eqref{eq:zeta-baoab}.
\end{enumerate}
\end{proposition}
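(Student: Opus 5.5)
Each of the four identities is a direct velocity-elimination computation, in the spirit of Propositions~\ref{prop:HB} and~\ref{prop:HB-BAOAB}. The plan is to write one step of the splitting as a composition of the maps $\mathrm A_t$, $\mathrm B_t$, $\mathrm O_t$. For each splitting we pick the variable at which the force is evaluated, and express the increment of that variable as $h$ times a velocity. We then express that velocity through the previous increment, one force evaluation and fresh noise. Substituting the second relation into the first gives a two-step recursion; we read off $\beta$ and $s$ and check the noise law.

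\emph{ABOBA and AOBOA.} The force is evaluated after the first half drift, at $Y_{m+1}=X_m+\frac h2V_m$. The closing half drift of step $m$ gives $X_m=Y_m+\frac h2V_m$ (with $Y_0$ defined so that this also holds at $m=0$), so $Y_{m+1}-Y_m=hV_m$. Next we trace the velocity through the middle of step $m+1$: $V_{m+1}$ is an affine function of $V_m$, $\nabla U(Y_{m+1})$ and the step's noise. Substituting $V_m=(Y_{m+1}-Y_m)/h$ into $Y_{m+2}-Y_{m+1}=hV_{m+1}$ and shifting indices produces \eqref{eq:HB-aboba} and \eqref{eq:HB-aoboa}.

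For ABOBA the middle is B--O--B, so $V_{m+1}=\beta(V_m-\frac h2\nabla U)+\varsigma\xi-\frac h2\nabla U$. The gradient coefficient is then $\frac h2(1+\beta)$, which multiplied by $h$ gives $s$. For AOBOA the middle is O--B--O, so $V_{m+1}=r(rV_m+\sigma\xi^{(1)}-h\nabla U)+\sigma\xi^{(2)}$, which gives $s=rh^2=\sqrt\beta h^2$. Independence of the noises is immediate because each step uses fresh Gaussians. The covariances follow from $\varsigma^2=1-\beta^2$ and $\sigma^2(1+r^2)=1-\beta^2$. The only care needed is the index shift between the noise label and $m$.

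\emph{BOAOB.} We copy the OBABO argument. The position update is $X_{k+1}=X_k+h\bigl(r(V_k-\frac h2\nabla U(X_k))+\sigma\xi^{(1)}_{k+1}\bigr)$. The tail O--B of the previous step gives $V_k=\frac rh(X_k-X_{k-1})+\sigma\xi^{(2)}_k-\frac h2\nabla U(X_k)$, which at $k=0$ is \eqref{eq:vpair-boaob}. Substituting yields gradient coefficient $h\cdot r\cdot\frac h2\cdot 2=rh^2$ and noise $h\sigma(r\xi_k^{(2)}+\xi_{k+1}^{(1)})$. The covariance statements are then identical to Proposition~\ref{prop:HB}.

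\emph{OABAO.} This is the main obstacle, because the force-evaluation point $Y_m$ depends on $\xi^{(1)}_m$ and is not a function of the state. Write $\tilde v_m=rV_{m-1}+\sigma\xi^{(1)}_m$, so that $Y_m=X_{m-1}+\frac h2\tilde v_m$. The step then gives $X_m=Y_m+\frac h2(\tilde v_m-h\nabla U(Y_m))$ and $V_m=r(\tilde v_m-h\nabla U(Y_m))+\sigma\xi^{(2)}_m$.

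From these, $Y_{m+1}-Y_m=\frac h2\bigl(\tilde v_m-h\nabla U(Y_m)\bigr)+\frac h2\tilde v_{m+1}$ and $\tilde v_{m+1}=\beta(\tilde v_m-h\nabla U(Y_m))+\sigma\chi_{m+1}$. We express $\tilde v_m-h\nabla U(Y_m)$ through the previous increment $Y_m-Y_{m-1}$. That increment equals $\frac h2(1+\beta)\bigl(\tilde v_{m-1}-h\nabla U(Y_{m-1})\bigr)+\frac h2\sigma\chi_m$. We then verify, analogously to the BAOAB computation in Proposition~\ref{prop:HB-BAOAB}, that eliminating the auxiliary velocities leaves momentum $\beta$ and gradient coefficient $\frac{h^2}{2}(1+\beta)$. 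The residual noise should be exactly $\frac h2\sigma(\chi_m+\chi_{m+1})$, and the main work is the bookkeeping showing that this cancellation is exact. Independence of the $\chi_m$ follows because they use disjoint pairs $(\xi^{(2)}_{m-1},\xi^{(1)}_m)$. Their covariance is $(1+\beta)(1-\beta)=1-\beta^2$, and comparison with \eqref{eq:zeta-baoab} gives equality in law with the BAOAB noise.
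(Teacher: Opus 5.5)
Your proposal is correct and follows the paper's proof essentially step for step: the paper also eliminates the velocity separately for each splitting, and for OABAO it uses exactly your $\tilde v_m$ (its $v^{(a)}_m$) together with $w_m=\tilde v_m-h\nabla U(Y_m)$ (its $v^{(c)}_m$). The bookkeeping you defer closes in one line. Your relations give $Y_{m+1}-Y_m=\frac h2(1+\beta)w_m+\frac h2\sigma\chi_{m+1}$ and $w_m=\beta w_{m-1}+\sigma\chi_m-h\nabla U(Y_m)$, and substituting $\frac h2(1+\beta)\beta w_{m-1}=\beta\bigl(Y_m-Y_{m-1}-\frac h2\sigma\chi_m\bigr)$ leaves exactly the noise $\frac h2\sigma\chi_m+\frac h2\sigma\chi_{m+1}$.
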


\begin{proof}
(i) One ABOBA step from $(X_m,V_m)$ reads $Y_{m+1}=X_m+\frac h2V_m$, then
\begin{equation*}
 V_{m+1}=\beta V_m-\frac h2(1+\beta)\nabla U(Y_{m+1})+\varsigma\xi_{m+1},
 \qquad
 X_{m+1}=Y_{m+1}+\frac h2V_{m+1},
\end{equation*}
because both half-kicks act at the same point $Y_{m+1}$, the position being unchanged by $\mathrm O_h$. Hence $Y_{m+2}=X_{m+1}+\frac h2V_{m+1}=Y_{m+1}+hV_{m+1}$, so $V_m=(Y_{m+1}-Y_m)/h$ for $m\ge0$ with the stated $Y_0$, and eliminating $V$ gives \eqref{eq:HB-aboba}.

(ii) One AOBOA step reads $Y_{m+1}=X_m+\frac h2V_m$, then
\begin{equation*}
 V_{m+1}=\beta V_m-rh\nabla U(Y_{m+1})+\sigma\bigl(r\xi_{m+1}^{(1)}+\xi_{m+1}^{(2)}\bigr),
 \qquad
 X_{m+1}=Y_{m+1}+\frac h2V_{m+1},
\end{equation*}
by composing $\mathrm O_{h/2}\,\mathrm B_h\,\mathrm O_{h/2}$ on the velocity. The same elimination as in (i) gives \eqref{eq:HB-aoboa}, and the covariance is $h^2\sigma^2(r^2+1)\Id_d=h^2(1-\beta^2)\Id_d$.

(iii) Write $v^{(b)}_{k+1}=r\bigl(V_k-\frac h2\nabla U(X_k)\bigr)+\sigma\xi_{k+1}^{(1)}$ for the velocity after the first two substeps, so that $X_{k+1}=X_k+hv^{(b)}_{k+1}$ and $V_{k+1}=rv^{(b)}_{k+1}+\sigma\xi_{k+1}^{(2)}-\frac h2\nabla U(X_{k+1})$. Using
\begin{equation*}
 V_k=rv^{(b)}_k+\sigma\xi_k^{(2)}-\frac h2\nabla U(X_k),
 \qquad
 v^{(b)}_k=\frac{X_k-X_{k-1}}h,
\end{equation*}
in the definition of $v^{(b)}_{k+1}$ gives
\begin{equation*}
 v^{(b)}_{k+1}=\frac{\beta}{h}(X_k-X_{k-1})-rh\nabla U(X_k)+\sigma\bigl(r\xi_k^{(2)}+\xi_{k+1}^{(1)}\bigr),
\end{equation*}
valid for $k\ge1$, and for $k=0$ by the imposed relation; multiplying by $h$ gives \eqref{eq:HB-boaob}.

(iv) Write $v^{(a)}_{m}=rV_{m-1}+\sigma\xi_m^{(1)}$ and $v^{(c)}_m=v^{(a)}_m-h\nabla U(Y_m)$ for the velocities before and after the middle kick of the $m$th step, so that $X_m=Y_m+\frac h2v^{(c)}_m$ and $V_m=rv^{(c)}_m+\sigma\xi_m^{(2)}$. Then $v^{(a)}_{m+1}=\beta v^{(c)}_m+\sigma\chi_{m+1}$, hence
\begin{equation*}
 Y_{m+1}-Y_m=\frac h2\bigl(v^{(c)}_m+v^{(a)}_{m+1}\bigr)
 =\frac h2(1+\beta)v^{(c)}_m+\frac h2\sigma\chi_{m+1},
\end{equation*}
and combining this with $v^{(c)}_{m+1}=\beta v^{(c)}_m+\sigma\chi_{m+1}-h\nabla U(Y_{m+1})$ yields \eqref{eq:HB-oabao}. The $\sigma\chi_m$ are independent with covariance $\sigma^2(r^2+1)\Id_d=(1-\beta^2)\Id_d$ because distinct $\chi_m$ involve disjoint pairs of the driving variables, and the covariances of the noise terms $\frac h2\sigma(\chi_m+\chi_{m+1})$ agree with \eqref{eq:zetacov-baoab}.
\end{proof}

For ABOBA and AOBOA, the map $(x,v)\mapsto\bigl(x+\frac h2v,\;x-\frac h2v\bigr)$ is a linear bijection of phase space under which the chain becomes the Markov chain of consecutive pairs $(Y_{m+1},Y_m)$. Total variation distances are invariant under this bijection, and Wasserstein distances change by at most a factor depending only on $h$.

\needspace{8\baselineskip}%
\begin{theorem}[Non-acceleration dichotomy for the remaining Strang splittings]\label{thm:palindromic}
Let $\dagger$ denote one of the four splittings of Proposition~\ref{prop:HB-palindromic}, with heavy-ball step size $s_\dagger$ as given there. Fix $C_\star>C_{\mathrm{GTD}}$, let $\kappa\ge2C_\star$, and assume $0<s_\dagger<2(1+\beta)/\kappa$. Then Theorem~\ref{thm:dichotomy} holds after replacing the OBABO kernel by the kernel of the splitting $\dagger$, the definition of $s$ in \eqref{eq:hbparamsintro} by $s_\dagger$, and the stability condition $h<2/\sqrt\kappa$ by $0<s_\dagger<2(1+\beta)/\kappa$, and, for OABAO, with the time window in \eqref{eq:cyclemetaintro} replaced by $0\le n\le\lfloor\tfrac1{32}e^{c_0R^2}\rfloor$.
\end{theorem}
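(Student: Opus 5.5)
The proof mirrors those of Theorem~\ref{thm:dichotomy} and Theorem~\ref{thm:dichotomy-BAOAB}. It splits on whether $\rhoq(s_\dagger,\beta;\kappa)\ge q_\kappa$. The deterministic results of Section~\ref{sec:GTD} (Lemma~\ref{lem:interiorcycle}, Proposition~\ref{prop:smoothcycle}) depend only on the pair $(s_\dagger,\beta)$ and on the stability condition $0<s_\dagger<2(1+\beta)/\kappa$, so they apply verbatim. What remains is to redo, for each splitting $\dagger$, the three stochastic ingredients: the Gaussian quadratic case, the tube estimate, and existence, uniqueness and moments of the invariant law.

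\emph{Quadratic case.} First I will check that the one-step phase matrix of $\dagger$ on $U_\lambda$ has characteristic polynomial $z^2-(1+\beta-s_\dagger\lambda)z+\beta$. The determinant is $\beta$, since the $\mathrm A,\mathrm B$ substeps are unit-determinant shears and the $\mathrm O$ substeps multiply by $\beta$ in total. For the trace, I will read it off from the exact deterministic recursions in Proposition~\ref{prop:HB-palindromic}. The phase state is linearly conjugate to $(Y_{m+1},Y_m)$ for ABOBA and AOBOA, and to $(X_k,X_{k-1})$ for BOAOB. For OABAO I will compute it directly. Hence the spectral radius at the maximizing $\lambda$ equals $\rhoq(s_\dagger,\beta;\kappa)$, and stability gives Schur stability.

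Next I will verify nondegeneracy of the $n_0$-step noise covariance. BOAOB and OABAO use two Gaussians per step entering triangularly, so $n_0=1$. ABOBA and AOBOA use $n_0=2$, via the pair coordinates $(Y_{m+1},Y_m)$: the effective noise enters the new $Y$ with a nonsingular coefficient in each of two consecutive steps. Lemma~\ref{lem:gaussianTVrate} then gives case (Q), exactly as in Proposition~\ref{prop:quadraticbranch}.

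\emph{Cycling case.} I will initialize on the dilated cycle using the recursion variables. For BOAOB I use $(X_{-1},X_0)=(Rx_{j-1}^\circ,Rx_j^\circ)$ with \eqref{eq:vpair-boaob} and $\xi_0^{(2)}=0$. For ABOBA and AOBOA I use $(Y_0,Y_1)=(Rx_{j-1}^\circ,Rx_j^\circ)$, which fixes $(X_0,V_0)$ through the linear bijection. Note that \eqref{eq:HB-aboba}--\eqref{eq:HB-aoboa} start at $m\ge1$, so the first noise is $\xi_1$ and the error starts at zero.

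OABAO needs a modified initialization, because $Y_m$ involves the fresh noise $\xi_m^{(1)}$. I will choose $(X_0,V_0)$ so that the deterministic part of $(Y_1,Y_2)$ equals $(Rx_{j-1}^\circ,Rx_j^\circ)$. The initial error $E_0$ is then a Gaussian linear in $\xi_1,\xi_2$ with $R$-independent covariance, so I will apply Lemma~\ref{lem:metastability} with $\varepsilon_0=a/(4C)$ and $\delta'=a/(4G_0)$. The tail bounds are Gaussian in every case: independent noises for ABOBA, AOBOA and BOAOB, and $1$-dependent noises for OABAO bounded through the $\chi_m$. This gives $\Pp(\mathcal G^c)\le A(n+1)e^{-c_0R^2}$. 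The extra $n+1$ in the OABAO case accounts for the window $\lfloor e^{c_0R^2}/32\rfloor$. By Lemma~\ref{lem:metastability}(ii), since positions are a measurable function of the state, the pair separation $3/4$ follows.

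\emph{Invariant law.} I will apply Lemma~\ref{lem:drift} on phase space. Writing $\nabla U_R=x+Rw(x/R)$, each step is the Schur-stable matrix at $\lambda_\ast=1$, plus Gaussian input, plus a continuous remainder bounded by $CR$. This gives existence and the bound $\int|z|^2\,d\pi_R\le C(1+R^2)$.

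For uniqueness I need positive continuous densities. BOAOB has a one-step triangular bijection in $(\xi^{(1)},\xi^{(2)})$. OABAO is similar, since $V$ after the last $\mathrm O$ is affine in $\xi^{(2)}$ and $X$ is affine in $\xi^{(1)}$ given the state. For ABOBA and AOBOA, the two-step map $(\xi_1,\xi_2)\mapsto(Y_2,Y_3)$ is triangular with constant Jacobian. Strong Feller plus irreducibility then yields uniqueness. The $W_2$ bound and the triangle inequality finish as in Theorem~\ref{thm:transfer}.

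I expect OABAO to be the main obstacle. Its recursion variables are not functions of the current state, so both the cycle initialization with random initial error and the link between $\Law(Y_n)$ and the phase-space law need care. I would first try expressing $Y_{n+1}$ as a measurable function of $(X_n,V_n)$ and the independent fresh noise $\xi_{n+1}^{(1)}$. The TV separation for $Y_{n+1}$ then transfers to the laws of $(X_n,V_n)$, since a common Markov kernel cannot increase TV.
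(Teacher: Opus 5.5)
Most of your plan follows the paper's proof. The characteristic-polynomial check, the choices $n_0\in\{1,2\}$, the cycle initializations, Lemma~\ref{lem:drift} for existence, and the triangle-inequality finish all match; your OABAO choice of $(X_0,V_0)$ is the paper's $rV_0=Rv^{(a)\circ}_j$, $X_0=Ry^\circ_j-\frac h2Rv^{(a)\circ}_j$ up to relabeling.

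There is, however, a gap in your OABAO uniqueness step. You assert that, given the state, $X$ is affine in $\xi^{(1)}$. For a nonquadratic potential this is false. One OABAO step gives $x^+=2y-x-\frac{h^2}{2}\nabla U_R(y)$ with $y=x+\frac h2(rv+\sigma\xi^{(1)})$, so $x^+$ depends on $\xi^{(1)}$ through $\nabla U_R$. Triangularity of $(\xi^{(1)},\xi^{(2)})\mapsto(x^+,v^+)$ therefore does not by itself give an everywhere positive continuous density: you need $\xi^{(1)}\mapsto x^+$ to be a global $C^1$ diffeomorphism.

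The missing idea is that $y\mapsto 2y-x-\frac{h^2}{2}\nabla U_R(y)$ is the gradient of $y\mapsto\abs{y}^2-\ip{x}{y}-\frac{h^2}{2}U_R(y)$. Its Hessian satisfies $2(\Id_d-\frac{h^2}{4}\nabla^2U_R)\succeq2(1-\frac{h^2\kappa}{4})\Id_d$, which is uniformly positive definite exactly because, for OABAO, the hypothesis $s_\dagger<2(1+\beta)/\kappa$ reads $h^2\kappa<4$. The gradient of a strongly convex $C^2$ function is a global diffeomorphism.

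The same point affects your quadratic case. There $x^+$ is affine in $\xi^{(1)}$ with coefficient $h\sigma(1-\frac{h^2\lambda}{4})$, so nondegeneracy with $n_0=1$ also uses the stability condition, not triangularity alone.

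Your treatment of OABAO separation, by contrast, is a valid route different from the paper's. The paper recovers the positions through $X_m=\frac{\beta Y_m+Y_{m+1}}{1+\beta}-\frac{h\sigma}{2(1+\beta)}\chi_{m+1}$. It shows they track the distinct points $R\bar x_t$, with $\bar x_t=y^\circ_t+\frac1{1+\beta}(y^\circ_{t+1}-y^\circ_t)$, and then chooses a new radius $a_\dagger$ and threshold $\delta_\dagger$ so that balls around the $\bar x_t$ are disjoint. You instead use that $Y_{n+1}=X_n+\frac h2(rV_n+\sigma\xi^{(1)}_{n+1})$ with $\xi^{(1)}_{n+1}$ independent of $(X_n,V_n)$. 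Hence $\Law(Y_{n+1})$ is the image of $\Law(X_n,V_n)$ under a fixed Markov kernel, and the tube separation of $Y_{n+1}$ from Lemma~\ref{lem:metastability} transfers to phase space by contraction of total variation. This avoids the auxiliary points and the extra parameters.

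Two bookkeeping remarks. First, $E_0$ is linear in $(\chi_1,\chi_2)$ only on the event that $Y_1$ stays in the affine ball around the cycle point. Second, to keep the prefactor $2(n+1)$, and hence the window $\frac1{32}e^{c_0R^2}$, it is simplest to do as the paper does: control $E_0$ and all noise terms through the single event $\{\max_{1\le\ell\le n+1}h\sigma\abs{\chi_\ell}\le\delta R\}$ with $\delta$ small, rather than through a separate Gaussian tail for $E_0$.
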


\begin{proof}
We first show that every splitting has the same quadratic characteristic polynomial, with $s$ replaced by $s_\dagger$. This proves case \emph{(Q)} when $\rhoq(s_\dagger,\beta;\kappa)\ge q_\kappa$. In the complementary case, the deterministic results of Section~\ref{sec:GTD} provide the attracting cycle used below. We then apply Lemma~\ref{lem:metastability} by identifying the recursion variables, controlling their noise terms, and constructing the required initial states.

On a quadratic potential the one-step phase matrix of each splitting has determinant $\beta$ in every coordinate, since $\mathrm A$ and $\mathrm B$ substeps are unit-determinant shears and the $\mathrm O$ substeps contract each velocity coordinate by a total factor $\beta$, and its trace on the curvature-$\lambda$ mode is $1+\beta-s_\dagger\lambda$. For ABOBA and AOBOA this follows from Proposition~\ref{prop:HB-palindromic}: in the pair coordinates given by the linear bijection above, the one-step linear map is the companion matrix of the pair recursion, and the trace and determinant are unchanged by the change of coordinates. For BOAOB and OABAO we verify the trace by composing the five substeps directly on $U_\lambda$: in the phase-space coordinates $(x,v)$, the one-step matrices are
\begin{equation*}
 \mathsf M^{\mathrm{BOAOB}}_\lambda=
 \begin{pmatrix}
  1-\frac{s_\dagger\lambda}{2} & rh\\[2pt]
  -\frac{h\lambda}{2}\bigl(1+\beta-\frac{s_\dagger\lambda}{2}\bigr) & \beta-\frac{s_\dagger\lambda}{2}
 \end{pmatrix},
 \qquad
 \mathsf M^{\mathrm{OABAO}}_\lambda=
 \begin{pmatrix}
  1-\frac{h^2\lambda}{2} & rh\bigl(1-\frac{h^2\lambda}{4}\bigr)\\[2pt]
  -rh\lambda & \beta\bigl(1-\frac{h^2\lambda}{2}\bigr)
 \end{pmatrix},
\end{equation*}
with $s_\dagger=\sqrt\beta\,h^2$ for BOAOB and $s_\dagger=h^2(1+\beta)/2$ for OABAO, and direct expansion gives trace $1+\beta-s_\dagger\lambda$ and determinant $\beta$ in both cases. Each mode therefore has the characteristic polynomial \eqref{eq:charpoly} with $s=s_\dagger$, so Lemma~\ref{lem:quadratic-stability} applies with $(s_\dagger,\beta)$. The deterministic results of Section~\ref{sec:GTD} depend only on $(s,\beta)$ and apply with $(s_\dagger,\beta)$ as well.

\emph{Quadratic case.} Assume first that
\begin{equation*}
 \rhoq(s_\dagger,\beta;\kappa)\ge q_\kappa.
\end{equation*}
Because $\lambda\mapsto\rho(A_\lambda(s_\dagger,\beta))$ is continuous on the compact interval $[1,\kappa]$, choose $\lambda_{\mathrm Q}\in[1,\kappa]$ such that
\begin{equation*}
 \rho(A_{\lambda_{\mathrm Q}}(s_\dagger,\beta))=\rhoq(s_\dagger,\beta;\kappa).
\end{equation*}
On $\R^2$, let
\begin{equation*}
 U_{\lambda_{\mathrm Q}}(x)=\frac{\lambda_{\mathrm Q}}2\abs x^2.
\end{equation*}
We now verify the nondegeneracy hypothesis of Lemma~\ref{lem:gaussianTVrate} for each splitting. For ABOBA and AOBOA, in the pair coordinates above the chain on $U_{\lambda_{\mathrm Q}}$ is a Gaussian autoregressive process whose noise enters the leading pair component with the nonsingular covariance $h^2(1-\beta^2)\Id_d$, so the two-step covariance is nonsingular and Lemma~\ref{lem:gaussianTVrate} applies with $n_0=2$. For BOAOB and OABAO the one-step noise covariance on phase space is nonsingular: given the state, the map $(\xi^{(1)},\xi^{(2)})\mapsto(x^+,v^+)$ is block triangular with diagonal blocks $h\sigma\Id_d$ and $\sigma\Id_d$ for BOAOB, and $h\sigma(1-\tfrac{h^2\lambda_{\mathrm Q}}4)\Id_d$ and $\sigma\Id_d$ for OABAO, the first factor being positive by numerical stability. Lemma~\ref{lem:gaussianTVrate} therefore supplies two initial states $z,\widetilde z\in\R^4$ whose asymptotic pairwise total variation contraction factor is $\rho(A_{\lambda_{\mathrm Q}}(s_\dagger,\beta))=\rhoq(s_\dagger,\beta;\kappa)$, proving case \emph{(Q)} for the four splittings.

\emph{Cycling case.} Assume henceforth that
\begin{equation*}
 \rhoq(s_\dagger,\beta;\kappa)<q_\kappa.
\end{equation*}
As in the proof of Theorem~\ref{thm:dichotomy}, Lemma~\ref{lem:interiorcycle} produces an integer $m\ge3$ with $P_m(s_\dagger,\beta;\kappa)<0$, and Proposition~\ref{prop:smoothcycle}, applied with $(s_\dagger,\beta)$, supplies the potential $U\in\cU_\kappa^2$, the cycle $(x_j^\circ)$, Assumptions~\ref{ass:cycle} and~\ref{ass:monodromy}, and the bound \eqref{eq:transfer-bounded} with $d=2$, $\lambda_\ast=1$, and $H_j\equiv\Id_2$. For $R\ge1$, set $U_R(x)=R^2U(x/R)$ and write $Q_R^\dagger$ for the phase-space transition kernel of the splitting $\dagger$ applied to $U_R$. The remaining steps establish case \emph{(C)} for this cycling-case data.

The recursion supplied to Lemma~\ref{lem:metastability} is the position sequence $\widehat Y_k=X_k$ for BOAOB, as it is for BAOAB in Theorem~\ref{thm:dichotomy-BAOAB}, and the shifted force-evaluation sequences $\widehat Y_k=Y_{k+1}$ for ABOBA and AOBOA and $\widehat Y_k=Y_{k+2}$ for OABAO, with the noise and cycle indices shifted accordingly.

\emph{Tube estimate.} For ABOBA and AOBOA the noise variables are independent with covariance $h^2(1-\beta^2)\Id_2$. The same holds for BOAOB from the second update onward, while, under the convention $\xi_0^{(2)}=0$, its first noise variable has the smaller covariance $h^2(1-\beta)\Id_2$. Set
\begin{equation*}
 \bar\delta=\frac{a}{2G_0},
 \qquad
 c_0=\frac{\bar\delta^2}{2h^2(1-\beta^2)},
\end{equation*}
with $a$ from \eqref{eq:tuberadius-general} and $G_0$ from \eqref{eq:impulsegain-general}. For the cycle initializations specified below, $E_0=0$. Moreover, for each of these three splittings,
\begin{equation*}
 \sup_{i\ge1}\Pp\bigl(\abs{\zeta_i^\dagger}>\bar\delta R\bigr)\le2e^{-c_0R^2},
\end{equation*}
where $\zeta_i^\dagger$ denotes the noise in the corresponding heavy-ball recursion.

For OABAO, set $\chi_1=\xi_1^{(1)}$, corresponding to the convention $\xi_0^{(2)}=0$. The variables $\sigma\chi_\ell$, $\ell\ge1$, are independent, with covariance $(1-\beta^2)\Id_2$ for $\ell\ge2$ and the smaller covariance $(1-\beta)\Id_2$ for $\ell=1$. Under the shift $\widehat Y_k=Y_{k+2}$, the increments used for $n-1$ updates are $\widehat\zeta_i=\zeta_{i+2}$ for $1\le i\le n-1$, where, by Proposition~\ref{prop:HB-palindromic}(iv),
\begin{equation*}
 \zeta_{m+1}=\frac h2\sigma(\chi_m+\chi_{m+1}).
\end{equation*}
For every $\delta_\dagger>0$, set $c_0=\delta_\dagger^2/(2h^2(1-\beta^2))$. The radial Gaussian tail bound and a union bound give
\begin{equation*}
 \Pp\Bigl(\max_{1\le\ell\le n+1}h\sigma\abs{\chi_\ell}>\delta_\dagger R\Bigr)
 \le2(n+1)e^{-c_0R^2}.
\end{equation*}
On the complementary event,
\begin{equation*}
 \max_{1\le i\le n-1}\abs{\widehat\zeta_i}
 =\max_{3\le k\le n+1}\abs{\zeta_k}
 \le\delta_\dagger R.
\end{equation*}
The same event also controls the shifted initial error, as shown below. We use these two bounds to estimate the probability of the good event \eqref{eq:good-general} directly.

\emph{Separating events and initialization.} For BOAOB the recursion variables are the positions. For $j\in\mathbb Z$, initialize at
\begin{equation*}
 z_R^{(j)}
 =\Bigl(Rx^\circ_j,\;
 \frac rhR\bigl(x^\circ_j-x^\circ_{j-1}\bigr)-\frac h2\nabla U_R(Rx^\circ_j)\Bigr),
\end{equation*}
which is \eqref{eq:vpair-boaob} with $\xi_0^{(2)}=0$ and $(X_{-1},X_0)=(Rx^\circ_{j-1},Rx^\circ_j)$, so $E_0=0$; the invariant-law and moment estimates are treated below. For ABOBA and AOBOA the tube events concern the pairs $(Y_{m+1},Y_m)$, which are state events under the linear bijection above, and total variation is unchanged; initialize at
\begin{equation*}
 z_R^{(j)}
 =\Bigl(\frac R2\bigl(x^\circ_j+x^\circ_{j-1}\bigr),\;
 \frac Rh\bigl(x^\circ_j-x^\circ_{j-1}\bigr)\Bigr),
\end{equation*}
for which $(Y_1,Y_0)=(Rx^\circ_j,Rx^\circ_{j-1})$, so $E_0=0$. Since $G_0\bar\delta=a/2$, Lemma~\ref{lem:metastability}(iii), applied with $\delta'=\bar\delta$ to the initial states with consecutive cycle indices $j$ and $j+1$, gives pairwise total variation separation at least $1-4ne^{-c_0R^2}$ for the recursion variables. For ABOBA and AOBOA this transfers to phase space under the linear bijection, and for BOAOB it transfers because the position is a measurable function of the state. Consequently,
\begin{equation}\label{eq:pairTV-three-splittings}
 \norm{\delta_{z_R^{(j)}}(Q_R^\dagger)^n-\delta_{z_R^{(j+1)}}(Q_R^\dagger)^n}_{\TV}
 \ge1-4ne^{-c_0R^2},
 \qquad n\ge1.
\end{equation}
The two initial states are distinct, so their total variation distance is one at time zero. In particular, the distance in \eqref{eq:pairTV-three-splittings} is at least $3/4$ for
\begin{equation*}
 0\le n\le\left\lfloor\frac1{16}e^{c_0R^2}\right\rfloor.
\end{equation*}
This completes the confinement and pairwise-separation argument for BOAOB, ABOBA, and AOBOA.

\emph{OABAO: initialization, confinement, and separation.} The OABAO cycling argument parallels the BAOAB argument after velocity elimination. The shifted force-evaluation variables satisfy the same noisy heavy-ball recursion, with noise having the same law as in BAOAB. Two additional points must be addressed: the shifted recursion has a random initial error, and its variables are not functions of the current phase-space state. We therefore first control the shifted initial error and then transfer confinement of the recursion variables to separation of the positions.

Write $y^\circ_t=x^\circ_t$, $t\in\mathbb Z$, for the cycle of Lemma~\ref{lem:metastability}, relabeled because it is now traversed by the recursion variables rather than by the positions. Define the deterministic cycle velocities
\begin{equation*}
 v^{(c)\circ}_t=\frac{2}{h(1+\beta)}\bigl(y^\circ_{t+1}-y^\circ_t\bigr),
 \qquad
 v^{(a)\circ}_t=v^{(c)\circ}_t+h\nabla U(y^\circ_t),
\end{equation*}
so that $v^{(a)\circ}_{t+1}=\beta v^{(c)\circ}_t$, as the noise-free recursion requires, and initialize at the state $z_R^{(j)}$ with $rV_0=Rv^{(a)\circ}_j$ and $X_0=Ry_j^\circ-\frac h2Rv^{(a)\circ}_j$.

Provided that $\delta_\dagger\le\bar\delta$, on the event
\begin{equation*}
 \max_{1\le\ell\le n+1}h\sigma\abs{\chi_\ell}\le\delta_\dagger R
\end{equation*}
we have
\begin{equation*}
 Y_1=Ry_j^\circ+\frac h2\sigma\chi_1,
 \qquad
 \abs{Y_1-Ry_j^\circ}\le\frac{\delta_\dagger R}2.
\end{equation*}
The local gradient identity \eqref{eq:transfer-local} then gives
\begin{equation*}
 Y_2-Ry_{j+1}^\circ
 =\frac{h\sigma}{2}\Bigl(\bigl[1+(1+\beta)(1-h^2/2)\bigr]\chi_1+\chi_2\Bigr).
\end{equation*}
Thus the initial error for the shifted recursion, started at cycle index $j+1$, is
\begin{equation*}
 E_0=\bigl(Y_2-Ry_{j+1}^\circ,\;Y_1-Ry_j^\circ\bigr).
\end{equation*}
Consequently,
\begin{equation*}
 \abs{E_0}\le C_2\delta_\dagger R
\end{equation*}
for a constant $C_2$ depending only on $(h,\gamma,\kappa)$, enlarged to bound both components of the shifted initial error.

Assume first that $n\ge2$, and, after choosing $a_\dagger$ and $\delta_\dagger$ as below, let $\mathcal G$ be the good event \eqref{eq:good-general} for the shifted recursion $\widehat Y_k=Y_{k+2}$, applied for $n-1$ updates with
\begin{equation*}
 \varepsilon_0=C_2\delta_\dagger,
 \qquad
 \delta'=\delta_\dagger.
\end{equation*}
The preceding bounds show that
\begin{equation*}
 \Bigl\{\max_{1\le\ell\le n+1}h\sigma\abs{\chi_\ell}\le\delta_\dagger R\Bigr\}
 \subseteq\mathcal G,
\end{equation*}
and it follows that $\Pp(\mathcal G^c)\le2(n+1)e^{-c_0R^2}$. On $\mathcal G$, the estimate established in the proof of Lemma~\ref{lem:metastability}(i) gives
\begin{equation*}
 \abs{Y_m-Ry^\circ_{j+m-1}}\le\bigl(CC_2+G_0\bigr)\delta_\dagger R\le a_\dagger R/2,
 \qquad 1\le m\le n+1.
\end{equation*}

From the proof of Proposition~\ref{prop:HB-palindromic}(iv),
\begin{equation*}
 X_m=\frac{\beta Y_m+Y_{m+1}}{1+\beta}
 -\frac{h\sigma}{2(1+\beta)}\,\chi_{m+1},
\end{equation*}
so the positions are recovered from the recursion variables with an error controlled by a constant $C_1$ depending only on $\beta$.

Next we identify the points that the positions track. Set
\begin{equation*}
 \bar x_t=y_t^\circ+\frac{1}{1+\beta}\bigl(y_{t+1}^\circ-y_t^\circ\bigr),
 \qquad t\in\mathbb Z.
\end{equation*}
For the roots-of-unity cycle, $\bar x_t=R_m^t\bar x_0$ with $\bar x_0=\frac{\beta\Id+R_m}{1+\beta}y_0^\circ\ne0$, since $-\beta$ is not an eigenvalue of the rotation $R_m$; the $\bar x_t$ are therefore $m$ distinct points.

Now choose the parameters. Fix $a_\dagger\in(0,a]$ small enough that the balls $B(R\bar x_t,2C_1a_\dagger R)$, $0\le t<m$, are pairwise disjoint, which is possible because the $\bar x_t$ are distinct and their separation scales with $R$. Then fix $\delta_\dagger\in(0,\bar\delta]$ small enough that $(CC_2+G_0)\delta_\dagger\le a_\dagger/2$ and $\delta_\dagger\le a_\dagger$, where $C$ and $G_0$ are the constants defined before Lemma~\ref{lem:metastability}; both choices depend only on $(h,\gamma,\kappa,m)$. With $c_0=\delta_\dagger^2/(2h^2(1-\beta^2))$ as above, the preceding tail bound applies.

On $\mathcal G$, the confinement above controls $Y_1,\dots,Y_{n+1}$, and hence $X_1,\dots,X_n$:
\begin{equation*}
 \abs{X_m-R\bar x_{j+m-1}}\le C_1(a_\dagger+\delta_\dagger)R\le2C_1a_\dagger R,
 \qquad 1\le m\le n,
\end{equation*}
so the two chains initialized at consecutive cycle indices lie, at the same time, in the disjoint position balls fixed above. For $n=1$, the same bound follows directly from the estimates on $Y_1$ and $Y_2$ and the recovery formula above. The pair separation follows with these position events in place of the tubes: the sum of the two failure probabilities is at most $4(n+1)e^{-c_0R^2}\le8ne^{-c_0R^2}$ for $n\ge1$. Consequently, for OABAO there are two deterministic initial states $z_R^{(j)}$ and $z_R^{(j+1)}$ corresponding to consecutive indices of the cycle such that
\begin{equation}\label{eq:pairTV-oabao}
 \norm{\delta_{z_R^{(j)}}(Q_R^\dagger)^n-\delta_{z_R^{(j+1)}}(Q_R^\dagger)^n}_{\TV}
 \ge1-8ne^{-c_0R^2},
 \qquad n\ge1.
\end{equation}
The distance is one at time zero. Hence the distance in \eqref{eq:pairTV-oabao} is at least $3/4$ for
\begin{equation*}
 0\le n\le\left\lfloor\frac1{32}e^{c_0R^2}\right\rfloor.
\end{equation*}
This completes the confinement and pairwise-separation argument for OABAO.

\emph{Invariant law and comparison with stationarity for all four splittings.} For BOAOB the one-step map $(\xi^{(1)},\xi^{(2)})\mapsto(x^+,v^+)$ is triangular, with $x^+$ affine in $\xi^{(1)}$ with coefficient $h\sigma\Id_d$ and, at fixed $\xi^{(1)}$, with $v^+$ affine in $\xi^{(2)}$ with coefficient $\sigma\Id_d$; the Jacobian determinant is therefore the constant $(h\sigma)^d\sigma^d$, and the kernel has an everywhere positive jointly continuous density. For OABAO, writing $y$ for the force-evaluation point, $x^+=2y-x-\frac{h^2}2\nabla U_R(y)$; the map $y\mapsto2y-x-\frac{h^2}2\nabla U_R(y)$ is the gradient of $y\mapsto\abs y^2-\langle x,y\rangle-\frac{h^2}2U_R(y)$, whose Hessian $2\bigl(\Id_d-\frac{h^2}4\nabla^2U_R(y)\bigr)$ is uniformly positive definite under the stability condition; gradients of strongly convex $C^2$ functions are global $C^1$ diffeomorphisms, and since $\xi^{(1)}\mapsto y$ is affine with coefficient $\frac h2\sigma\Id_d$ and $\xi^{(2)}$ enters $v^+$ affinely with coefficient $\sigma\Id_d$, the kernel again has an everywhere positive jointly continuous density. For ABOBA and AOBOA the same holds for the two-step kernel in the pair coordinates: at fixed initial pair, the map from the two consecutive Gaussian noise terms to the updated pair is triangular, since the first noise term enters the intermediate recursion variable additively and the second enters the final one additively at fixed intermediate value; the diagonal blocks are identities, the noise covariances $h^2(1-\beta^2)\Id_d$ are nonsingular, and the two-step kernel therefore has an everywhere positive jointly continuous density. Scheff\'e's lemma \citep[Theorem~16.12]{Billingsley1995} gives the strong Feller property, and positivity gives Lebesgue irreducibility. Hence the relevant one- or two-step kernel has at most one invariant probability law; for ABOBA and AOBOA an invariant law of the one-step kernel is invariant for its square, so uniqueness transfers. For each splitting, with $\nabla U_R(x)=\lambda_\ast x+Rw(x/R)$ as in \eqref{eq:dilatedremainder-general}, one step of the chain $Z_k=(X_k,V_k)$ on phase space has the form \eqref{eq:perturbed-linear}:
\begin{equation}\label{eq:splitting-decomposition}
 Z_{k+1}=\mathsf M_\dagger Z_k+\mathsf G_\dagger\xi_{k+1}
 +F_R(Z_k,\xi_{k+1}),
 \qquad
 \sup_{z,\xi}\abs{F_R(z,\xi)}\le C_FR,
\end{equation}
where $\mathsf M_\dagger$ is the one-step phase matrix of the splitting on the quadratic potential $\lambda_\ast\abs x^2/2$, $\mathsf G_\dagger$ is the constant input matrix, $\xi_{k+1}$ collects the step's Gaussian inputs, and $F_R$ collects the terms $Rw(\cdot/R)$ from the force evaluations, propagated through the remaining substeps. Each force-evaluation point is an affine function of $(Z_k,\xi_{k+1})$ plus terms already bounded by a constant multiple of $R$, and the substep maps are affine up to these remainders, with coefficients depending only on $(h,\gamma,\lambda_\ast)$, so $F_R$ is continuous and the bound in \eqref{eq:splitting-decomposition} holds with $C_F$ depending only on $(h,\gamma,\lambda_\ast,b_\ast)$. Moreover $\rho(\mathsf M_\dagger)<1$: each quadratic mode has the characteristic polynomial \eqref{eq:charpoly} with $s=s_\dagger$ and $\lambda=\lambda_\ast$, as verified above, and $0<s_\dagger\lambda_\ast<2(1+\beta)$ under the standing stability condition. Lemma~\ref{lem:drift} with $B=C_FR$ and the moment bound \eqref{eq:drift-moment} therefore yield an invariant law with $\int\abs z^2\,\pi_R(dz)\le C(1+R^2)$. Each of the two initial states has Wasserstein distance at most $C_0(1+R)$ from $\pi_R$, as before, the linear bijection for ABOBA and AOBOA changing $W_2$ by a factor depending only on $h$, absorbed into $C_0$.

At the endpoint of the applicable time window, the laws of the two initial states are at total variation distance at least $3/4$. The triangle inequality therefore shows that one of these states, denoted by $z_R$, is at total variation distance at least $3/8$ from $\pi_R$. Since this distance is nonincreasing in time, the same bound holds throughout the applicable window, proving case \emph{(C)}.
\end{proof}

\needspace{7\baselineskip}%
\begin{corollary}[No fixed-parameter acceleration for the remaining Strang splittings]\label{cor:main-palindromic}
Corollary~\ref{cor:sharp}(ii) holds after replacing OBABO by any of the four
splittings of Proposition~\ref{prop:HB-palindromic}, with the numerical
stability condition \eqref{eq:stabletuningintro} replaced by
$0<s_\dagger<2(1+\beta)/L$.
\end{corollary}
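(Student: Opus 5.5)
The plan is to repeat the proof of the lower-bound part of Corollary~\ref{cor:sharp}(ii) verbatim, substituting Theorem~\ref{thm:palindromic} for Theorem~\ref{thm:dichotomy}. Suppose, for a contradiction, that the splitting $\dagger$ with a numerically stable fixed-parameter tuning $(h_\kappa,\gamma_\kappa)$ satisfies the bound \eqref{eq:falseuniform} in reverse for every $\kappa\ge\kappa_0$, every $U\in\cU_\kappa^2$ whose kernel has an invariant law, every $z$, and every $n$. Here numerically stable means $0<s_\dagger<2(1+\beta)/L$. Evaluated on the normalized subfamily $K=1$, $L=\kappa$, $d=2$, this condition is exactly the hypothesis $0<s_\dagger<2(1+\beta)/\kappa$ of Theorem~\ref{thm:palindromic}. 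The reduction to the normalized subfamily is legitimate because the assumed bound is asserted for all curvature pairs.

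Fix $C_\star>C_{\mathrm{GTD}}$. As before, since $\tau(\kappa)=o(\kappa)$, choose $\kappa\ge\max\{\kappa_0,2C_\star\}$ with $\kappa>C_\star^2$ and $c\kappa/\tau(\kappa)\ge5C_\star$. The elementary two-case argument then gives $q_\kappa>e^{-c/\tau(\kappa)}$ as in \eqref{eq:qbeatsballistic}; it does not involve the splitting at all. Apply Theorem~\ref{thm:palindromic} with $s=s_\dagger$.

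In case (Q), we have a Gaussian target $U_\lambda\in\cU_\kappa^2$ and two states $z,\widetilde z$ whose pairwise total variation distance has $n$th-root limit $\rhoq(s_\dagger,\beta;\kappa)\ge q_\kappa$. The triangle inequality through the unique invariant law, combined with the assumed bound at $z$ and at $\widetilde z$, gives pairwise distance at most $C_{z,\widetilde z,\kappa}e^{-cn/\tau(\kappa)}$. Taking $n$th roots yields $q_\kappa\le e^{-c/\tau(\kappa)}$, which is a contradiction.

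In case (C), we have $z_R$ with $W_2(\delta_{z_R},\pi_R)\le C_0(1+R)$ and total variation distance at least $3/8$ on the window up to $N_R$. For OABAO this window is $N_R=\lfloor e^{c_0R^2}/32\rfloor$; for the others it is $\lfloor e^{c_0R^2}/16\rfloor$. The assumed bound at time $N_R$ gives $3/8\le C\kappa^a(1+C_0(1+R))^b e^{-cN_R/\tau(\kappa)}$. The right-hand side tends to zero as $R\to\infty$ because $N_R$ grows like $e^{c_0R^2}$ while the prefactor grows only polynomially in $R$. This is again a contradiction, and the case $\tau(\kappa)=\kappa^\alpha$ with $\alpha<1$ is a special case.

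The only point requiring care is bookkeeping rather than a genuine obstacle. We must check that Theorem~\ref{thm:palindromic} provides a unique invariant law in both cases, which the assumed bound needs in order to be meaningful. In case (Q) this comes from Lemma~\ref{lem:gaussianTVrate}, and in case (C) from the density and strong Feller arguments in its proof. We must also check that for ABOBA and AOBOA the linear bijection only rescales $W_2$, and that this rescaling is already absorbed into $C_0$ there. The altered window constant for OABAO changes nothing in the limit.
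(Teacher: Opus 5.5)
Your proposal is correct and follows the paper's proof. You rerun the lower-bound argument of Corollary~\ref{cor:sharp}(ii) with Theorem~\ref{thm:palindromic} in place of Theorem~\ref{thm:dichotomy}, and you correctly note that the stability condition specializes to $0<s_\dagger<2(1+\beta)/\kappa$ on the normalized family and that the smaller OABAO time window does not affect the $R\to\infty$ contradiction.
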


\begin{proof}
Repeat the proof of the lower-bound assertion of
Corollary~\ref{cor:sharp}, applying
Theorem~\ref{thm:palindromic} in place of
Theorem~\ref{thm:dichotomy}. The normalization argument of
Section~\ref{sec:notation} applies to each splitting, since the
substep maps transform under $(x,v)\mapsto(\sqrt Kx,v)$ exactly as
for OBABO. The smaller time window for OABAO affects only a fixed
constant and does not change the argument.
\end{proof}

\section{Consequences, scope and open problems}\label{sec:discussion}

The main results are mixing time lower bounds. Corollary~\ref{cor:sharp}(ii) shows that no ballistic mixing time bound holds for OBABO uniformly over the class \eqref{eq:classUkappa}: for every step size and friction chosen from the curvature bounds alone, there are targets in the class \eqref{eq:classUkappa} and cold starts whose total variation mixing time is not of order $\sqrt\kappa$, and indeed not of any order $o(\kappa)$. Theorem~\ref{thm:dichotomy} explains why: the chain either mixes no faster than at the nonaccelerated rate $O(\kappa^{-1})$ on some Gaussian target, giving a mixing time of at least order $\kappa$, or admits an attracting cycle of the associated heavy-ball recursion, which dilation converts into mixing times exponential in $R^2$ at fixed condition number (Corollary~\ref{cor:mixing}). By the results of Section~\ref{sec:BAOAB}, the same conclusions hold for the remaining standard Strang splittings, and no splitting admits, under a numerically stable fixed-parameter tuning, a cold-start exponential bound, uniform over the class \eqref{eq:classUkappa}, with convergence rate of larger order than $\kappa^{-1}$ (Corollaries~\ref{cor:main-BAOAB} and~\ref{cor:main-palindromic}). Theorem~\ref{thm:upper} provides the complementary upper bound at the diffusive scale: a fixed-parameter OBABO tuning achieves a total variation mixing time of order $\kappa$ up to a logarithmic factor. Together, the lower and upper bounds show that a cold-start mixing time of order $\kappa$ is optimal for fixed-parameter OBABO, up to a logarithmic factor (Corollary~\ref{cor:sharp}), and thereby settle Question~\ref{q:acceleration}.

These lower bounds may seem to contradict the convergence guarantees available for the exact diffusion and for Gaussian targets, but they do not. The exact process converges to the measure \eqref{eq:gibbs} in relative entropy at a rate of order $\sqrt K$ under the conditions of Lu \citep{Lu2026}, with a corresponding explicit $L^2$ estimate by Cao, Lu and Wang \citep{CaoLuWang2023}; this concerns the continuous dynamics, while the lower bounds concern its fixed-parameter discretizations, and the metastable cycles in case~(C) of Theorem~\ref{thm:dichotomy} are artifacts of a finite step size with no counterpart in the diffusion. On Gaussian targets, the optimally tuned OBABO chain has asymptotic total variation relaxation scale $\sqrt\kappa$ (Corollary~\ref{cor:optimalGaussian}). This is consistent with the lower bounds because the guarantee is uniform only over Gaussian targets: by Theorem~\ref{thm:dichotomy}, a tuning whose worst-case Gaussian rate is accelerated must admit an attracting cycle, and the dilated potentials on which this cycle produces metastability are not Gaussian. Appendix~\ref{app:diffusion} completes the comparison with the diffusion: on every Gaussian target with smallest curvature $K$, the critical friction $\gamma=2\sqrt K$ gives worst-case asymptotic total variation exponent $\sqrt K$ over deterministic initial states, and no other friction gives a larger worst-case exponent.

The deterministic dichotomy used above does not apply directly to the left-endpoint exponential integrators analyzed in \citep{ChengChatterjiBartlettJordan2018,DalalyanRiouDurand2020,MaChatterjiChengFlammarionBartlettJordan2021,KimGruffazParkDurmus2026}. For the Strang splittings treated here, eliminating the velocity yields a noisy heavy-ball recursion with a single current-gradient term. For a left-endpoint exponential integrator, the same elimination instead produces a two-step recurrence involving both $\nabla U(X_k)$ and $\nabla U(X_{k-1})$, with distinct coefficients. Nevertheless, a direct Gaussian argument suffices: Proposition~\ref{prop:exp-euler-gaussian} shows that every fixed choice of step size and friction is either unstable on a Gaussian target with curvature in $[1,\kappa]$, or admits such a target, an invariant law $\pi$, and a deterministic initial state $y$ such that, with $P$ the transition kernel of the left-endpoint exponential integrator on that target,
\begin{equation*}
 \lim_{n\to\infty}
 \norm{\delta_yP^n-\pi}_{\TV}^{1/n}
 \ge1-\frac{64}{\kappa}.
\end{equation*}
For large $\kappa$, this is incompatible with a cold-start bound of the form \eqref{eq:falseuniform} with $\tau(\kappa)=\sqrt\kappa$. Thus the standard left-endpoint exponential integrator likewise admits no cold-start ballistic bound, uniform over the class \eqref{eq:classUkappa}, although the mechanism is purely Gaussian rather than a deterministic cycle.

The results suggest the following informal conjecture. Fix a
consistent explicit, possibly randomized, discretization of
\eqref{eq:KLD} whose number of
pointwise evaluations of \(U\) and \(\nabla U\) per step is bounded
independently of \(\kappa\), and let \(\mathfrak t\) be any
fixed-parameter tuning rule for this discretization. In the normalized
setting \(K=1\), \(L=\kappa\), and \(d=2\), write \(P_\kappa^U\) for
the resulting transition kernel, and suppose that it has a unique
invariant law \(\pi_\kappa^U\) for every
\(U\in\cU_\kappa^2\). The conjecture is that, for every function
\(\tau:(1,\infty)\to(0,\infty)\) satisfying
\(\tau(\kappa)=o(\kappa)\), every \(C\ge1\), \(a,b\ge0\), \(c>0\),
and every \(\kappa_0>1\), there exist
\(\kappa\ge\kappa_0\), \(U\in\cU_\kappa^2\),
\(z\in\mathbb R^4\), and \(\varepsilon\in(0,1/2)\) such that
\[
 t_{\mathrm{mix}}
 \bigl(z,\varepsilon;P_\kappa^U,\pi_\kappa^U\bigr)
 >
 \left\lceil
 \frac{\tau(\kappa)}{c}
 \log\!\left(
   \frac{C\kappa^a
   \bigl(1+W_2(\delta_z,\pi_\kappa^U)\bigr)^b}
   {\varepsilon}
 \right)
 \right\rceil.
\]
A fully formal statement would require specifying the admissible
class of discretizations. Weak consistency, understood as convergence
of the one-step weak generator to the generator of
\eqref{eq:KLD}, constrains only the small-step limit and does not by
itself restrict the kernel at a tuned step size.
The equivalent conclusion at the level of cold-start exponential
bounds holding uniformly over \eqref{eq:classUkappa} is proved for the six standard Strang splittings
in Corollaries~\ref{cor:sharp}, \ref{cor:main-BAOAB}, and
\ref{cor:main-palindromic}. Proposition
\ref{prop:exp-euler-gaussian} gives the corresponding Gaussian rate
lower bound for the left-endpoint exponential integrator.

Randomized time integrators already provide a partial positive result.
Altschuler, Chewi and Zhang \citep{AltschulerChewiZhang2026} prove
that the randomized midpoint discretization admits a relative
entropy, and hence total variation, mixing guarantee whose
condition-number dependence is of order \(\kappa^{5/6}\) up to
logarithmic factors, from initial laws satisfying a controlled
\(\chi^2\)-divergence assumption. This dependence is sublinear in \(\kappa\), but does not
attain the ballistic scale \(\sqrt\kappa\). It also does not
contradict the conjectured mixing time lower bound above, which
concerns point-mass cold starts controlled only through their
Wasserstein distance to stationarity. The randomized
Runge-Kutta-Nystr\"om methods of Bou-Rabee and Kleppe
\citep{BouRabeeKleppe2025} provide another class of time-homogeneous
fixed-parameter randomized integrators. Their analysis establishes
finite-time \(L^2\)-accuracy and discusses the resulting bounds on
asymptotic bias, rather than mixing time dependence on \(\kappa\). The present mixing time lower
bounds do not extend directly to either class. For the randomized
Runge-Kutta-Nystr\"om schemes, in particular, the internal-stage
random variable retains its original distribution under the dilation
of Lemma~\ref{lem:metastability}, so the rescaled update does not
become a small random perturbation of a single deterministic
recurrence. Whether such internal randomization permits a
ballistic mixing bound, uniform over the class \eqref{eq:classUkappa}, from arbitrary point-mass initial states
remains open.

Metropolized OBABO is a particularly natural test of this conjecture.
The unadjusted chain generally converges to a
discretization-dependent invariant law, whereas the Metropolized
chain preserves the measure \eqref{eq:gibbs} exactly
\citep[Algorithm~3.2 and Proposition~3.4]{BouRabee2014};
consequently, the informal assertion that Metropolis adjustment can
only slow the chain does not provide a valid comparison of their
mixing times. Moreover, the acceptance decision and the momentum flip
following rejection change the deterministic dynamics. They may
break the attracting cycles used in the present mixing time lower
bounds and redirect subsequent motion, and could therefore contribute
to an accelerated mixing rate. They may instead cause the chain to
reverse direction and undo earlier progress, as discussed for
generalized HMC in
\citep[Section~2.2]{HoffmanSountsov2022}. Whether Metropolized OBABO
satisfies the conjectured mixing time lower bound remains open.

Since the lower bounds concern the transition laws themselves, no alternative coupling can repair the global theorem; in particular, pairing a uniform Wasserstein contraction at any per-step rate of larger order than $\kappa^{-1}$ with the finite-horizon regularization estimate of \citep{BouRabeeCoxSchieven2026} would produce exactly a cold-start bound excluded by Corollary~\ref{cor:sharp}. The result also leaves open warm-start acceleration, target-dependent parameters, time-dependent splittings, and analyses that compare a discretization to the accelerated exact semigroup without demanding global pointwise contractivity. The finite-horizon coupling of Bou-Rabee, Cox and Schieven \citep{BouRabeeCoxSchieven2026} and the shifted-composition framework of Altschuler, Chewi and Zhang \citep{AltschulerChewiZhang2026} remain natural tools for such restricted positive results.

The lower bounds also motivate modifying the kinetic Langevin dynamics \eqref{eq:KLD}. Gradient-adjusted underdamped Langevin dynamics \citep{ZuoOsherLi2025}, building on the Hessian-free high-resolution dynamics of Li, Zha and Tao \citep{LiZhaTao2022}, augments the kinetic Langevin drift with additional gradient terms and achieves total variation mixing on the ballistic scale $\sqrt\kappa$ on Gaussian targets; extending the proved acceleration beyond the Gaussian case remains open.

\begin{acks}[Acknowledgments]
The author thanks Andreas Eberle, Andre Wibisono, and Jason
Altschuler for helpful conversations.
\end{acks}

\appendix

\section{Gaussian non-acceleration of the left-endpoint exponential integrator}
\label{app:exponential-integrator}

The left-endpoint exponential integrator freezes the force at the
initial position during each step of \eqref{eq:KLD} and solves the
resulting linear stochastic differential equation exactly
\citep{ChengChatterjiBartlettJordan2018,DalalyanRiouDurand2020,
MaChatterjiChengFlammarionBartlettJordan2021,
KimGruffazParkDurmus2026}. The Gaussian non-acceleration result of
this appendix, Proposition~\ref{prop:exp-euler-gaussian} below, is
an application of Lemma~\ref{lem:gaussianTVrate}. For a step size $h>0$, write
$\beta=e^{-\gamma h}$ as in \eqref{eq:r_eta_sigma}. The integrator is
\begin{align}
 X_{k+1}
 &=X_k+\frac{1-\beta}{\gamma}V_k
   -\frac{\gamma h+\beta-1}{\gamma^2}\nabla U(X_k)
   +\zeta_{k+1}^X,\label{eq:exp-euler-x}\\
 V_{k+1}
 &=\beta V_k-\frac{1-\beta}{\gamma}\nabla U(X_k)
   +\zeta_{k+1}^V,\label{eq:exp-euler-v}
\end{align}
where the noise pairs are independent over $k$ and have the
representation
\begin{equation}\label{eq:exp-euler-noise}
 \begin{pmatrix}\zeta_{k+1}^X\\ \zeta_{k+1}^V\end{pmatrix}
 =\sqrt{2\gamma}\int_0^h
 \begin{pmatrix}
  \bigl(1-e^{-\gamma(h-t)}\bigr)/\gamma\\
  e^{-\gamma(h-t)}
 \end{pmatrix}dW_{k,t},
\end{equation}
with independent Brownian motions $W_{k,\cdot}$. The one-step
noise covariance is positive definite on $\R^{2d}$. Indeed, its
quadratic form at $(a,b)\in\R^d\times\R^d$ is
\begin{equation*}
 2\gamma\int_0^h
 \left|
 \frac{1-e^{-\gamma(h-t)}}{\gamma}\,a
 +e^{-\gamma(h-t)}\,b
 \right|^2dt,
\end{equation*}
and if this integral vanishes, continuity gives
$\frac{1-e^{-\gamma(h-t)}}{\gamma}\,a+e^{-\gamma(h-t)}\,b=0$ for
every $t\in[0,h]$; evaluating at $t=h$ gives $b=0$, and then
$a=0$.

For the one-dimensional quadratic potential
$U_\lambda(x)=\lambda x^2/2$, the deterministic part of
\eqref{eq:exp-euler-x}-\eqref{eq:exp-euler-v}, acting on $(X,V)$,
is
\begin{equation}\label{eq:exp-euler-matrix}
 M_\lambda=
 \begin{pmatrix}
  1-uA\lambda&D/\gamma\\
  -D\lambda/\gamma&\beta
 \end{pmatrix},
 \qquad
 u=\frac{1}{\gamma^2},\qquad
 z=\gamma h,\qquad D=1-\beta,\qquad
 A=z+\beta-1.
\end{equation}

\needspace{22\baselineskip}%
\begin{proposition}[Gaussian non-acceleration of the left-endpoint exponential integrator]
\label{prop:exp-euler-gaussian}
Let $\kappa>1$ and let $h,\gamma>0$. For
$U_\lambda(x)=\lambda x^2/2$ on $\R$, let $M_\lambda$ be the
matrix in \eqref{eq:exp-euler-matrix}. If $M_\kappa$ is not Schur
stable, then the left-endpoint exponential integrator for
$U_\kappa$ has no invariant probability law. If $M_\kappa$ is
Schur stable, then $M_\lambda$ is Schur stable for every
$\lambda\in[1,\kappa]$, and
\begin{equation}\label{eq:exp-euler-slow-rate}
 \rho(M_1)\ge1-\frac{64}{\kappa}.
\end{equation}
In this case, the chain for $U_1$ has a unique invariant law
$\pi_1$. Let $P$ denote the transition kernel of the chain for
$U_1$. For every initial state $y\in\R^2$,
\begin{equation}\label{eq:exp-euler-onepoint}
 \limsup_{n\to\infty}
 \norm{\delta_yP^n-\pi_1}_{\TV}^{1/n}
 \le\rho(M_1),
\end{equation}
and the limit exists and equals $\rho(M_1)$ for a suitable $y$.
Moreover, there are initial states $y,\widetilde y\in\R^2$ such
that
\begin{equation}\label{eq:exp-euler-TV-rate}
 \lim_{n\to\infty}
 \norm{\delta_yP^n-\delta_{\widetilde y}P^n}_{\TV}^{1/n}
 =\rho(M_1)\ge1-\frac{64}{\kappa}.
\end{equation}
\end{proposition}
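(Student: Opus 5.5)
The plan has three parts: compute the characteristic polynomial of $M_\lambda$, handle the unstable case with a characteristic-function argument, and in the stable case reduce everything to the bound \eqref{eq:exp-euler-slow-rate}, after which Lemma~\ref{lem:gaussianTVrate} gives the rest.

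\emph{Characteristic polynomial.} From \eqref{eq:exp-euler-matrix}, and writing $B:=D^2-\beta A$,
\begin{equation*}
 \operatorname{tr}M_\lambda=1+\beta-uA\lambda,\qquad \det M_\lambda=\beta+uB\lambda .
\end{equation*}
A short computation gives $B=1-\beta-\beta z=1-e^{-z}(1+z)>0$. The characteristic polynomial $p_\lambda$ of $M_\lambda$ is therefore affine in $\lambda$, and
\begin{equation*}
 p_\lambda(1)=uz D\lambda>0,\qquad A+B=zD,\qquad p_\lambda(-1)=2(1+\beta)-u\lambda(A-B).
\end{equation*}

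\emph{Unstable case.} I follow the proof of Lemma~\ref{lem:quadratic-stability}, with one change needed because an eigenvalue of modulus at least one may be complex. If $\pi$ were invariant, its characteristic function would satisfy
\begin{equation*}
 \widehat\pi(t)=\widehat\pi\bigl((M_\kappa^\top)^nt\bigr)\exp\Bigl(-\tfrac12\sum_{j<n}t^\top M_\kappa^jGG^\top(M_\kappa^\top)^jt\Bigr).
\end{equation*}
The one-step covariance $GG^\top$ is positive definite, as shown before the proposition. So the sum is at least $c\sum_{j<n}\abs{(M_\kappa^\top)^jt}^2$. For $t\neq0$ in the real invariant subspace of $M_\kappa^\top$ belonging to an eigenvalue of modulus at least one, $\abs{(M_\kappa^\top)^jt}$ is bounded below. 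The sum then diverges, so $\widehat\pi(t)=0$ on that punctured subspace, which contradicts continuity of $\widehat\pi$ at $0$.

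\emph{Stable case: stability for all $\lambda$.} I use the Schur criterion $\abs{\operatorname{tr}}<1+\det<2$ from Lemma~\ref{lem:quadratic-stability}, split into three affine conditions in $\lambda$:
\begin{itemize}
 \item $p_\lambda(1)>0$ holds for every $\lambda>0$;
 \item $1+\det M_\lambda<2$ reads $\beta+uB\lambda<1$, which is increasing in $\lambda$, so it holds for all $\lambda\le\kappa$ once it holds at $\kappa$;
 \item $p_\lambda(-1)>0$ is affine in $\lambda$, positive at $\lambda=0$ and at $\lambda=\kappa$, hence positive on $[0,\kappa]$.
\end{itemize}
Stability at $\kappa$ thus yields two quantitative constraints:
\begin{equation*}
 u\kappa B<D,\qquad u\kappa(A-B)<2(1+\beta).
\end{equation*}
Together they give $u\kappa A<4+D\le5$.

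\emph{Stable case: the rate bound.} I split by whether the roots $z_1,z_2$ of $p_1$ are real.
\begin{itemize}
 \item \emph{Real roots.} Both roots lie in $(-1,1)$. Their sum gives $(1-z_1)+(1-z_2)=D+uA$, so the smaller root satisfies $1-z_{\min}\ge (D+uA)/2$. Then
 \begin{equation*}
  1-\rho(M_1)\le 1-z_{\max}=\frac{uzD}{1-z_{\min}}\le\frac{2uzD}{D+uA}.
 \end{equation*}
 For $z\ge1$, I use $A\ge cz$ with $uA\le5/\kappa$. For $z<1$, I use $B\ge c'z^2$ and $D\le z$, so that $u\kappa B<D$ forces $uz\le C/\kappa$. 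Either way $1-\rho(M_1)\le C/\kappa$.
 \item \emph{Complex roots.} Here $\rho(M_1)^2=\det M_1=\beta+uB$, so $1-\rho(M_1)\le D-uB$. Complexity gives the extra constraint $(1+\beta-uA)^2<4(\beta+uB)$. Combined with $uA\le5/\kappa$, it should force $D^2\lesssim uA+uB$, and hence $D\lesssim1/\sqrt\kappa$.
\end{itemize}
The complex case is the step I expect to be the main obstacle. The naive bound $D^2\lesssim u(A+B)$ only gives $D\lesssim\kappa^{-1/2}$, which is the accelerated scale, not the required $\kappa^{-1}$. What I would try first is to use the exact identity $(1-\rho)^2\le\abs{1-z_1}^2=uzD$ together with $u z\lesssim 1/\kappa$, which I obtain as in the real case. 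This would give $(1-\rho)^2\lesssim D/\kappa$, and combining it with $1-\rho\le D$ might be refined into $1-\rho\lesssim1/\kappa$. I would also try a case split on $z$ and $\gamma$, which may be needed to reach the explicit constant $64$.

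\emph{Remaining conclusions.} Once \eqref{eq:exp-euler-slow-rate} holds, the process on $U_1$ is a Gaussian autoregression with $\rho(M_1)<1$ and nonsingular one-step covariance. Lemma~\ref{lem:gaussianTVrate} with $n_0=1$ then gives the unique invariant law $\pi_1$, the bound \eqref{eq:exp-euler-onepoint} with its attained limit, and the pair of initial states in \eqref{eq:exp-euler-TV-rate}.
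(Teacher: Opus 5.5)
Most of your outline matches the paper's proof. The trace and determinant of $M_\lambda$, the Jury conditions at $\lambda=\kappa$, the bound on $u\kappa z$ obtained by splitting on $z$, the real/complex split for $M_1$, and the final appeal to Lemma~\ref{lem:gaussianTVrate} are all the same. Your unstable case, your stability-for-all-$\lambda$ argument and your real-root case are correct.

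\textbf{The gap.} The complex-root case of \eqref{eq:exp-euler-slow-rate} is left open, so the proposal does not prove the rate bound. Your conclusion $D\lesssim\kappa^{-1/2}$ comes from bounding $u(A+B)$ by $O(1/\kappa)$. Your proposed fallback does not repair this: the inequalities $(1-\rho)^2\le uzD\lesssim D/\kappa$ and $1-\rho\le D$ together give only $1-\rho\lesssim\kappa^{-1/2}$ when $D$ is of order one. Neither inequality rules out a large $D$.

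\textbf{The missing step.} It is one line, using the identity $A+B=zD$ that you wrote down at the start.
\begin{itemize}
\item Expanding the complexity condition $(1+\beta-uA)^2<4(\beta+uB)$ gives $D^2+u^2A^2<2(1+\beta)uA+4uB\le4u(A+B)=4uzD$. In the paper's variable $\mu=1-w$, for $w$ an eigenvalue of $M_1$, this reads $(D+uA)^2<4uzD$.
\item Dividing by $D>0$ gives $D<4uz$. So complex eigenvalues force $D$ to be of order $uz$, not $\sqrt{uz}$.
\item Your bound on $u\kappa z$ uses only stability at $\lambda=\kappa$, not the nature of the roots of $p_1$, so it applies here. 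With your constants it gives $u\kappa z<5e$ for $z\ge1$ (from $A\ge z/e$ and $u\kappa A<5$) and $u\kappa z<2e$ for $z<1$.
\item Hence $D<20e/\kappa<64/\kappa$, and $\rho(M_1)=\sqrt{\beta+uB}\ge\sqrt{1-D}\ge1-D>1-64/\kappa$.
\end{itemize}
Your real case already gives $1-\rho(M_1)\le2uz<10e/\kappa$, so no further case split is needed to reach the constant $64$. This is exactly how the paper closes the complex case, with its bound $Qz=u\kappa z<16$ in place of yours.
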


\begin{proof}
Define
\begin{equation*}
 B=1-(1+z)\beta=D-z\beta,
 \qquad Q=u\kappa.
\end{equation*}
Both $A$ and $B$ are positive, and
\begin{equation}\label{eq:exp-euler-identities}
 A+B=zD,\qquad
 A-B=z(1+\beta)-2D>0.
\end{equation}
Indeed, $B=\int_0^z t e^{-t}\,dt>0$, the second expression in
\eqref{eq:exp-euler-identities} vanishes at $z=0$ and has derivative
$B>0$, and then $A=B+(A-B)>0$.

The trace and determinant of $M_\lambda$ are
\begin{equation*}
 \operatorname{tr}M_\lambda=1+\beta-uA\lambda,
 \qquad
 \det M_\lambda=\beta+uB\lambda.
\end{equation*}
The Jury conditions for a real quadratic give Schur stability exactly
when
\begin{align*}
 1-\det M_\lambda&=D-uB\lambda>0,\\
 1-\operatorname{tr}M_\lambda+\det M_\lambda
   &=u\lambda(A+B)>0,\\
 1+\operatorname{tr}M_\lambda+\det M_\lambda
   &=2(1+\beta)-u\lambda(A-B)>0.
\end{align*}
Thus stability of $M_\kappa$ implies
\begin{equation}\label{eq:exp-euler-stability-bounds}
 QB<D,\qquad Q(A-B)<2(1+\beta),
\end{equation}
and, by monotonicity in $u\lambda$, also implies stability of every
$M_\lambda$ with $\lambda\in[1,\kappa]$.

We first deduce the uniform estimate
\begin{equation}\label{eq:Qz-bound}
 Qz<16.
\end{equation}
If $0<z\le1$, then
\begin{equation*}
 B=\int_0^z t e^{-t}\,dt\ge\frac{e^{-1}z^2}{2},
 \qquad D\le z,
\end{equation*}
so the first inequality in \eqref{eq:exp-euler-stability-bounds}
gives $Qz<zD/B\le2e<6$. If $1\le z\le4$, then
$B\ge\int_0^1t e^{-t}\,dt=1-2/e$ and hence
$Qz<zD/B\le4/(1-2/e)<16$. Finally, if $z\ge4$, then
$A-B\ge z-2\ge z/2$, and the second inequality in
\eqref{eq:exp-euler-stability-bounds} gives
$Qz<4(1+\beta)\le8$. This proves \eqref{eq:Qz-bound}.

It remains to examine the low-curvature matrix $M_1$. Put $q=u=Q/\kappa$.
Writing each eigenvalue of $M_1$ as $1-\mu$, the characteristic equation becomes, using $A+B=zD$,
\begin{equation}\label{eq:exp-euler-s-equation}
 \mu^2-(D+qA)\mu+qzD=0.
\end{equation}
Suppose first that the discriminant in
\eqref{eq:exp-euler-s-equation} is nonnegative. The two roots of
\eqref{eq:exp-euler-s-equation} are then positive, since their sum
$D+qA$ and their product $qzD$ are positive. If $\mu_-$ is the
smaller root, then, since the larger root is at least $(D+qA)/2$
and the product of the roots is $qzD$,
\begin{equation*}
 \mu_-\le\frac{2qzD}{D+qA}\le2qz
 =\frac{2Qz}{\kappa}<\frac{32}{\kappa}.
\end{equation*}
For $\kappa\ge64$, the corresponding eigenvalue $1-\mu_-$ is positive,
so $\rho(M_1)\ge1-32/\kappa$; for $\kappa<64$, the claimed bound
\eqref{eq:exp-euler-slow-rate} is automatic.

If the discriminant is negative, then
\begin{equation*}
 D^2\le(D+qA)^2<4qzD,
 \qquad\text{and hence}\qquad
 D<4qz<\frac{64}{\kappa}.
\end{equation*}
The eigenvalues are a complex conjugate pair, so
\begin{equation*}
 \rho(M_1)=\sqrt{\det M_1}
 =\sqrt{\beta+qB}\ge\sqrt{1-D}\ge1-D
 >1-\frac{64}{\kappa}.
\end{equation*}
This proves \eqref{eq:exp-euler-slow-rate}.

Let $C\succ0$ denote the one-step noise covariance in
\eqref{eq:exp-euler-noise}, specialized to one dimension, and
choose $G\in\R^{2\times2}$ with $GG^\top=C$. The chain for
$U_\lambda$ can then be written
\begin{equation*}
 Y_{k+1}=M_\lambda Y_k+G\xi_{k+1},
 \qquad Y_k=(X_k,V_k),
\end{equation*}
where the $\xi_k$ are i.i.d.\ standard Gaussian vectors in $\R^2$.
When $M_\kappa$ is Schur stable, so is $M_1$, and therefore
$\rho(M_1)<1$. Lemma~\ref{lem:gaussianTVrate} applies with
$M=M_1$, $N=N'=2$, and $n_0=1$. It gives the unique invariant law
$\pi_1$, the one-point conclusions in
\eqref{eq:exp-euler-onepoint}, and the pairwise identity
\eqref{eq:exp-euler-TV-rate}.

Finally, suppose $M_\kappa$ is not Schur stable, so that
$\rho(M_\kappa)\ge1$, and let $\phi$ be the characteristic
function of a hypothetical invariant law. Iterating the invariance
identity gives
\begin{equation*}
 \abs{\phi(t)}
 =\abs{\phi((M_\kappa^\top)^nt)}
  \exp\left(-\frac12\sum_{j=0}^{n-1}
  \abs{C^{1/2}(M_\kappa^\top)^jt}^2\right).
\end{equation*}
Choose an eigenvalue of $M_\kappa^\top$ of modulus at least one. If
it is real, let $t$ be a corresponding nonzero real eigenvector; if
it is complex, let $t$ be any nonzero vector in the two-dimensional
real invariant subspace of the conjugate pair, on which the
restriction of $M_\kappa^\top$ is similar to a rotation scaled by
the eigenvalue modulus. In either case there is a constant
$c>0$ such that
$\abs{(M_\kappa^\top)^jt}\ge c\abs t$ for every $j\ge0$. Since
$C\succ0$, the sum in the exponent diverges as $n\to\infty$; as
$\abs\phi\le1$, the displayed identity forces $\phi(t)=0$. The same
argument applies to every nonzero scalar multiple of $t$, so $\phi$
vanishes along a punctured line through the origin, contradicting
the continuity of $\phi$ at zero and $\phi(0)=1$.
Thus the chain has no invariant probability law.
\end{proof}

\section{Sharp cold-start total variation rates for the kinetic Langevin diffusion on Gaussian targets}
\label{app:diffusion}

As an application of Lemma~\ref{lem:gaussianTVrate}, this appendix
determines the sharp worst-case asymptotic total variation decay
rate over deterministic initial states on every Gaussian target
whose smallest curvature is $K$.

Fix $K>0$, $d\ge1$, and $\gamma>0$, let $H\in\R^{d\times d}$ be a
symmetric matrix with smallest eigenvalue $K$, and consider
\eqref{eq:KLD} with the Gaussian potential
$U_H(x)=\tfrac12x^\top Hx$. The process
$Z_t=(X_t,V_t)$ is the linear diffusion $dZ_t=AZ_t\,dt+B\,dW_t$ with
\begin{equation*}
 A=\begin{pmatrix}0&\Id_d\\-H&-\gamma\,\Id_d\end{pmatrix},
 \qquad
 B=\begin{pmatrix}0\\\sqrt{2\gamma}\,\Id_d\end{pmatrix},
\end{equation*}
and transition kernels $(P_t)_{t\ge0}$. For the potential $U_H$,
the measure \eqref{eq:gibbs} is
$\mu_H=\cN(0,H^{-1})\otimes\cN(0,\Id_d)$. Let $\alpha(\gamma,H)$
denote the spectral abscissa of $A$, that is, the largest real part
of an eigenvalue of $A$.

For every friction parameter $\gamma>0$,
Proposition~\ref{prop:diffusion-coldstart} gives a deterministic
initial state whose asymptotic decay exponent is
$-\alpha(\gamma,H)$. Since $-\alpha(\gamma,H)\le\sqrt K$, with
equality if and only if $\gamma=2\sqrt K$
(Proposition~\ref{prop:diffusion-coldstart}(i)), no choice of
friction makes all deterministic initial states converge with an
exponent larger than $\sqrt K$. At the critical friction,
Lemma~\ref{lem:gaussianTVrate} also gives an upper exponent
$\sqrt K$ from every deterministic initial state, with equality
for a suitable one. Thus $\sqrt K$ is the optimal worst-case
asymptotic total variation exponent over deterministic initial
states. This statement does not assert that every initial state
has the same exact rate or that the multiplicative constant can be
chosen independently of the initial state.

The upper exponent can also be recovered from entropy estimates: the law of the diffusion started at a point is a nondegenerate Gaussian at every positive time, so it has finite relative entropy with respect to equilibrium, and at the critical friction the relative entropy bound of Arnold and Erb \citep[Theorem~4.9(ii)]{ArnoldErb2014}, whose arbitrarily small exponent loss disappears at the level of the asymptotic exponent, combined with Pinsker's inequality gives the upper exponent $\sqrt K$ from every deterministic initial state.

For nonquadratic strongly convex smooth potentials, the entropy bound of Lu \citep{Lu2026} gives a rate of order $\sqrt K$ when $\gamma\asymp\sqrt K$, but a Dirac initial condition has infinite relative entropy, so that result does not directly provide a cold-start bound. Guillin and Monmarch\'e \citep{GuillinMonmarche2016} and Eberle and L\"orler \citep{EberleLorler2026} study related questions for linear drifts and for non-reversible lifts, respectively; both concern entropy or relaxation-time notions rather than the deterministic-start total variation statement proved here. Determining the sharp worst-case cold-start total variation rate for nonquadratic strongly convex potentials remains open.

\begin{proposition}[Cold-start total variation rates at every friction]\label{prop:diffusion-coldstart}
Fix $K>0$, $d\ge1$, and $\gamma>0$, and let $H\in\R^{d\times d}$ be
symmetric with smallest eigenvalue $K$.
\begin{enumerate}
\item[\rm(i)] The eigenvalues of $A$ are the roots of
$\mu^2+\gamma\mu+\lambda=0$ as $\lambda$ ranges over the eigenvalues
of $H$, and
$\alpha(\gamma,H)\ge-\sqrt K$, with equality if and only if
$\gamma=2\sqrt K$.
\item[\rm(ii)] The measure $\mu_H$ is the unique invariant law of
$(P_t)_{t\ge0}$. For every initial state $z\in\R^{2d}$,
\begin{equation}\label{eq:diffusion-onepoint}
 \limsup_{t\to\infty}
 \norm{\delta_zP_t-\mu_H}_{\TV}^{1/t}
 \le e^{\alpha(\gamma,H)},
\end{equation}
and the limit exists and equals $e^{\alpha(\gamma,H)}$ for a
suitable $z$. Moreover, there are initial states
$y,\widetilde y\in\R^{2d}$ such that
\begin{equation}\label{eq:diffusion-TV-rate}
 \lim_{t\to\infty}
 \norm{\delta_yP_t-\delta_{\widetilde y}P_t}_{\TV}^{1/t}
 =e^{\alpha(\gamma,H)}
 \ \ge\ e^{-\sqrt K}.
\end{equation}
\item[\rm(iii)] There are an initial state $z\in\R^{2d}$ and a
constant $c>0$ such that, for every $\eps\in(0,c/4]$,
\begin{equation*}
 t_{\mathrm{mix}}\bigl(z,\eps;(P_t)_{t\ge0},\mu_H\bigr)
 \ \ge\ \frac{1}{\sqrt K}\,\log\frac{c}{4\eps},
\end{equation*}
where the mixing time is defined as in
Section~\ref{sec:notation}, with $t\in[0,\infty)$ in place of $n$.
\end{enumerate}
\end{proposition}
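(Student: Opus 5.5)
The plan is to treat the three parts in order, reusing the proof of Lemma~\ref{lem:gaussianTVrate} with the discrete time index replaced by continuous time.

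For (i), write $A$ in an eigenbasis of $H$. The system then decouples into $2\times2$ blocks $\begin{pmatrix}0&1\\-\lambda&-\gamma\end{pmatrix}$, each with characteristic polynomial $\mu^2+\gamma\mu+\lambda$. For a single block the roots are:
\begin{itemize}
\item if $\gamma^2<4\lambda$, a complex pair with real part $-\gamma/2$;
\item otherwise, real roots with largest one $(-\gamma+\sqrt{\gamma^2-4\lambda})/2$.
\end{itemize}
Both expressions are at least $-\sqrt\lambda$. In the real case this holds because $(-\gamma+\sqrt{\gamma^2-4\lambda})/2\ge-\sqrt\lambda$ is equivalent to $\sqrt{\gamma^2-4\lambda}\ge\gamma-2\sqrt\lambda$, and the latter follows by squaring when $\gamma\ge2\sqrt\lambda$. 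Equality in either case forces $\gamma=2\sqrt\lambda$.

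Applying this at $\lambda=K$ gives $\alpha(\gamma,H)\ge-\sqrt K$, with equality only if $\gamma=2\sqrt K$. For the converse at $\gamma=2\sqrt K$, every other eigenvalue $\lambda\ge K$ satisfies $\gamma^2\le4\lambda$, so its block has real part exactly $-\sqrt K$. Hence $\alpha=-\sqrt K$.

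For (ii), observe that the time-$1$ skeleton is a Gaussian autoregression $Z_{n+1}=e^{A}Z_n+G\xi_{n+1}$. Its noise covariance is $\int_0^1e^{As}BB^\top e^{A^\top s}ds$, which is nonsingular by controllability: the Kalman rank condition holds since $[B,AB]$ has full rank. Because $\rho(e^{A})=e^{\alpha}<1$, Lemma~\ref{lem:gaussianTVrate} gives uniqueness of the invariant law for the skeleton. A direct computation shows $\mu_H$ is invariant for $(P_t)_{t\ge0}$, so $\mu_H$ is that unique law.

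For the rates I would rerun the proof of Lemma~\ref{lem:gaussianTVrate} with $M^n$ replaced by $e^{At}$ and $\Sigma_n$ by $\Sigma_t=\int_0^te^{As}BB^\top e^{A^\top s}ds$. The ingredients carry over directly:
\begin{itemize}
\item Gelfand's formula becomes $\norm{e^{At}}^{1/t}\to e^{\alpha}$.
\item The covariance error satisfies $\Sigma_\infty-\Sigma_t=e^{At}\Sigma_\infty e^{A^\top t}$, giving $TV=O(\norm{e^{At}}^2)$.
\item For the mean, pick $u$ in the real invariant subspace of an eigenvalue with real part $\alpha$, so that $c e^{\alpha t}\le\abs{e^{At}u}\le Ce^{\alpha t}$.
\end{itemize}
The one delicate point is a real eigenvalue with a nontrivial Jordan block, which occurs at critical damping. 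There one must take $u$ to be a true eigenvector, not a generalized one, to get two-sided pure-exponential bounds. Even without this choice, polynomial factors vanish under $t$th roots. With these pieces, \eqref{eq:diffusion-onepoint} and \eqref{eq:diffusion-TV-rate} follow exactly as before, and (i) supplies the final inequality $e^\alpha\ge e^{-\sqrt K}$.

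For (iii), mimic the proof of Corollary~\ref{cor:mixing}(i). Using the eigenvector $u$ with $\abs{e^{At}u}\ge c_1e^{\alpha t}\ge c_1e^{-\sqrt Kt}$ and $\Sigma_t\preceq\Sigma_\infty$, obtain
\[
\norm{\delta_uP_t-\delta_0P_t}_{\TV}\ge c\,e^{-\sqrt Kt}\quad\text{for all }t\ge0 .
\]
By the triangle inequality, one of $z\in\{u,0\}$ is at distance at least $\tfrac c2e^{-\sqrt Kt}$ from $\mu_H$. The start $0$ should be handled by noting that $\delta_0P_t$ is centered Gaussian with covariance $\Sigma_t$, so its distance to $\mu_H$ is only $O(e^{2\alpha t})$. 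This lets us fix $z=u$ for all small $\eps$, possibly after shrinking $c$.

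Monotonicity of TV distance then gives the mixing-time bound $t_{\mathrm{mix}}\ge\frac1{\sqrt K}\log\frac c{4\eps}$. I expect this last step, producing a single $z$ that works uniformly in $\eps$, to be the main bookkeeping obstacle.
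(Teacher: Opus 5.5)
Your parts (i) and (ii) are correct. In (ii) you rerun Lemma~\ref{lem:gaussianTVrate} in continuous time, using $e^{At}$, $\Sigma_t$ and the continuous Gelfand formula. The paper instead applies the lemma unchanged to the time-$1$ skeleton. It then interpolates to real $t$ using that $t\mapsto\norm{\delta_zP_t-\mu_H}_{\TV}$ and $t\mapsto\norm{\delta_yP_t-\delta_{\widetilde y}P_t}_{\TV}$ are nonincreasing. The paper's route avoids redoing any estimates, but yours is equally valid.

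The gap is in the last step of (iii). You weaken the pairwise bound to $\norm{\delta_uP_t-\delta_0P_t}_{\TV}\ge c\,e^{-\sqrt Kt}$ before discarding the start $0$ via $\norm{\delta_0P_t-\mu_H}_{\TV}=O\bigl((1+t)^2e^{2\alpha t}\bigr)$. The triangle inequality then gives only
$\norm{\delta_uP_t-\mu_H}_{\TV}\ge c\,e^{-\sqrt Kt}-C(1+t)^2e^{2\alpha t}$.
This is useful only when $2\alpha<-\sqrt K$, which fails for a large range of frictions:
$\alpha=-\gamma/2>-\sqrt K/2$ when $\gamma<\sqrt K$, and
$\alpha=-2K/(\gamma+\sqrt{\gamma^2-4K})>-\sqrt K/2$ when $\gamma>\tfrac52\sqrt K$.
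In that regime the right-hand side is eventually negative, so nothing justifies fixing $z=u$ uniformly in $\eps$.

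The repair is to subtract before weakening. Since $\alpha<0$, you get $\norm{\delta_uP_t-\mu_H}_{\TV}\ge c\,e^{\alpha t}-C(1+t)^2e^{2\alpha t}\ge\tfrac c2e^{\alpha t}$ for $t\ge T$. On $[0,T]$ the distance is nonincreasing and positive at $T$, so a smaller constant works for all $t\ge0$. Only then use $e^{\alpha t}\ge e^{-\sqrt Kt}$ from (i), or set $t^*=\log(c/4\eps)/\abs{\alpha}\ge K^{-1/2}\log(c/4\eps)$ as the paper does.

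This is the paper's argument. It takes $z=u$ from the end of the proof of Lemma~\ref{lem:gaussianTVrate}, where \eqref{eq:TVlower} compares a quantity of order $e^{\alpha n}$ with $\norm{e^{An}}^2$. It extends the bound to all $n$ using that $\delta_zP_n$ has nonzero mean, and to real $t$ by monotonicity. The auxiliary start $0$ and the triangle inequality are not needed at all.
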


\begin{proof}
(i) In an orthonormal basis of eigenvectors of $H$, the matrix $A$
decomposes into $d$ two-by-two blocks with characteristic
polynomials $\mu^2+\gamma\mu+\lambda$, where $\lambda$ ranges over
the eigenvalues of $H$; this gives the list of eigenvalues. For
fixed $\gamma$, the largest real part of the roots of
$\mu^2+\gamma\mu+\lambda=0$ equals $-\gamma/2$ for
$\lambda\ge\gamma^2/4$ and
$\tfrac12\bigl(-\gamma+\sqrt{\gamma^2-4\lambda}\bigr)$ for
$\lambda<\gamma^2/4$, so it is nonincreasing in $\lambda$, and
$\alpha(\gamma,H)$ equals its value at the smallest eigenvalue
$\lambda=K$. If $\gamma\le2\sqrt K$, this value is
$-\gamma/2\ge-\sqrt K$, with equality only at $\gamma=2\sqrt K$. If
$\gamma>2\sqrt K$, the two roots for $\lambda=K$ are real and
negative with product $K$, so the larger root $\mu_+$ satisfies
$\abs{\mu_+}\le\sqrt K$; equality would force
$\abs{\mu_-}=\sqrt K$ and hence
$\gamma=\abs{\mu_+}+\abs{\mu_-}=2\sqrt K$, a contradiction.

(ii) Observed at integer times, the diffusion is the Gaussian
autoregressive process
\begin{equation*}
 Z_{n+1}=e^AZ_n+G\xi_{n+1},
 \qquad
 GG^\top=\int_0^1e^{As}BB^\top e^{A^\top s}\,ds.
\end{equation*}
We first verify directly that $GG^\top\succ0$. If
$y=(p,q)\in\R^d\times\R^d$ satisfies $y^\top GG^\top y=0$, then
\begin{equation*}
 \int_0^1\abs{B^\top e^{A^\top s}y}^2\,ds=0.
\end{equation*}
Continuity therefore gives $B^\top e^{A^\top s}y=0$ for every
$s\in[0,1]$. Evaluating at $s=0$ gives $\sqrt{2\gamma}\,q=0$,
while differentiating at $s=0$ gives
\begin{equation*}
 0=B^\top A^\top y=(AB)^\top y
 =\sqrt{2\gamma}\,(p-\gamma q).
\end{equation*}
Hence $p=q=0$, proving $GG^\top\succ0$.

By (i), the eigenvalues of $A$ are the roots of
$\mu^2+\gamma\mu+\lambda=0$ with $\lambda$ an eigenvalue of $H$;
each such pair of roots has product $\lambda>0$ and sum
$-\gamma<0$, so every eigenvalue of $A$ has negative real part.
The eigenvalues of $e^A$ are $e^\mu$, where $\mu$ ranges over the
eigenvalues of $A$, and hence
$\rho(e^A)=e^{\alpha(\gamma,H)}<1$.
Lemma~\ref{lem:gaussianTVrate}, applied with $M=e^A$ and $n_0=1$,
gives a unique invariant law for the integer-time chain, the
one-point upper bound in \eqref{eq:diffusion-onepoint} at integer
times with equality as a limit for a suitable initial state, and
the pairwise identity \eqref{eq:diffusion-TV-rate} at integer
times; the inequality $e^{\alpha(\gamma,H)}\ge e^{-\sqrt K}$ in
\eqref{eq:diffusion-TV-rate} follows from (i).

The measure $\mu_H$ is invariant for every $P_t$, in particular
for $P_1$, so the unique invariant law of the integer-time chain
is $\mu_H$. Any invariant law of $(P_t)_{t\ge0}$ is invariant for
$P_1$ and therefore equals $\mu_H$; hence $\mu_H$ is the unique
invariant law of the continuous-time process.

Both
$t\mapsto\norm{\delta_zP_t-\mu_H}_{\TV}$ and
$t\mapsto\norm{\delta_yP_t-\delta_{\widetilde y}P_t}_{\TV}$ are
nonincreasing, since
$\delta_zP_t=(\delta_zP_s)P_{t-s}$ and $\mu_H=\mu_HP_{t-s}$ for
$0\le s\le t$, and applying a common Markov kernel does not
increase total variation distance. Taking $n=\lfloor t\rfloor$ and
bounding each quantity at time $t$ between its values at times $n$
and $n+1$ extends the integer-time conclusions to every real
$t\ge0$.

(iii) Take as $z$ the initial state from the final part of the
proof of Lemma~\ref{lem:gaussianTVrate}, which attains equality in
\eqref{eq:diffusion-onepoint}. The lower bound established there
gives a constant $c_0>0$ with
\begin{equation*}
 \norm{\delta_zP_n-\mu_H}_{\TV}
 \ge c_0e^{\alpha(\gamma,H)n}
\end{equation*}
for all sufficiently large $n$. Each left-hand side is positive,
because $\delta_zP_n$ is Gaussian with the nonzero mean $e^{An}z$
while $\mu_H$ is centered; decreasing $c_0$ if necessary, the
bound therefore holds for every $n\in\mathbb N$. If
$n=\lfloor t\rfloor$, the monotonicity established in (ii) gives
\begin{equation*}
 \norm{\delta_zP_t-\mu_H}_{\TV}
 \ge\norm{\delta_zP_{n+1}-\mu_H}_{\TV}
 \ge c_0e^{\alpha(\gamma,H)(n+1)}
 \ge c_0e^{\alpha(\gamma,H)}e^{\alpha(\gamma,H)t}.
\end{equation*}
Thus, with $c=c_0e^{\alpha(\gamma,H)}$,
$\norm{\delta_zP_t-\mu_H}_{\TV}\ge c\,e^{\alpha(\gamma,H)t}$ for
every $t\ge0$. For $\eps\in(0,c/4]$, set
$t^*=\log(c/4\eps)/\abs{\alpha(\gamma,H)}$. Then
$\norm{\delta_zP_{t^*}-\mu_H}_{\TV}\ge c\,e^{\alpha(\gamma,H)t^*}
=4\eps>\eps$, so, by the monotonicity established in (ii),
$t_{\mathrm{mix}}(z,\eps;(P_t)_{t\ge0},\mu_H)\ge t^*$. Part (i)
gives $\abs{\alpha(\gamma,H)}\le\sqrt K$, so
$t^*\ge K^{-1/2}\log(c/4\eps)$, completing the proof.
\end{proof}

\bibliographystyle{imsart-number}
\bibliography{obabo_non_acceleration}

\begin{thebibliography}{41}

\bibitem{AltschulerChewiZhang2026}
\begin{binproceedings}[author]
\bauthor{\bsnm{Altschuler},~\bfnm{Jason~M.}\binits{J.~M.}},
  \bauthor{\bsnm{Chewi},~\bfnm{Sinho}\binits{S.}} \AND
  \bauthor{\bsnm{Zhang},~\bfnm{Matthew~S.}\binits{M.~S.}}
(\byear{2026}).
\btitle{Shifted Composition {IV}: Toward Ballistic Acceleration for Log-Concave
  Sampling}.
In \bbooktitle{Proceedings of the 58th Annual ACM Symposium on Theory of
  Computing}
\bpages{1739--1750}.
\bpublisher{Association for Computing Machinery}, \baddress{New York, NY, USA}.
\bdoi{10.1145/3798129.3800881}
\end{binproceedings}
\endbibitem

\bibitem{ArnoldErb2014}
\begin{barticle}[author]
\bauthor{\bsnm{Arnold},~\bfnm{Anton}\binits{A.}} \AND
  \bauthor{\bsnm{Erb},~\bfnm{Jan}\binits{J.}}
(\byear{2014}).
\btitle{Sharp Entropy Decay for Hypocoercive and Non-Symmetric
  {F}okker-{P}lanck Equations with Linear Drift}.
\bjournal{arXiv preprint arXiv:1409.5425v2}.
\bdoi{10.48550/arXiv.1409.5425}
\end{barticle}
\endbibitem

\bibitem{BakryGentilLedoux2014}
\begin{bbook}[author]
\bauthor{\bsnm{Bakry},~\bfnm{Dominique}\binits{D.}},
  \bauthor{\bsnm{Gentil},~\bfnm{Ivan}\binits{I.}} \AND
  \bauthor{\bsnm{Ledoux},~\bfnm{Michel}\binits{M.}}
(\byear{2014}).
\btitle{Analysis and Geometry of Markov Diffusion Operators}.
\bseries{Grundlehren der mathematischen Wissenschaften}
\bvolume{348}.
\bpublisher{Springer}, \baddress{Cham}.
\bdoi{10.1007/978-3-319-00227-9}
\end{bbook}
\endbibitem

\bibitem{Billingsley1995}
\begin{bbook}[author]
\bauthor{\bsnm{Billingsley},~\bfnm{Patrick}\binits{P.}}
(\byear{1995}).
\btitle{Probability and Measure},
\bedition{3} ed.
\bpublisher{John Wiley \& Sons}, \baddress{New York}.
\bmrnumber{1324786}
\end{bbook}
\endbibitem

\bibitem{BouRabee2014}
\begin{barticle}[author]
\bauthor{\bsnm{Bou-Rabee},~\bfnm{Nawaf}\binits{N.}}
(\byear{2014}).
\btitle{Time Integrators for Molecular Dynamics}.
\bjournal{Entropy}
\bvolume{16}
\bpages{138--162}.
\bdoi{10.3390/e16010138}
\end{barticle}
\endbibitem

\bibitem{BouRabeeCoxSchieven2026}
\begin{barticle}[author]
\bauthor{\bsnm{Bou-Rabee},~\bfnm{Nawaf}\binits{N.}},
  \bauthor{\bsnm{Cox},~\bfnm{Sonja}\binits{S.}} \AND
  \bauthor{\bsnm{Schieven},~\bfnm{Roy}\binits{R.}}
(\byear{2026}).
\btitle{On Couplings for Kinetic Langevin Diffusions}.
\bjournal{arXiv preprint arXiv:2605.31088v13}.
\bdoi{10.48550/arXiv.2605.31088}
\end{barticle}
\endbibitem

\bibitem{BouRabeeEberle2023}
\begin{barticle}[author]
\bauthor{\bsnm{Bou-Rabee},~\bfnm{Nawaf}\binits{N.}} \AND
  \bauthor{\bsnm{Eberle},~\bfnm{Andreas}\binits{A.}}
(\byear{2023}).
\btitle{Mixing Time Guarantees for Unadjusted {H}amiltonian {M}onte {C}arlo}.
\bjournal{Bernoulli}
\bvolume{29}
\bpages{75--104}.
\bdoi{10.3150/21-BEJ1450}
\end{barticle}
\endbibitem

\bibitem{BouRabeeKleppe2025}
\begin{barticle}[author]
\bauthor{\bsnm{Bou-Rabee},~\bfnm{Nawaf}\binits{N.}} \AND
  \bauthor{\bsnm{Kleppe},~\bfnm{Tore~Selland}\binits{T.~S.}}
(\byear{2025}).
\btitle{Randomized {Runge--Kutta--Nystr\"om} Methods for Unadjusted
  {Hamiltonian} and Kinetic {Langevin} {Monte Carlo}}.
\bjournal{Mathematics of Computation}
\bvolume{94}
\bpages{2839--2865}.
\bdoi{10.1090/mcom/4061}
\end{barticle}
\endbibitem

\bibitem{BouRabeeOberdorster2024}
\begin{barticle}[author]
\bauthor{\bsnm{Bou-Rabee},~\bfnm{Nawaf}\binits{N.}} \AND
  \bauthor{\bsnm{Oberd\"{o}rster},~\bfnm{Stefan}\binits{S.}}
(\byear{2024}).
\btitle{Mixing of {M}etropolis-Adjusted {M}arkov Chains via Couplings: The High
  Acceptance Regime}.
\bjournal{Electronic Journal of Probability}
\bvolume{29}
\bpages{1--27}.
\bdoi{10.1214/24-EJP1150}
\end{barticle}
\endbibitem

\bibitem{BouRabeeSchuh2023}
\begin{barticle}[author]
\bauthor{\bsnm{Bou-Rabee},~\bfnm{Nawaf}\binits{N.}} \AND
  \bauthor{\bsnm{Schuh},~\bfnm{Katharina}\binits{K.}}
(\byear{2023}).
\btitle{Convergence of Unadjusted {H}amiltonian {M}onte {C}arlo for Mean-Field
  Models}.
\bjournal{Electronic Journal of Probability}
\bvolume{28}
\bpages{1--40}.
\bdoi{10.1214/23-EJP970}
\end{barticle}
\endbibitem

\bibitem{BouRabeeVandenEijnden2010}
\begin{barticle}[author]
\bauthor{\bsnm{Bou-Rabee},~\bfnm{Nawaf}\binits{N.}} \AND
  \bauthor{\bsnm{Vanden-Eijnden},~\bfnm{Eric}\binits{E.}}
(\byear{2010}).
\btitle{Pathwise Accuracy and Ergodicity of Metropolized Integrators for
  {SDEs}}.
\bjournal{Communications on Pure and Applied Mathematics}
\bvolume{63}
\bpages{655--696}.
\bdoi{10.1002/cpa.20306}
\end{barticle}
\endbibitem

\bibitem{BussiParrinello2007}
\begin{barticle}[author]
\bauthor{\bsnm{Bussi},~\bfnm{Giovanni}\binits{G.}} \AND
  \bauthor{\bsnm{Parrinello},~\bfnm{Michele}\binits{M.}}
(\byear{2007}).
\btitle{Accurate Sampling Using Langevin Dynamics}.
\bjournal{Physical Review E}
\bvolume{75}
\bpages{056707}.
\bdoi{10.1103/PhysRevE.75.056707}
\end{barticle}
\endbibitem

\bibitem{CaoLuWang2023}
\begin{barticle}[author]
\bauthor{\bsnm{Cao},~\bfnm{Yu}\binits{Y.}},
  \bauthor{\bsnm{Lu},~\bfnm{Jianfeng}\binits{J.}} \AND
  \bauthor{\bsnm{Wang},~\bfnm{Lihan}\binits{L.}}
(\byear{2023}).
\btitle{On Explicit {$L^2$}-Convergence Rate Estimate for Underdamped
  {L}angevin Dynamics}.
\bjournal{Archive for Rational Mechanics and Analysis}
\bvolume{247}
\bpages{90}.
\bdoi{10.1007/s00205-023-01922-4}
\end{barticle}
\endbibitem

\bibitem{ChakMonmarche2026}
\begin{barticle}[author]
\bauthor{\bsnm{Chak},~\bfnm{Martin}\binits{M.}} \AND
  \bauthor{\bsnm{Monmarch\'e},~\bfnm{Pierre}\binits{P.}}
(\byear{2026}).
\btitle{Reflection Coupling for Unadjusted Generalized {H}amiltonian {M}onte
  {C}arlo in the Nonconvex Stochastic Gradient Case}.
\bjournal{IMA Journal of Numerical Analysis}
\bvolume{46}
\bpages{2216--2266}.
\bdoi{10.1093/imanum/draf045}
\end{barticle}
\endbibitem

\bibitem{ChengChatterjiBartlettJordan2018}
\begin{binproceedings}[author]
\bauthor{\bsnm{Cheng},~\bfnm{Xiang}\binits{X.}},
  \bauthor{\bsnm{Chatterji},~\bfnm{Niladri~S.}\binits{N.~S.}},
  \bauthor{\bsnm{Bartlett},~\bfnm{Peter~L.}\binits{P.~L.}} \AND
  \bauthor{\bsnm{Jordan},~\bfnm{Michael~I.}\binits{M.~I.}}
(\byear{2018}).
\btitle{Underdamped {L}angevin {MCMC}: A Non-Asymptotic Analysis}.
In \bbooktitle{Proceedings of the 31st Conference on Learning Theory}.
\bseries{Proceedings of Machine Learning Research}
\bvolume{75}
\bpages{300--323}.
\bpublisher{PMLR}.
\end{binproceedings}
\endbibitem

\bibitem{DalalyanRiouDurand2020}
\begin{barticle}[author]
\bauthor{\bsnm{Dalalyan},~\bfnm{Arnak~S.}\binits{A.~S.}} \AND
  \bauthor{\bsnm{Riou-Durand},~\bfnm{Lionel}\binits{L.}}
(\byear{2020}).
\btitle{On Sampling from a Log-Concave Density Using Kinetic {L}angevin
  Diffusions}.
\bjournal{Bernoulli}
\bvolume{26}
\bpages{1956--1988}.
\bdoi{10.3150/19-BEJ1178}
\end{barticle}
\endbibitem

\bibitem{DolbeaultMouhotSchmeiser2015}
\begin{barticle}[author]
\bauthor{\bsnm{Dolbeault},~\bfnm{Jean}\binits{J.}},
  \bauthor{\bsnm{Mouhot},~\bfnm{Cl\'{e}ment}\binits{C.}} \AND
  \bauthor{\bsnm{Schmeiser},~\bfnm{Christian}\binits{C.}}
(\byear{2015}).
\btitle{Hypocoercivity for Linear Kinetic Equations Conserving Mass}.
\bjournal{Transactions of the American Mathematical Society}
\bvolume{367}
\bpages{3807--3828}.
\bdoi{10.1090/S0002-9947-2015-06012-7}
\end{barticle}
\endbibitem

\bibitem{EberleGuillinZimmer2019}
\begin{barticle}[author]
\bauthor{\bsnm{Eberle},~\bfnm{Andreas}\binits{A.}},
  \bauthor{\bsnm{Guillin},~\bfnm{Arnaud}\binits{A.}} \AND
  \bauthor{\bsnm{Zimmer},~\bfnm{Raphael}\binits{R.}}
(\byear{2019}).
\btitle{Couplings and Quantitative Contraction Rates for {L}angevin Dynamics}.
\bjournal{The Annals of Probability}
\bvolume{47}
\bpages{1982--2010}.
\bdoi{10.1214/18-AOP1299}
\end{barticle}
\endbibitem

\bibitem{EberleLorler2026}
\begin{barticle}[author]
\bauthor{\bsnm{Eberle},~\bfnm{Andreas}\binits{A.}} \AND
  \bauthor{\bsnm{L\"{o}rler},~\bfnm{Francis}\binits{F.}}
(\byear{2026}).
\btitle{Non-Reversible Lifts of Reversible Diffusion Processes and Relaxation
  Times}.
\bjournal{Probability Theory and Related Fields}
\bvolume{194}
\bpages{173--203}.
\bdoi{10.1007/s00440-024-01308-x}
\end{barticle}
\endbibitem

\bibitem{FaureSchreiber2014}
\begin{barticle}[author]
\bauthor{\bsnm{Faure},~\bfnm{Mathieu}\binits{M.}} \AND
  \bauthor{\bsnm{Schreiber},~\bfnm{Sebastian~J.}\binits{S.~J.}}
(\byear{2014}).
\btitle{Quasi-Stationary Distributions for Randomly Perturbed Dynamical
  Systems}.
\bjournal{Annals of Applied Probability}
\bvolume{24}
\bpages{553--598}.
\bdoi{10.1214/13-AAP923}
\end{barticle}
\endbibitem

\bibitem{FreidlinWentzell2012}
\begin{bbook}[author]
\bauthor{\bsnm{Freidlin},~\bfnm{Mark~I.}\binits{M.~I.}} \AND
  \bauthor{\bsnm{Wentzell},~\bfnm{Alexander~D.}\binits{A.~D.}}
(\byear{2012}).
\btitle{Random Perturbations of Dynamical Systems},
\bedition{3} ed.
\bseries{Grundlehren der Mathematischen Wissenschaften}
\bvolume{260}.
\bpublisher{Springer}, \baddress{Heidelberg}.
\bdoi{10.1007/978-3-642-25847-3}
\end{bbook}
\endbibitem

\bibitem{GoujaudTaylorDieuleveut2025}
\begin{barticle}[author]
\bauthor{\bsnm{Goujaud},~\bfnm{Baptiste}\binits{B.}},
  \bauthor{\bsnm{Taylor},~\bfnm{Adrien~B.}\binits{A.~B.}} \AND
  \bauthor{\bsnm{Dieuleveut},~\bfnm{Aymeric}\binits{A.}}
(\byear{2025}).
\btitle{Provable Non-Accelerations of the Heavy-Ball Method}.
\bjournal{Mathematical Programming, Series B}.
\bnote{Published online October 27, 2025}.
\bdoi{10.1007/s10107-025-02269-2}
\end{barticle}
\endbibitem

\bibitem{GouraudLeBrisMajkaMonmarche2025}
\begin{barticle}[author]
\bauthor{\bsnm{Gouraud},~\bfnm{Nicola\"{i}}\binits{N.}},
  \bauthor{\bsnm{Le~Bris},~\bfnm{Pierre}\binits{P.}},
  \bauthor{\bsnm{Majka},~\bfnm{Adrien}\binits{A.}} \AND
  \bauthor{\bsnm{Monmarch\'{e}},~\bfnm{Pierre}\binits{P.}}
(\byear{2025}).
\btitle{{HMC} and Underdamped {L}angevin United in the Unadjusted Convex Smooth
  Case}.
\bjournal{SIAM/ASA Journal on Uncertainty Quantification}
\bvolume{13}
\bpages{278--303}.
\bdoi{10.1137/23M1608963}
\end{barticle}
\endbibitem

\bibitem{GuillinMonmarche2016}
\begin{barticle}[author]
\bauthor{\bsnm{Guillin},~\bfnm{Arnaud}\binits{A.}} \AND
  \bauthor{\bsnm{Monmarch\'e},~\bfnm{Pierre}\binits{P.}}
(\byear{2016}).
\btitle{Optimal Linear Drift for the Speed of Convergence of an Hypoelliptic
  Diffusion}.
\bjournal{Electronic Communications in Probability}
\bvolume{21}
\bpages{1--14}.
\bdoi{10.1214/16-ECP25}
\end{barticle}
\endbibitem

\bibitem{HestenesStiefel1952}
\begin{barticle}[author]
\bauthor{\bsnm{Hestenes},~\bfnm{Magnus~R.}\binits{M.~R.}} \AND
  \bauthor{\bsnm{Stiefel},~\bfnm{Eduard}\binits{E.}}
(\byear{1952}).
\btitle{Methods of Conjugate Gradients for Solving Linear Systems}.
\bjournal{Journal of Research of the National Bureau of Standards}
\bvolume{49}
\bpages{409--436}.
\bdoi{10.6028/jres.049.044}
\end{barticle}
\endbibitem

\bibitem{HoffmanSountsov2022}
\begin{binproceedings}[author]
\bauthor{\bsnm{Hoffman},~\bfnm{Matthew~D.}\binits{M.~D.}} \AND
  \bauthor{\bsnm{Sountsov},~\bfnm{Pavel}\binits{P.}}
(\byear{2022}).
\btitle{Tuning-Free Generalized {H}amiltonian {M}onte {C}arlo}.
In \bbooktitle{Proceedings of the 25th International Conference on Artificial
  Intelligence and Statistics}.
\bseries{Proceedings of Machine Learning Research}
\bvolume{151}
\bpages{7799--7813}.
\bpublisher{PMLR}.
\end{binproceedings}
\endbibitem

\bibitem{Kifer1990}
\begin{barticle}[author]
\bauthor{\bsnm{Kifer},~\bfnm{Yuri}\binits{Y.}}
(\byear{1990}).
\btitle{A Discrete-Time Version of the {W}entzell--{F}riedlin Theory}.
\bjournal{Annals of Probability}
\bvolume{18}
\bpages{1676--1692}.
\bdoi{10.1214/aop/1176990641}
\end{barticle}
\endbibitem

\bibitem{KimGruffazParkDurmus2026}
\begin{barticle}[author]
\bauthor{\bsnm{Kim},~\bfnm{Kyurae}\binits{K.}},
  \bauthor{\bsnm{Gruffaz},~\bfnm{Samuel}\binits{S.}},
  \bauthor{\bsnm{Park},~\bfnm{Ji~Won}\binits{J.~W.}} \AND
  \bauthor{\bsnm{Durmus},~\bfnm{Alain~Oliviero}\binits{A.~O.}}
(\byear{2026}).
\btitle{Analysis of Kinetic {L}angevin {M}onte {C}arlo under the Stochastic
  Exponential {E}uler Discretization from Underdamped All the Way to
  Overdamped}.
\bjournal{Electronic Journal of Statistics}
\bvolume{20}
\bpages{3106--3142}.
\bdoi{10.1214/26-EJS2556}
\end{barticle}
\endbibitem

\bibitem{LeimkuhlerMatthews2013}
\begin{barticle}[author]
\bauthor{\bsnm{Leimkuhler},~\bfnm{Benedict}\binits{B.}} \AND
  \bauthor{\bsnm{Matthews},~\bfnm{Charles}\binits{C.}}
(\byear{2013}).
\btitle{Rational Construction of Stochastic Numerical Methods for Molecular
  Sampling}.
\bjournal{Applied Mathematics Research eXpress}
\bvolume{2013}
\bpages{34--56}.
\bdoi{10.1093/amrx/abs010}
\end{barticle}
\endbibitem

\bibitem{LeimkuhlerPaulinWhalley2024}
\begin{barticle}[author]
\bauthor{\bsnm{Leimkuhler},~\bfnm{Benedict~J.}\binits{B.~J.}},
  \bauthor{\bsnm{Paulin},~\bfnm{Daniel}\binits{D.}} \AND
  \bauthor{\bsnm{Whalley},~\bfnm{Peter~A.}\binits{P.~A.}}
(\byear{2024}).
\btitle{Contraction and Convergence Rates for Discretized Kinetic Langevin
  Dynamics}.
\bjournal{SIAM Journal on Numerical Analysis}
\bvolume{62}
\bpages{1226--1258}.
\bdoi{10.1137/23M1556289}
\end{barticle}
\endbibitem

\bibitem{LessardRechtPackard2016}
\begin{barticle}[author]
\bauthor{\bsnm{Lessard},~\bfnm{Laurent}\binits{L.}},
  \bauthor{\bsnm{Recht},~\bfnm{Benjamin}\binits{B.}} \AND
  \bauthor{\bsnm{Packard},~\bfnm{Andrew}\binits{A.}}
(\byear{2016}).
\btitle{Analysis and Design of Optimization Algorithms via Integral Quadratic
  Constraints}.
\bjournal{SIAM Journal on Optimization}
\bvolume{26}
\bpages{57--95}.
\bdoi{10.1137/15M1009597}
\end{barticle}
\endbibitem

\bibitem{LiZhaTao2022}
\begin{binproceedings}[author]
\bauthor{\bsnm{Li},~\bfnm{Ruilin}\binits{R.}},
  \bauthor{\bsnm{Zha},~\bfnm{Hongyuan}\binits{H.}} \AND
  \bauthor{\bsnm{Tao},~\bfnm{Molei}\binits{M.}}
(\byear{2022}).
\btitle{Hessian-Free High-Resolution {N}esterov Acceleration for Sampling}.
In \bbooktitle{Proceedings of the 39th International Conference on Machine
  Learning}.
\bseries{Proceedings of Machine Learning Research}
\bvolume{162}
\bpages{13125--13162}.
\bpublisher{PMLR}.
\end{binproceedings}
\endbibitem

\bibitem{Lu2026}
\begin{barticle}[author]
\bauthor{\bsnm{Lu},~\bfnm{Jianfeng}\binits{J.}}
(\byear{2026}).
\btitle{A Sharp Hypocoercive Entropy Decay Estimate for Underdamped Langevin
  Dynamics}.
\bjournal{arXiv preprint arXiv:2605.01933v2}.
\bdoi{10.48550/arXiv.2605.01933}
\end{barticle}
\endbibitem

\bibitem{MaChatterjiChengFlammarionBartlettJordan2021}
\begin{barticle}[author]
\bauthor{\bsnm{Ma},~\bfnm{Yi-An}\binits{Y.-A.}},
  \bauthor{\bsnm{Chatterji},~\bfnm{Niladri~S.}\binits{N.~S.}},
  \bauthor{\bsnm{Cheng},~\bfnm{Xiang}\binits{X.}},
  \bauthor{\bsnm{Flammarion},~\bfnm{Nicolas}\binits{N.}},
  \bauthor{\bsnm{Bartlett},~\bfnm{Peter~L.}\binits{P.~L.}} \AND
  \bauthor{\bsnm{Jordan},~\bfnm{Michael~I.}\binits{M.~I.}}
(\byear{2021}).
\btitle{Is There an Analog of {N}esterov Acceleration for Gradient-Based
  {MCMC}?}
\bjournal{Bernoulli}
\bvolume{27}
\bpages{1942--1992}.
\bdoi{10.3150/20-BEJ1297}
\end{barticle}
\endbibitem

\bibitem{Monmarche2021}
\begin{barticle}[author]
\bauthor{\bsnm{Monmarch\'{e}},~\bfnm{Pierre}\binits{P.}}
(\byear{2021}).
\btitle{High-Dimensional {MCMC} with a Standard Splitting Scheme for the
  Underdamped Langevin Diffusion}.
\bjournal{Electronic Journal of Statistics}
\bvolume{15}
\bpages{4117--4166}.
\bdoi{10.1214/21-EJS1888}
\end{barticle}
\endbibitem

\bibitem{Nesterov2004}
\begin{bbook}[author]
\bauthor{\bsnm{Nesterov},~\bfnm{Yurii}\binits{Y.}}
(\byear{2004}).
\btitle{Introductory Lectures on Convex Optimization: A Basic Course}.
\bseries{Applied Optimization}
\bvolume{87}.
\bpublisher{Kluwer Academic Publishers}, \baddress{Boston}.
\bdoi{10.1007/978-1-4419-8853-9}
\end{bbook}
\endbibitem

\bibitem{Polyak1964}
\begin{barticle}[author]
\bauthor{\bsnm{Polyak},~\bfnm{Boris~T.}\binits{B.~T.}}
(\byear{1964}).
\btitle{Some Methods of Speeding Up the Convergence of Iteration Methods}.
\bjournal{USSR Computational Mathematics and Mathematical Physics}
\bvolume{4}
\bpages{1--17}.
\bdoi{10.1016/0041-5553(64)90137-5}
\end{barticle}
\endbibitem

\bibitem{SchuhWhalley2025}
\begin{barticle}[author]
\bauthor{\bsnm{Schuh},~\bfnm{Katharina}\binits{K.}} \AND
  \bauthor{\bsnm{Whalley},~\bfnm{Peter~A.}\binits{P.~A.}}
(\byear{2025}).
\btitle{Convergence of Kinetic {L}angevin Samplers for Non-Convex Potentials}.
\bjournal{arXiv preprint arXiv:2405.09992v1}.
\bdoi{10.48550/arXiv.2405.09992}
\end{barticle}
\endbibitem

\bibitem{TrefethenBau1997}
\begin{bbook}[author]
\bauthor{\bsnm{Trefethen},~\bfnm{Lloyd~N.}\binits{L.~N.}} \AND
  \bauthor{\bsnm{Bau},~\bfnm{David}\binits{D.} \bsuffix{III}}
(\byear{1997}).
\btitle{Numerical Linear Algebra}.
\bpublisher{Society for Industrial and Applied Mathematics},
  \baddress{Philadelphia}.
\bdoi{10.1137/1.9780898719574}
\end{bbook}
\endbibitem

\bibitem{Villani2009}
\begin{bbook}[author]
\bauthor{\bsnm{Villani},~\bfnm{C\'{e}dric}\binits{C.}}
(\byear{2009}).
\btitle{Hypocoercivity}.
\bseries{Memoirs of the American Mathematical Society}
\bvolume{202}.
\bpublisher{American Mathematical Society}.
\bdoi{10.1090/S0065-9266-09-00567-5}
\end{bbook}
\endbibitem

\bibitem{ZuoOsherLi2025}
\begin{barticle}[author]
\bauthor{\bsnm{Zuo},~\bfnm{Xinzhe}\binits{X.}},
  \bauthor{\bsnm{Osher},~\bfnm{Stanley}\binits{S.}} \AND
  \bauthor{\bsnm{Li},~\bfnm{Wuchen}\binits{W.}}
(\byear{2025}).
\btitle{Gradient-Adjusted Underdamped {L}angevin Dynamics for Sampling}.
\bjournal{SIAM/ASA Journal on Uncertainty Quantification}
\bvolume{13}
\bpages{1735--1765}.
\bdoi{10.1137/24M1702015}
\end{barticle}
\endbibitem

\end{thebibliography}

\end{document}